\numberwithin{equation}{section}
\newtheorem{letterthm}{Theorem}
\newtheorem{lettercor}[letterthm]{Corollary}
\newtheorem{letterapp}{Application}
\newtheorem*{haagerup-theorem}{Relative Bicentralizer Theorem}
\newtheorem{thm}{Theorem}[section]
\newtheorem{lem}[thm]{Lemma}
\newtheorem{cor}[thm]{Corollary}
\newtheorem{prop}[thm]{Proposition}
\theoremstyle{definition}
\newtheorem{rem}[thm]{Remark}
\newtheorem{remark}[thm]{Remarks}
\newtheorem*{example}{Examples}
\newtheorem{df}[thm]{Definition}
\newtheorem{claim}[thm]{Claim}
\newtheorem*{question}{Question}
\newcommand{\R}{\mathbf{R}}
\newcommand{\C}{\mathbf{C}}
\newcommand{\Z}{\mathbf{Z}}
\newcommand{\Q}{\mathbf{Q}}
\newcommand{\N}{\mathbf{N}}
\newcommand{\T}{\mathbf{T}}
\newcommand{\Ad}{\operatorname{Ad}}
\newcommand{\id}{\text{\rm id}}
\newcommand{\Aut}{\mathord{\text{\rm Aut}}}
\newcommand{\Inn}{\mathord{\text{\rm Inn}}}
\newcommand{\rL}{\mathord{\text{\rm L}}}
\newcommand{\rB}{\mathord{\text{\rm B}}}
\newcommand{\rC}{\mathord{\text{\rm C}}}
\newcommand{\rd}{\mathord{\text{\rm d}}}
\newcommand{\co}{\mathord{\text{\rm co}}}
\newcommand{\rS}{\mathord{\text{\rm S}}}
\newcommand{\rD}{\mathord{\text{\rm D}}}
\newcommand{\rE}{\mathord{\text{\rm E}}}
\newcommand{\spec}{\mathord{\text{\rm sp}}}
\newcommand{\core}{\mathord{\text{\rm c}}}
\newcommand{\rZ}{\mathord{\text{\rm Z}}}
\newcommand{\Tr}{\mathord{\text{\rm Tr}}}
\newcommand{\Ball}{\mathord{\text{\rm Ball}}}
\newcommand{\ovt}{\mathbin{\overline{\otimes}}}
\newcommand{\bigovt}{\mathbin{\overline{\bigotimes}}}
\newcommand{\op}{\mathord{\text{\rm op}}}
\newcommand{\AC}{\mathord{\text{\rm AC}}}
\newcommand{\Bic}{\mathord{\text{\rm B}}}
\newcommand{\dpr}{^{\prime\prime}}
\newcommand{\I}{{\rm I}}
\newcommand{\II}{{\rm II}}
\newcommand{\III}{{\rm III}}
\begin{document}

\title[Structure of bicentralizer algebras and inclusions of type $\III$ factors]{Structure of bicentralizer algebras \\ and inclusions of type $\III$ factors}

\begin{abstract}
We investigate the structure of the relative bicentralizer algebra $\rB(N \subset M, \varphi)$ for inclusions of von Neumann algebras with normal expectation where $N$ is a type $\III_1$ subfactor and $\varphi \in N_*$ is a faithful state. We first construct a canonical flow $\beta^\varphi : \R^*_+ \curvearrowright \rB(N \subset M, \varphi)$ on the relative bicentralizer algebra and we show that the W$^*$-dynamical system $(\rB(N \subset M, \varphi), \beta^\varphi)$ is independent of the choice of $\varphi$ up to a canonical isomorphism. In the case when $N=M$, we deduce new results on the structure of the automorphism group of $\rB(M,\varphi)$ and we relate the period of the flow $\beta^\varphi$ to the tensorial absorption of Powers factors. For general irreducible inclusions $N \subset M$, we relate the ergodicity of the flow $\beta^\varphi$ to the existence of irreducible hyperfinite subfactors in $M$ that sit with normal expectation in $N$. When the inclusion $N \subset M$ is discrete, we prove a relative bicentralizer theorem and we use it to solve Kadison's problem when $N$ is amenable. 
\end{abstract}

\author[H.\ Ando]{Hiroshi Ando}
\email{hiroando@math.s.chiba-u.ac.jp}
\address{Department of Mathematics and Informatics \\ Chiba University \\ 1-33 Yayoi-cho \\
Inage \\ Chiba \\ 263-0022 \\ JAPAN}

\author[U.\ Haagerup]{Uffe Haagerup}

\author[C.\ Houdayer]{Cyril Houdayer}
\email{cyril.houdayer@math.u-psud.fr}

\author[A.\ Marrakchi]{Amine Marrakchi}
\email{amine.marrakchi@math.u-psud.fr}
\address{Laboratoire de Math\'ematiques d'Orsay\\ Universit\'e Paris-Sud\\ CNRS\\ Universit\'e Paris-Saclay\\ 91405 Orsay\\ FRANCE}

\thanks{HA is supported by JSPS KAKENHI 16K17608}

\thanks{CH and AM are supported by ERC Starting Grant GAN 637601}

\subjclass[2010]{46L10, 46L30, 46L36, 46L37, 46L55}

\keywords{Bicentralizer algebras; Type ${\rm III}$ factors; Ultraproduct von Neumann algebras}

\maketitle

\section{Introduction and statement of the main results}
Let $M$ be any $\sigma$-finite von Neumann algebra and $\varphi \in M_*$ any faithful state. Following \cite{Co80, Ha85}, we define the \emph{bicentralizer} of $M$ with respect to $\varphi$ by
$$\rB(M, \varphi) = \left\{ x \in M \mid x a_{n} - a_{n} x \to 0 \text{ strongly}, \forall (a_{n})_{n} \in \AC(M, \varphi)\right\}$$
where $$\AC(M, \varphi) = \left \{ (a_{n})_{n} \in \ell^{\infty}(\N, N) \mid \lim_{n} \|a_{n} \varphi - \varphi a_{n}\| = 0\right\}$$ is the {\em asymptotic centralizer} of $\varphi$. One of the most famous open problems in the theory of type $\III$ factors is Connes' bicentralizer problem asking whether for any type $\III_1$ factor $M$ with separable predual and any faithful state $\varphi \in M_*$, we have $\rB(M, \varphi)=\C 1$. This question was solved affirmatively by Haagerup in \cite{Ha85} for \emph{amenable} $M$, thus settling the problem of the classification of amenable factors with separable predual (see \cite{Co75b, Co85}). 

Nowadays, the bicentralizer problem is still of premium importance. Indeed by \cite[Theorem 3.1]{Ha85}, for any type ${\rm III_{1}}$ factor $M$ with separable predual, $M$ has trivial bicentralizer if and only if there exists a faithful state $\varphi \in M_{\ast}$ with an irreducible centralizer, meaning that $(M_{\varphi})' \cap M = \C 1$. Then by  \cite[Theorem 3.1]{Ha85} and \cite[Theorem 3.2]{Po81}, $M$ has trivial bicentralizer if and only if there exists a maximal abelian subalgebra $A \subset M$ that is the range of a normal conditional expectation (see \cite[Question]{Ta71} where the problem of finding such maximal abelian subalgebras is mentioned). For these reasons, Connes' bicentralizer problem appears naturally when one tries to use Popa's deformation/rigidity theory in the type $\III$ context (see for instance \cite[Theorem C]{HI15}). The bicentralizer problem is known to have a positive solution for particular classes of nonamenable type $\III_{1}$ factors: factors with a Cartan subalgebra; Shlyakhtenko's free Araki--Woods factors (\cite{Ho08}); (semi-)solid factors (\cite{HI15}); free product factors (\cite{HU15}). However, the bicentralizer problem is still wide open for arbitrary type $\III_1$ factors.

In his attempt to solve the bicentralizer problem, Connes observed that for any type $\III_1$ factor $M$, the bicentralizer $\rB(M,\varphi)$ does not depend on the choice of the state $\varphi$ up to a canonical isomorphism. In around 2012--2013, Haagerup found out that the idea of Connes' isomorphism (denoted by $\beta_{\psi,\varphi}$ below) can be enhanced to construct a canonical flow ($u$-continuous action) $\beta^{\varphi}\colon \mathbf{R}_+^*\curvearrowright \Bic(M,\varphi)$ with interesting properties. This flow was independently discovered by Marrakchi and this was the starting point of our joint research.

Let $N \subset M$ be any inclusion of $\sigma$-finite von Neumann algebras {\em with expectation}, meaning that there exists a faithful normal conditional expectation $\rE_{N} : M \to N$. Following \cite[Definition 4.1]{Ma03}, we define the {\em relative bicentralizer} $\rB(N \subset M, \varphi)$ of the inclusion $N \subset M$ with respect to the faithful state $\varphi \in N_*$ by 
$$\rB(N \subset M, \varphi) = \left\{ x \in M \mid x a_{n} - a_{n} x \to 0 \text{ strongly}, \forall (a_{n})_{n} \in \AC(N, \varphi)\right\}.$$
Observe that we always have $N' \cap M \subset \rB(N\subset M, \varphi) \subset (N_{\varphi})' \cap M$. When $N = M$, we simply have $\rB(N \subset M,\varphi)=\rB(M, \varphi)$.

Our first main result deals with the construction of the canonical flow on the relative bicentralizer $\rB(N \subset M, \varphi)$.

\begin{letterthm} \label{thm: Connes' isomorphism and bicentralizer flow} 
Let $N \subset M$ be any inclusion of $\sigma$-finite von Neumann algebras with expectation. Assume that $N$ is a type $\III_1$ factor. Then the following assertions hold:
\begin{itemize}
 \item [$(\rm i)$] For every pair of faithful states $\varphi, \psi \in N_*$, there exists a canonical isomorphism $$\beta_{\psi, \varphi} : \rB(N \subset M,\varphi) \rightarrow \rB(N \subset M,\psi)$$ characterized by the following property: for any uniformly bounded sequence $(a_n)_{n \in \N}$ in $N$ and any $x \in \rB(N \subset M,\varphi)$, we have 
$$  \|a_n \varphi - \psi a_n\| \to 0 \quad  \Rightarrow \quad a_nx-\beta_{\psi, \varphi}(x)a_n \to 0 \; \ast\text{-strongly}. $$

 \item [$(\rm ii)$] There exists a canonical flow $$\beta^{\varphi}: \R^*_+ \curvearrowright \Bic(N \subset M,\varphi)$$ characterized by the following property: for any uniformly bounded sequence $(a_n)_{n \in \N}$ in $N$, any $x \in \rB(N \subset M,\varphi)$ and any $\lambda > 0$, we have 
$$  \|a_n \varphi -\lambda\varphi a_n\| \to 0 \quad  \Rightarrow \quad a_nx-\beta_\lambda^{\varphi}(x)a_n \to 0 \; \ast\text{-strongly}.$$

\item [$(\rm iii)$] We have $\beta_{\varphi_3,\varphi_2} \circ \beta_{\varphi_2, \varphi_1}=\beta_{\varphi_3, \varphi_1}$ for every faithful state $\varphi_i \in N_*$, $i \in \{1,2,3\}$, and $\beta_\lambda^{\psi} \circ \beta_{\psi, \varphi} = \beta_{\psi, \varphi} \circ \beta_\lambda^{\varphi}$ for every pair of faithful states $\psi, \varphi \in N_*$ and every $\lambda > 0$.

\item [$(\rm iv)$] For every pair of faithful states $\psi, \varphi \in N_*$ and every $\lambda > 0$, we have
$$\rE_{N' \cap M}^{\psi} \circ \beta_{\psi, \varphi} = \rE_{N' \cap M}^{\varphi}= \rE_{N' \cap M}^{\varphi} \circ \beta_{\lambda}^{\varphi} $$
where $\rE_{N' \cap M}^{\varphi} : M \rightarrow N' \cap M$ is the unique normal conditional expectation such that $\rE_{N' \cap M}^{\varphi}(x)=\varphi(x)1$ for all $x \in N$.

 \end{itemize}
 \end{letterthm}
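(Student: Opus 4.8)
The plan is to construct $\beta_{\psi,\varphi}$ and $\beta^{\varphi}$ by conjugating with suitable \emph{asymptotic intertwiners} living in $N$, whose very existence is where the hypothesis that $N$ is a type $\III_1$ factor is used. The key input I would isolate first is: for all faithful states $\varphi,\psi\in N_*$ and every $\lambda>0$ there is a bounded sequence $(a_n)_n$ in $N$ with $\|a_n\varphi-\lambda\psi a_n\|\to 0$ and $a_n^*a_n\to 1$ $\ast$-strongly; for $\lambda=1$ the $a_n$ may be taken unitary (norm density of the unitary orbit of a faithful state on a type $\III_1$ factor, Connes--St\o rmer), and for $\lambda\neq 1$ one may take them to be isometries or coisometries according as $\lambda$ exceeds or is less than $1$. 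This is precisely the statement that the point spectrum of the ``modular scaling'' is all of $\R^*_+$, i.e.\ the Connes type $\III_1$ condition; in ultrapower language such sequences provide an isometry $a\in N^\omega$ with $\sigma_t^{\varphi^\omega}(a)=\lambda^{it}a$. Two elementary computations I would record alongside: if $(a_n),(b_n)$ are two such intertwiners for the same $\varphi,\psi,\lambda$, then $(b_n^*a_n)\in\AC(N,\varphi)$; and composing an intertwiner $\varphi\to\psi$ at rate $\lambda$ with one $\psi\to\chi$ at rate $\mu$ gives an intertwiner $\varphi\to\chi$ at rate $\lambda\mu$.

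Given $x\in\rB(N\subset M,\varphi)$ and a unitary intertwiner $(u_n)$ in $N$ with $\|u_n\varphi-\psi u_n\|\to 0$, I would set $\beta_{\psi,\varphi}(x)$ to be the limit of $(u_nxu_n^*)_n$. The mechanism is the identity $u_mxu_m^*-u_nxu_n^*=u_m\,[x,u_m^*u_n]\,u_n^*$ combined with $(u_m^*u_n)_{m,n}\in\AC(N,\varphi)$ (a short computation from $\|u_n\varphi-\psi u_n\|\to 0$ and its adjoint $\|\varphi u_n^*-u_n^*\psi\|\to 0$) and $x\in\rB(N\subset M,\varphi)$: since $\rB(N\subset M,\varphi)$ is a von Neumann subalgebra of $M$, the commutator tends to $0$ $\ast$-strongly, which forces $(u_nxu_n^*)$ to converge with limit in $M$. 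This limiting argument, which is where $x\in\rB(N\subset M,\varphi)$ enters essentially and which I regard as the technical heart of part (i), is cleanest to carry out inside the Ocneanu ultrapower $M^\omega$: there $u=(u_n)^\omega$ is a unitary in $N^\omega$ with $u\varphi^\omega u^*=\psi^\omega$, and one checks that $uxu^*$ is a constant (hence lies in $M$) using that $x$ commutes with the asymptotic centralizer of $\varphi$ in $N^\omega$. Granting this, the rest of (i) is routine: $\beta_{\psi,\varphi}$ is a normal unital $\ast$-homomorphism because $u_n(xy)u_n^*=(u_nxu_n^*)(u_nyu_n^*)$ exactly, its range lies in $\rB(N\subset M,\psi)$ because conjugation by $u_n$ carries $\AC(N,\psi)$ asymptotically into $\AC(N,\varphi)$, its inverse is $\beta_{\varphi,\psi}$, and the characterizing property (hence uniqueness of $\beta_{\psi,\varphi}(x)$) for an arbitrary bounded $(a_n)$ with $\|a_n\varphi-\psi a_n\|\to 0$ follows by applying $x\in\rB(N\subset M,\varphi)$ to $(u_n^*a_n)\in\AC(N,\varphi)$. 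The first half of (iii) then comes from composing unitary intertwiners together with a diagonal argument to exchange the two resulting limits.

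For the flow (ii) I would run the same construction with an intertwiner $(a_n)$ in $N$ satisfying $\|a_n\varphi-\lambda\varphi a_n\|\to 0$ and $a_n^*a_n\to 1$ $\ast$-strongly, defining $\beta_\lambda^{\varphi}(x)$ to be the unique $y\in M$ with $a_nx-ya_n\to 0$ $\ast$-strongly. The genuine extra difficulty, which I expect to be the main obstacle, is that these $a_n$ \emph{cannot} be unitary when $\lambda\neq 1$ (evaluating $a_n\varphi\approx\lambda\varphi a_n$ at $a_n^*$ forces $\varphi(a_n^*a_n)\approx\lambda\varphi(a_na_n^*)$, so $a_na_n^*$ does not converge to $1$), which breaks the clean identity above: it now only controls $a_mxa_m^*$ after multiplication by the range projections $a_na_n^*$. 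I would handle this by amplifying and scaling: working inside $N\ovt\rB(\ell^2(\Z))$ equipped with the faithful normal semifinite weight $\varphi\ot\Tr_\lambda$ whose eigenvalue list is $(\lambda^n)_{n\in\Z}$, the bilateral shift implements a genuine \emph{unitary} intertwiner for the scaling by $\lambda$; one carries out the unitary construction there and cuts back down by a minimal projection of $\rB(\ell^2(\Z))$ to produce a well-defined automorphism $\beta_\lambda^{\varphi}$ of $\rB(N\subset M,\varphi)$. Composition of shifts gives $\beta_\lambda^{\varphi}\circ\beta_\mu^{\varphi}=\beta_{\lambda\mu}^{\varphi}$, so $\beta^{\varphi}$ is an action of $\R^*_+$; $u$-continuity follows since these intertwiners can be chosen to vary $\ast$-strongly continuously in $\lambda$. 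The commutation $\beta_\lambda^{\psi}\circ\beta_{\psi,\varphi}=\beta_{\psi,\varphi}\circ\beta_\lambda^{\varphi}$ in (iii) again follows by composing the two kinds of intertwiners in either order, with a diagonal argument.

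Finally, for (iv): every intertwiner $u_n$ (resp.\ $a_n$) lies in $N$ and so commutes with $N'\cap M$; hence $u_nxu_n^*=xu_nu_n^*\to x$ for $x\in N'\cap M$, i.e.\ $\beta_{\psi,\varphi}$ and $\beta_\lambda^{\varphi}$ \emph{restrict to the identity on $N'\cap M$}, so that $\rE_{N'\cap M}^{\psi}\circ\beta_{\psi,\varphi}$ and $\rE_{N'\cap M}^{\varphi}\circ\beta_\lambda^{\varphi}$ are normal conditional expectations of $\rB(N\subset M,\varphi)$ onto $N'\cap M$. To identify them with $\rE_{N'\cap M}^{\varphi}|_{\rB(N\subset M,\varphi)}$ I would first establish the compatibility of $\beta_{\psi,\varphi}$ with the canonical states: since $\|u_n^*\psi u_n-\varphi\|=\|\psi u_n-u_n\varphi\|\to 0$ and $\psi\circ\rE_N$ is normal, a direct computation gives $(\psi\circ\rE_N)\circ\beta_{\psi,\varphi}=(\varphi\circ\rE_N)|_{\rB(N\subset M,\varphi)}$, and likewise $(\varphi\circ\rE_N)\circ\beta_\lambda^{\varphi}=(\varphi\circ\rE_N)|_{\rB(N\subset M,\varphi)}$. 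Using that $\rB(N\subset M,\varphi)$ and $N'\cap M$ are globally invariant under the modular flow of $(\varphi\circ\rE_N)|_{\rB(N\subset M,\varphi)}$ (because $\AC(N,\varphi)$ is stable under $\sigma^{\varphi}$ and $N$ is a factor), Takesaki's theorem guarantees a \emph{unique} $(\varphi\circ\rE_N)$-preserving normal conditional expectation $\rB(N\subset M,\varphi)\to N'\cap M$; combining the displayed state identities with the equality $(\varphi\circ\rE_N)|_{N'\cap M}=(\psi\circ\rE_N)|_{N'\cap M}$ (valid since $N$ is a factor) and the characterization of $\rE_{N'\cap M}^{\varphi}$ recalled in the statement, one checks both maps coincide with this unique expectation. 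When $N=M$ this reduces to the single statement $\psi\circ\beta_{\psi,\varphi}=\varphi$ on $\rB(M,\varphi)$, which is already contained in the displayed state identity.
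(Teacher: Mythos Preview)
Your treatment of (i), (iii) and (iv) is correct and close to the paper's. For (i) you show that $(u_nxu_n^*)_n$ is $\ast$-strongly Cauchy via the identity $u_mxu_m^*-u_nxu_n^*=u_m[x,u_m^*u_n]u_n^*$ and the fact that $(u_m^*u_n)$ asymptotically centralizes $\varphi$; this works. The paper reaches the same conclusion by a different device: it picks two ultrafilters $\omega_1,\omega_2$, notes that $v^*u\in N^{\omega_2\otimes\omega_1}_{\varphi^{\omega_2\otimes\omega_1}}$ commutes with $x$ so that $uxu^*=vxv^*$, and invokes $M^{\omega_1}\cap M^{\omega_2}=M$ inside the iterated ultrapower. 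One small point: your aside that in $M^\omega$ ``$uxu^*$ is a constant'' is not a consequence of commutation with $N^\omega_{\varphi^\omega}$ alone (that only gives independence of $u$ for a fixed $\omega$); it is really your Cauchy computation, or the paper's two-ultrafilter trick, that places $uxu^*$ in $M$.

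The genuine gap is your construction of the flow in (ii). The bilateral shift $1\otimes S$ on $N\ovt\rB(\ell^2(\Z))$ commutes with $x\otimes 1$, so $(1\otimes S)(x\otimes 1)(1\otimes S)^*=x\otimes 1$ and cutting down by a minimal projection returns $x$: the amplification produces the identity map, not $\beta_\lambda^\varphi$. Moreover $\varphi\otimes\Tr_\lambda$ is a weight rather than a state, so neither the bicentralizer nor Connes--St\o rmer is available in that picture; and your continuity claim (intertwiners varying $\ast$-strongly continuously in $\lambda$) is unsupported. The paper solves the non-unitarity problem differently: working in $N^\omega$, where $N^\omega_{\varphi^\omega}$ is a type $\II_1$ factor, it finds for each $\lambda$ a \emph{finite family} of partial isometries $v_1,\dots,v_n\in N^\omega$ with $v_k\varphi^\omega=\lambda\varphi^\omega v_k$ and $\sum_k v_kv_k^*=1$, and sets $\beta_\lambda^\varphi(x)=\sum_k v_kxv_k^*$; independence of the family follows from $v_k^*w_l\in N^\omega_{\varphi^\omega}$, and the two-ultrafilter trick lands the result in $M$. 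Continuity is then obtained by a further iterated-ultrapower argument. If you want to salvage your sequential approach, a correct substitute for the amplification is to use, for $\lambda\le 1$, a single co-isometry in $N^\omega$ with $a\varphi^\omega=\lambda\varphi^\omega a$ (so $aa^*=1$), run your Cauchy argument with its representing sequence, and define $\beta_\lambda^\varphi$ for $\lambda>1$ as $(\beta_{\lambda^{-1}}^\varphi)^{-1}$.
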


The proof of Theorem \ref{thm: Connes' isomorphism and bicentralizer flow} uses ultraproduct von Neumann algebras \cite{Oc85, AH12} and relies on Connes--St\o rmer transitivity theorem \cite{CS76} and the fact that any $\lambda > 0$ is an approximate eigenvalue for the faithful state $\varphi \in N_{\ast}$ (see Lemma \ref{lem: family of partial isometries}). 

The meaning of the compatibility relations given in item $(\rm iii)$ is that the W$^*$-dynamical system $(\rB(N \subset M, \varphi), \beta^\varphi)$ does not depend on the choice of $\varphi \in N_\ast$ up to the canonical isomorphism $\beta_{\psi, \varphi}$. Thus, $(\rB(N \subset M, \varphi), \beta^\varphi)$ is an invariant of the inclusion $N \subset M$. We call it the \emph{relative bicentralizer flow} of the inclusion $N \subset M$. When $N=M$, we simply call it the \emph{bicentralizer flow} of $M$. In this paper, we study this invariant and we relate it to some structural properties of the inclusion $N \subset M$.

\subsection*{Self-bicentralizing factors}
For any von Neumann algebra $M$ and any faithful state $\varphi \in M_{\ast}$, we have $\rB(\rB(M,\varphi), \varphi|_{\rB(M,\varphi)})=\rB(M,\varphi)$ and if $M$ is a type $\III_1$ factor then $\rB(M,\varphi)$ is either trivial or a type $\III_1$ factor. Therefore, the bicentralizer problem reduces to the following question: does there exist a type $\III_1$ factor $M$ which satisfies $M=\rB(M,\varphi)$ for some faithful state $\varphi \in M_{\ast}$? We call such a state $\varphi$ a \emph{bicentralizing} state on $M$. If such a factor exists, by \cite{Ha85}, it must be nonamenable and by \cite{HI15}, it must be McDuff, that is, $M\cong M \ovt R$ where $R$ is the hyperfinite type $\II_1$ factor. In our next result, we use Theorem \ref{thm: Connes' isomorphism and bicentralizer flow} to further understand the structure of these mysterious self-bicentralizing type $\III_1$ factors. We show that the bicentralizing state is unique up to conjugacy by a unique approximately inner automorphism. We also show that their automorphism group splits as a semi-direct product. Finally, we relate the period of the bicentralizer flow to the tensorial absorption of Powers factors.

\begin{letterthm} \label{thm: self-bicentralizing}
Let $M$ be any type $\III_1$ factor with separable predual and with a bicentralizing state $\varphi \in M_{\ast}$. Then the following properties hold:

\begin{itemize}
\item [$(\rm i)$] Let $\Delta(M)$ be the set of all bicentralizing states of $M$. Then the map 
$$\overline{\Inn}(M) \ni \alpha \mapsto \alpha(\varphi) \in \Delta(M)$$ is a homeomorphism and its inverse is given by
$$ \Delta(M) \ni \psi \mapsto \beta_{\psi, \varphi} \in \overline{\Inn}(M).$$
 \item [$(\rm ii)$] Define $\Aut_\varphi(M) = \{\alpha \in \Aut(M) \mid \alpha(\varphi) = \varphi\}$ and consider the conjugation action $\Aut_\varphi(M) \curvearrowright \overline{\Inn}(M)$. Then the natural homomorphism
$$ \iota : \overline{\Inn}(M) \rtimes \Aut_{\varphi}(M) \ni (g,h)\mapsto g \circ h  \in \Aut(M)$$
is an isomorphism of topological groups. In particular, $\sigma_t^{\varphi} \notin \overline{\Inn}(M)$ for all $t \neq 0$.
\item [$(\rm iii)$] For every $0<\lambda<1$, we have 
$$ M\cong M\overline{\otimes}R_{\lambda} \; \Leftrightarrow \; \beta_\lambda^{\varphi}=\id \; \Leftrightarrow  \; \beta_\lambda^{\varphi} \in \overline{\Inn}(M).$$ 
In particular, we have $M\cong M\overline{\otimes}R_{\infty}$ if and only if the bicentralizer flow $\beta^{\varphi}$ is trivial.
 \item [$(\rm iv)$]  For every $\lambda > 0$, the automorphism $\beta_\lambda^{\varphi} \odot \id$ of $M \odot M^{\op}$ extends to the $\rC^{*}$-algebra $\rC^{*}_{\lambda \cdot \rho}(M)$ generated by the standard representation of $M \odot M^{\op}$ on $\rL^{2}(M)$.
\end{itemize}
\end{letterthm}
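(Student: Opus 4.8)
\emph{The common engine.} All four parts hinge on one fact, valid precisely because $\varphi$ is bicentralizing: $\overline{\Inn}(M)\cap\Aut_\varphi(M)=\{\id\}$. Indeed, any $\alpha$ in this intersection is a $\ast$-strong limit of $\Ad(u_n)$ for unitaries $(u_n)_n\in\AC(M,\varphi)$ (a standard consequence of $\alpha\in\overline{\Inn}(M)$ and $\alpha(\varphi)=\varphi$: the $\varphi$-preserving approximately inner automorphisms are limits of $\varphi$-asymptotically inner ones); since $\rB(M,\varphi)=M$, every $x\in M$ satisfies $xu_n-u_nx\to0$ strongly, hence $\Ad(u_n)(x)\to x$ $\ast$-strongly and $\alpha(x)=x$, i.e.\ $\alpha=\id$. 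I will also use the naturality $\rB(M,\alpha(\psi))=\alpha(\rB(M,\psi))$ for $\alpha\in\Aut(M)$ (so $\alpha(\varphi)\in\Delta(M)$ for every $\alpha\in\Aut(M)$, as $\rB(M,\alpha(\varphi))=\alpha(M)=M$), together with two consequences of Theorem~\ref{thm: Connes' isomorphism and bicentralizer flow}(iv) read off for $N=M$, $N'\cap M=\C 1$: that $\psi\circ\beta_{\psi,\varphi}=\varphi$ on $M$ (equivalently $\beta_{\psi,\varphi}(\varphi)=\psi$ as states), and that $\varphi\circ\beta_\lambda^\varphi=\varphi$, i.e.\ $\beta_\lambda^\varphi\in\Aut_\varphi(M)$.

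\emph{Parts (i) and (ii).} Set $F:\overline{\Inn}(M)\to\Delta(M)$, $\alpha\mapsto\alpha(\varphi)$, and $G:\Delta(M)\to\overline{\Inn}(M)$, $\psi\mapsto\beta_{\psi,\varphi}$. First, $G$ is well defined: by Connes--St\o rmer transitivity (available since $M$ is of type $\III_1$) there are unitaries $u_n\in M$ with $\|u_n\varphi-\psi u_n\|\to0$, and then Theorem~\ref{thm: Connes' isomorphism and bicentralizer flow}(i) gives $\Ad(u_n)\to\beta_{\psi,\varphi}$ pointwise $\ast$-strongly, so $\beta_{\psi,\varphi}\in\overline{\Inn}(M)$. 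We have $F\circ G=\id$ because $\beta_{\psi,\varphi}(\varphi)=\psi$, and $F$ is injective because $\alpha_1(\varphi)=\alpha_2(\varphi)$ forces $\alpha_2^{-1}\alpha_1\in\overline{\Inn}(M)\cap\Aut_\varphi(M)=\{\id\}$; hence $G\circ F=\id$ as well, proving the bijection part of (i). The homeomorphism statement requires in addition the continuity of $\psi\mapsto\beta_{\psi,\varphi}$ in the state, obtained from the construction of $\beta$ (equivalently, a quantitative form of Connes--St\o rmer transitivity). For (ii): $\iota$ is a group homomorphism by the semidirect-product relations; it is injective since $\iota(g,h)=\id$ forces $g=h^{-1}\in\overline{\Inn}(M)\cap\Aut_\varphi(M)=\{\id\}$, and surjective since for $\alpha\in\Aut(M)$ one has $\alpha(\varphi)\in\Delta(M)$, so $g:=\beta_{\alpha(\varphi),\varphi}\in\overline{\Inn}(M)$ satisfies $g(\varphi)=\alpha(\varphi)$, whence $h:=g^{-1}\alpha\in\Aut_\varphi(M)$ and $\alpha=\iota(g,h)$. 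As $M$ has separable predual, $\Aut(M)$ is Polish with $\overline{\Inn}(M),\Aut_\varphi(M)$ closed subgroups, so $\iota$ is a continuous bijective homomorphism of Polish groups, hence a homeomorphism by the open mapping theorem. Finally $\sigma_t^\varphi\in\Aut_\varphi(M)$, so $\sigma_t^\varphi\in\overline{\Inn}(M)$ would force $\sigma_t^\varphi=\id$, impossible for $t\neq0$ since $M$ is of type $\III_1$.

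\emph{Part (iii).} The implication $\beta_\lambda^\varphi=\id\Rightarrow\beta_\lambda^\varphi\in\overline{\Inn}(M)$ is trivial, and $\beta_\lambda^\varphi\in\overline{\Inn}(M)\Rightarrow\beta_\lambda^\varphi=\id$ holds because $\beta_\lambda^\varphi\in\overline{\Inn}(M)\cap\Aut_\varphi(M)=\{\id\}$. If $M\cong M\ovt R_\lambda$, then by Theorem~\ref{thm: Connes' isomorphism and bicentralizer flow}(iii) it suffices to prove $\beta_\lambda^\psi=\id$ for one faithful state, so take $M=M_0\ovt R_\lambda$ and $\psi=\psi_0\otimes\omega_\lambda$; choosing co-isometries $W_n$ supported on the tensor factors of $R_\lambda$ beyond the $n$-th and built from matrix units chaining the eigenvalue $\lambda$, the elements $w_n=1\otimes W_n$ are asymptotically central in $M_0\ovt R_\lambda$, satisfy $w_n\psi-\lambda\psi w_n=0$ exactly and $w_nw_n^*\to1$ strongly; then Theorem~\ref{thm: Connes' isomorphism and bicentralizer flow}(ii) gives $w_nx-\beta_\lambda^\psi(x)w_n\to0$ while asymptotic centrality gives $w_nx-xw_n\to0$, and multiplying by $w_n^*$ yields $\beta_\lambda^\psi(x)=x$ for all $x\in\rB(M_0\ovt R_\lambda,\psi)$. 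Conversely, if $\beta_\lambda^\varphi=\id$, then using that $\lambda$ is an approximate eigenvalue of $\varphi$ together with Theorem~\ref{thm: Connes' isomorphism and bicentralizer flow}(ii) we get asymptotically central co-isometries $v_n\in M$ with $\|v_n\varphi-\lambda\varphi v_n\|\to0$ and $v_nx-xv_n\to0$ $\ast$-strongly for all $x$; in the Ocneanu ultrapower $M^\omega$, the element $v=(v_n)_\omega$ lies in $M'\cap M^\omega$ and is a non-unitary co-isometry with $\sigma_t^{\varphi^\omega}(v)=\lambda^{-it}v$, and from such an eigenoperator one produces, inside $M'\cap M^\omega$, a copy of the Powers factor $R_\lambda$ carrying its canonical state and sitting with normal expectation; a type $\III$ analogue of McDuff's absorption theorem then gives $M\cong M\ovt R_\lambda$. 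The last assertion follows since $M\cong M\ovt R_\infty$ is equivalent to $M\cong M\ovt R_\lambda$ for all $0<\lambda<1$ (using $R_\lambda\ovt R_\mu\cong R_\infty$ when $\log\lambda/\log\mu\notin\Q$), hence to $\beta_\lambda^\varphi=\id$ for all such $\lambda$, hence to triviality of the flow. I expect this converse implication — the type $\III$ McDuff absorption — to be the main technical obstacle of the theorem.

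\emph{Part (iv).} Since $\varphi\circ\beta_\lambda^\varphi=\varphi$, the automorphism $\beta_\lambda^\varphi$ is implemented on $\rL^2(M,\varphi)$ by a unitary $U$ commuting with $J$ and $\Delta_\varphi$; thus $\Ad(U)$ restricts to an automorphism of $\rC^*_{\lambda\cdot\rho}(M)$ implementing $\beta_\lambda^\varphi\odot\beta_\lambda^\varphi$. On the other hand, for $0<\lambda<1$ take the co-isometries $v_n\in M$ from Theorem~\ref{thm: Connes' isomorphism and bicentralizer flow}(ii) with $\|v_n\varphi-\lambda\varphi v_n\|\to0$, $v_nx-\beta_\lambda^\varphi(x)v_n\to0$ $\ast$-strongly and $v_nv_n^*=1$. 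Viewing $v_n$ inside the left representation $\pi_l(M)$ on $\rL^2(M)$, so that $v_n$ commutes exactly with the right representation $\pi_r(M^{\op})$, the unital completely positive contractions $\Ad(v_n)$ of $\rB(\rL^2 M)$ satisfy $\Ad(v_n)(\pi_l(x)\pi_r(y^{\op}))=\pi_l(v_nxv_n^*)\pi_r(y^{\op})\to\pi_l(\beta_\lambda^\varphi(x))\pi_r(y^{\op})$ $\ast$-strongly, hence converge pointwise $\ast$-strongly on $M\odot M^{\op}$ to $\beta_\lambda^\varphi\odot\id$; by lower semicontinuity of the operator norm under $\ast$-strong limits, $\|(\beta_\lambda^\varphi\odot\id)(z)\|_{\rC^*_{\lambda\cdot\rho}}\le\|z\|_{\rC^*_{\lambda\cdot\rho}}$, so $\beta_\lambda^\varphi\odot\id$ extends to a $\ast$-endomorphism $\tilde\theta$ of $\rC^*_{\lambda\cdot\rho}(M)$ with dense range. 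The analogous construction using co-isometries inside $\pi_r(M^{\op})$ gives a $\ast$-endomorphism $\tilde\theta'$ extending $\id\odot\beta_\lambda^\varphi$. Since $\tilde\theta\circ\tilde\theta'$ and $\tilde\theta'\circ\tilde\theta$ agree with $\Ad(U)$ on the dense $\ast$-subalgebra $\pi_l(M)\pi_r(M^{\op})$, both compositions are automorphisms, which forces $\tilde\theta$ to be an automorphism of $\rC^*_{\lambda\cdot\rho}(M)$. The case $\lambda>1$ reduces to the previous one via $\beta_\lambda^\varphi=(\beta_{1/\lambda}^\varphi)^{-1}$, and $\lambda=1$ is trivial.
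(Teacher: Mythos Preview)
Your proof is essentially correct and close in spirit to the paper's, but organized differently and with a couple of places worth flagging.

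\textbf{The common engine.} Your extraction of $\overline{\Inn}(M)\cap\Aut_\varphi(M)=\{\id\}$ as the driving principle is exactly right; the paper derives this implicitly as part of $(\rm i)$ (via the identity $\beta_{\alpha(\varphi),\varphi}=\alpha$ for $\alpha\in\overline{\Inn}(M)$, read off from the ultrapower implementation in Proposition~4.1) and then uses it for $(\rm ii)$. Your justification is fine once one observes that ``$\varphi$-preserving approximately inner $\Rightarrow$ limit of $\varphi$-asymptotically inner'' is precisely the equivalence $(\rm i)\Leftrightarrow(\rm ii)$ in the paper's Proposition~4.1.

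\textbf{Part $(\rm i)$, continuity of the inverse.} This is the one genuine gap. Your remark that it follows from ``the construction of $\beta$ (equivalently, a quantitative form of Connes--St\o rmer transitivity)'' is not a proof. The paper gives a specific argument: given $\psi_n\to\psi$ in $\Delta(M)$, one takes $u_n\in M^{\omega_0}$ with $u_n\varphi^{\omega_0}u_n^*=\psi_n^{\omega_0}$, forms $u=(u_n)^{\omega}\in M^{\omega\otimes\omega_0}$, and checks that $\Ad(u)=\beta_{\psi,\varphi}$ on $M$ using the iterated-ultrapower machinery. This double-ultrapower trick is the actual content here.

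\textbf{Part $(\rm iii)$.} Your forward direction is fine. For the converse you produce a non-unitary co-isometry $v\in M'\cap M^\omega$ with $v\varphi^\omega=\lambda\varphi^\omega v$ and then appeal to a ``type $\III$ McDuff absorption'', calling this the main technical obstacle. It is not: the paper simply observes that such a $v$ (or rather any nonzero partial isometry $v\in M'\cap M^\omega$ with $vv^*+v^*v=1$ and $v\varphi^\omega=\lambda\varphi^\omega v$) means $M$ has Araki's property $L_\lambda'$, and then invokes Araki's theorem \cite[Theorem~1.3]{Ar70} directly. No further construction inside $M'\cap M^\omega$ is needed.

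\textbf{Part $(\rm iv)$.} Here your route genuinely differs from the paper's. The paper applies its Proposition~4.1 $(\rm iv)\Rightarrow(\rm v)$: the partial isometries from Theorem~A$(\rm ii)$ in the ultrapower satisfy $\beta_\lambda^\varphi(x)v=vx$, and a direct estimate in $\rL^2(M^\omega)$ gives the norm equality. Your argument---compress by left co-isometries to get one norm inequality via lower semicontinuity, do the same on the right with $Jv_nJ$, and then compose with the $\Ad(U)$-implemented automorphism $\beta_\lambda^\varphi\odot\beta_\lambda^\varphi$ to force bijectivity---is correct and pleasantly elementary; it avoids the ultrapower Hilbert space computation at the cost of the extra composition step.
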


In Connes' strategy to prove the uniqueness of the amenable type $\III_1$ factor \cite{Co85}, a crucial step was to show that $\sigma_t^{\varphi}\in \overline{\rm{Inn}}(M)$ for every $t \in \R$. The amenability of $M$ implies that $\rC^*_{\lambda . \rho}(M)=M \otimes_{\min} M^{\op}$ and so any automorphism of $M$ satisfies the property $(\rm iv)$ above. Connes conjectured \cite[Section IV]{Co85} that for any type $\III_1$ factor $M$, any automorphism satisfying the property $(\rm iv)$ above must be approximately inner. It is in trying to prove this conjecture that he encountered the bicentralizer problem. As we see from item $(\rm iii)$ and item $(\rm iv)$ above, if this conjecture is true then the bicentralizer flow $\beta^\varphi$ must be trivial.

\subsection*{Irreducible hyperfinite subfactors in inclusions of type $\III$ factors}
Let $N \subset M$ be any irreducible inclusion of factors with separable predual and with expectation. In \cite{Po81}, Popa proved that if $N$ is \emph{semifinite}, then there exists a hyperfinite subfactor with expectation $P \subset N$  such that $P' \cap M=\C1$. We extend this theorem to the case when $N$ is a type $\III_\lambda$ factor $(0 < \lambda < 1)$ in Theorem \ref{thm: hyperfinite subfactor III_lambda}. In the case when $N$ is a type $\III_1$ factor, we relate this question to the ergodicity of the relative bicentralizer flow.

\begin{letterthm} \label{thm: hyperfinite subfactor III_1}
Let $N \subset M$ be any inclusion of von Neumann algebras with separable predual and with expectation. Assume that $N$ is a type $\III_1$ factor. Let $\varphi \in N_*$ be any faithful state. The following assertions are equivalent:
\begin{itemize}
\item [$(\rm i)$] $\rB(N \subset M, \varphi)^{\beta^{\varphi}}=N' \cap M$.

\item [$(\rm ii)$]  There exists a hyperfinite subfactor with expectation $P \subset N$ such that $P' \cap M=N' \cap M$.
\end{itemize}
We can always choose $P = R_{\infty}$ to be the hyperfinite type $\III_1$ factor. 
\begin{itemize}
\item We can moreover choose $P = R_{\lambda}$ to be the hyperfinite type $\III_\lambda$ factor $(0 < \lambda < 1)$ if and only if $\rB(N \subset M, \varphi)^{\beta^{\varphi}_\lambda}=N' \cap M$.

\item We can moreover choose $P = R$ to be the hyperfinite type $\II_1$ factor if and only if $\rB(N \subset M, \varphi)=N' \cap M$. 
\end{itemize}

\end{letterthm}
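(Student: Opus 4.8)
The plan is to prove the equivalence (i) $\Leftrightarrow$ (ii), together with the three refinements, by constructing the desired hyperfinite subfactor $P$ as an increasing union of finite-dimensional subalgebras built inside $N$ using the approximate centralizer of $\varphi$, and by analyzing when the relative commutant of $P$ in $M$ collapses onto $N'\cap M$. The direction (ii) $\Rightarrow$ (i) should be the easy one: if $P\subset N$ is a hyperfinite subfactor with expectation and $P'\cap M = N'\cap M$, then since $P$ is hyperfinite of type $\III_1$ (or $\III_\lambda$, or $\II_1$) it has trivial bicentralizer relative to a suitable state, and one pushes this through the relative bicentralizer: any element $x\in \rB(N\subset M,\varphi)$ commutes asymptotically with $\AC(N,\varphi)\supset \AC(P,\varphi|_P)$, hence lies in $\rB(P\subset M,\varphi|_P)$; combining with the fixed-point/flow condition and the triviality of $\rB(P,\varphi|_P)$ together with Theorem A$(\rm iv)$ (the conditional expectation $\rE^\varphi_{N'\cap M}$ intertwines the flow), one gets $x\in P'\cap M = N'\cap M$. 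The flow-fixed version gives $P=R_\infty$; the $\beta^\varphi_\lambda$-fixed version gives $P=R_\lambda$; and $\rB(N\subset M,\varphi)=N'\cap M$ outright gives $P=R$.

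The substantial direction is (i) $\Rightarrow$ (ii). Here I would adapt Popa's intertwining/local-quantization technique from \cite{Po81} to the type $\III_1$ setting, working in the ultraproduct $N^\omega$. The idea is to build, by induction, an increasing sequence of matrix subalgebras $Q_1\subset Q_2\subset \cdots \subset N$ together with, at stage $k$, a system of matrix units that is $\varepsilon_k$-close (in $\|\cdot\|_{\varphi}$) to being fixed by $\sigma^\varphi$ — i.e. almost contained in the centralizer $N_\varphi$ — and such that the relative commutant $Q_k'\cap M$ is getting squeezed toward $N'\cap M$. At each step one must enlarge $Q_k$ so as to kill the extra part of $Q_k'\cap M$ that does not already lie in $N'\cap M$: the hypothesis (i), rephrased via Theorem A$(\rm ii)$, says precisely that no nontrivial element of $M$ survives commutation with \emph{all} asymptotically-$\varphi$-central sequences in $N$ while also being $\beta^\varphi$-fixed, so the obstruction to continuing the induction can always be reduced. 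The hyperfinite type $\III_1$ factor $P=R_\infty$ is then obtained as (the weak closure of) the union of the $Q_k$ together with a single extra generator implementing a $\sigma^\varphi$-eigenvalue $\lambda$ of infinite order (produced by Lemma 1.? / the fact that every $\lambda>0$ is an approximate eigenvalue of $\varphi$, cited after Theorem A), arranged to have trivial relative commutant against $N'\cap M$; similarly one uses a rational $\lambda$ for $R_\lambda$ and only centralizer elements for $R$.

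The main obstacle, and the place where the bicentralizer flow genuinely enters, is controlling the relative commutant during the inductive construction: one needs a quantitative lemma saying that if $x\in M$ \emph{almost} commutes with a large finite-dimensional $\varphi$-centralizing subalgebra $Q_k\subset N$ and is almost fixed by $\beta^\varphi$, then $x$ is close to $\rE^\varphi_{N'\cap M}(x)$. This is an asymptotic/approximate version of the defining property of $\rB(N\subset M,\varphi)^{\beta^\varphi}$, and its proof should go through a reindexation argument in the ultraproduct: pass to $N^\omega$, use Connes–Størmer transitivity \cite{CS76} (exactly as in the proof of Theorem A) to upgrade the approximate centrality into an honest element of $\AC(N,\varphi)$, apply hypothesis (i) to conclude the limit element lies in $N'\cap M$, and then transfer back to a finite-stage estimate by a standard $\omega$-limit extraction. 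Getting the estimates uniform enough to feed the induction — and, for the $R_\lambda$ and $R$ refinements, making sure the constructed subalgebra has exactly the prescribed type rather than merely containing a copy of it — is the delicate bookkeeping that the full proof will have to carry out.
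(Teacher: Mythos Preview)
Your overall framework (Popa-style induction producing an AFD tower inside $N$, working through the ultrapower, controlling the relative commutant along the way) is the right one, and agrees with the paper. But the key lemma you isolate is stated backwards, and this is a genuine gap.

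You propose: \emph{if $x$ almost commutes with a large $\varphi$-centralizing finite-dimensional $Q_k\subset N$ and is almost $\beta^\varphi$-fixed, then $x$ is close to $N'\cap M$.} The problem is that in the inductive construction the element you must kill at stage $k$ is some $x\in Q_k'\cap M$ with $\rE^\varphi_{N'\cap M}(x)=0$; such an $x$ has \emph{no reason} to be approximately $\beta^\varphi$-fixed, so your lemma says nothing about it. What the paper actually proves is the contrapositive shape: given any $x$ with $\rE^\varphi_{N'\cap M}(x)=0$, one can \emph{produce} a finite-dimensional subfactor $F\subset N$ (with a prescribed state) such that $\|\rE_{F'\cap M}(x)\|_\varphi\le (1-c)\|x\|_\varphi$ for an absolute $c>0$. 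This is Lemma~\ref{induction_step}, and its proof is where hypothesis $(\rm i)$ really enters, via a mechanism you do not describe: one splits $x=y+z$ with $z=\rE^\varphi_{\rB(N\subset M,\varphi)}(x)$; the part $y$ (orthogonal to the relative bicentralizer) is handled by a unitary in the type $\II_1$ factor $N^\omega_{\varphi^\omega}$ exactly as in Popa; the part $z$ lies in $\rB(N\subset M,\varphi)$ with $\rE^\varphi_{N'\cap M}(z)=0$, so by $(\rm i)$ it is \emph{not} $\beta^\varphi$-fixed, hence $\|z-\beta^\varphi_\lambda(z)\|_\varphi\ge\|z\|_\varphi$ for some $\lambda$ in a prescribed subgroup. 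The passage from this flow displacement to an honest norm reduction under $\rE_{F'\cap M}$ uses a concrete device you are missing: a family $(a_n)$ in a copy of $R_\mu\ovt R_\nu$ (built inside $N^\omega$ via Lemma~\ref{lem: R_lambda}) with $a_n\varphi^\omega=\lambda\varphi^\omega a_n$, $\sum a_na_n^*=1$, $\sum a_n^*a_n=\lambda$, which gives $\|z-\rE_{P'\cap M^\omega}(z)\|_{\varphi^\omega}=\|\beta^\varphi_\lambda(z)-\rE_{P'\cap M^\omega}(z)\|_{\varphi^\omega}$ and hence the reduction. So the flow hypothesis is used to \emph{move} the bicentralizer part of $x$, not to certify that $x$ is close to being fixed.

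Two smaller points. First, the type control is not done by adjoining ``a single extra generator with eigenvalue $\lambda$ of infinite order''; rather, each finite-dimensional block $F$ is built from the start in the form $(\mathbf M_2,\tau)^{\otimes p}\otimes(\mathbf M_2,\omega_\mu)^{\otimes q}\otimes(\mathbf M_2,\omega_\nu)^{\otimes r}$, with $q,r\ge 1$ forcing type $\III_1$, $r=0$ forcing type $\III_\mu$, and $q=r=0$ forcing type $\II_1$ (and this is exactly where the weaker hypotheses $\rB^{\beta^\varphi_\mu}=N'\cap M$ or $\rB=N'\cap M$ suffice). Second, the construction requires controlling $\|\varphi-\varphi\circ\rE_{R_n'\cap M}\|$ summably so that the limit state is faithful and the resulting $P$ sits with expectation in $M$; this bookkeeping is essential and should be made explicit.
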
 

The proof of Theorem \ref{thm: hyperfinite subfactor III_1} generalizes the methods developed by Popa in \cite[Theorem 3.2]{Po81} and Haagerup in \cite[Theorem 3.1]{Ha85}. 

Let us mention that by \cite[Theorem 2]{Po83} (see also \cite[Theorem 5.1]{Lo83} for the infinite case), any factor $M$ with separable predual possesses an irreducible hyperfinite subfactor that is typically {\em not} the range of a normal expectation. In contrast, constructing an irreducible hyperfinite subfactor that is  the range of a normal expectation is a more subtle problem. In \cite[Corollaries 1.3 and 3.4]{Po85}, Popa shows that any type ${\rm III_{\lambda}}$ factor $M$ with separable predual with $0 \leq \lambda < 1$ possesses an irreducible hyperfinite subfactor with expectation. Our Theorem \ref{thm: hyperfinite subfactor III_1} shows in particular that a type ${\rm III_{1}}$ factor $M$ with separable predual possesses an irreducible hyperfinite subfactor with expectation if and only if the bicentralizer flow is ergodic.

Following \cite{Co72, Co74}, a $\sigma$-finite von Neumann algebra $Q$ is {\em almost periodic} if $Q$ possesses an {\em almost periodic} state, that is, a faithful normal state for which the corresponding modular operator is diagonalizable. By \cite{Co72, Co74}, any $\sigma$-finite type ${\rm III_{\lambda}}$ factor with $0 \leq \lambda < 1$ is almost periodic. When $N \subset M$ is an irreducible inclusion of factors with separable predual and with expectation, a sufficient condition for the relative bicentralizer flow $\beta^{\varphi} : \R^{*}_{+} \curvearrowright \rB(N \subset M, \varphi)$ to be ergodic is the existence of an almost periodic subfactor with expectation $Q \subset N$  such that $Q' \cap M = \C 1$. Using Theorem \ref{thm: hyperfinite subfactor III_1}, we derive the following application that gives a partial solution to \cite[Problem 4]{Po85}.

\begin{letterapp}\label{app-almost-periodic}
Let $M$ be any ${\rm III_{1}}$ factor with separable predual. Assume that there exists an irreducible almost periodic subfactor with expectation $Q \subset M$.

Then there exists an irreducible hyperfinite subfactor with expectation $P \subset M$.
\end{letterapp}

We point out that it is unclear whether we can choose $P$ as a subfactor of $Q$. We can do so if $Q$ possesses an almost periodic faithful state $\varphi \in Q_\ast$ such that its centralizer $Q_\varphi$ is a type ${\rm II_1}$ factor (see Theorem \ref{thm: hyperfinite subfactor III_lambda}). However, when $Q$ is a type ${\rm III_0}$ factor, no such almost periodic state exists on $Q$ and so we really need to exploit the ergodicity of the relative bicentralizer flow to construct the hyperfinite subfactor $P \subset N$.

A sufficient condition for an inclusion of factors $N \subset M$ to be irreducible is the existence of an abelian subalgebra $A \subset N$ that is maximal abelian in $M$. One of Kadison's well-known problems in \cite{Ka67} asks whether the converse is true. We will say that an irreducible inclusion of factors with expectation $N \subset M$ satisfies {\em Kadison's property} if there exists an abelian subalgebra with expectation $A \subset N$ that is maximal abelian in $M$. Popa proved in \cite[Theorem 3.2]{Po81} that any irreducible inclusion $N \subset M$ with separable predual and with expectation such that $N$ is semifinite satisfies Kadison's property. 

Combining Theorem \ref{thm: hyperfinite subfactor III_1} with \cite[Theorem 3.2]{Po81}, we obtain the following characterization:

\begin{lettercor} \label{kadison bicentralizer}
Let $N \subset M$ be any irreducible inclusion of factors with separable predual and with expectation. Assume that $N$ is a type $\III_1$ factor. Then the following assertions are equivalent:
\begin{itemize}
\item [$(\rm i)$] $\rB(N \subset M, \varphi)=\C1$ for some (or any) faithful state $\varphi \in N_*$.

\item [$(\rm ii)$] The inclusion $N \subset M$ satisfies Kadison's property.
\end{itemize}
\end{lettercor}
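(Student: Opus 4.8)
The plan is to derive the corollary by combining Theorem \ref{thm: hyperfinite subfactor III_1} (in its strongest form, with $P = R$ the hyperfinite type $\II_1$ factor) with Popa's Theorem 3.2 in \cite{Po81}. Recall that Theorem \ref{thm: hyperfinite subfactor III_1} tells us that $\rB(N \subset M, \varphi) = N' \cap M$ holds if and only if there is a hyperfinite type $\II_1$ subfactor with expectation $P = R \subset N$ with $P' \cap M = N' \cap M$. Since the inclusion $N \subset M$ is irreducible, $N' \cap M = \C 1$, so the condition $\rB(N \subset M, \varphi) = \C 1$ is equivalent to the existence of an irreducible hyperfinite type $\II_1$ subfactor with expectation $R \subset N$ (with $R' \cap M = \C 1$). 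Note that, by irreducibility and the observation $N' \cap M \subset \rB(N \subset M, \varphi) \subset (N_\varphi)' \cap M$ from the introduction, the value $\rB(N \subset M, \varphi) = \C 1$ does not depend on $\varphi$; so "for some" and "for any" faithful state $\varphi \in N_\ast$ are equivalent, which handles that parenthetical clause.

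For the implication $(\rm i) \Rightarrow (\rm ii)$: assume $\rB(N \subset M, \varphi) = \C 1$. By Theorem \ref{thm: hyperfinite subfactor III_1} there is an irreducible hyperfinite type $\II_1$ subfactor with expectation $R \subset N$, i.e. $R' \cap M = \C 1$ and there is a faithful normal conditional expectation $M \to R$. In particular $R' \cap N = \C 1$, so $R \subset N$ is an irreducible inclusion where the smaller algebra is semifinite (indeed finite). Popa's Theorem 3.2 in \cite{Po81} then applies to the inclusion $R \subset M$: since $R$ is semifinite and $R \subset M$ is irreducible with expectation, there exists an abelian subalgebra $A \subset R$ with expectation that is maximal abelian in $M$. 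As $A \subset R \subset N$ and $A$ is the range of a normal conditional expectation on $M$, the inclusion $N \subset M$ satisfies Kadison's property. (One should double-check that the conditional expectations compose correctly: a normal expectation $M \to R$ followed by one $R \to A$ gives a normal expectation $M \to A$, and separable predual is inherited by all subalgebras involved, so the hypotheses of \cite[Theorem 3.2]{Po81} are met.)

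For the converse $(\rm ii) \Rightarrow (\rm i)$: suppose $A \subset N$ is an abelian subalgebra with expectation that is maximal abelian in $M$. Then $A$ is in particular maximal abelian in $N$, so $A = A' \cap N$ and $A$ is a MASA in $N$ with expectation. Since $N$ is a type $\III_1$ factor, $A$ is necessarily diffuse; as $A \subset N_\varphi$ is false in general, we should instead argue directly: since $A$ is a MASA in $M$ with expectation, \cite[Theorem 3.1]{Ha85} (already invoked in the introduction) shows that having such a MASA for $M$ forces triviality of the bicentralizer of $M$—but here we want the \emph{relative} bicentralizer of $N \subset M$. The cleaner route is via the characterization in the introduction: by \cite[Theorem 3.1]{Ha85} applied to $N$, the existence of a MASA $A \subset N$ with expectation is equivalent to $\rB(N) = \C 1$; and since $A$ is moreover maximal abelian in $M$, we get $A' \cap M = A \subset N$, hence $\rB(N \subset M, \varphi) \subset (N_\varphi)' \cap M$ combined with a centralizer argument forces triviality. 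Actually the most transparent argument: choose $\varphi$ with $A \subset N_\varphi$ (possible after conjugating, since $A$ has expectation), then $\rB(N \subset M, \varphi) \subset A' \cap M = A$; but $\rB(N \subset M, \varphi)$ is globally invariant under the flow $\beta^\varphi$ which acts trivially on $A \subset N' \cap M$... one checks $\rB(N \subset M, \varphi) \cap N = \rB(N, \varphi) = \C 1$ by \cite{Ha85}, and $\rB(N \subset M, \varphi) \subset A \subset N$ then yields $\rB(N \subset M, \varphi) = \C 1$.

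The main obstacle is getting the converse direction $(\rm ii) \Rightarrow (\rm i)$ cleanly: one must carefully reduce the statement about the relative bicentralizer $\rB(N \subset M, \varphi)$ to the absolute bicentralizer $\rB(N, \varphi)$ of the type $\III_1$ factor $N$, using the containment $\rB(N \subset M, \varphi) \subset A' \cap M$ together with $A \subset N$ maximal abelian in $M$ (so $A' \cap M = A$), which pins $\rB(N \subset M, \varphi)$ inside $N$ and then inside $\rB(N, \varphi) = \C 1$ via Haagerup's theorem. Verifying that a bicentralizing state can be chosen compatibly with $A$ (i.e. arranging $A \subset N_\varphi$), and that this does not affect the triviality conclusion thanks to the $\varphi$-independence of the relative bicentralizer in the irreducible case, is the delicate bookkeeping. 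The forward direction $(\rm i) \Rightarrow (\rm ii)$ is essentially immediate once Theorem \ref{thm: hyperfinite subfactor III_1} is in hand, modulo checking composition of conditional expectations.
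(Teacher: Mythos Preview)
Your argument is correct in substance and follows the paper's approach: the paper's one-line proof invokes exactly Theorem~\ref{thm: hyperfinite subfactor III_1} and \cite[Theorem 3.2]{Po81}, and your treatment of $(\rm i)\Rightarrow(\rm ii)$ via the hyperfinite $\II_1$ subfactor $R\subset N$ with $R'\cap M=\C1$ followed by Popa's theorem applied to $R\subset M$ is precisely what is intended. For $(\rm ii)\Rightarrow(\rm i)$, your \emph{final} argument is also the right one: choose $\varphi$ with $A\subset N_\varphi$, deduce $\rB(N\subset M,\varphi)\subset (N_\varphi)'\cap M\subset A'\cap M=A\subset N$, hence $\rB(N\subset M,\varphi)=\rB(N\subset M,\varphi)\cap N=\rB(N,\varphi)$, and conclude $\rB(N,\varphi)=\C1$ since $A$ is a masa with expectation in $N$ (this last step uses \cite[Theorem 3.1]{Ha85}, which the paper takes as background from the introduction).

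Two points to fix. First, your justification that ``$\rB(N\subset M,\varphi)=\C1$ does not depend on $\varphi$'' is incorrect as written: the inclusion chain $N'\cap M\subset\rB(N\subset M,\varphi)\subset(N_\varphi)'\cap M$ says nothing about $\varphi$-independence. The independence is Theorem~\ref{thm: Connes' isomorphism and bicentralizer flow}(i): the isomorphism $\beta_{\psi,\varphi}$ shows $\rB(N\subset M,\varphi)\cong\rB(N\subset M,\psi)$, so one is trivial iff the other is. Second, the write-up of $(\rm ii)\Rightarrow(\rm i)$ reads as a sequence of abandoned attempts (``is false in general, we should instead argue directly'', ``Actually the most transparent argument'', the aside about $\beta^\varphi$ acting trivially on $A\subset N'\cap M$ --- which is wrong, since $A\not\subset N'\cap M$). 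Strip all of this and keep only the three-line argument above; that is a complete proof.
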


In the case when $N \subset M$ has finite index, Corollary \ref{kadison bicentralizer} follows from \cite[Theorem 4.2]{Po95}. In order to find new examples of inclusions $N \subset M$ that satisfy Kadison's property, we will prove a relative bicentralizer theorem for {\em discrete} inclusions. 

\subsection*{Relative bicentralizers of discrete inclusions}
Following \cite[Definition 1.1]{Po98}, we say that an inclusion of von Neumann algebras $Q \subset P$ satisfies the {\em weak relative Dixmier property} if for every $x \in P$, we have
$$\mathcal K_{Q}(x) \cap (Q' \cap P) \neq \emptyset$$
where $\mathcal K_{Q}(x) = \overline{\co}^{w} \left \{uxu^{*} \mid u \in \mathcal U(Q)\right \}$. Recall that $Q$ is amenable if and only if the inclusion $Q \subset \mathbf B(H)$ satisfies the weak relative Dixmier property for some (or any) unital faithful normal representation of $Q$ on $H$ (see \cite{Sc63}). In particular, any inclusion $Q \subset P$ with $Q$ amenable satisfies the weak relative Dixmier property. 

In \cite{Ha85}, a connection was established between the bicentralizer problem and the weak relative Dixmier property. Indeed, by \cite[Theorem 3.1]{Ha85}, a type $\III_1$ factor with separable predual has trivial bicentralizer if and only if the inclusion $M_\psi \subset M$ satisfies the weak relative Dixmier property for some (or any) dominant weight $\psi$ on $M$. This solved in particular the bicentralizer problem for amenable $M$. 

Following \cite[Definition 3.7]{ILP96}, an inclusion of von Neumann algebras $N \subset M$ with separable predual and with expectation is {\em discrete} if the inclusion $N \subset \langle M, N \rangle = (JNJ)' \cap \mathbf B(\rL^{2}(M))$ is with expectation (this is indeed equivalent to \cite[Definition 3.7]{ILP96} thanks to \cite[Theorem 6.6]{Ha77}).

\begin{example}
Here are some fundamental examples of discrete inclusions of factors with separable predual.
\begin{itemize}
\item [$(\rm i)$] The inclusion $N \otimes \C 1 \subset N \ovt Q$, where $N, Q$ are any factors.
\item [$(\rm ii)$] Any finite index inclusion $N \subset M$ (see \cite{Jo82, PP84, Ko85}).
\item [$(\rm iii)$] The inclusion $N \subset N \rtimes \Gamma$, where $\Gamma$ is any countable discrete group, $N$ is any factor and $\Gamma \curvearrowright N$ is any outer action (see \cite[Section 3]{ILP96}).
\item [$(\rm iv)$] The inclusion $M^{\mathbf G} \subset M$, where $\mathbf G$ is any compact second countable group, $M$ any factor and $\alpha : \mathbf G \curvearrowright M$ any {\em minimal} action, meaning that $\alpha$ is faithful and $(M^{\mathbf G})' \cap M = \C 1$ (see \cite[Section 3]{ILP96}).
\end{itemize}
\end{example}

For discrete inclusions of von Neumann algebras $N \subset M$ where $N$ is a type ${\rm III_{1}}$ factor, we obtain the following {\em relative bicentralizer theorem} that generalizes  Haagerup's result \cite[Theorem 3.1]{Ha85} (when $N = M$) and Popa's result \cite[Theorem 4.2]{Po95} (when $N \subset M$ has finite index). 

\begin{letterthm}\label{thm-relative-bicentralizer}
Let $N \subset M$ be any inclusion of von Neumann algebras with separable predual and with expectation. Assume that $N$ is a type $\III_1$ factor. Consider the following assertions:

\begin{itemize}
\item [$(\rm i)$] For some (or any) faithful state $\varphi \in N_{\ast}$, we have $\rB(N\subset M, \varphi) = N' \cap M$.

\item [$(\rm ii)$] For some (or any) dominant weight $\psi$ on $N$, we have $(N_{\psi})' \cap M = N' \cap M$ and the inclusion $N_{\psi} \subset M$ satisfies the weak relative Dixmier property.

\end{itemize}
Then $(\rm i) \Rightarrow (\rm ii)$. If the inclusion $N \subset M$ is discrete, then $(\rm ii) \Rightarrow (\rm i)$.
\end{letterthm}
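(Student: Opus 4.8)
The plan is to reduce the relative statement to Haagerup's absolute bicentralizer theorem via a crossed-product/discreteness argument, using the modular theory of the dominant weight as the bridge between the two items.

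\textbf{Step 1: The implication $(\rm i) \Rightarrow (\rm ii)$.} First I would recall that if $\psi$ is a dominant weight on $N$, then $N_\psi$ is a type ${\rm II}_\infty$ factor and the core $\core(N) = N \rtimes_{\sigma^\psi} \R \cong N_\psi \ovt \rB(\ell^2(\Z))$-type picture lets us relate the asymptotic centralizer $\AC(N, \varphi)$ to approximately-central sequences coming from $N_\psi$. The key point, already implicit in \cite[Theorem 3.1]{Ha85}, is that when $N$ is type $\III_1$, a bounded sequence $(a_n)$ in $N_\psi$ that is approximately central in $N_\psi$ gives rise, after suitable rescaling through the modular automorphism group, to a sequence in $\AC(N, \varphi)$ for a well-chosen faithful state $\varphi$ (one constructed from $\psi$ via a projection of finite weight). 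Then $\rB(N \subset M, \varphi) = N' \cap M$ forces any such sequence to asymptotically commute with every $x \in M$ modulo $N' \cap M$, which upon unpacking yields both $(N_\psi)' \cap M = N' \cap M$ and the weak relative Dixmier property for $N_\psi \subset M$ (the averaging $\mathcal K_{N_\psi}(x) \cap (N_\psi)' \cap M \neq \emptyset$ follows because approximately central sequences in the type ${\rm II}_\infty$ factor $N_\psi$ witness a Dixmier-type property after intersecting with the fixed point algebra). This direction does not use discreteness.

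\textbf{Step 2: The implication $(\rm ii) \Rightarrow (\rm i)$ under discreteness.} Here I would fix a faithful state $\varphi \in N_*$; by Theorem~\ref{thm: Connes' isomorphism and bicentralizer flow}(i) the choice does not matter, so I may take $\varphi$ adapted to $\psi$. Given $x \in \rB(N \subset M, \varphi)$, I want to show $x \in N' \cap M$. The strategy mimics Haagerup: using the weak relative Dixmier property for $N_\psi \subset M$, average $x$ over $\mathcal U(N_\psi)$ to land in $(N_\psi)' \cap M = N' \cap M$ — but one must check that the averaging does not move $x$ outside $\rB(N \subset M, \varphi)$ and that the limit equals $x$ itself. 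This last equality is where $x \in \rB(N \subset M, \varphi)$ is used: one produces, from the type $\III_1$ structure, approximately central sequences $(a_n) \in \AC(N, \varphi)$ that detect the difference between $x$ and its average, precisely as in \cite[Theorem 3.1]{Ha85}. \emph{This is where discreteness enters}: to run the averaging argument inside $M$ one needs the Jones-type basic construction $\langle M, N\rangle$ to carry an expectation onto $N$, so that $N_\psi$-central sequences in $M$ can be controlled — discreteness provides exactly the operator-valued weight needed to push the Dixmier averaging from $\langle M, N\rangle$ down to $M$ and to guarantee that the relevant conditional expectations (e.g. $\rE_{N' \cap M}^\varphi$ from Theorem~\ref{thm: Connes' isomorphism and bicentralizer flow}(iv)) interact well with the crossed-product decomposition.

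\textbf{Main obstacle.} The hard part will be Step 2: making the Dixmier averaging over $\mathcal U(N_\psi)$ converge \emph{back to $x$} rather than to some other element of $N' \cap M$. In the absolute case $N = M$ this is the technical heart of Haagerup's proof and relies on a delicate interplay between the two-sided asymptotic centralizer conditions and the semifiniteness of $M_\psi$; in the relative discrete case one additionally has to keep track of the conditional expectation $\rE_N : M \to N$ and the discreteness expectation simultaneously, ensuring compatibility with the modular flow $\sigma^{\psi \circ \rE_N}$. I expect that the right framework is to work in the ultraproduct $M^\omega$ and reformulate both items as statements about the relative commutant of $N^\omega \cap (N_\omega)'$-type algebras, where the discreteness hypothesis translates into a decomposition of $M^\omega$ over $N^\omega$ that makes the averaging argument functorial; the verification of that decomposition, and the control of the $\beta^\varphi$-equivariance along the way, is the step most likely to require genuinely new input beyond \cite{Ha85} and \cite{Po95}.
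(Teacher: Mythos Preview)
Your proposal misidentifies the mechanism on both sides.

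For $(\rm i)\Rightarrow(\rm ii)$, the paper does not argue directly via rescaled approximately-central sequences. It first invokes Theorem~\ref{thm: hyperfinite subfactor III_1} to produce a hyperfinite type $\II_1$ subfactor $P\subset N$ with expectation and $P'\cap M=N'\cap M$, chooses $\varphi$ with $P\subset N_\varphi$, and then passes to $N^\infty$ where the dominant weight is $\psi=\varphi\otimes\omega$. The weak relative Dixmier property is obtained because $Q=P\ovt A$ (with $A$ the diagonal of $\mathbf B(\rL^2(\R))$) is amenable, so $Q\subset M^\infty$ has the property; one then transfers it to $(N^\infty)_\psi$ via the identification $(N'\cap M)\vee (N^\infty)_\psi\cong (N'\cap M)\ovt (N^\infty)_\psi$. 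Your sketch of ``rescaling sequences from $N_\psi$ into $\AC(N,\varphi)$'' is not obviously wrong, but it is vague at the point that matters (how exactly do you deduce Dixmier from $\rB(N\subset M,\varphi)=N'\cap M$?) and it bypasses the machinery the paper actually builds.

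For $(\rm ii)\Rightarrow(\rm i)$, your plan has a genuine gap. Averaging $x\in\rB(N\subset M,\varphi)$ over $\mathcal U(N_\psi)$ lands you in $(N_\psi)'\cap M=N'\cap M$, but the weak Dixmier property only says the intersection $\mathcal K_{N_\psi}(x)\cap(N'\cap M)$ is nonempty---it gives you no handle on whether the point you reach equals $x$. Your ``main obstacle'' is not a technicality: it is the entire content of the implication, and nothing in the bicentralizer condition tells you that $u x u^*\approx x$ for $u\in\mathcal U(N_\psi)$, because unitaries in $N_\psi$ need not be approximately $\varphi$-central. The paper's argument runs in the opposite direction: it is a \emph{witness construction}, not an averaging. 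Assuming $(\rm ii)$, for any $x\in M$ with $\rE^\varphi_{N'\cap M}(x)=0$ one builds (Claims~\ref{claim1}--\ref{claim6}, following \cite[\S2]{Ha85}) a sequence of projections $p_n\in N$ with $\|p_n\xi_\varphi-\xi_\varphi p_n\|\to 0$ but $\|p_n\eta-\eta p_n\|$ bounded below, where $\eta=x\xi_\varphi$. If $x$ were in $\rB(N\subset M,\varphi)$ this would be a contradiction. Discreteness enters at exactly one place (Claim~\ref{claim2}): to approximate the vector $\eta\in\rL^2(M)$ by $x_n\xi_\varphi$ with $x_n\in M$, one constructs $T\in\langle M,N\rangle$ with $T\xi_\varphi=\eta$ and $Te_N=T$, and then uses the finite-index-like projections $p_n\nearrow 1$ in $N'\cap\langle M,N\rangle$ with $\widehat\rE_N(p_n)\in M$ (this is precisely discreteness) to set $x_n=\widehat\rE_N(p_nTe_N)$. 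This is a lifting of vectors to operators, not a transfer of Dixmier averaging; your description of the role of discreteness does not capture it.
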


Let us point out that Theorem \ref{thm-relative-bicentralizer} gives a positive answer to \cite[Further Remarks 4.9 (2)]{Po95}. Moreover, by combining Theorem \ref{thm-relative-bicentralizer} with Corollary \ref{kadison bicentralizer} and a generalization of Connes--Takesaki relative commutant theorem (see Theorem \ref{thm-relative-connes-takesaki}), we solve Kadison's problem for all discrete irreducible inclusions $N \subset M$ where $N = R_{\infty}$ is the hyperfinite type ${\rm III_{1}}$ factor. This answers a question raised in \cite[Problem 6]{HP17}.

\begin{lettercor}\label{cor-characterization}
Let $N \subset M$ be any discrete irreducible inclusion of factors with separable predual and with expectation. Assume that $N = R_{\infty}$ is the hyperfinite type ${\rm III_{1}}$ factor. Then the following assertions are equivalent:

\begin{itemize}

\item [$(\rm i)$] The inclusion of continuous cores $\core(N) \subset \core(M)$ is irreducible, i.e.\ $\core(N)' \cap \core(M) = \C 1$.

\item [$(\rm ii)$] The inclusion $N \subset M$ satisfies Kadison's property.
\end{itemize}
\end{lettercor}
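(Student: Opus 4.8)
The plan is to deduce Corollary \ref{cor-characterization} by combining Corollary \ref{kadison bicentralizer}, Theorem \ref{thm-relative-bicentralizer}, and the announced generalization of the Connes--Takesaki relative commutant theorem (Theorem \ref{thm-relative-connes-takesaki}). Since $N \subset M$ is discrete and irreducible with $N = R_\infty$ a type $\III_1$ factor, Corollary \ref{kadison bicentralizer} tells us that Kadison's property (assertion $(\rm ii)$) is equivalent to $\rB(N \subset M, \varphi) = \C 1$ for a faithful state $\varphi \in N_*$; because the inclusion is irreducible, $N' \cap M = \C 1$, so this is the same as $\rB(N \subset M, \varphi) = N' \cap M$. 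That is exactly assertion $(\rm i)$ of Theorem \ref{thm-relative-bicentralizer}. Since the inclusion $N \subset M$ is discrete, Theorem \ref{thm-relative-bicentralizer} gives the full equivalence $(\rm i) \Leftrightarrow (\rm ii)$ there, so Kadison's property is equivalent to: for some (or any) dominant weight $\psi$ on $N$, one has $(N_\psi)' \cap M = N' \cap M = \C 1$ and the inclusion $N_\psi \subset M$ satisfies the weak relative Dixmier property.

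The next step is to handle these two conditions on the dominant weight separately and show together they are equivalent to irreducibility of the core inclusion $\core(N) \subset \core(M)$. Recall that for a dominant weight $\psi$ on $N$, the centralizer $N_\psi$ is (up to isomorphism) the continuous core $\core(N)$, and more precisely $\core(N) = N_\psi$ sits inside $\core(M)$ in a way compatible with $N \subset M$ (after choosing $\psi$ to extend, via $\psi \circ \rE_N$, to a dominant weight on $M$; discreteness guarantees one can arrange dominant weights compatibly). First, I would argue that the weak relative Dixmier property for $N_\psi \subset M$ is automatic here: since $N = R_\infty$ is the hyperfinite type $\III_1$ factor, its core $N_\psi = \core(R_\infty)$ is the hyperfinite type $\II_\infty$ factor, hence amenable, and any inclusion with amenable smaller algebra satisfies the weak relative Dixmier property (as recalled in the excerpt, via \cite{Sc63}). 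So that condition costs nothing. Second, the condition $(N_\psi)' \cap M = \C 1$ needs to be upgraded to a statement about the full cores. Here is where Theorem \ref{thm-relative-connes-takesaki} enters: the relative Connes--Takesaki commutant theorem should say that $\core(N)' \cap \core(M)$ is generated by $(N_\psi)' \cap M$ together with the canonical one-parameter subgroup coming from $\rL(\R) \subset \core(N)$ (the dual-weight/trace direction), i.e.\ roughly $\core(N)' \cap \core(M) = ((N_\psi)' \cap M) \rtimes \text{(something)}$ or is computed from $(N_\psi)' \cap M$ via the modular flow. In particular $\core(N)' \cap \core(M) = \C 1$ if and only if $(N_\psi)' \cap M = \C 1$ and the relevant cohomological/fixed-point condition is trivial; but since $N' \cap M = \C 1$ already and $N_\psi \subset N \subset M$, the only content left is precisely $(N_\psi)' \cap M = \C 1$.

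Assembling: $(\rm ii)$ Kadison's property $\Leftrightarrow$ $\rB(N\subset M,\varphi) = N'\cap M$ (by Corollary \ref{kadison bicentralizer}, using irreducibility) $\Leftrightarrow$ $(N_\psi)' \cap M = N' \cap M$ and weak relative Dixmier for $N_\psi \subset M$ (by Theorem \ref{thm-relative-bicentralizer}, using discreteness) $\Leftrightarrow$ $(N_\psi)' \cap M = \C1$ (the Dixmier condition being free since $\core(R_\infty)$ is amenable, and $N'\cap M = \C1$) $\Leftrightarrow$ $\core(N)' \cap \core(M) = \C 1$ (by Theorem \ref{thm-relative-connes-takesaki}) $\Leftrightarrow$ $(\rm i)$. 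The main obstacle I expect is the careful bookkeeping in the last equivalence: one must pin down exactly how $\core(N)$ embeds in $\core(M)$ for a discrete inclusion (choosing the dominant weight on $M$ of the form $\psi \circ \rE_N$ so that $\core(N) = N_\psi \vee \rL(\R)$ and $\core(M) = M \vee \rL(\R)$ with a shared $\rL(\R)$), and then verify that the relative Connes--Takesaki theorem genuinely reduces $\core(N)' \cap \core(M) = \C1$ to $(N_\psi)' \cap M = \C1$ with no leftover contribution from the flow — this is plausible precisely because the trace-scaling flow acts on $(N_\psi)'\cap M$ and, for it to contribute nontrivially to the relative commutant of the cores, would require fixed points there beyond $\C1$, which is ruled out once $(N_\psi)'\cap M = \C1$. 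The amenability input (that $\core(R_\infty)$ is the hyperfinite $\II_\infty$ factor) is what makes the weak relative Dixmier hypothesis of Theorem \ref{thm-relative-bicentralizer} vacuous in this special case, and is the reason the statement can be phrased purely in terms of the core inclusion.
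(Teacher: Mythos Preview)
Your overall strategy is correct and matches the paper's for the hard direction $(\rm i)\Rightarrow(\rm ii)$: apply Theorem~\ref{thm-relative-connes-takesaki} to pass from $\core(N)'\cap\core(M)=\C1$ to $(N_\psi)'\cap M=\C1$, observe that the weak relative Dixmier property for $N_\psi\subset M$ is automatic because $N_\psi=\core(R_\infty)$ is amenable, then invoke Theorem~\ref{thm-relative-bicentralizer} (discrete case) and Corollary~\ref{kadison bicentralizer}. For $(\rm ii)\Rightarrow(\rm i)$ the paper does not run this chain backwards but instead uses Proposition~\ref{prop-masa} directly; your reverse chain also works, since $(N_\psi)'\cap M_\psi\subset (N_\psi)'\cap M$ trivially.

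The one place where your write-up goes astray is your speculation about the content of Theorem~\ref{thm-relative-connes-takesaki}. It does \emph{not} say that $\core(N)'\cap\core(M)$ is generated by $(N_\psi)'\cap M$ together with a one-parameter group; rather, it says simply that $(N_\psi)'\cap M=(N_\psi)'\cap M_\psi$. Once you extend the dominant weight $\psi$ on $N$ to $M$ via $\rE_N$, one has $\core(N)=N_\psi$ and $\core(M)=M_\psi$, so this reads $(N_\psi)'\cap M=\core(N)'\cap\core(M)$ on the nose, and the ``careful bookkeeping'' you anticipate is not needed. With that correction your argument is clean.
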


We emphasize the fact that condition $(\rm i)$ in Corollary \ref{cor-characterization} can be concretely checked in many situations. In particular, for crossed products by discrete groups, combining Corollary \ref{cor-characterization} with \cite[Proposition 5.4]{HS88}, we obtain:

\begin{letterapp}\label{app-crossed-product}
Let $N = R_{\infty}$ be the hyperfinite type $\III_1$ factor. Let $\Gamma$ be any countable discrete group and $\alpha : \Gamma \curvearrowright N$ any outer action. Then the following assertions are equivalent:
\begin{itemize}
\item [$(\rm i)$]  The automorphism $\alpha(g) \circ \sigma_t^\varphi$ is outer for all $g \in \Gamma \setminus \{e\}$ and all $t \in \R$.

\item [$(\rm ii)$] The factor $N \rtimes \Gamma$ is of type $\III_1$.

\item [$(\rm iii)$] The inclusion $N \subset N \rtimes \Gamma$ satisfies Kadison's property.
\end{itemize}
\end{letterapp}

In particular, the above conditions are satisfied by all Bernoulli actions $\Gamma \curvearrowright \bigovt_{\Gamma} (R_{\infty}, \phi)$ where $\Gamma$ is an arbitrary countable discrete group and $\phi \in (R_{\infty})_{\ast}$ is an arbitrary faithful state. This result is new in the case when $\phi$ is ergodic, that is, when $(R_{\infty})_{\phi} = \C 1$.

For minimal actions of compact second countable groups, combining Corollary \ref{cor-characterization} with \cite[Corollary 5.14]{Iz01}, we obtain:

\begin{letterapp}\label{app-minimal-action}
Let $M = R_{\infty}$ be the hyperfinite type $\III_1$ factor. Let $\mathbf G$ be any compact connected semisimple Lie group and $\alpha : \mathbf G \curvearrowright M$ any minimal action. 

Then the inclusion $M^{\mathbf G} \subset M$ satisfies Kadison's property.
\end{letterapp}

\subsection*{Bicentralizers of tensor product factors}

It is straightforward to see that if $M$ and $N$ have trivial bicentralizer, then $M \ovt N$ also has trivial bicentralizer (see Proposition \ref{tensor-formula}). The following result provides a partial converse.

\begin{letterthm} \label{tensor_products_trivial}
Let $M$ be any $\sigma$-finite type $\III_1$ factor. Suppose that there exists a $\sigma$-finite factor $N$ such that $M \ovt N$ has trivial bicentralizer. Then $M \ovt R_\infty$ has trivial bicentralizer. 

If $N$ is a type $\III_\lambda$ factor for $0 < \lambda < 1$, then $M \ovt R_\lambda$ has trivial bicentralizer. If $N$ is semifinite, then $M$ has trivial bicentralizer.
\end{letterthm}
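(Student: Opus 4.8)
\textbf{Proof proposal for Theorem \ref{tensor_products_trivial}.}

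The plan is to reduce everything to the relative bicentralizer flow of the inclusion $M \ovt \C 1 \subset M \ovt N$ and to apply Theorem \ref{thm: hyperfinite subfactor III_1} together with Theorem \ref{tensor-formula}-type identities. The key point will be the computation
$$\rB(M \ovt \C 1 \subset M \ovt N, \varphi \otimes \psi) = \rB(M, \varphi) \ovt \C 1$$
valid for any faithful states $\varphi \in M_\ast$, $\psi \in N_\ast$; indeed, the asymptotic centralizer $\AC(M \ovt \C 1, \varphi \otimes \psi)$ consists exactly of sequences in $M$ (tensored with $1$) that asymptotically centralize $\varphi$, so an element $x \in M \ovt N$ commutes asymptotically with all of them iff it lies in $\rB(M,\varphi) \ovt \C 1$ (using that $M' \cap (M \ovt N) = \C 1 \ovt N$ and a standard slice-map argument). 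One should also identify the bicentralizer flow: $\beta^{\varphi \otimes \psi}$ on this relative bicentralizer is, under the above identification, precisely $\beta^\varphi \otimes \id$ acting on $\rB(M,\varphi) \ovt \C 1$, i.e.\ it is just the bicentralizer flow of $M$. This follows by inspecting the characterizing property in Theorem \ref{thm: Connes' isomorphism and bicentralizer flow}(ii): the approximate eigenvalue sequences for $\varphi \otimes \psi$ inside $M \ovt \C 1$ are the same as those for $\varphi$ inside $M$.

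With this dictionary in hand, I would argue as follows. Since $M \ovt N$ has trivial bicentralizer, so does the type $\III_1$ factor $M$ (by the easy direction, $\rB(M,\varphi) \ovt \C 1 \subset \rB(M \ovt N, \varphi \otimes \psi) = \C 1$, which already settles the semifinite-looking claim — but see below for the genuine semifinite statement). The nontrivial point is to produce, inside $M$, an irreducible hyperfinite subfactor with expectation of the appropriate type. Apply Theorem \ref{thm: hyperfinite subfactor III_1} to the irreducible inclusion $M \ovt \C 1 \subset M \ovt N$: its relative bicentralizer is $\rB(M,\varphi) \ovt \C 1 = \C 1$, so in particular $\rB(M \ovt \C 1 \subset M \ovt N, \varphi \otimes \psi)^{\beta^{\varphi \otimes \psi}} = \C 1 = (M \ovt \C 1)' \cap (M \ovt N)$ fails — wait, it does \emph{not} fail, since $(M \ovt \C 1)' \cap (M \ovt N) = \C 1 \ovt N \neq \C 1$ in general. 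This is the crux: the fixed-point condition (i) of Theorem \ref{thm: hyperfinite subfactor III_1} reads $\C 1 \ovt N = \C 1 \ovt N$ and is automatically true, so Theorem \ref{thm: hyperfinite subfactor III_1} gives a hyperfinite subfactor with expectation $P \subset M$ with $P' \cap (M \ovt N) = \C 1 \ovt N$, equivalently $P' \cap M = \C 1$. Thus $M$ has an irreducible hyperfinite type $\III_1$ subfactor with expectation, and then Theorem \ref{thm: hyperfinite subfactor III_1} applied to the inclusion $P \subset M$ (or directly to $M \subset M \ovt N$) together with the triviality of $\rB(M,\varphi)$ shows $M$ itself has trivial bicentralizer, whence $M \ovt R_\infty$ does by Proposition \ref{tensor-formula} since $R_\infty$ has trivial bicentralizer. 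For the refinements: if $N$ is type $\III_\lambda$ with $0 < \lambda < 1$, then $\beta^{\varphi \otimes \psi}_\lambda$ can be taken trivial on $\rB(\cdot) = \C 1$, so the $\lambda$-periodic clause of Theorem \ref{thm: hyperfinite subfactor III_1} yields an irreducible $R_\lambda \subset M$ with expectation, hence $M \ovt R_\lambda$ has trivial bicentralizer. If $N$ is semifinite, one instead gets (again via triviality of the full relative bicentralizer) an irreducible type $\II_1$ hyperfinite subfactor with expectation in $M$, hence $\rB(M,\varphi) = \C 1$ outright.

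The main obstacle I anticipate is the bookkeeping around relative commutants: Theorem \ref{thm: hyperfinite subfactor III_1} is phrased with the relative commutant $N' \cap M$ rather than with triviality, and one must be careful that ``$M \ovt N$ has trivial bicentralizer'' translates into the hypothesis of Theorem \ref{thm: hyperfinite subfactor III_1} for the \emph{irreducible} inclusion $M \ovt \C 1 \subset M \ovt N$ — which it does, because that inclusion, while not irreducible as stated, has the explicit relative commutant $\C 1 \ovt N$, and the computation $\rB(M \ovt \C 1 \subset M \ovt N, \varphi \otimes \psi) = \rB(M,\varphi) \ovt \C 1$ shows condition (i) of Theorem \ref{thm: hyperfinite subfactor III_1} holds precisely when $\rB(M,\varphi) = \C 1$. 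A cleaner route, which I would actually adopt to avoid the irreducibility hypothesis entirely, is: first prove $\rB(M,\varphi) = \C 1$ directly from $\rB(M,\varphi) \ovt \C 1 \subset \rB(M \ovt N, \varphi \otimes \psi) = \C 1$ (this is immediate and needs no hyperfinite subfactor at all), then invoke Theorem \ref{thm: hyperfinite subfactor III_1} with $N \subset M$ replaced by $M \subset M \ovt N$ purely to upgrade to the statements about $M \ovt R_\lambda$ and $M \ovt R_\infty$ via the periodicity of the bicentralizer flow of $M$, which is trivial since $\rB(M,\varphi) = \C 1$. So in fact the theorem reduces to: (a) the trivial observation on slices, and (b) the remark that $R_\lambda$, $R_\infty$ have trivial bicentralizer plus Proposition \ref{tensor-formula}; the only subtlety is ensuring the $\III_\lambda$ and semifinite refinements are read off from the \emph{type} of the hyperfinite subfactor that Theorem \ref{thm: hyperfinite subfactor III_1} allows us to choose, which in turn is governed by whether the relevant periodic/full bicentralizer vanishes — and here it does, vacuously, since $\rB(M,\varphi) = \C 1$.
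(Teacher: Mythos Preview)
Your argument has a genuine gap at its most critical point: the ``trivial observation'' that $\rB(M,\varphi)\ovt\C 1 \subset \rB(M\ovt N,\varphi\otimes\psi)$ is neither trivial nor, in general, true. Proposition~\ref{tensor-formula} gives only the \emph{opposite} inclusion $\rB(M\ovt N,\varphi\otimes\psi)\subset\rB(M,\varphi)\ovt\rB(N,\psi)$. The asymptotic centralizer $\AC(M\ovt N,\varphi\otimes\psi)$ is strictly larger than the subalgebra generated by $\AC(M,\varphi)\otimes 1$ and $1\otimes\AC(N,\psi)$: when $N$ is type $\III$, it contains sequences of the form $(v_n\otimes w_n)_n$ with $v_n\varphi\approx\lambda\varphi v_n$ and $w_n\psi\approx\lambda^{-1}\psi w_n$. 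For $x\in\rB(M,\varphi)$ one computes (via Theorem~\ref{thm: Connes' isomorphism and bicentralizer flow}(ii)) that $(x\otimes 1)(v_n\otimes w_n)-(v_n\otimes w_n)(x\otimes 1)\approx (\beta^\varphi_\lambda(x)-x)v_n\otimes w_n$, which vanishes only when $\beta^\varphi_\lambda(x)=x$. So at best you get $\rB(M,\varphi)^{\beta^\varphi}\otimes 1\subset\rB(M\ovt N,\varphi\otimes\psi)$, and there is no reason the flow should be trivial on $\rB(M,\varphi)$. This is precisely why the theorem concludes only that $M\ovt R_\infty$ (or $M\ovt R_\lambda$) has trivial bicentralizer, \emph{not} $M$ itself, when $N$ is type $\III$; your route would prove the stronger statement $\rB(M,\varphi)=\C 1$ in all cases, which is not known.

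Your first computation is also off: the relative bicentralizer $\rB(M\ovt\C 1\subset M\ovt N,\varphi\otimes\psi)$ equals $\rB(M,\varphi)\ovt N$, not $\rB(M,\varphi)\ovt\C 1$ --- it must contain the relative commutant $\C 1\ovt N$. With the corrected formula, condition~(i) of Theorem~\ref{thm: hyperfinite subfactor III_1} becomes $\rB(M,\varphi)^{\beta^\varphi}\ovt N=\C 1\ovt N$, i.e.\ ergodicity of the bicentralizer flow of $M$, which again is not given by the hypothesis. The paper's proof proceeds entirely differently: it reduces (after replacing $M$ by $M\ovt P$ for $P\in\{R_\infty,R_\lambda,\C\}$) to verifying Haagerup's local criterion (Theorem~\ref{local_haagerup}) by disintegrating over the spectrum of $\Delta_\psi$ on $\rL^2(N)$ to extract, from an element $b\in M\ovt N$ witnessing trivial bicentralizer, an element $a\in M$ with the required approximate-commutation properties; the absorption $M\cong M\ovt P$ is used to correct the eigenvalue mismatch coming from the $\Delta_\psi$-fiber.
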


We already mentioned the importance of the bicentralizer problem in the framework of Popa's deformation/rigidity theory for type $\III$ factors. In this respect, Theorem \ref{tensor_products_trivial} has a direct application to Unique Prime Factorization results. Indeed, we can remove all the assumptions on the unknown tensor product decomposition  in \cite[Theorem B]{HI15} to obtain the following W$^{*}$-rigidity result.
\begin{letterapp}\label{UPF}
Let $m,n \geq 1$ be any integers. For each $1 \leq i \leq m$, let $M_i$ be a nonamenable factor in the class $\mathcal{C}_{\rm (AO)}$. For each $1 \leq j \leq n$, let $N_j$ be any non type $\I$ factor and suppose that
$$M= M_1 \ovt \cdots \ovt M_m = N_1 \ovt \cdots \ovt N_n.$$
Then there exists a surjection $\sigma : \{1,\dots,m\} \rightarrow \{1, \dots, n\}$, a family of type $\I$ factors $F_1, \dots, F_n$, and a unitary $u \in M \ovt F_1 \ovt \cdots \ovt F_n$ such that for all $1 \leq j \leq n$, we have
$$ u(F_j \ovt N_j)u^*=F_j \ovt \overline{\bigotimes}_{ i \in \sigma^{-1}(j)} M_i.$$
In particular, for all $1 \leq j \leq n$, the factor $N_j$ is stably isomorphic to $\overline{\bigotimes}_{ i \in \sigma^{-1}(j)} M_i$.
\end{letterapp}
 
\subsection*{Open question}
 
Let $N \subset M$ be any irreducible inclusion of factors with separable predual and with expectation $\rE_N : M \rightarrow N$. Assume that $N$ is a type $\III_1$ factor. On the one hand, by Theorem \ref{thm: Connes' isomorphism and bicentralizer flow}, one can associate with the inclusion $N \subset M$ a canonical state preserving W$^*$-dynamical system that we called the {\em relative bicentralizer flow}
$$ (\rB(N \subset M, \varphi), \beta^\varphi , (\varphi \circ \rE_N) |_{ \rB(N \subset M, \varphi)} ).$$
This state preserving W$^*$-dynamical system does not depend on the choice of the faithful state $\varphi \in N_*$ up to isomorphism. On the other hand, one can associate with the inclusion $N \subset M$ yet another canonical state preserving W$^*$-dynamical system that we call the \emph{relative flow of weights}
 $$ ( (N_\psi)' \cap M, \theta^\psi, \rE_N |_{(N_\psi)' \cap M} )$$
where $\psi$ is any dominant weight on $N$ and $\theta^\psi : \R^*_+ \curvearrowright (N_\psi)' \cap M$ is the flow given by $\theta^\psi_\lambda(x)=uxu^*$ for all $x \in (N_\psi)' \cap M$ and any unitary $u \in N$ such that $u\psi u^*=\lambda \psi, \; \lambda > 0$. This state preserving W$^*$-dynamical system does not depend on the choice of $\psi$ up to isomorphism and it is ergodic (see Subsection \ref{subsection:flow} for further details). 

There is a striking analogy between the relative bicentralizer flow and the relative flow of weights. Moreover, Theorem \ref{thm-relative-bicentralizer} shows that they are indeed closely related. Therefore, one is naturally led to ask the following question.

\begin{question} 
Are the relative bicentralizer flow and the relative flow of weights isomorphic? More precisely, does there exist an isomorphism 
$$\pi : \rB(N \subset M, \varphi) \rightarrow (N_\psi)' \cap M$$ 
such that $\theta^\psi =\pi \circ \beta^\varphi\circ \pi^{-1}$ and $\rE_N(\pi(x))=\varphi(\rE_N(x))1$ for all $x \in \rB(N \subset M, \varphi)$?
\end{question}

Note that the above question is equivalent to the bicentralizer problem when $N=M$. Therefore, in this generality, it is out of reach at the present time. Nevertheless, in Proposition \ref{prop:open-question}, we provide examples for which the above question has a positive solution. 

We also point out that since the relative flow of weights is always ergodic, it is reasonable to think that the same is true for the relative bicentralizer flow. In particular, when $N=M$, we have at the same time a good reason to believe that the bicentralizer flow is ergodic and we also have a good reason to believe that it must be trivial (see the discussion after Theorem \ref{thm: self-bicentralizing}). In other words, we strongly believe that the bicentralizer problem has a positive solution for all type $\III_1$ factors with separable predual.

\subsection*{Acknowledgments} Cyril Houdayer is grateful to Sorin Popa for thought-provoking discussions that took place in Orsay in the Spring of 2017, and later inspired the idea of the relative bicentralizer theorem for discrete inclusions (Theorem \ref{thm-relative-bicentralizer}). He also thanks him for mentioning the reference \cite{Po85} in connection with Theorem \ref{thm: hyperfinite subfactor III_1}.

{\hypersetup{linkcolor=black} {\setlength{\parskip}{0.3ex}
\tableofcontents} }

\section{Preliminaries}

\subsection{Standard form}
For any von Neumann algebra $M$, we denote by $(M, \rL^2(M), J, \rL^{2}(M)^{+})$ its \emph{standard form} \cite{Ha73}. Recall that $\rL^2(M)$ is naturally endowed with the structure of a $M$-$M$-bimodule: we will simply write $x \xi y = x Jy^*J \xi$ for all $x, y \in M$ and all $\xi \in \rL^2(M)$. For any positive functional $\varphi \in M_\ast^+$, there exists a unique vector $\xi \in \rL^{2}(M)^{+}$ such that $\varphi(x)=\langle x \xi, \xi \rangle$ for every $x \in M$. We denote this vector by $\xi_\varphi \in \rL^{2}(M)^{+}$. We then have $\|x\|_\varphi = \|x \xi_\varphi\|$ for all $x \in M$.

\subsection{Ultraproduct von Neumann algebras}

Let $M$ be any $\sigma$-finite von Neumann algebra, $I$ any directed set and $\omega$ any cofinal ultrafilter on $I$ (cofinal means that $\{ j \in I \mid j \geq i \} \in \omega$ for every $i \in I$). Let $A=(M,\| \cdot \|_\infty)^\omega$ be the ultraproduct Banach space of $M$ with respect to $\omega$. Then $A$ is naturally a $\rC^*$-algebra but it is not a von Neumann algebra in general. Let $A^{**}$ be the bidual of $A$ which is a von Neumann algebra. Let $(M_*)^\omega$ be the ultraproduct Banach space of $M_*$. Then $(M_*)^\omega$ can be naturally identified with a closed subspace of $A^*$ via the embedding
\[ (\varphi_i)^\omega \mapsto \left( (x_i)^\omega \mapsto \lim_{i \rightarrow \omega} \varphi_i(x_i) \right) \]
The orthogonal of $(M_*)^\omega$ in $A^{**}$ defined by
\[ \mathfrak{J}=\{ x \in A^{**} \mid \forall \varphi \in (M_*)^\omega, \; \varphi(x)=0 \} \]
is a weak$^{*}$ closed ideal in the von Neumann algebra $A^{**}$ which means that the quotient $M^\omega_{GR} = A^{**}/\mathfrak{J}$ is a von Neumann algebra. It is called the \emph{Groh--Raynaud ultraproduct} of $M$ with respect to $\omega$. By construction, the predual of $M^\omega_{GR}$ is exactly $(M_*)^\omega$ and $M^\omega_{GR}$ contains the ultraproduct Banach space $A=(M,\| \cdot \|_\infty)^\omega$ as a dense $\rC^*$-subalgebra. The $\ast$-homomorphism $M \to M^\omega_{GR} : x \mapsto x^\omega$ is not normal in general and so $M$ is not a von Neumann subalgebra of $M^\omega_{GR}$. The von Neumann algebra $M^\omega_{GR}$ is very large (not separable and not even $\sigma$-finite in general). The main interest in this ultraproduct comes from the fact that, as explained in \cite{AH12}, there is a natural identification $\rL^2(M_{GR}^\omega)=\rL^2(M)^\omega$.

Choose a faithful state $\varphi \in M_*$. Then we have $\varphi^\omega \in (M^\omega_{GR})_*^+$ but $\varphi^{\omega}$ is not faithful in general. Let $e$ be the support of $\varphi^{\omega}$ in $M^\omega_{GR}$. The projection $e$ does not depend on the choice of $\varphi$ and the corner $e(M^\omega_{GR})e$ coincides with the \emph{Ocneanu ultraproduct} of $M$ with respect to $\omega$ \cite{Oc85, AH12}. It is simply denoted by $M^{\omega}$. For every $x \in M^{\omega}$, we can find $(x_i)^{\omega} \in (M, \| \cdot \|_\infty)^{\omega}$ such that $x=(x_i)^{\omega}e=e(x_i)^{\omega}$, and in that case, when no confusion is possible, we will abuse notation and write $x=(x_i)^{\omega}$. By construction, we have $(M^\omega)_*=e(M_*)^\omega e$ and $\rL^2(M^\omega)=e(\rL^2(M)^\omega) e$.

We will need the following lemma which allows us to lift matrix units from the Ocneanu ultrapower of a factor to the factor itself.
\begin{lem} \label{lift_matrix}
Let $M$ be any $\sigma$-finite factor. Let $\theta : F \rightarrow M^{\omega}$ be any $*$-homomorphism where $F$ is a finite dimensional factor and $\omega \in \beta (\N) \setminus \N$ is any nonprincipal ultrafilter. Then there exists a sequence of $*$-homomorphisms $\theta_n : F \rightarrow M$ such that $\theta=(\theta_n)^{\omega}$.

Moreover, if $\rE : M^\omega \rightarrow \theta(F)' \cap M^\omega$ is a conditional expectation, then for any faithful state $\varphi \in M_*$ and any $x=(x_n)^\omega \in M^\omega$, we have $\varphi^\omega \circ \rE=(\varphi \circ \rE_n)^\omega$ and $\rE(x)=(\rE_n(x_n))^\omega$, where $\rE_n : M \rightarrow \theta_n(F)' \cap M$ is the unique conditional expectation such that $\rE_n \circ \theta_n = \rE \circ \theta$.
\end{lem}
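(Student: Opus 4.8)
The plan is to first lift the matrix units. Write $F = M_k(\C)$ with standard matrix units $(e_{ij})_{1 \le i,j \le k}$, and set $f_{ij} = \theta(e_{ij}) \in M^\omega$. Choose representatives $(f_{ij}^{(n)})_n \in (M, \|\cdot\|_\infty)^\omega$ of $f_{ij}$ that are uniformly bounded. The relations $f_{ij}^* = f_{ji}$, $f_{ij} f_{k\ell} = \delta_{jk} f_{i\ell}$ and $\sum_i f_{ii} = 1$ hold only in the limit along $\omega$, so I would run the standard finite-dimensional perturbation argument: using the fact that $M$ is a factor (so in particular has no obstruction to embedding $M_k(\C)$ unitally and any two unital copies are conjugate by a unitary), one can correct the approximate matrix units to genuine ones. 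Concretely, for $\omega$-almost all $n$: polar-decompose $f_{11}^{(n)}$-type elements to produce a genuine projection $p_{11}^{(n)}$ close in $\|\cdot\|_{\varphi} + \|\cdot\|_{\varphi^*}$ norm to $f_{11}^{(n)}$; then successively build projections $p_{ii}^{(n)}$ and partial isometries $p_{i1}^{(n)}$ with $(p_{i1}^{(n)})^*(p_{i1}^{(n)}) = p_{11}^{(n)}$, $(p_{i1}^{(n)})(p_{i1}^{(n)})^* = p_{ii}^{(n)}$, using that a factor is simple at the level of its type decomposition and hence equivalences of projections with equal (or here, "asymptotically equal") traces/dimensions exist. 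Finally normalize so that $\sum_i p_{ii}^{(n)} = 1$ and set $p_{ij}^{(n)} = p_{i1}^{(n)}(p_{1j}^{(n)})$. By construction $(p_{ij}^{(n)})^\omega = f_{ij}$, and $\theta_n : e_{ij} \mapsto p_{ij}^{(n)}$ is a $*$-homomorphism with $(\theta_n)^\omega = \theta$. I expect this to be the main obstacle: one must be careful that the perturbations are small enough in the relevant strong$^*$ topology (not just in norm, where no control is available) so that the ultraproduct of the corrected elements still equals the original $f_{ij}$; this is where $\sigma$-finiteness of $M$ and the use of a fixed faithful state $\varphi$ enter, and one should cite or reproduce the standard reindexing/diagonal argument for the ultrafilter $\omega \in \beta(\N) \setminus \N$.

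For the "moreover" part, fix a faithful state $\varphi \in M_*$. Since $\theta_n(F) \subset M$ is a finite-dimensional subfactor, there is a unique $\varphi$-compatible conditional expectation $\rE_n : M \to \theta_n(F)' \cap M$; explicitly, $\rE_n(x) = \sum_{i,j} \theta_n(e_{ij}) \, \big(\text{scalar combination of } p_{k\ell}^{(n)} x p_{k\ell}^{(n)}{}^* \big)$ — more precisely $\rE_n(x) = \frac{1}{k}\sum_{i,j} p_{ij}^{(n)}{}^* x \, p_{ij}^{(n)} \cdot (\text{appropriate term})$, i.e. the standard averaging $\rE_n(x) = \frac1k \sum_{i,j} p_{ji}^{(n)} x p_{ij}^{(n)}$ pushed into the relative commutant; the point is that $\rE_n$ is given by a fixed universal polynomial expression in the matrix units and $x$. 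I would then impose the compatibility condition $\rE_n \circ \theta_n = \rE \circ \theta$ on the factor part, which pins down $\rE_n$ uniquely once one knows $M^\omega = e(M_{GR}^\omega)e$ decomposes compatibly. Because $\rE_n$ is a $\|\cdot\|_\infty$-contraction given by a fixed $*$-polynomial in $(p_{ij}^{(n)})$ and the variable, the ultraproduct map $(\rE_n)^\omega$ is well-defined on $(M,\|\cdot\|_\infty)^\omega$, restricts to a conditional expectation $e(M,\|\cdot\|_\infty)^\omega e \to \theta(F)' \cap M^\omega$, and agrees with $\rE$ on the dense C$^*$-subalgebra; by normality of both $\rE$ and $(\rE_n)^\omega$ (the latter since it is a state-preserving, hence $\sigma$-weakly continuous, expectation for $\varphi^\omega$) they coincide on all of $M^\omega$. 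This gives $\rE(x) = (\rE_n(x_n))^\omega$ for $x = (x_n)^\omega$, and applying $\varphi^\omega$ gives $\varphi^\omega \circ \rE = (\varphi \circ \rE_n)^\omega$.

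The only genuinely delicate point beyond the matrix-unit lifting is verifying that $(\rE_n)^\omega$ lands in the relative commutant $\theta(F)' \cap M^\omega$ with image exactly $\rE(M^\omega)$ and that it is state-preserving for $\varphi^\omega$; both follow once the polynomial formula for $\rE_n$ in terms of $(p_{ij}^{(n)})$ is in hand and one observes $(p_{ij}^{(n)})^\omega = \theta(e_{ij})$. I would present the matrix-unit lifting as the heart of the proof (possibly citing \cite{Oc85} or \cite{AH12} for the perturbation lemma in the $\sigma$-finite setting) and treat the expectation statement as a formal consequence of the universal-polynomial description of $\rE_n$ together with normality.
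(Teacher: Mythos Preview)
Your approach is essentially the same as the paper's, and it is correct. Two remarks will tighten it considerably.

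First, for the matrix-unit lifting the paper simply cites \cite[Proposition 1.1.3]{Co75a} (stated there for $M_\omega$, but the same proof works for $M^\omega$); your sketch of the perturbation argument is exactly what that proposition does, so you may as well cite it.

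Second, for the ``moreover'' part you have the right idea but you never write down the correct polynomial. The formula $\frac{1}{k}\sum_{i,j} p_{ji}^{(n)} x p_{ij}^{(n)}$ is the \emph{trace}-preserving expectation, not the one satisfying $\rE_n\circ\theta_n = \rE\circ\theta$ for a general $\rE$. Set $\psi(a)1 = (\rE\circ\theta)(a)$ for $a\in F$; then the expectation determined by this data is
\[
\rE(x)=\sum_{1\le i,j,k\le n}\psi(e_{ij})\,\theta(e_{ki})\,x\,\theta(e_{jk}),
\qquad
\rE_n(x)=\sum_{1\le i,j,k\le n}\psi(e_{ij})\,\theta_n(e_{ki})\,x\,\theta_n(e_{jk}).
\]
Once you have this explicit formula, everything you want is immediate: each $\rE_n$ is a norm-one map given by a fixed $*$-polynomial in the $\theta_n(e_{ij})$ and $x$, so $(\rE_n(x_n))^\omega$ is well defined in $M^\omega$ and equals $\rE(x)$ because $(\theta_n(e_{ij}))^\omega=\theta(e_{ij})$. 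The state identity $\varphi^\omega\circ\rE=(\varphi\circ\rE_n)^\omega$ follows by applying $\varphi^\omega$ to the same formula. No separate normality or density argument is needed.
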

\begin{proof}
The first part of the lemma is proved in \cite[Proposition 1.1.3]{Co75a} for the asymptotic centralizer $M_\omega$ but the same proof also works for the ultrapower $M^{\omega}$.

For the second part of the lemma, put $\psi(x)1 = (\rE \circ \theta)(x)$ for every $x \in F$. Then $\psi$ is a state on $F$ and the explicit formula for the conditional expectation $\rE : M^\omega \rightarrow \theta(F)' \cap M^\omega$ is given by
$$\forall x \in M^\omega, \quad \rE(x)=\sum_{1\leq i,j,k \leq n} \psi(e_{ij})\theta(e_{ki})x\theta(e_{jk})$$
where $(e_{ij})_{1 \leq i,j \leq n}$ is a matrix unit for $F$. Then we also have
$$\forall x \in M^\omega, \quad \rE_n(x)=\sum_{1\leq i,j,k \leq n} \psi(e_{ij})\theta_n(e_{ki})x\theta_n(e_{jk}).$$
This finishes the proof.
\end{proof}

The following well-known fact about ultralimits will be used repeatedly. 
\begin{lem}\label{lem: ultralimit test}
Let $X$ be any Hausdorff space and $(x_n)_{n=1}^{\infty}$ any sequence in $X$. If there exists $x\in X$ such that $ \lim_{n\to \omega}x_n=x$ for every $\omega\in \beta (\mathbf{N})\setminus \mathbf{N}$, then $ \lim_{n\to \infty}x_n=x$.  
\end{lem}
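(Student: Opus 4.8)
The plan is to argue by contradiction. Suppose $(x_n)_n$ does \emph{not} converge to $x$ in $X$. Then, by definition of convergence in a topological space, there exists an open neighborhood $U$ of $x$ such that the set
$$ A = \{ n \in \mathbf{N} \mid x_n \notin U \} $$
is infinite. The first step is to observe that, since $A$ is infinite, the family consisting of $A$ together with all cofinite subsets of $\mathbf{N}$ has the finite intersection property, hence generates a proper filter on $\mathbf{N}$; extending it to an ultrafilter $\omega$ (by Zorn's lemma / the ultrafilter lemma) produces an ultrafilter with $A \in \omega$ that moreover contains every cofinite set, and is therefore nonprincipal, i.e.\ $\omega \in \beta(\mathbf{N}) \setminus \mathbf{N}$.

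The second step is to derive the contradiction from the hypothesis applied to this $\omega$. By assumption, $\lim_{n \to \omega} x_n = x$, which, unwinding the definition of convergence along an ultrafilter in a topological space, means precisely that for the open neighborhood $U$ of $x$ we have $\{ n \in \mathbf{N} \mid x_n \in U \} \in \omega$. But $\{ n \mid x_n \in U \}$ and $A$ are disjoint, and a filter cannot contain two disjoint sets; since $A \in \omega$ this forces $\{ n \mid x_n \in U \} \notin \omega$, a contradiction. Hence $(x_n)_n$ converges to $x$ in the ordinary sense.

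There is essentially no obstacle here: the only points requiring a word of care are (a) the existence of a \emph{nonprincipal} ultrafilter containing the prescribed infinite set $A$, which is the standard ultrafilter-lemma argument sketched above, and (b) the (automatic, from the Hausdorff hypothesis) fact that the ultralimit, when it exists, is unique — though we do not even need uniqueness, only that the hypothesized value is $x$. Note that the Hausdorff assumption is used only implicitly, to make the statement "$\lim_{n\to\omega} x_n = x$" unambiguous; the contradiction itself is purely filter-theoretic.
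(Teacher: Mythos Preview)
Your proof is correct and is the standard contradiction argument for this well-known fact. The paper does not supply a proof at all (it introduces the lemma as ``the following well-known fact about ultralimits''), so there is nothing to compare against; your argument is exactly the expected one.
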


\subsection{Iterated ultraproducts} 
We shall consider the iterated ultraproduct of von Neumann algebras. We will see that this procedure helps us to show that some sequence in a von Neumann algebra $M$, which defines an element in some ultrapower $M^{\omega}$, actually converges to an element, which is independent of the choice of an ultrafilter $\omega$.   
Let $I,J$ be any directed sets and  $\mathcal{U},\mathcal{V}$ any cofinal ultrafilters on $I$ and $J$, respectively. Then the {\it product ultrafilter}, denoted by $\mathcal{U}\otimes \mathcal{V}$, is a cofinal ultrafilter on $I\times J$ (with the partial ordering $(i,j)\le (i',j')$ if and only if $i\le i'$ and $j\le j'$) given by 
\[\mathcal{U}\otimes \mathcal{V}=\{A\subset I\times J \mid \{i\in I \mid  \{j\in J \mid (i,j)\in A\}\in \mathcal{V}\}\in \mathcal{U}\}.\]

\begin{prop}\label{prop: iterated ultralimit}
If $(x_{i,j})_{(i,j)\in I\times J}$ is a doubly indexed net in a compact Hausdorff space $X$, then 
\[\lim_{(i,j)\to \mathcal{U}\otimes \mathcal{V}}x_{i,j}=\lim_{i\to \mathcal{U}}\lim_{j\to \mathcal{V}}x_{i,j}.\]
\end{prop}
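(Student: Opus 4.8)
The statement is a purely topological fact about iterated limits along product ultrafilters, so the plan is to argue directly from the definitions of convergence along an ultrafilter in a compact Hausdorff space. Write $x = \lim_{i \to \mathcal{U}} \lim_{j \to \mathcal{V}} x_{i,j}$, which is well-defined: for each fixed $i$, compactness guarantees that $y_i := \lim_{j \to \mathcal{V}} x_{i,j}$ exists (a net in a compact space always has a limit along any ultrafilter, and the Hausdorff hypothesis makes it unique), and then $x := \lim_{i \to \mathcal{U}} y_i$ exists for the same reason. We also know $\lim_{(i,j) \to \mathcal{U} \otimes \mathcal{V}} x_{i,j}$ exists by compactness, so since $X$ is Hausdorff it suffices to show that $x$ is \emph{a} limit point of the net $(x_{i,j})$ along $\mathcal{U} \otimes \mathcal{V}$.

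To do this, fix an arbitrary open neighbourhood $W$ of $x$; I must show $\{(i,j) \in I \times J : x_{i,j} \in W\} \in \mathcal{U} \otimes \mathcal{V}$. By regularity of the compact Hausdorff space $X$, choose an open set $V$ with $x \in V \subset \overline{V} \subset W$. Since $x = \lim_{i \to \mathcal{U}} y_i$, the set $A := \{i \in I : y_i \in V\}$ lies in $\mathcal{U}$. Now for each $i \in A$ we have $y_i = \lim_{j \to \mathcal{V}} x_{i,j} \in V$; I claim this forces $B_i := \{j \in J : x_{i,j} \in W\} \in \mathcal{V}$. Indeed, if not, then its complement $\{j : x_{i,j} \notin W\} \subset \{j : x_{i,j} \in X \setminus W\}$ would lie in $\mathcal{V}$, and since $X \setminus W$ is closed, the limit $y_i$ would have to lie in $X \setminus W$, contradicting $y_i \in V \subset W$. (Here one uses the standard fact that if a net lies in a closed set along a set of indices belonging to the ultrafilter, its ultralimit lies in that closed set.) Hence $A \subset \{i \in I : B_i \in \mathcal{V}\}$, so the latter set is in $\mathcal{U}$, which by the very definition of $\mathcal{U} \otimes \mathcal{V}$ says precisely that $\{(i,j) : x_{i,j} \in W\} \supset \{(i,j) : i \in A,\ j \in B_i\} \in \mathcal{U} \otimes \mathcal{V}$. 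As $W$ was an arbitrary neighbourhood of $x$, this shows $x$ is a limit point of $(x_{i,j})$ along $\mathcal{U} \otimes \mathcal{V}$, and by uniqueness of limits in $X$ we conclude $\lim_{(i,j) \to \mathcal{U} \otimes \mathcal{V}} x_{i,j} = x$.

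There is no serious obstacle here; the only point requiring care is the repeated use of the lemma that ultralimits respect closed sets (a net eventually-along-$\mathcal{U}$ in a closed set has its $\mathcal{U}$-limit in that closed set), together with the regularity step that lets one separate $x$ from $X \setminus W$ by nested open and closed sets so that this lemma can be applied. Both are elementary consequences of compactness and the Hausdorff property, so the proof is a routine unwinding of definitions.
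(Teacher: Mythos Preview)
Your proof is correct and follows essentially the same route as the paper's: both arguments unwind the definition of $\mathcal{U}\otimes\mathcal{V}$, invoke regularity of the compact Hausdorff space, and use the fact that ultralimits of nets lying (along the ultrafilter) in a closed set remain in that closed set. The only difference is the direction: the paper sets $x=\lim_{(i,j)\to\mathcal{U}\otimes\mathcal{V}}x_{i,j}$ and shows $\lim_{i\to\mathcal{U}}x_i=x$, whereas you start from the iterated limit and show it is the product ultralimit---in your direction the regularity step is in fact unnecessary (your contradiction $y_i\in X\setminus W$ already clashes with $y_i\in W$ without passing through $V$), so your argument could be trimmed slightly.
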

\begin{proof} 
Put $x=\lim_{(i,j)\to \mathcal{U}\otimes \mathcal{V}}x_{i,j}$ and $x_i=\lim_{j\to \mathcal{V}}x_{i,j}\ (i\in I)$. Let $W$ be an open neighborhood of $x$ in $X$. Since a compact Hausdorff space is regular, there exists an open neighborhood $W_1$ of $x$ such that $x\in W_1\subset \overline{W_1}\subset W$. Then $\{(i,j)\in I\times J \mid x_{i,j}\in W_1\}\in \mathcal{U}\otimes \mathcal{V}$, whence $I_0=\{i\in I \mid \{j\in J \mid x_{i,j}\in W_1\}\in \mathcal{V}\}\in \mathcal{U}$ holds. Let $i\in I_0$. Then $B=\{j\in J \mid x_{i,j}\in W_1\}\in \mathcal{V}$. If $V$ is any open neighborhood of $x_i$, then $B'=\{j\in J \mid x_{i,j}\in V\}\in \mathcal{V}$, whence $B\cap B'\in \mathcal{V}$ holds. In particular, we can take $j\in B\cap B'$. Then $x_{i,j}\in V\cap W_1\neq \emptyset$. Since $V$ is arbitrary, this shows that $x_i\in \overline{W_1}\subset W$. Therefore $\mathcal{U}\ni I_0\subset \{i\in I \mid x_i\in W\}$, which shows that $\{i\in I \mid x_i\in W\}\in \mathcal{U}$. Since $W$ is arbitrary, we have $\displaystyle \lim_{i\to \mathcal{U}}x_i=x$.  
\end{proof}
 
As a consequence of Proposition \ref{prop: iterated ultralimit}, we see that for any Banach space $E$, the natural isomorphism
$$ \ell^{\infty}(I \times J, E) \ni (x_{i,j})_{(i,j) \in I \times J} \mapsto ((x_{i,j})_{j \in J})_{i \in I} \in \ell^{\infty}(I , \ell^{\infty}(J, E))$$
induces an isomorphism of the ultrapowers
$$ E^{\mathcal{U} \otimes \mathcal{V}} \ni (x_{i,j})^{\mathcal{U} \otimes \mathcal{V}} \mapsto ((x_{i,j})^{\mathcal{V}})^{\mathcal{U}} \in (E^{\mathcal{V}})^{\mathcal{U}}.$$
If we apply this to $E=M_*$ where $M$ is a von Neumann algebra, we obtain the following proposition which extends \cite[Proposition 2.1]{CP12} on iterated ultrapowers of ${\rm II}_1$ factors to arbitrary $\sigma$-finite von Neumann algebras. We leave the details to the reader.

\begin{prop} \label{prop: double ultrapower}
Let $M$ be any $\sigma$-finite von Neumann algebra. There exists a natural isomorphism of the Groh--Raynaud ultrapowers
$$ \pi_{\mathrm{GR}} :  M_\mathrm{GR}^{\mathcal{U} \otimes \mathcal{V}}  \rightarrow (M_\mathrm{GR}^{\mathcal{V}})_\mathrm{GR}^{\mathcal{U}}$$
characterized by
$$ \pi_{\mathrm{GR}}((x_{i,j})^{\mathcal{U} \otimes \mathcal{V}})=((x_{i,j})^{\mathcal{V}})^{\mathcal{U}} \; \text{ for all } \; (x_{i,j})_{(i,j) \in I \times J} \in \ell^\infty(I \times J, M).$$
Its predual map is the isomorphism
$$ (\pi_{\mathrm{GR}})_* :  (M_*)^{\mathcal{U} \otimes \mathcal{V}}  \ni (\varphi_{i,j})^{\mathcal{U} \otimes \mathcal{V}} \mapsto ((\varphi_{i,j})^{\mathcal{V}})^{\mathcal{U}} \in  ((M_*)^{\mathcal{V}})^{\mathcal{U}}.$$
In particular, $\pi_{\mathrm{GR}}$ restricts to an isomorphism between the Ocneanu corners
$$ \pi : M^{\mathcal{U} \otimes \mathcal{V}} \rightarrow (M^{\mathcal{V}})^{\mathcal{U}}.$$
\end{prop}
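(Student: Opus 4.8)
The plan is to obtain $\pi_{\mathrm{GR}}$ as the Banach-space adjoint of the isometric isomorphism of preduals furnished by the discussion after Proposition \ref{prop: iterated ultralimit}, and then to check that this adjoint is multiplicative and $\ast$-preserving by testing it on a weak-$\ast$ dense $C^*$-subalgebra. Concretely, first apply the observation following Proposition \ref{prop: iterated ultralimit} with $E = M_*$: it gives an isometric isomorphism of Banach spaces
$$\Phi : (M_*)^{\mathcal{U} \otimes \mathcal{V}} \to ((M_*)^{\mathcal{V}})^{\mathcal{U}}, \qquad (\varphi_{i,j})^{\mathcal{U} \otimes \mathcal{V}} \mapsto ((\varphi_{i,j})^{\mathcal{V}})^{\mathcal{U}},$$
the norm identity being exactly Proposition \ref{prop: iterated ultralimit} applied to the bounded scalar net $(\|\varphi_{i,j}\|)_{(i,j)}$. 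By construction of the Groh--Raynaud ultrapower, $(M_*)^{\mathcal{U} \otimes \mathcal{V}}$ is the predual of $M^{\mathcal{U} \otimes \mathcal{V}}_{\mathrm{GR}}$, while $((M_*)^{\mathcal{V}})^{\mathcal{U}} = ((M^{\mathcal{V}}_{\mathrm{GR}})_*)^{\mathcal{U}}$ is the predual of $(M^{\mathcal{V}}_{\mathrm{GR}})^{\mathcal{U}}_{\mathrm{GR}}$. Hence the adjoint of $\Phi^{-1}$ is a weak-$\ast$-weak-$\ast$ continuous isometric linear bijection
$$\pi_{\mathrm{GR}} : M^{\mathcal{U} \otimes \mathcal{V}}_{\mathrm{GR}} \to (M^{\mathcal{V}}_{\mathrm{GR}})^{\mathcal{U}}_{\mathrm{GR}},$$
whose predual map $(\pi_{\mathrm{GR}})_*$ is (the inverse of) $\Phi$, i.e.\ the relabelling $(\varphi_{i,j})^{\mathcal{U} \otimes \mathcal{V}} \mapsto ((\varphi_{i,j})^{\mathcal{V}})^{\mathcal{U}}$ of the statement. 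It remains to see that $\pi_{\mathrm{GR}}$ is a $\ast$-isomorphism.

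To that end, I would first unwind $\pi_{\mathrm{GR}}$ on the weak-$\ast$ dense $C^*$-subalgebra $A_{\mathcal{U} \otimes \mathcal{V}} := (M, \|\cdot\|_\infty)^{\mathcal{U} \otimes \mathcal{V}} \subset M^{\mathcal{U} \otimes \mathcal{V}}_{\mathrm{GR}}$. Given a bounded net $(x_{i,j})_{(i,j)} \in \ell^\infty(I \times J, M)$ and any $(\varphi_{i,j})^{\mathcal{U} \otimes \mathcal{V}} \in (M_*)^{\mathcal{U} \otimes \mathcal{V}}$, Proposition \ref{prop: iterated ultralimit} gives
\begin{align*}
\big\langle (x_{i,j})^{\mathcal{U} \otimes \mathcal{V}},\, (\varphi_{i,j})^{\mathcal{U} \otimes \mathcal{V}} \big\rangle
&= \lim_{(i,j) \to \mathcal{U} \otimes \mathcal{V}} \varphi_{i,j}(x_{i,j}) = \lim_{i \to \mathcal{U}} \lim_{j \to \mathcal{V}} \varphi_{i,j}(x_{i,j}) \\
&= \big\langle ((x_{i,j})^{\mathcal{V}})^{\mathcal{U}},\, ((\varphi_{i,j})^{\mathcal{V}})^{\mathcal{U}} \big\rangle,
\end{align*}
where on the right $(x_{i,j})^{\mathcal{V}} \in (M, \|\cdot\|_\infty)^{\mathcal{V}} \subset M^{\mathcal{V}}_{\mathrm{GR}}$ and then $((x_{i,j})^{\mathcal{V}})^{\mathcal{U}} \in (M^{\mathcal{V}}_{\mathrm{GR}}, \|\cdot\|_\infty)^{\mathcal{U}} \subset (M^{\mathcal{V}}_{\mathrm{GR}})^{\mathcal{U}}_{\mathrm{GR}}$. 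Since $(\pi_{\mathrm{GR}})_*$ is the relabelling above, this identity says precisely that $\pi_{\mathrm{GR}}((x_{i,j})^{\mathcal{U} \otimes \mathcal{V}}) = ((x_{i,j})^{\mathcal{V}})^{\mathcal{U}}$ on $A_{\mathcal{U} \otimes \mathcal{V}}$. On this subalgebra the assignment $(x_{i,j})^{\mathcal{U} \otimes \mathcal{V}} \mapsto ((x_{i,j})^{\mathcal{V}})^{\mathcal{U}}$ is manifestly a $\ast$-homomorphism of $C^*$-algebras — products and adjoints are formed entrywise, first inside the $\mathcal{V}$-ultraproduct $C^*$-algebra $(M, \|\cdot\|_\infty)^{\mathcal{V}}$ and then inside the $\mathcal{U}$-ultraproduct $C^*$-algebra $(M^{\mathcal{V}}_{\mathrm{GR}}, \|\cdot\|_\infty)^{\mathcal{U}}$ — and it is isometric by the same iterated-ultralimit formula for norms. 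Thus $\pi_{\mathrm{GR}}$ is a $\ast$-homomorphism on $A_{\mathcal{U} \otimes \mathcal{V}}$.

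Finally I would upgrade this to all of $M^{\mathcal{U} \otimes \mathcal{V}}_{\mathrm{GR}}$. Multiplication on a von Neumann algebra is separately weak-$\ast$ continuous and the involution is weak-$\ast$ continuous; combined with the weak-$\ast$ continuity of $\pi_{\mathrm{GR}}$ and the weak-$\ast$ density of $A_{\mathcal{U} \otimes \mathcal{V}}$, the standard two-step argument (fix $a$ in the dense subalgebra and let $b$ vary, then fix $b$ arbitrary and let $a$ vary) forces $\pi_{\mathrm{GR}}(ab) = \pi_{\mathrm{GR}}(a)\pi_{\mathrm{GR}}(b)$ and $\pi_{\mathrm{GR}}(a^*) = \pi_{\mathrm{GR}}(a)^*$ for all $a,b$; being a bijective isometry as well, $\pi_{\mathrm{GR}}$ is a $\ast$-isomorphism, and the formula $\pi_{\mathrm{GR}}((x_{i,j})^{\mathcal{U} \otimes \mathcal{V}}) = ((x_{i,j})^{\mathcal{V}})^{\mathcal{U}}$ for all bounded nets (together with weak-$\ast$ continuity and density) characterizes it. For the restriction to Ocneanu corners, fix a faithful state $\varphi \in M_*$; then $(\pi_{\mathrm{GR}})_*$ sends $(\varphi^{\mathcal{V}})^{\mathcal{U}}$ to $\varphi^{\mathcal{U} \otimes \mathcal{V}}$, so $\pi_{\mathrm{GR}}$ intertwines the normal states $\varphi^{\mathcal{U} \otimes \mathcal{V}}$ and $(\varphi^{\mathcal{V}})^{\mathcal{U}}$, hence carries the support projection of $\varphi^{\mathcal{U} \otimes \mathcal{V}}$ (which cuts out the Ocneanu corner $M^{\mathcal{U} \otimes \mathcal{V}}$) onto the support projection of $(\varphi^{\mathcal{V}})^{\mathcal{U}}$ (which cuts out $(M^{\mathcal{V}})^{\mathcal{U}}$), and therefore restricts to the asserted isomorphism $\pi : M^{\mathcal{U} \otimes \mathcal{V}} \to (M^{\mathcal{V}})^{\mathcal{U}}$.

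I do not expect a substantive obstacle: the whole argument is bookkeeping resting on Proposition \ref{prop: iterated ultralimit} and the basic ultraproduct machinery, which is presumably why the statement is left to the reader. The two points requiring genuine care are the multiplicativity upgrade — in particular the weak-$\ast$ density statements it relies on, namely that $(M, \|\cdot\|_\infty)^{\mathcal{U} \otimes \mathcal{V}}$ is weak-$\ast$ dense in $M^{\mathcal{U} \otimes \mathcal{V}}_{\mathrm{GR}}$ and, applied with $M^{\mathcal{V}}_{\mathrm{GR}}$ in place of $M$, that $(M^{\mathcal{V}}_{\mathrm{GR}}, \|\cdot\|_\infty)^{\mathcal{U}}$ is weak-$\ast$ dense in $(M^{\mathcal{V}}_{\mathrm{GR}})^{\mathcal{U}}_{\mathrm{GR}}$ — and keeping the two chains of predual identifications consistently aligned so that the same formula on representing sequences governs $\pi_{\mathrm{GR}}$ and $(\pi_{\mathrm{GR}})_*$.
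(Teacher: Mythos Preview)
Your proposal is correct and follows precisely the route the paper indicates: the paper explicitly says to apply the Banach-space ultrapower isomorphism $E^{\mathcal U\otimes\mathcal V}\cong (E^{\mathcal V})^{\mathcal U}$ with $E=M_*$ and leaves the details to the reader, and your argument supplies exactly those details---taking the adjoint of the predual isomorphism, verifying the relabelling formula on the weak-$\ast$ dense $\rC^*$-subalgebra $(M,\|\cdot\|_\infty)^{\mathcal U\otimes\mathcal V}$ via Proposition~\ref{prop: iterated ultralimit}, extending multiplicativity by separate weak-$\ast$ continuity, and passing to the Ocneanu corners by tracking the support projection of the product state. The only place worth a remark is the last step: to see that the corner cut by $\operatorname{supp}\bigl((\varphi^{\mathcal V})^{\mathcal U}\bigr)$ inside $(M_{\mathrm{GR}}^{\mathcal V})_{\mathrm{GR}}^{\mathcal U}$ really is $(M^{\mathcal V})^{\mathcal U}$, one uses that this support is $\leq e^{\mathcal U}$ (where $e=\operatorname{supp}\varphi^{\mathcal V}$) together with the elementary identification $(eNe)_{\mathrm{GR}}^{\mathcal U}=e^{\mathcal U}N_{\mathrm{GR}}^{\mathcal U}e^{\mathcal U}$, which follows from $(eN_*e)^{\mathcal U}=e^{\mathcal U}(N_*)^{\mathcal U}e^{\mathcal U}$.
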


Recall that if $N \subset M$ is a von Neumann subalgebra with faithful normal conditional expectation $\rE^{M}_{N} : M \to N$, then we have a natural embedding $N^{\mathcal{U}} \subset M^{\mathcal{U}}$ with faithful normal conditional expectation $\rE^{M^{\mathcal{U}}}_{N^{\mathcal{U}}}=\left(\rE^{M}_N  \right)^{\mathcal{U}} :M^{\mathcal{U}} \to N^{\mathcal{U}}$ and we have a commuting square
\begin{align*}
\begin{array}{ccc}
N & \subset  & M \\
\rotatebox{90}{$\supset$} & & \rotatebox{90}{$\supset$}  \\
N^{\mathcal{U}} & \subset  & M^{\mathcal{U}}\\
\end{array}
\end{align*}
where the conditional expectations satisfy
 $$\rE^{M^{\mathcal{U}}}_M \circ \rE^{M^{\mathcal{U}}}_{N^{\mathcal{U}}}  = \rE^{M^{\mathcal{U}}}_{N^{\mathcal{U}}} \circ \rE^{M^{\mathcal{U}}}_M =\rE^{N^{\mathcal{U}}}_N \circ \rE^{M^{\mathcal{U}}}_{N^{\mathcal{U}}}  = \rE^{M}_N \circ \rE^{M^{\mathcal{U}}}_M.$$
 By applying this to the inclusion $M \subset M^{\mathcal{V}}$ with the canonical faithful normal conditional expectation $\rE^{M^{\mathcal{V}}}_M : M^{\mathcal{V}} \rightarrow M$, we obtain the following result.

\begin{prop} \label{prop: commuting square}
Let $M$ be any $\sigma$-finite von Neumann algebra. Then we have a commuting square
\begin{align*}
\begin{array}{cccc}
M & \subset  & M^{\mathcal{V}} &\\
\rotatebox{90}{$\supset$} & & \rotatebox{90}{$\supset$}&  \\
M^{\mathcal{U}} & \subset  & (M^{\mathcal{V}})^{\mathcal{U}}& =M^{\mathcal{U} \otimes \mathcal{V}}\\
\end{array}
\end{align*}
where the canonical faithful normal conditional expectations satisfy
$$\rE^{M^{\mathcal{U} \otimes \mathcal{V}}}_M =\rE^{(M^{\mathcal{V}})^{\mathcal{U}}}_{M^{\mathcal{V}}} \circ \rE^{(M^{\mathcal{V}})^{\mathcal{U}}}_{M^{\mathcal{U}}} =\rE^{(M^{\mathcal{V}})^{\mathcal{U}}}_{M^{\mathcal{U}}} \circ  \rE^{(M^{\mathcal{V}})^{\mathcal{U}}}_{M^{\mathcal{V}}}=
\rE^{M^{\mathcal{U}}}_M \circ  \rE^{(M^{\mathcal{V}})^{\mathcal{U}}}_{M^{\mathcal{U}}} =\rE^{M^{\mathcal{V}}}_M \circ  \rE^{(M^{\mathcal{V}})^{\mathcal{U}}}_{M^{\mathcal{V}}}.$$
In particular, we have $M^{\mathcal{U}} \cap M^{\mathcal{V}}=M$.
\end{prop}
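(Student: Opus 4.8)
The plan is to deduce Proposition \ref{prop: commuting square} from Proposition \ref{prop: double ultrapower} together with the general nonsense about iterated conditional expectations recorded in the paragraph preceding it. First I would apply the natural embedding/commuting-square formalism to the particular inclusion $M \subset M^{\mathcal{V}}$, with its canonical faithful normal conditional expectation $\rE^{M^{\mathcal{V}}}_M : M^{\mathcal{V}} \to M$. Forming the Ocneanu ultrapower with respect to $\mathcal{U}$ yields the embedding $M^{\mathcal{U}} \subset (M^{\mathcal{V}})^{\mathcal{U}}$ with faithful normal conditional expectation $\rE^{(M^{\mathcal{V}})^{\mathcal{U}}}_{M^{\mathcal{U}}} = (\rE^{M^{\mathcal{V}}}_M)^{\mathcal{U}}$, and the displayed identities for compositions of conditional expectations specialize (taking $N = M$, ambient algebra $M^{\mathcal{V}}$) to
$$\rE^{(M^{\mathcal{V}})^{\mathcal{U}}}_{M^{\mathcal{V}}} \circ \rE^{(M^{\mathcal{V}})^{\mathcal{U}}}_{M^{\mathcal{U}}}  = \rE^{(M^{\mathcal{V}})^{\mathcal{U}}}_{M^{\mathcal{U}}} \circ \rE^{(M^{\mathcal{V}})^{\mathcal{U}}}_{M^{\mathcal{V}}}  = \rE^{M^{\mathcal{U}}}_M \circ \rE^{(M^{\mathcal{V}})^{\mathcal{U}}}_{M^{\mathcal{U}}}  = \rE^{M^{\mathcal{V}}}_M \circ \rE^{(M^{\mathcal{V}})^{\mathcal{U}}}_{M^{\mathcal{V}}}.$$
This gives the commuting square on the right-hand copy $(M^{\mathcal{V}})^{\mathcal{U}}$ with $M$ at the top-left corner.

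Next I would transport this along the isomorphism $\pi : M^{\mathcal{U} \otimes \mathcal{V}} \to (M^{\mathcal{V}})^{\mathcal{U}}$ of Proposition \ref{prop: double ultrapower}. The key compatibility to check is that $\pi$ maps the copies of $M$ and of $M^{\mathcal{U}}$ inside $M^{\mathcal{U} \otimes \mathcal{V}}$ onto the copies of $M$ and $M^{\mathcal{U}}$ inside $(M^{\mathcal{V}})^{\mathcal{U}}$, and intertwines the canonical expectations; for $M$ this is immediate from the formula $\pi((x_{i,j})^{\mathcal{U}\otimes\mathcal{V}}) = ((x_{i,j})^{\mathcal{V}})^{\mathcal{U}}$ applied to constant families, and for $M^{\mathcal{U}}$ it follows from applying the formula to families $(x_{i,j})$ that do not depend on $j$. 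Once the identifications are pinned down, the four-term identity above becomes exactly the asserted identity
$$\rE^{M^{\mathcal{U} \otimes \mathcal{V}}}_M =\rE^{(M^{\mathcal{V}})^{\mathcal{U}}}_{M^{\mathcal{V}}} \circ \rE^{(M^{\mathcal{V}})^{\mathcal{U}}}_{M^{\mathcal{U}}} =\rE^{(M^{\mathcal{V}})^{\mathcal{U}}}_{M^{\mathcal{U}}} \circ  \rE^{(M^{\mathcal{V}})^{\mathcal{U}}}_{M^{\mathcal{V}}}=\rE^{M^{\mathcal{U}}}_M \circ  \rE^{(M^{\mathcal{V}})^{\mathcal{U}}}_{M^{\mathcal{U}}} =\rE^{M^{\mathcal{V}}}_M \circ  \rE^{(M^{\mathcal{V}})^{\mathcal{U}}}_{M^{\mathcal{V}}},$$
after identifying $\rE^{M^{\mathcal{U}\otimes\mathcal{V}}}_M$ with $\rE^{(M^{\mathcal{V}})^{\mathcal{U}}}_M := \rE^{M^{\mathcal{U}}}_M \circ \rE^{(M^{\mathcal{V}})^{\mathcal{U}}}_{M^{\mathcal{U}}}$ via $\pi$.

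For the final claim $M^{\mathcal{U}} \cap M^{\mathcal{V}} = M$, I would argue as follows: inside $(M^{\mathcal{V}})^{\mathcal{U}}$ we have the two intermediate subalgebras $M^{\mathcal{U}}$ and $M^{\mathcal{V}}$ (the latter sitting as the image of $M^{\mathcal{V}} \subset (M^{\mathcal{V}})^{\mathcal{U}}$), and the commuting square says precisely that $\rE^{(M^{\mathcal{V}})^{\mathcal{U}}}_{M^{\mathcal{U}}}$ and $\rE^{(M^{\mathcal{V}})^{\mathcal{U}}}_{M^{\mathcal{V}}}$ commute with common value $\rE^{(M^{\mathcal{V}})^{\mathcal{U}}}_M$. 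If $x \in M^{\mathcal{U}} \cap M^{\mathcal{V}}$, then $x = \rE^{(M^{\mathcal{V}})^{\mathcal{U}}}_{M^{\mathcal{U}}}(x)$ since $x \in M^{\mathcal{U}}$, and applying $\rE^{(M^{\mathcal{V}})^{\mathcal{U}}}_{M^{\mathcal{V}}}$ and using that $x \in M^{\mathcal{V}}$ is fixed by it gives $x = \rE^{(M^{\mathcal{V}})^{\mathcal{U}}}_{M^{\mathcal{V}}}(\rE^{(M^{\mathcal{V}})^{\mathcal{U}}}_{M^{\mathcal{U}}}(x)) = \rE^{(M^{\mathcal{V}})^{\mathcal{U}}}_M(x) \in M$; the reverse inclusion $M \subset M^{\mathcal{U}} \cap M^{\mathcal{V}}$ is clear. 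I expect the only real subtlety — and hence the step deserving the most care — to be verifying that the isomorphism $\pi$ of Proposition \ref{prop: double ultrapower} genuinely carries the canonical conditional expectations onto each other and respects the two distinguished intermediate subalgebras; everything else is bookkeeping with the already-established commuting-square identities. Since these verifications are routine given the explicit formula for $\pi$ on bounded families, I would state them briefly and leave the details to the reader, in keeping with the style of the surrounding propositions.
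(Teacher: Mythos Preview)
Your approach is correct and is essentially identical to the paper's own (very brief) argument: the paper simply states that Proposition \ref{prop: commuting square} follows by applying the general commuting-square formalism for inclusions with expectation (recorded in the paragraph preceding it) to the particular inclusion $M \subset M^{\mathcal{V}}$ with its canonical expectation $\rE^{M^{\mathcal{V}}}_M$, together with the identification $(M^{\mathcal{V}})^{\mathcal{U}} = M^{\mathcal{U}\otimes\mathcal{V}}$ from Proposition \ref{prop: double ultrapower}. Your proposal spells out the compatibility of $\pi$ with the intermediate subalgebras and the deduction of $M^{\mathcal{U}} \cap M^{\mathcal{V}} = M$ in more detail than the paper does, but there is no difference in substance.
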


\subsection{The relative flow of weights}\label{subsection:flow}

Let $N \subset M$ be any irreducible inclusion of factors with separable predual and with expectation $\rE_N : M \rightarrow N$. Assume that $N$ is a type $\III_1$ factor. By using \cite[Theorem ${\rm XII}$.1.1]{Ta03}, we can identify the inclusions
$$\left( N \subset M \right) = \left ( N_{\psi} \rtimes_{\theta} \R^*_+ \subset M_{\psi} \rtimes_{\theta} \R^*_+ \right)$$ 
where $\theta : \R^*_+ \curvearrowright M_{\psi}$ is a trace-scaling action ($\tau \circ \theta_\lambda = \lambda^{-1}\tau$ for every $\lambda > 0$) that leaves $N_{\psi} \subset M_{\psi}$ globally invariant. We denote by $(v_{\lambda})_{\lambda > 0}$ the canonical unitaries in $N$ that implement the trace-scaling action $\theta : \R^*_+ \curvearrowright M_{\psi}$. 

The {\em relative flow of weights} $\theta^\psi : \R^*_+ \curvearrowright (N_\psi)' \cap M$ is defined by $\theta_\lambda^\psi(x) = v_\lambda x v_\lambda^{*}$ for every $x \in (N_\psi)' \cap M$ and every $\lambda > 0$. By Connes--Takesaki relative commutant theorem \cite[Chapter II, Theorem 5.1]{CT76}, ${\rE_N}|_{(N_\psi)' \cap M}$ defines a faithful normal state on $(N_\psi)' \cap M$ that is invariant under the flow $\theta^\psi$.

We observe that the relative flow of weights $\theta^\psi$ does not depend on the choice of the dominant weight $\psi$ on $N$ since all dominants weights on $N$ are unitarily conjugate\cite[Chapter II, Theorem 1.1]{CT76}. We moreover observe that the relative flow of weights $\theta^\psi$ is ergodic. Indeed, if $x \in (N_\psi)' \cap M$ is invariant under $\theta^\psi$, then $x \in (N_\psi \vee \{v_\lambda \mid \lambda > 0\})' \cap M = N' \cap M$ and so $x \in \C 1$.

\subsection{Kadison's property}

We introduce {\em Kadison's property} for arbitrary inclusions of von Neumann algebras with expectation.

\begin{df}
Let $N \subset M$ be any inclusion of $\sigma$-finite von Neumann algebras with expectation. We say that $N \subset M$ satisfies {\em Kadison's property} if there exists an abelian von Neumann subalgebra with expectation $A \subset N \subset M$ that is maximal abelian in $M$.
\end{df}

For any inclusion $N \subset M$ that satisfies Kadison's property, we have $N' \cap M \subset A' \cap M = A \subset N$ and so there is a unique faithful normal conditional expectation $\rE_N : M \to N$ (see \cite[Th\'eor\`eme 1.5.5]{Co72}). In particular, the inclusion of continuous cores $\core(N) \subset \core(M)$ is canonical. We next prove the following useful observation.

\begin{prop}\label{prop-masa}
Let $N \subset M$ be any inclusion of $\sigma$-finite von Neumann algebras with expectation $\rE_{N} : M \to N$. Assume that $N \subset M$ satisfies Kadison's property. Put $M^\infty = M \ovt \mathbf B(\rL^2(\R))$ and regard $M^\infty = \core(M) \rtimes_\theta \R^*_+$ where $\theta : \R^*_+ \curvearrowright \core(M)$ is the dual trace-scaling action. Then we have 
$$\core(N)' \cap M^\infty = \mathcal Z(\core(N)).$$ 
In particular, if $N$ is a type ${\rm III_{1}}$ factor, then so is $M$.
\end{prop}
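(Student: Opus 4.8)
The plan is to exploit Kadison's property to reduce the computation of the relative commutant to a statement about a maximal abelian subalgebra inside the continuous core, where things become transparent. First I would fix an abelian subalgebra with expectation $A \subset N$ that is maximal abelian in $M$, so in particular $A' \cap M = A$ and $A' \cap M^\infty = A \ovt \mathbf B(\rL^2(\R))$ (since $\mathbf B(\rL^2(\R))$ is a type $\I$ factor, tensoring preserves the maximal abelian property in the sense that $A' \cap (M \ovt \mathbf B(\rL^2(\R))) = A \ovt \mathbf B(\rL^2(\R))$, wait — that is not abelian; rather the point is $A \subset M^\infty$ has relative commutant $A \ovt \mathbf B(\rL^2(\R))$). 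The key observation is that the modular automorphism group of $M$ associated with a faithful normal state of the form $\varphi \circ \rE_N$, where $\varphi$ is a faithful normal state on $N$ whose centralizer contains $A$, restricts to $A$ trivially, so that $A$ sits inside $\core(M)$ in a canonical way; more precisely, choosing $\varphi \in N_\ast$ with $A \subset N_\varphi$ (possible since $A$ is maximal abelian with expectation in $N$), one has $A \subset M_{\varphi \circ \rE_N} \subset \core(M)$.

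Next I would compute $\core(N)' \cap M^\infty$ in stages. Since $A \subset \core(N)$ (it lies in the centralizer of the dominant weight built from $\varphi$, up to the standard identifications), any $x \in \core(N)' \cap M^\infty$ in particular commutes with $A$, hence $x \in A' \cap M^\infty = A \ovt \mathbf B(\rL^2(\R))$. Now I want to push this further: $x$ must commute with all of $\core(N)$, and in particular with the trace-scaling unitaries $(v_\lambda)_{\lambda>0} \subset N$ implementing $\theta$ on $\core(M)$, as well as with $N_\psi$ for a dominant weight $\psi$ on $N$. Using the identification $\core(N) = N_\psi \rtimes_\theta \R^*_+$ from Subsection \ref{subsection:flow}, commuting with $\core(N)$ forces $x$ to lie in $(N_\psi)' \cap M^\infty$ intersected with the fixed points of the relevant flow; here I would invoke the generalized Connes--Takesaki relative commutant theorem (Theorem \ref{thm-relative-connes-takesaki}, cited in the introduction) applied to the inclusion $N \subset M$ together with its amplification by $\mathbf B(\rL^2(\R))$, which identifies $\core(N)' \cap \core(M)$ with $\mathcal Z(\core(N))$ when $N' \cap M = \C 1$. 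The amplification to $M^\infty = \core(M) \rtimes_\theta \R^*_+$ then only adds the crossed-product direction, and since $x$ commutes with the implementing unitaries of $\theta$ as well, one concludes $x \in \mathcal Z(\core(N))' \cap \mathcal Z(\core(N))$... — more carefully, $x \in \core(N)' \cap M^\infty$ decomposes along the crossed product $M^\infty = \core(M) \rtimes \R^*_+$, and the component conditions force $x \in \core(N)' \cap \core(M) = \mathcal Z(\core(N))$, giving one inclusion; the reverse inclusion $\mathcal Z(\core(N)) \subseteq \core(N)' \cap M^\infty$ needs that $\mathcal Z(\core(N))$ is fixed by $\theta$ on the $M$-side and commutes with the $\R$-direction, which holds because $\mathcal Z(\core(N)) = \rL^\infty(\R)$ sits inside the canonical $\rL^\infty(\R) \subset \core(M)$ common to both cores.

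Finally, for the last sentence: if $N$ is type $\III_1$, then $\core(N)$ is a type $\II_\infty$ factor, so $\mathcal Z(\core(N)) = \C 1$, whence $\core(N)' \cap M^\infty = \C 1$; in particular $\mathcal Z(\core(M)) \subseteq \core(N)' \cap M^\infty = \C1$, so $\core(M)$ is a factor, which means $M$ is either type $\II$ or type $\III_1$; since $\core(N) \subseteq \core(M)$ is a type $\II_\infty$ factor sitting with expectation and $M$ is type $\III$ (it contains the type $\III_1$ factor $N$ with expectation, so $M$ is not semifinite — here one uses that a semifinite algebra cannot contain a type $\III$ subalgebra with a normal conditional expectation, by Takesaki's theorem on modular automorphisms), we get that $M$ is type $\III_1$.

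I expect the main obstacle to be the careful bookkeeping in identifying $A$ (and more generally $\mathcal Z(\core(N))$) inside a common copy of the continuous core of $M$ and tracking how the three flows involved (the modular flow making sense of the cores, the trace-scaling action $\theta$, and the crossed-product direction in $M^\infty$) interact — in other words, making precise the reduction "$\core(N)' \cap M^\infty = \core(N)' \cap \core(M)$" rather than the stages being conceptually difficult. The generalized Connes--Takesaki theorem does the real work once the setup is aligned, so the proof is mostly a matter of organizing the standard identifications correctly.
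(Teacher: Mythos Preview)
Your proposal has a genuine gap and misses the elementary key idea. You try to invoke Theorem \ref{thm-relative-connes-takesaki}, but that result requires the inclusion $N \subset M$ to be \emph{discrete} and \emph{irreducible} with $N$ of type $\III_1$, none of which is assumed in Proposition \ref{prop-masa}; moreover that theorem appears much later in the paper and is far deeper than what is needed here. Your fallback plan of decomposing along the crossed product $M^\infty = \core(M) \rtimes_\theta \R^*_+$ and analyzing Fourier coefficients is left vague, and your use of $A' \cap M^\infty = A \ovt \mathbf B(\rL^2(\R))$ does not get you inside $\core(N)$.

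The paper's argument is a two-line computation once one sees the right object. With $A \subset N_\varphi$ as you set up, one forms $\core(A) = \core_\varphi(A) = A \ovt \rL(\R)$. Since $A \subset M$ is maximal abelian and $\rL(\R) \subset \mathbf B(\rL^2(\R))$ is maximal abelian, the tensor product $A \ovt \rL(\R) \subset M \ovt \mathbf B(\rL^2(\R)) = M^\infty$ is again maximal abelian. But $\core(A) \subset \core(N)$, so
\[
\core(N)' \cap M^\infty \;\subset\; \core(A)' \cap M^\infty \;=\; \core(A) \;\subset\; \core(N),
\]
which immediately gives $\core(N)' \cap M^\infty = \mathcal Z(\core(N))$. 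The missing idea in your sketch is precisely to enlarge $A$ to $A \ovt \rL(\R)$ rather than to compute $A' \cap M^\infty$; this is what forces the relative commutant all the way down into $\core(N)$ without any appeal to structural theorems about type $\III_1$ factors or discrete inclusions. Your deduction of the final sentence (if $N$ is type $\III_1$ then so is $M$) is fine in spirit once the main identity is established.
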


\begin{proof}
Let $A \subset N \subset M$ be an abelian von Neumann subalgebra with expectation that is maximal abelian in $M$. Choose a faithful state $\varphi \in M_{\ast}$ such that $\varphi \circ \rE_{N} = \varphi$ and such that $A \subset N_{\varphi}$. Regard $\left( \core(N) \subset \core(M) \right ) = \left( \core_{\varphi}(N) \subset \core_{\varphi}(M) \right )$. Put $\core(A) = \core_{\varphi}(A) = A \ovt \rL(\R)$.

Since $A \subset M$ is maximal abelian and since $\rL(\R) \subset \mathbf B(\rL^{2}(\R))$ is maximal abelian, $\core(A) = A \ovt \rL(\R) \subset M \ovt \mathbf B(\rL^{2}(\R)) = M^\infty$ is maximal abelian. Then we have $\core(N)' \cap M^\infty \subset \core(A)' \cap M^\infty = \core(A) \subset \core(N)$ and so $\core(N)' \cap M^\infty = \mathcal Z(\core(N))$.
\end{proof}

\section{Structure of relative bicentralizers}

Following \cite{Co80, Ha85, Ma03}, we introduce the following terminology that we will use throughout.

\begin{df}
Let $N \subset M$ be any inclusion of $\sigma$-finite von Neumann algebras with expectation. Let $\varphi \in N_\ast$ be any faithful state. The {\em relative bicentralizer} $\rB(N \subset M, \varphi)$ of the inclusion $N \subset M$ with respect to $\varphi$ is defined by 
$$\rB(N \subset M, \varphi) = \left\{ x \in M \mid x a_{n} - a_{n} x \to 0 \text{ strongly}, \forall (a_{n})_{n} \in \AC(N, \varphi)\right\}$$
where $$\AC(N, \varphi) = \left \{ (a_{n})_{n} \in \ell^{\infty}(\N, N) \mid \lim_{n} \|a_{n} \varphi - \varphi a_{n}\| = 0\right\}$$ is the {\em asymptotic centralizer} of $\varphi$.
\end{df}

Observe that we always have $N' \cap M \subset \rB(N\subset M, \varphi) \subset (N_{\varphi})' \cap M$.  When $N = M$, we simply have that $\rB(N \subset M,\varphi) = \rB(M, \varphi)$ is the usual bicentralizer.

Let us point out that in the definition of $\rB(N \subset M, \varphi)$, it is enough to consider sequences in $\AC(N, \varphi)$ consisting of unitaries. Indeed, since $\AC(N, \varphi)$ is a $\rC^{*}$-algebra, every element is a linear combination of at most four unitaries. 

\begin{rem}
We note that $\rB(N\subset M, \varphi)$ is always with (canonical) expectation in $M$. Indeed, extend $\varphi$ to a faithful normal state on $M$ by using any faithful normal conditional expectation $\rE_N : M \rightarrow N$. Then $\sigma^{\varphi}$ leaves $\AC(N,\varphi)$ globally invariant and so $\rB(N \subset M, \varphi)$ is also globally invariant under $\sigma^{\varphi}$. Therefore, there exists a unique $\varphi$-preserving faithful normal conditional expectation $\rE^\varphi_{\rB(N \subset M, \varphi)} : M \rightarrow \rB(N \subset M, \varphi)$. Since $N' \cap M \subset \rB(N \subset M, \varphi)$ and since the $\varphi$-preserving faithful normal conditional expectation $\rE^{\varphi}_{N' \cap M} : M \rightarrow N' \cap M$ does not depend on the choice of $\rE_N$, it follows $\rE^\varphi_{\rB(N \subset M, \varphi)}$ does not depend either on the choice of $\rE_N$ and is characterized by
$$ \rE^{\varphi}_{N' \cap M} \circ \rE^\varphi_{\rB(N \subset M, \varphi)} = \rE^{\varphi}_{N' \cap M}.$$
\end{rem}

The relative bicentralizer $\rB(N \subset M, \varphi)$ has the following ultraproduct interpretation.

\begin{prop} \label{ultraproduct_bicentralizer}
Let $N \subset M$ be any inclusion of $\sigma$-finite von Neumann algebras with expectation.  Let $\varphi \in N_*$ be any faithful state. For any nonprincipal ultrafilter $\omega \in \beta (\N) \setminus \N$, we have
$$ \rB(N \subset M,\varphi)=(N^{\omega}_{\varphi^{\omega}})' \cap M$$
and 
$$ (N^{\omega}_{\varphi^{\omega}})' \cap M^{\omega} \subset \rB(N \subset M,\varphi)^{\omega}.$$
\end{prop}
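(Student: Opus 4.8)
The plan is to establish the two statements separately, the first being an equality between two concrete subalgebras of $M$ and the second an inclusion into the ultrapower $\rB(N\subset M,\varphi)^\omega$. For the equality $\rB(N \subset M,\varphi)=(N^{\omega}_{\varphi^{\omega}})' \cap M$, I would first recall the basic description of the asymptotic centralizer inside the Ocneanu ultrapower: a bounded sequence $(a_n)_n$ in $N$ satisfies $\|a_n\varphi - \varphi a_n\|\to 0$ along $\omega$ if and only if the element $(a_n)^\omega \in N^\omega$ lies in the centralizer $N^\omega_{\varphi^\omega}$ (this is the standard fact that $\|x\xi_{\varphi^\omega}-\xi_{\varphi^\omega}x\|=0$, i.e. commutation with $\varphi^\omega$, translates into the asymptotic condition; see e.g.\ \cite{AH12}). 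Given $x \in M$, the condition $x \in \rB(N\subset M,\varphi)$ says exactly that $xa_n - a_nx \to 0$ $\ast$-strongly for every $(a_n)_n \in \AC(N,\varphi)$; using Lemma \ref{lem: ultralimit test}, this is equivalent to requiring, for every nonprincipal ultrafilter $\omega$, that $(xa_n - a_nx)^\omega = 0$ in $M^\omega$, i.e.\ that $x^\omega = x$ commutes with every element of $N^\omega_{\varphi^\omega}$ of the form $(a_n)^\omega$. Since every element of $N^\omega_{\varphi^\omega}$ can be lifted to a $\ast$-strongly bounded sequence in $N$ asymptotically commuting with $\varphi$, this is precisely $x \in (N^\omega_{\varphi^\omega})' \cap M$. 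One has to be a little careful about the direction "$\rB \subseteq (N^\omega_{\varphi^\omega})'\cap M$": here $\omega$ is fixed, so I would check that membership in $\rB$, which quantifies over \emph{all} sequences in $\AC(N,\varphi)$ (in particular over sequences realizing any prescribed element of $N^\omega_{\varphi^\omega}$ along the given $\omega$), forces commutation with that element.

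For the second statement, let $x = (x_n)^\omega \in (N^{\omega}_{\varphi^{\omega}})' \cap M^{\omega}$, where $(x_n)_n$ is a bounded sequence in $M$ representing $x$. I want to show $x \in \rB(N\subset M,\varphi)^\omega$, i.e.\ that for all but $\omega$-negligibly many $n$, the element $x_n$ lies in $\rB(N\subset M,\varphi)$; equivalently, after modifying the representing sequence on an $\omega$-negligible set, each $x_n$ is in $\rB(N\subset M,\varphi)$. The natural route is a diagonal/reindexing argument via the iterated ultrapower $M^{\mathcal U \otimes \omega} \cong (M^\omega)^{\mathcal U}$ from Proposition \ref{prop: double ultrapower}: pick a second nonprincipal ultrafilter $\mathcal U$, and observe that since $x$ commutes with $N^\omega_{\varphi^\omega}$, the constant-in-$\mathcal U$ element it defines in $(M^\omega)^{\mathcal U}$ commutes with $N^\omega_{\varphi^\omega}$ viewed inside $(N^\omega)^{\mathcal U}_{(\varphi^\omega)^{\mathcal U}}$; then transport this through the identification $(M^\omega)^{\mathcal U} \cong M^{\mathcal U \otimes \omega}$ and use the first part of the proposition (applied to the inclusion at the level of $M^{\mathcal U \otimes \omega}$, or by a direct $\eps$-$\delta$ extraction) to conclude that $x_n \in \rB(N\subset M,\varphi)$ for $n$ in a set in $\omega$. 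Alternatively, and perhaps more cleanly, I would argue directly: fix a countable $\|\cdot\|_\varphi$-dense sequence of unitaries $(u^{(k)}_m)_m \in \AC(N,\varphi)$ generating enough of $N^\omega_{\varphi^\omega}$, and for each $n$ choose, by a standard diagonal extraction using that $\|[x_n, (u^{(k)}_m)_m]^\omega\|=0$, indices making $x_n$ asymptotically commute with all these sequences, showing $x_n \in \rB(N\subset M,\varphi)$ for $\omega$-almost all $n$.

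The main obstacle I anticipate is the lifting step in the second statement: passing from the \emph{single} relation "$x$ commutes with $N^\omega_{\varphi^\omega}$" (a statement about one ultraproduct element) to "$\omega$-almost every $x_n$ commutes with \emph{all} of $\AC(N,\varphi)$" (a statement with a fresh universal quantifier over sequences, for each fixed $n$). Making this quantifier switch rigorous is exactly the kind of thing the iterated-ultrapower machinery of Section 2 is designed for, so I expect the slick proof to invoke Proposition \ref{prop: double ultrapower} together with the first half of the present proposition, reducing the second inclusion to a formal consequence of the first at one level up. Everything else — the translation between $\AC(N,\varphi)$ and $N^\omega_{\varphi^\omega}$, the use of Lemma \ref{lem: ultralimit test}, and the reduction to unitaries noted just before the proposition — is routine.
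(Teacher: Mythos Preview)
Your treatment of the first equality is fine and matches the paper's: both reduce to the $\varepsilon$--$\delta$ characterization ``$x\in\rB(N\subset M,\varphi)$ iff $\forall\varepsilon>0\ \exists\delta>0\ \forall u\in\mathcal U(N):\ \|u\varphi-\varphi u\|\le\delta\Rightarrow\|[u,x]\|_\varphi\le\varepsilon$'', which is exactly the statement $x\in(N^\omega_{\varphi^\omega})'\cap M$.

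The second part, however, has a genuine gap at the very first step. You write that $x=(x_n)^\omega\in\rB(N\subset M,\varphi)^\omega$ means ``for all but $\omega$-negligibly many $n$, the element $x_n$ lies in $\rB(N\subset M,\varphi)$''. This is false: membership in $\rB^\omega$ means only that $x$ admits \emph{some} representing sequence in $\rB$, equivalently that $\|x_n-\rE^\varphi_{\rB}(x_n)\|_\varphi\to 0$ along $\omega$. The individual $x_n$ need not lie in $\rB$ for any $n$ (take $x_n=b+\tfrac1n w$ with $b\in\rB$, $w\notin\rB$). Both of your proposed routes --- the iterated-ultrapower transport and the diagonal extraction against countably many $\AC$-sequences --- are aimed at proving this too-strong statement and do not yield the inclusion. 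In particular, applying the first part ``one level up'' would at best give you $\rB(N^\omega\subset M^\omega,\varphi^\omega)$, which is not the same object as $\rB(N\subset M,\varphi)^\omega$.

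The paper's argument is completely different and does not use iterated ultrapowers. It proves a quantitative Dixmier-type statement: for every $x\in M$ with $\rE^\varphi_{\rB}(x)=0$ and every $\delta>0$ there is $u\in\mathcal U(N)$ with $\|u\varphi-\varphi u\|<\delta$ and $\|uxu^*-x\|_\varphi\ge\|x\|_\varphi$. This is obtained by taking the element of minimal $\|\cdot\|_\varphi$-norm in $\overline{\mathrm{co}}^w\{uxu^*: \|u\varphi-\varphi u\|<\delta\}$ and showing it lies in $\rB$, hence is $\varphi$-orthogonal to $x$. Once this is in hand, given $y=(y_n)^\omega\in(N^\omega_{\varphi^\omega})'\cap M^\omega$ one sets $z_n=y_n-\rE^\varphi_{\rB}(y_n)$ and, for each $n$, chooses $u_n$ with $\delta$ small enough \emph{both} to force $\|[u_n,\rE^\varphi_{\rB}(y_n)]\|_\varphi$ small (using the $\varepsilon$--$\delta$ characterization applied to $\rE^\varphi_{\rB}(y_n)\in\rB$) \emph{and} to have $\|u_n z_n u_n^*-z_n\|_\varphi\ge\|z_n\|_\varphi$. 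Then $u=(u_n)^\omega\in N^\omega_{\varphi^\omega}$ commutes with $y$, forcing $\lim_\omega\|z_n\|_\varphi=0$, i.e.\ $y\in\rB^\omega$. The crucial point you are missing is this adaptive choice of $\delta$ depending on each $y_n$; no amount of reindexing will produce it.
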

\begin{proof}
Extend $\varphi$ to $M$ by using any faithful normal conditional expectation from $M$ to $N$. Then for any $x \in M$, we have $x \in (N^{\omega}_{\varphi^{\omega}})' \cap M$ if and only if for every $\varepsilon > 0$, there exists $\delta > 0$ such that for every $u \in \mathcal{U}(N)$ we have
 $$\|u \varphi-\varphi u\| \leq \delta \quad  \Rightarrow \quad \| ux-xu \|_\varphi \leq \varepsilon. $$
This is clearly equivalent to $x \in \rB(N \subset M, \varphi)$.

For the second part of the proposition, it is enough to show that for any $x \in M$ such that $\|x\|_{\infty} \leq 1$ and $\rE^\varphi_{\rB(N \subset M, \varphi)}(x)=0$ and any $\delta > 0$, we can find $u \in \mathcal{U}(N)$ such that $\|u \varphi-\varphi u\| \leq \delta$ and $\|uxu^*-x\|_\varphi \geq \|x \|_\varphi$. Let $y$ be the element of minimal $\| \cdot \|_\varphi$-norm in
$$ C_\delta(x) = \overline{\rm co}^{w} \{ uxu^{*} \mid u \in \mathcal{U}(N),   \| u \varphi - \varphi u \| < \delta \}.$$
Let us show that $y \in \rB(N \subset M, \varphi)$. Take a sequence $v_n \in \mathcal{U}(N)$ such that $\lim_n \|v_n \varphi-\varphi v_n\|=0$. Put $\varepsilon_n= \frac12 \max(\|v_n \varphi-\varphi v_n\|, \delta)$ for every $n \in \N$. Take a sequence $z_n \in \mathrm{co}\{ uxu^* \mid u \in \mathcal{U}(N),   \| u \varphi - \varphi u \| < \delta-\varepsilon_n \}$ such that $\|z_n-y\|_\varphi \to 0$. Then for every $n \in \N$, we have $v_nz_nv_n^* \in C_\delta(x)$. Note that $\|v_nz_nv_n^*-v_nyv_n^*\|_\varphi \to 0$ and $\|v_n yv_n^*\|_\varphi \to \|y\|_\varphi$, thus $\|v_nz_nv_n^*\|_\varphi \to \|y \|_\varphi$. Since $y$ is the element of minimal norm in $C_\delta(x)$, this forces $\|v_nz_nv_n^*-y \|_\varphi \to 0$ and therefore $\|v_nyv_n^*-y\|_\varphi \to 0$. This shows that $y \in \rB(N \subset M, \varphi)$. Therefore, we have 
$$0=\Re(\varphi(y^*x)) \in \overline{\rm co} \{\Re(\varphi(ux^*u^*x))  \mid u \in \mathcal{U}(N), \;  \| u \varphi - \varphi u \| < \delta \}.$$
This implies that there exists $u \in \mathcal{U}(N)$ with $ \| u \varphi - \varphi u \| < \delta $ such that $\Re(\varphi(ux^*u^*x)) \leq \delta/2$. Thus we obtain 
$$\|uxu^*-x\|_\varphi^2 = \|uxu^*\|_\varphi^2+ \|x\|_\varphi^2 - 2\Re(\varphi(ux^*u^*x))  \geq 2\|x\|_\varphi^2-2\delta.$$
Hence if we suppose that $2 \delta \leq \|x\|_\varphi^2$ (which we can always do, without loss of generality), we obtain $\|uxu^*-x\|_\varphi \geq \|x\|_\varphi$.
\end{proof}

To prove Theorem \ref{thm: Connes' isomorphism and bicentralizer flow}, we use the ultraproduct technology. The crucial ingredient is Connes--St\o rmer transitivity theorem \cite{CS76} which shows that the ultrapower of a type $\III_1$ factor has a \emph{strictly homogeneous state space}  \cite[Theorem 4.20]{AH12}. Namely, for any pair of faithful normal states $\varphi, \psi$ on $M^{\omega}$, we can find $u \in \mathcal U(M^{\omega})$ such that $u\varphi u^{*}=\psi$. To construct the bicentralizer flow, we will also need the following lemma.

\begin{lem}\label{lem: family of partial isometries} 
Let $M$ be any nontrivial factor with strictly homogeneous state space. Let $\varphi \in M_{\ast}$ be any faithful state. Then $M_\varphi$ is a type $\II_1$ factor and for any $\lambda > 0$, we can find a finite family $v_1, \dots, v_n$ of partial isometries in $M$ such that $v_k \varphi = \lambda \varphi v_k$ for all $k=1,\dots, n$ and $\sum_{k=1}^{n} v_k^{*}v_k=1$. If $\lambda \geq 1$, then we can take $n=1$.
\end{lem}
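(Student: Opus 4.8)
The plan is to establish the three assertions in order. \emph{The claim that $M_\varphi$ is a type $\II_1$ factor:} First I would observe that $M_\varphi$ is finite, since $\varphi$ restricted to $M_\varphi$ is a faithful normal tracial state. It is a factor by a standard argument: if $z \in \rZ(M_\varphi)$ is a nontrivial projection, then $z\varphi z$ and $(1-z)\varphi(1-z)$ are faithful normal states on $zMz$ and $(1-z)M(1-z)$ with $z\varphi z + (1-z)\varphi(1-z)$ fixed by the modular automorphism group, but strict homogeneity of the state space of $M$ forces these corners to be conjugate in a way incompatible with $z$ being central and nontrivial — more directly, one uses that $M$ has no minimal projections (it is nontrivial with homogeneous state space, hence not type $\I_n$ or $\C$) together with Connes--Størmer transitivity to move any spectral projection of any element of $M_\varphi$, contradicting centrality unless $M_\varphi' \cap M_\varphi = \C 1$. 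Finally $M_\varphi$ is not finite-dimensional: if it were, it would be a matrix algebra $\mathbb M_k(\C)$, but then $M$ would have a faithful state whose centralizer is $\mathbb M_k$, which by the structure of such states forces $M$ to be a direct sum/tensor expression with discrete modular spectrum contradicting strict homogeneity (alternatively: strict homogeneity implies $M$ is a type $\III_1$ factor, whose centralizers are type $\II_1$ by Connes). So $M_\varphi$ is a type $\II_1$ factor.

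\emph{Construction of the partial isometries:} For $\lambda > 0$ I want partial isometries $v_k$ with $v_k\varphi = \lambda\varphi v_k$, equivalently $v_k \xi_\varphi = \lambda \, \Delta_\varphi^{?}$-type relations; concretely $v_k\varphi = \lambda\varphi v_k$ means $\varphi(v_k^* x) = \lambda\varphi(x v_k^*)$ for all $x$, i.e. $v_k$ is an eigenoperator: $\sigma^\varphi_t(v_k) = \lambda^{it} v_k$ in the analytic sense, so that $v_k^* v_k, v_k v_k^* \in M_\varphi$. To build one such $v$, pick any faithful state $\psi$ of the form obtained by cutting: since the state space of $M$ is strictly homogeneous, I can find a unitary $u$ with $u\varphi u^* = \psi$ for a $\psi$ chosen so that $\Delta_\psi$ has $\lambda$ in its point spectrum relative to $\varphi$; but cleaner: using that $M_\varphi$ is type $\II_1$, fix a projection $p \in M_\varphi$ with $\varphi(p) = \min(\lambda,1)/\max(\lambda,1)$ suitably, and use Connes--Størmer transitivity in $M$ to produce a partial isometry implementing a conjugacy between the states $\varphi(p \cdot p)$ and $\lambda\,\varphi(q\cdot q)$ on corners of matching trace. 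The key structural input is that any $\lambda > 0$ is an approximate eigenvalue of $\Delta_\varphi$ (this is what the paper flags right after Theorem~\ref{thm: Connes' isomorphism and bicentralizer flow}), but here strict homogeneity upgrades this to an honest eigenvalue on the level of partial isometries. Having one partial isometry $v$ with $v^*v = p \in M_\varphi$ and $vv^* = q \in M_\varphi$ and $v\varphi = \lambda\varphi v$: if $\lambda \geq 1$ then automatically $\varphi(q) = \lambda^{-1}\varphi(p) \le \varphi(p)$, and I arrange $\varphi(p) = 1$, i.e. $v^*v = 1$, giving the case $n = 1$. If $\lambda < 1$, one such $v$ has $\varphi(v^*v) = \lambda\,\varphi(vv^*) < \varphi(vv^*) \le 1$, so a single $v$ cannot have $v^*v = 1$; instead, using the type $\II_1$ factor $M_\varphi$ (which has projections of every trace in $[0,1]$ and in which any two projections of equal trace are equivalent) I partition $1 = \sum_{k=1}^n p_k$ in $M_\varphi$ with each $\varphi(p_k)$ small enough that $\lambda^{-1}\varphi(p_k) \le$ a fixed value, pick orthogonal projections $q_k \in M_\varphi$ (recursively, using that the leftover trace stays positive because $\lambda^{-1}\sum\varphi(p_k) $ can be made $\le 1$ by taking $n \ge \lceil \lambda^{-1}\rceil$ and $\varphi(p_k)\approx 1/n$) with $\varphi(q_k) = \lambda^{-1}\varphi(p_k)$, and for each $k$ produce $v_k$ with $v_k^*v_k = p_k$, $v_k v_k^* = q_k$, $v_k\varphi = \lambda\varphi v_k$, by transporting the earlier single partial isometry via unitaries of $M_\varphi$ (which commute with $\varphi$ and so preserve the eigenrelation). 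Then $\sum_k v_k^* v_k = \sum_k p_k = 1$, as desired.

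\emph{Expected main obstacle.} The genuinely delicate point is producing even a single partial isometry $v$ with the exact eigenrelation $v\varphi = \lambda\varphi v$ and prescribed source/range projections in $M_\varphi$ — passing from "$\lambda$ is an approximate eigenvalue of $\Delta_\varphi$" to an \emph{exact} eigenoperator. Here I would lean on Connes--Størmer transitivity (via strict homogeneity): consider on $M$ the two faithful states $\varphi$ and, for a projection $p \in M_\varphi$ of trace $t$, the state $\omega$ defined so that on $pMp$ it looks like $\lambda$-scaled $\varphi$ and whose centralizer contains a matching projection; apply transitivity to get a unitary conjugating these, then compress to extract $v$. The bookkeeping of traces in $M_\varphi$ to guarantee the $q_k$ can be chosen pairwise orthogonal when $\lambda < 1$ is routine given that $M_\varphi$ is type $\II_1$, so I do not expect difficulty there. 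The factoriality and type $\II_1$ assertions are essentially citations to Connes' structure theory for type $\III_1$ factors, repackaged through the strict-homogeneity hypothesis; I would state them with a one-line justification and a reference.
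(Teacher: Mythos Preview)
Your overall approach matches the paper's: cite the fact that $M_\varphi$ is a type $\II_1$ factor (the paper cites \cite[Proposition 4.24]{AH12}), use strict homogeneity to produce, for any pair of nonzero projections $p,q \in M_\varphi$, a partial isometry $v$ with $v^*v=p$, $vv^*=q$, $v\varphi = \frac{\varphi(p)}{\varphi(q)}\varphi v$ (the paper cites the proof of \cite[Proposition 4.22]{AH12} for exactly this), and then assemble the $v_k$ by choosing projections with suitable traces.

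There is, however, a concrete arithmetic error in your $\lambda<1$ case. You propose choosing the range projections $q_k$ \emph{pairwise orthogonal}, arguing that ``the leftover trace stays positive because $\lambda^{-1}\sum\varphi(p_k)$ can be made $\le 1$.'' But the $p_k$ form a partition of unity, so $\sum_k \varphi(p_k)=1$ regardless of $n$, and hence $\sum_k \varphi(q_k)=\lambda^{-1}\sum_k\varphi(p_k)=\lambda^{-1}>1$. Orthogonal $q_k$'s are therefore impossible. The fix is simply to drop the orthogonality requirement on the $q_k$: the lemma only asks for $\sum_k v_k^*v_k=1$, and nothing is claimed about $\sum_k v_kv_k^*$. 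What you do need is that each $\varphi(q_k)=\lambda^{-1}\varphi(p_k)\le 1$, which is exactly what your choice $n\ge \lceil\lambda^{-1}\rceil$ with $\varphi(p_k)\approx 1/n$ guarantees. The paper's proof makes this explicit, writing that the $q_k$ are ``not necessarily orthogonal.''
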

\begin{proof} 
By \cite[Proposition 4.24]{AH12}, we know that $M_\varphi$ is a type $\II_1$ factor and by the proof of \cite[Proposition 4.22]{AH12}, we know that if $p,q \in M_{\varphi}$ are two nonzero projections, then we can find $v \in M$ such that $v^{*}v=p$, $vv^{*}=q$ and $v\varphi=\frac{\varphi(p)}{\varphi(q)}\varphi v$. If $\lambda \geq 1$, we can take $p=1$ and $q \in M_{\varphi}$ such that $\varphi(q)=\frac{1}{\lambda}$ and we obtain an isometry $v \in M$ such that $v\varphi=\lambda \varphi v$. If $\lambda \leq 1$, choose $n \geq 1$ such that $\frac1n \leq \lambda$. Then we can find a finite partition of unity $p_1, \dots, p_n$ in $M_{\varphi}$ and some projections $q_1, \dots, q_n$ (not necessarily orthogonal) such that $\varphi(p_k)=\lambda\varphi(q_k)$. Then we can find a family $v_k \in M$ such that $v_k^{*}v_k=p_k$, $v_kv_k^{*}=q
_k$ and $v_k \varphi = \frac{\varphi(p_k)}{\varphi(q_k)}\varphi v_k=\lambda \varphi v_k$ as we wanted.
\end{proof}

\begin{proof}[Proof of Theorem \ref{thm: Connes' isomorphism and bicentralizer flow}]
$(\rm i)$ Let $\omega_1,\omega_2\in \beta (\mathbf{N})\setminus \mathbf{N}$ be any nonprincipal ultrafilters. Let $u \in \mathcal U(N^{\omega_1})$ (resp.\ $v \in \mathcal U(N^{\omega_2})$) such that $u\varphi^{\omega_1}u^*=\psi^{\omega_1}$ (resp.\ $v\varphi^{\omega_2}v^*=\psi^{\omega_2}$). Then, inside $M^{\omega_2 \otimes \omega_1}$, we have $v^*u \in N^{\omega_2 \otimes \omega_1}_{\varphi^{\omega_2 \otimes \omega_1}}$. For every $x \in \rB(N \subset M,\varphi)$, we have $v^{*}ux=xv^{*}u$ which means that $uxu^{*}=vxv^{*}$. Since $uxu^{*} \in M^{\omega_1}$ and $vxv^{*} \in M^{\omega_2}$, Proposition \ref{prop: commuting square} shows that $uxu^{*}=vxv^{*}$ is an element of $M$. Thus, we have shown that for every $x \in \rB(N \subset M, \varphi)$, there exists an element $\beta_{\psi,\varphi}(x) \in M$ given by $\beta_{\psi, \varphi}(x)=uxu^{*}$ where $u \in N^{\omega}$ is any unitary such that $u\varphi^{\omega}u^{*}=\psi^{\omega}$ and $\omega \in \beta(\N) \setminus \N$ is any nonprincipal ultrafilter. In particular, if $w$ is a unitary in $N^{\omega}_{\psi^{\omega}}$, we can replace $u$ by $wu$, so that we have $\beta_{\psi, \varphi}(x)=wuxu^{*}w^{*}=w\beta_{\psi,\varphi}(x)w^{*}$. This shows that $\beta_{\psi, \varphi}(x) \in \rB(N \subset M,\psi)$. Now, if $(a_n)_{n \in \N}$ is a uniformly bounded sequence in $N$ such that $\|a_n \varphi-\psi a_n \| \to 0$, then it defines an element $a=(a_n)^{\omega} \in M^{\omega}$ such that $a\varphi^{\omega}=\psi^{\omega}a$ and so $u^{*}a \in N^{\omega}_{\varphi^{\omega}}$. This shows that $u^{*}ax=xu^{*}a$, that is, $ax=uxu^{*}a=\beta_{\psi, \varphi}(x)a$. Since the nonprincipal ultrafilter $\omega \in \beta(\N) \setminus \N$ is arbitrary, by Lemma \ref{lem: ultralimit test}, we conclude that $a_n x - \beta_{\psi,\varphi}(x)a_n \to 0$ $*$-strongly as $n \to \infty$. It is straightforward to check that $\beta_{\psi, \varphi}$ is a $*$-homomorphism and that $\beta_{\varphi_3, \varphi_2} \circ \beta_{\varphi_2, \varphi_1} = \beta_{\varphi_3, \varphi_1}$ for every faithful state $\varphi_i \in N_*$, $ i \in \{1, 2, 3\}$. This shows in particular that $\beta_{\psi, \varphi} : \rB(N \subset M, \varphi) \rightarrow \rB(N \subset M, \psi) $ is an isomorphism with inverse $\beta_{\varphi, \psi}$. Let $\rE_N: M \rightarrow N$ be any faithful normal conditional expectation and use it to extend $\varphi$ and $\psi$ to faithful normal states on $M$. Then we clearly have $\psi \circ \beta_{\psi,\varphi} = \varphi$. Since $N' \cap M$ is clearly fixed by $\beta_{\psi,\varphi}$, this implies that
$$\rE_{N' \cap M}^{\psi} \circ \beta_{\psi, \varphi}  = \rE_{N' \cap M}^{\varphi}.$$

$(\rm ii)$ Let $\omega_1,\omega_2 \in \beta(\mathbf{N}) \setminus \mathbf{N}$ be any nonprincipal ultrafilters and $\lambda>0$. By Lemma \ref{lem: family of partial isometries}, there exists a family $v_1,\dots, v_n$ of partial isometries in $N^{\omega_1}$ such that $v_k\varphi^{\omega_1}=\lambda \varphi^{\omega_1}v_k$ for all $k \in \{1,\dots,n\}$ and $\sum_{k=1}^{n}v_kv_k^{*}=1$. Similarly, let $w_1,\dots, w_m$ a family of partial isometries in $N^{\omega_2}$ such that $w_l\varphi^{\omega_2}=\lambda \varphi^{\omega_2}w_l$ for all $l \in \{1,\dots,m \}$ and $\sum_{l=1}^{m}w_lw_l^{*}=1$. Then inside $M^{\omega_2 \otimes \omega_1}$, we have $v_k^{*}w_l \in N^{\omega_2 \otimes \omega_1}_{\varphi^{\omega_2 \otimes \omega_2}}$ for all $k\in\{1, \dots, n\}$ and all $l \in \{1, \dots, m\}$. Then for all $x \in \rB(N \subset M, \varphi)$, we have
$$ xv_k^{*}w_l=v_k^{*}w_lx$$
and so
$$ v_kxv_k^{*}(w_lw_l^{*})=(v_kv_k^{*})w_lxw_l^{*}.$$
By summing over $k$ and $l$, we obtain
\begin{equation}\label{eq:flow}
\sum_{k=1}^{n}v_kxv_k^{*}=\sum_{l=1}^{m}w_l x w_l^{*}.
\end{equation}
But the left hand side of \eqref{eq:flow} lies in $M^{\omega_1}$ and the right hand side of \eqref{eq:flow} lies in $M^{\omega_2}$. Then they are both in $M$ by Proposition \ref{prop: commuting square} and the element $\beta_\lambda^\varphi(x)=\sum_{k=1}^{n}v_kxv_k^{*} \in M$ is independent of the choice of the nonprincipal ultrafilter $\omega \in \beta(\N) \setminus \N$ and the family $v_1, \dots, v_n \in N^{\omega}$ as above. In particular, if $u$ is a unitary in $M^{\omega}_{\varphi^{\omega}}$, then we can replace $v_k$ by $uv_k$ for all $k\in \{1, \dots, n\}$ and we obtain $\beta_\lambda^\varphi(x)=u\beta_\lambda^\varphi(x)u^*$. This shows that $\beta_\lambda^\varphi(x) \in \rB(N \subset M,\varphi)$. Let $(a_n)_{n \in \N}$ be a uniformly bounded sequence in $N$ such that $\lim_n \|a_n\varphi-\lambda \varphi a_n \|=0$. Then it defines an element $a=(a_n)^\omega \in N^\omega$ such that $a\varphi^\omega=\lambda\varphi^\omega a$. Then we have $v_k^*a \in N^\omega_{\varphi^\omega}$ for all $k \in \{1, \dots, n\}$. Thus, for all $x \in \rB(N\subset M, \varphi)$, we have $ v_k^*ax=xv_k^*a$ and so
$$ ax=\sum_{k=1}^n v_kv_k^*ax=\sum_{k=1}^n v_kxv_k^*a=\beta_\lambda^\varphi(x)a.$$
Since the nonprincipal ultrafilter $\omega \in \beta(\N) \setminus \N$ is arbitrary, by Lemma \ref{lem: ultralimit test}, we conclude that $a_n x - \beta_\lambda^\varphi(x)a_n \to 0$ $\ast$-strongly as $n \to \infty$. 

It is straightforward to check that $\beta_\lambda^\varphi$ is a unital $*$-homomorphism for all $\lambda > 0$ and that $\beta_\lambda^\varphi  \circ \beta_\mu^\varphi = \beta_{\lambda \mu}^\varphi$ for all $\lambda, \mu > 0$. This shows that $\beta^\varphi : \lambda \mapsto \beta_\lambda^\varphi$ is a one-parameter group of automorphisms of $\rB(N \subset M, \varphi)$. Also, one checks easily that $\beta^{\psi}_\lambda \circ \beta_{\psi, \varphi}=\beta_{\psi, \varphi} \circ \beta^{\varphi}_\lambda$ for all faithful normal states $\varphi, \psi \in N_*$. Extend $\varphi$ to a state on $M$ by using any faithful normal conditional expectation from $M$ to $N$. Then $\beta^{\varphi}$ is $\varphi$-preserving.  Indeed, for all $\lambda > 0$, we have
$$ \varphi(\beta_\lambda^{\varphi}(x))=\sum_{k = 1}^{n} \varphi^{\omega}(v_k x v_k^{*})=\sum_{k = 1}^{n} \lambda^{-1}\varphi^{\omega}(xv_k^{*}v_k  )=\sum_{k = 1}^{n} \lambda^{-1}\varphi(x)\varphi^{\omega}(v_k^{*}v_k )$$
because $x$ commutes with the factor $N^{\varphi^{\omega}}$. Since $\varphi^{\omega}(v_k^{*}v_k  )=\lambda \varphi^{\omega}(v_kv_k^{*}  )$, we obtain
$$ \varphi(\beta_\lambda^{\varphi}(x))=\sum_{k = 1}^{n}\varphi(x)\varphi^{\omega}(v_kv_k^{*}  )=\varphi(x).$$
Thus $\beta^{\varphi}_\lambda$ is $\varphi$-preserving and since $\beta^{\varphi}$ clearly fixes $N' \cap M$, we obtain
$$ \rE_{N' \cap M}^{\varphi} \circ \beta_{\lambda}^{\varphi} =\rE_{N' \cap M}^{\varphi}.$$

At this point, we have proved all items $(\rm i),(\rm ii), (\rm iii)$ and $(\rm iv)$. It only remains to check that $\beta^\varphi$ is indeed a flow in the sense that it is continuous with respect to the $u$-topology on $\rB(N \subset M, \varphi)$. Take a sequence $\lambda_n \in \R^{*}_+$ such that $\lambda_n \to 1$ and $\lambda_n \leq 1$. We have to show that $\beta^{\varphi}_{\lambda_n} \to \id_{\rB(N\subset M, \varphi)}$ with respect to the $u$-topology. Since $\beta^{\varphi}$ is $\varphi$-preserving, it is enough to show that $\beta^{\varphi}_{\lambda_n}(x) \to x$ strongly for all $x \in \rB(N \subset M, \varphi)$. Let $\omega_1 \in \beta(\N) \setminus \N$ be any nonprincipal ultrafilter and pick, for every $n \in \N$, a co-isometry $v_n \in N^{\omega_1}$ such that $v_n \varphi^{\omega_1}=\lambda_n \varphi^{\omega_1}v_n$ (possible because $\lambda_n \leq 1$). Let $\omega_2 \in \beta (\N) \setminus \N$ be any other nonprincipal ultrafilter. Since $\lambda_n \to 1$, then $v=(v_n)^{\omega_2}$ defines a co-isometry of $N^{\omega_2 \otimes \omega_1}$ with $v\varphi^{\omega_2 \otimes \omega_1}=\varphi^{\omega_2 \otimes \omega_1}v$. Since $x \in \rB(N \subset M, \varphi)$, we get $x=vxv^{*}=(v_nxv_n^{*})^{\omega_2}=(\beta_{\lambda_n}^{\varphi}(x))^{\omega_2}$. Since the nonprincipal ultrafilter $\omega_{2} \in \beta(\N) \setminus \N$ is arbitrary, by Lemma \ref{lem: ultralimit test}, we conclude that $\beta^{\varphi}_{\lambda_n}(x) \to x$ strongly as $n \to \infty$.
 \end{proof}

\begin{rem}\label{rem:flow-computation}
Although we strongly believe that the bicentralizer $\rB(M, \varphi)$ should always be trivial for all type ${\rm III_{1}}$ factors $M$ with separable predual, we point out that the relative bicentralizer (flow) need not be trivial in general.

\begin{itemize}
\item [$(\rm i)$] Let $N$ be any type ${\rm III_{1}}$ factor with separable predual and with trivial bicentralizer (e.g.\ $N = R_{\infty}$). Choose a faithful state $\varphi \in N_{\ast}$. Fix $\mu \in (0, 1)$, put $T = \frac{2 \pi}{- \log(\mu)}$ and define $M = N \rtimes_{\sigma_{T}^{\varphi}} \Z$. Extend $\varphi$ to $M$ by using the canonical conditional expectation $\rE_{N} : M \to N$. Then $M$ is a type ${\rm III_{\mu}}$ factor by \cite[Lemma 1]{Co85} and the inclusion $N \subset M$ is irreducible and with expectation. Since $\rB(N, \varphi) = \C 1$, a straightforward argument using the Fourier expansion shows that $\rB(N \subset M, \varphi) = \rL(\Z)$ and the relative bicentralizer flow $\beta^{\varphi} : \R^{*}_{+} \curvearrowright \rB(N \subset M, \varphi)$ is given by the action $\R^{*}_{+} \curvearrowright \R^{*}_{+}/\mu^{\Z}$. 

\item [$(\rm ii)$] We can upgrade the example given in item $(\rm i)$ to obtain an irreducible inclusion of type ${\rm III_{1}}$ factors with expectation and with nontrivial bicentralizer flow. Indeed, define $\mathcal M = (M, \varphi) \ast (R_{\infty}, \psi)$ to be the free product von Neumann algebra, where $\psi \in (R_{\infty})_{\ast}$ is any faithful state. By \cite[Theorem 4.1]{Ue10}, $\mathcal M$ is a type ${\rm III_{1}}$ factor. Moreover, \cite[Corollary 3.2]{Ue10} shows that $N \subset \mathcal M$ is irreducible and with expectation. Fix $\omega \in \beta(\N) \setminus \N$ any nonprincipal ultrafilter. Noticing that $\mathcal M \subset (M^{\omega}, \varphi^{\omega}) \ast (R_{\infty}, \psi)$, \cite[Corollary 3.2]{Ue10} implies that 
$$\rB(N \subset \mathcal M, \varphi) = (N^{\omega}_{\varphi^{\omega}})' \cap \mathcal M =  (N^{\omega}_{\varphi^{\omega}})' \cap \mathcal M \cap M^{\omega} =   (N^{\omega}_{\varphi^{\omega}})' \cap M = \rB(N \subset M, \varphi).$$ 
The relative bicentralizer flow $\beta^{\varphi} : \R^{*}_{+} \curvearrowright \rB(N \subset \mathcal M, \varphi)$ is still given by the action $\R^{*}_{+} \curvearrowright \R^{*}_{+}/\mu^{\Z}$.
\end{itemize}
\end{rem}

\section{Self-bicentralizing factors}
To prove Theorem \ref{thm: self-bicentralizing}, we use again the ultraproduct technology. We start by proving the following result.

\begin{prop} \label{prop: approximately inner}
Let $M$ be any factor with separable predual, $\theta \in \Aut(M)$ any automorphism and $\omega \in \beta (\N) \setminus \N$ any nonprincipal ultrafilter. Consider the following properties:
\begin{itemize}
\item [$(\rm i)$] $\theta \in \overline{\Inn}(M)$.
\item [$(\rm ii)$] There exists a unitary $u \in M^\omega$ such that $\theta(x)=uxu^*$ for all $x \in M$ and $\theta(\varphi)^\omega=u\varphi^\omega u^*$ for some (or any) faithful normal state $\varphi \in M_*$.
\item [$(\rm iii)$] There exists a nonzero partial isometry $v \in M^\omega$ such that $\theta(x)v=vx$ for all $x \in M$ and $v\varphi^\omega=\theta(\varphi)^\omega v$ for some (or any) faithful normal state $\varphi \in M_*$.
\item [$(\rm iv)$] There exists a nonzero partial isometry $v \in M^\omega$ such that $\theta(x)v=vx$ for all $x \in M$.
\item [$(\rm v)$] The automorphism $\theta \odot \id$ of $M \odot M^{\op}$ extends to an automorphism of the $\rC^*$-algebra $\rC^*_{\lambda \cdot \rho}(M)$ generated by the standard representation $\lambda \cdot \rho$ of $M \odot M^{\op}$ on $\rL^2(M)$.
\end{itemize}
Then we have $(\rm i) \Leftrightarrow (\rm ii) \Leftrightarrow (\rm iii) \Rightarrow (\rm iv) \Rightarrow (\rm v)$.
\end{prop}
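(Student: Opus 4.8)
The plan is to establish the chain of implications $(\rm i) \Leftrightarrow (\rm ii)$, $(\rm ii) \Rightarrow (\rm iii)$, $(\rm iii) \Rightarrow (\rm ii)$, $(\rm iii) \Rightarrow (\rm iv)$, and $(\rm iv) \Rightarrow (\rm v)$. For $(\rm i) \Rightarrow (\rm ii)$: if $\theta \in \overline{\Inn}(M)$, pick a sequence $u_n \in \mathcal U(M)$ with $\Ad(u_n) \to \theta$ in $\Aut(M)$; since $M$ has separable predual, the $u$-topology is metrizable and convergence $\Ad(u_n) \to \theta$ together with convergence of the states $u_n \varphi u_n^* \to \theta(\varphi)$ can be arranged simultaneously. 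Then $u = (u_n)^\omega \in \mathcal U(M^\omega)$ does the job: $u x u^* = \lim_{n \to \omega} u_n x u_n^* = \theta(x)$ for $x \in M$ (convergence holds $\ast$-strongly hence in $M^\omega$), and $u \varphi^\omega u^* = \theta(\varphi)^\omega$ follows from $\|u_n \varphi u_n^* - \theta(\varphi)\| \to 0$. The independence of the choice of $\varphi$ is because, $\theta$ being fixed, the conditions on different faithful states are equivalent via bounded perturbation arguments inside $M^\omega$. For $(\rm ii) \Rightarrow (\rm i)$: lift $u \in \mathcal U(M^\omega)$ to a sequence of unitaries $(u_n)_n$ in $M$ (using that the unitary group of $M^\omega$ maps onto itself from bounded sequences, up to perturbation to genuine unitaries); then $\Ad(u_n) \to \theta$ along $\omega$, and by Lemma \ref{lem: ultralimit test} one upgrades this to a genuine sequential limit along a subsequence, giving $\theta \in \overline{\Inn}(M)$.

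For $(\rm ii) \Rightarrow (\rm iii)$ simply take $v = u$. The more substantial implication is $(\rm iii) \Rightarrow (\rm ii)$: given a nonzero partial isometry $v \in M^\omega$ with $\theta(x)v = vx$ for all $x \in M$ and $v\varphi^\omega = \theta(\varphi)^\omega v$, one computes that $v^*v \in M' \cap M^\omega$ and $vv^* \in \theta(M)' \cap M^\omega = M' \cap M^\omega$ (using that $\theta$ is an automorphism of $M$). Writing $p = v^*v$, $q = vv^*$, the relation $v\varphi^\omega = \theta(\varphi)^\omega v$ forces $\varphi^\omega(p) = \varphi^\omega(q)$. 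Since $M' \cap M^\omega$ is a finite von Neumann algebra (as $\varphi^\omega$ restricts to a faithful normal trace-like state — more precisely $M' \cap M^\omega$ carries the trace $\varphi^\omega$ which is tracial there), $p$ and $q$ are Murray–von Neumann equivalent \emph{in} $M' \cap M^\omega$ via some partial isometry $w$; moreover one can choose $w$ so that it intertwines $\varphi^\omega$ appropriately. Then $u = v + w(1-p)^{1/2}\cdots$ — rather, one patches $v$ by a partial isometry supported under $1-q$ and $1-p$ to build a genuine unitary $u \in M^\omega$ still satisfying $uxu^* = \theta(x)$ on $M$ and $u\varphi^\omega u^* = \theta(\varphi)^\omega$. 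This is the step I expect to be the main obstacle: one must carefully use the finiteness of the relative commutant $M' \cap M^\omega$ and the homogeneity of its state space to complete the partial isometry to a unitary while preserving \emph{both} the intertwining of $M$ and the intertwining of the state.

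The implication $(\rm iii) \Rightarrow (\rm iv)$ is trivial (drop the condition on $\varphi$). For $(\rm iv) \Rightarrow (\rm v)$: given a nonzero partial isometry $v \in M^\omega$ with $\theta(x)v = vx$ for all $x\in M$, represent $M^\omega$ on a Hilbert space and observe that $\theta \odot \id$ acting on $M \odot M^{\op}$ is implemented, on the relevant subspace, by the sequence of unitaries lifting $v$ (after completing to a unitary as above, or directly on the range projection). Concretely, for $a \in M$, $b \in M^{\op}$ one checks that the standard operator $\lambda(a)\rho(b)$ is sent by $\theta \odot \id$ to $\lambda(\theta(a))\rho(b)$, and that the norm $\|\sum \lambda(\theta(a_i))\rho(b_i)\|$ equals $\|\sum \lambda(a_i)\rho(b_i)\|$ because conjugation by the lifted unitaries $u_n \in M$ implements exactly $\theta \odot \id$ on $\lambda \cdot \rho(M \odot M^{\op})$ and preserves the operator norm; passing to the limit along $\omega$ and invoking Lemma \ref{lem: ultralimit test} gives a well-defined isometric (hence extendable to an automorphism, by symmetry $\theta \leftrightarrow \theta^{-1}$) map on $\rC^*_{\lambda\cdot\rho}(M)$. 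The only care needed is that the partial isometry $v$, even without the state condition, still yields enough to control the norm; here one uses that $v^*v \in M' \cap M^\omega$ is nonzero and that $\lambda(a)\rho(b)$ commutes with $M' \cap M^\omega$, so $\|\lambda(\theta(a))\rho(b) v\| = \|\lambda(a)\rho(b)\|\,\|v\|$ as soon as $v$ is nonzero, which pins down the norm identity.
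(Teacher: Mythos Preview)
Your treatment of $(\rm i)\Leftrightarrow(\rm ii)$, $(\rm ii)\Rightarrow(\rm iii)$ and $(\rm iii)\Rightarrow(\rm iv)$ is fine, and the paper does not prove these either, referring to \cite{Co85} for $(\rm iii)\Rightarrow(\rm ii)$. On that implication your sketch stops at the real difficulty: once you know $p=v^*v$ and $q=vv^*$ are equivalent in the finite algebra $M'\cap M^\omega$, you still need a partial isometry $w$ with $w^*w=1-p$, $ww^*=1-q$ that \emph{simultaneously} satisfies $\theta(x)w=wx$ for all $x\in M$ and $w\varphi^\omega=\theta(\varphi)^\omega w$. A generic $w\in M'\cap M^\omega$ does not intertwine $\theta$; producing one that does is exactly the content of the Connes argument the paper cites, and your outline does not supply it.

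The main gap is in $(\rm iv)\Rightarrow(\rm v)$. Your argument oscillates between two incompatible strategies. First you write ``after completing to a unitary as above'': but ``as above'' was the $(\rm iii)\Rightarrow(\rm ii)$ step, which used the state condition $v\varphi^\omega=\theta(\varphi)^\omega v$ that you have explicitly dropped in $(\rm iv)$, so this completion is not available. Then you invoke ``conjugation by the lifted unitaries $u_n\in M$'', which again presumes a unitary implementation. Your closing sentence gestures at the right idea but is not a proof: the expression $\|\lambda(\theta(a))\rho(b)\,v\|$ mixes operators on $\rL^2(M)$ with $v\in M^\omega$ without specifying the Hilbert space, and ``$\lambda(a)\rho(b)$ commutes with $M'\cap M^\omega$'' is not meaningful until you say where the right $M$-action lives relative to $M^\omega$.

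The paper's argument avoids all of this by never upgrading $v$ to a unitary. For $T=\sum_i x_i\otimes y_i^{\op}$ and a unit vector $\xi\in\rL^2(M)$, it embeds $\xi$ as $\xi^\omega\in\rL^2(M^\omega)$ and applies $v^*$ on the left to the vector $\sum_i \theta(x_i)\,\xi^\omega y_i$, using only $v^*\theta(x_i)=x_i v^*$ to rewrite it as $\sum_i x_i\,(v^*\xi^\omega)\,y_i$. The crucial observation is that the $M$-$M$-bimodule operator $T$ has the same norm on $\rL^2(M^\omega)$ as on $\rL^2(M)$, so the last expression is bounded by $\|v^*\xi^\omega\|\cdot\|T\|_{\rC^*_{\lambda\cdot\rho}(M)}$. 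Since $vv^*\in M'\cap M^\omega$ has $\rE_M(vv^*)>0$, the factor $\|v^*\xi^\omega\|$ is a fixed positive constant independent of $\xi$, which gives $\|(\theta\otimes\id)(T)\|_{\rC^*_{\lambda\cdot\rho}(M)}\le\|T\|_{\rC^*_{\lambda\cdot\rho}(M)}$; applying the same to $\theta^{-1}$ yields equality. No unitary is needed.
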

\begin{proof}
We only prove $(\rm iv) \Rightarrow (\rm v)$ since the other implications are well-known (for the implication $(\rm iii) \Rightarrow (\rm ii)$, see the end of the proof of \cite[Theorem 1]{Co85} starting from Lemma 4). Let $v \in M^{\omega}$ be a nonzero partial isometry such that $\theta(x)v=vx$ for every $x \in M$. Note that $vv^{*} \in M' \cap M^{\omega}$ and so $\rE_M(vv^{*})=\lambda \in \R^{*}_+$. Take $T=\sum_i x_i \otimes y_i^{\op} \in M \odot M^{\op}$. For every unit vector $\xi \in \rL^{2}(M)$, we have in $\rL^{2}(M^{\omega})$ the following equalities:
$$ \|\sum_i \theta(x_i)\xi y_i \|=\frac{1}{\lambda} \|v^{*}\sum_i \theta(x_i)\xi^{\omega} y_i \|=\frac{1}{\lambda} \|\sum_i x_i v^{*}\xi^{\omega} y_i \|.$$
Since $$\| T \|_{\rC^{*}_{\lambda \cdot \rho}(M)}=\| T \|_{\rL^2(M)}=\| T^{\omega} \|_{\rL^2(M^{\omega})}$$
we obtain
$$\|\sum_i x_i v^{*}\xi^{\omega} y_i \| \leq \| v^{*} \xi^{\omega} \| \cdot \|T\|_{\rC^{*}_{\lambda \cdot \rho}(M)} =\lambda \|T\|_{\rC^{*}_{\lambda \cdot \rho}(M)}.$$
Thus we have shown that 
$$ \|\sum_i \theta(x_i)\xi y_i \| \leq \|T\|_{\rC^{*}_{\lambda \cdot \rho}(M)}$$
for all unit vectors $\xi \in \rL^{2}(M)$. This means that
$$\|(\theta \otimes \id)(T)\|_{\rC^{*}_{\lambda \cdot \rho}(M)} \leq \|T\|_{\rC^{*}_{\lambda \cdot \rho}(M)}.$$
By the same reasoning, replacing $\theta$ by $\theta^{-1}$ and $T$ by $(\theta \otimes \id)(T)$, we obtain
$$\|(\theta \otimes \id)(T)\|_{\rC^{*}_{\lambda \cdot \rho}(M)} = \|T\|_{\rC^{*}_{\lambda \cdot \rho}(M)}$$
as we wanted.
\end{proof}

\begin{proof}[Proof of Theorem \ref{thm: self-bicentralizing}]
$(\rm i)$ Let $\psi$ be any bicentralizing state on $M$. Let $\omega \in \beta(\N) \setminus \N$ be any nonprincipal ultrafilter and $u \in M^{\omega}$ any unitary such that $u\varphi^{\omega} u^{*}=\psi^{\omega}$. Then, with $M=\rB(M,\varphi)$ and $M=\rB(M,\psi)$, the $\ast$-homomorphism $\beta_{\psi, \varphi} : M \rightarrow M$ satisfies $\beta_{\psi, \varphi}(x)=uxu^{*}$ for all $x \in M$. This shows that $\beta_{\psi, \varphi} \in \overline{\Inn}(M)$ and we have $\beta_{\psi, \varphi}(\varphi)=\psi$. Conversely,  if $\alpha \in \overline{\Inn}(M)$, then we can find a unitary $u \in M^\omega$ such that $\alpha(x)=uxu^*$ for all $x \in M$ and $u\varphi^\omega u^*=\alpha(\varphi)^\omega$. This shows that $\beta_{\alpha(\varphi),\varphi}=\alpha$. Hence the map
$$ \overline{\Inn}(M) \ni \alpha \mapsto \alpha(\varphi) \in \Delta(M)$$
is a continous bijection with inverse
$$ \Delta(M) \ni \psi \mapsto \beta_{\psi, \varphi} \in \overline{\Inn}(M).$$
It only remains to show that this inverse is continous. Let $(\psi_n)_{n \in \N}$ be a sequence in $\Delta(M)$ which converges to $\psi \in \Delta(M)$. Let $\theta_n=\beta_{\psi_n, \varphi}$. We have to show that $\theta_n$ converges to $\beta_{\psi, \varphi}$. Since $(\theta_n(\varphi))^\omega=(\psi_n)^\omega=\psi^\omega$, $\theta=(\theta_n)^\omega$ defines an automorphism of $M^\omega$ and we just have to show that $\theta(x)=\beta_{\psi, \varphi}(x)$ for all $x \in M$. Let $\omega_0 \in \beta(\N) \setminus \N$ be any other nonprincipal utltrafilter, and for every $n \in \N$, take a unitary $u_n \in M^{\omega_0}$ such that $u_n \varphi^{\omega_0} u_n^*=\psi_n^{\omega_0}$. Then we have $\theta_n(x) = \beta_{\psi_n, \varphi}(x) =u_nxu_n^*$ for all $x \in M$ and all $n \in \N$. Let $u=(u_n)^\omega \in M^{\omega \otimes \omega_0}$. Then, by construction, we have $\theta(x)=uxu^*$ for all $x \in M$. But we have $u (\varphi^{\omega_0})^\omega u^*=(\psi_n^{\omega_0})^\omega=(\psi^{\omega_0})^\omega$ and so $uxu^*=\beta_{\psi, \varphi}(x)$ for all $x \in M$. This shows that $\theta(x)=\beta_{\psi, \varphi}(x)$ for all $x \in M$, as we wanted.

$(\rm ii)$ Since a continuous bijective homomorphism between Polish groups is automatically a homeomorphism, we just need to show that the homomorphism $\iota$ is bijective. Let $(g,h) \in \overline{\Inn}(M) \rtimes \Aut_\varphi(M)$ and assume that $g \circ h=\id$. Then $h=g^{-1}$ is approximately inner. Since $h(\varphi)=\varphi$, by item $(\rm i)$, we conclude that $h=\beta_{\varphi, \varphi}=\id$. This shows the injectivity. For the surjectivity, we just write every $\alpha \in \Aut(M)$ as $\alpha=\beta_{\alpha(\varphi), \varphi} \circ \left( \beta_{\varphi, \alpha( \varphi)} \circ \alpha \right)$ and we note that $\beta_{\alpha(\varphi), \varphi} \in \overline{\Inn}(M)$ by item $(\rm i)$ while $\beta_{\varphi, \alpha( \varphi)} \circ \alpha \in \Aut_\varphi(M)$. We conclude that $\iota$ is indeed an isomorphism of topological groups. Moreover, it shows that $\sigma_t^\varphi \in \overline{\Inn}(M)$ if and only if $\sigma_t^\varphi=\id$ and so $\sigma_t^\varphi \in \overline{\Inn}(M)$ if and only if $t = 0$ because $M$ is a type ${\rm III_1}$ factor.

$(\rm iii)$ Note first that $\beta_\lambda^\varphi \in \Aut_\varphi(M)$ and so $\beta_\lambda^\varphi \in \overline{\Inn}(M)$ if and only if $\beta_\lambda^\varphi=\id$, thanks to item $(\rm ii)$. If $M \cong M \ovt R_\lambda$ for $0 < \lambda < 1$, then we can find a co-isometry $v \in M' \cap M^\omega$ such that $v \varphi^\omega=\lambda \varphi^\omega v$. This shows that $\beta_\lambda^\varphi(x)v=vx=xv$ for all $x \in M$. Since $v$ is a co-isometry, we obtain $\beta_\lambda^\varphi=\id$. Conversely, take $v$ any nonzero partial isometry $v \in M^\omega$ such that $vv^*+v^*v=1$ and $v\varphi^\omega=\lambda \varphi^\omega v$. If $\beta_\lambda^\varphi=\id$, then we have $v \in M' \cap M^\omega$. This shows that $M$ satisfies Araki's property L$_\lambda'$ and so $M \cong M \ovt R_\lambda$ by \cite[Theorem 1.3]{Ar70}.

$(\rm iv)$ This follows from item $(\rm ii)$ in Theorem \ref{thm: Connes' isomorphism and bicentralizer flow} and Proposition \ref{prop: approximately inner}.
\end{proof}
 
\begin{remark}
We observe that $\Delta(M)$ is a dense $G_\delta$ subset of the set of all faithful normal states $\rS_{\rm{nf}}(M)$. The density of $\Delta(M)$ follows of course from Connes--St\o rmer transitivity. Since $\overline{\rm{Inn}}(M)$ is a Polish space, so is $\Delta(M)$, and since $\rS_{\rm{nf}}(M)$ is Polish as well, $\Delta(M)$ must be a $G_{\delta}$ subset.
\end{remark}

\section{Irreducible hyperfinite subfactors in inclusions of type III factors}

\subsection{Proof of Theorem \ref{thm: hyperfinite subfactor III_1}} In this section, we prove Theorem \ref{thm: hyperfinite subfactor III_1}. We first need to prove a few technical lemmas. The next result is a straightforward variation of \cite[Lemma 2.3]{Po81}.

\begin{lem} \label{convex_automorphism}
Let $M$ be any $\sigma$-finite von Neumann algebra and $\varphi \in M_{\ast}$ any faithful state. Let $G \subset \Aut_{\varphi}(M)$ be any subgroup. Let $x \in M$ be any element that satisfies $\rE_{M^G}^\varphi(x)=0$ where $\rE_{M^G}^\varphi : M \to M^{G}$ denotes the unique $\varphi$-preserving conditional expectation on the fixed point algebra $M^G$. Then there exists $\alpha \in G$ such that $\|x-\alpha(x)\|_\varphi \geq \|x\|_\varphi$.
\end{lem}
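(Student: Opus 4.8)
The plan is to argue by contradiction, exploiting the fact that $\rE_{M^G}^\varphi(x)$ can be recovered as the element of minimal $\|\cdot\|_\varphi$-norm in a suitable $G$-invariant weakly closed convex set. Assume, toward a contradiction, that $\|x - \alpha(x)\|_\varphi < \|x\|_\varphi$ for every $\alpha \in G$. Consider the weakly closed convex hull
$$
K = \overline{\conv}^{\,w}\left\{ \alpha(x) \mid \alpha \in G \right\} \subset M,
$$
which is a bounded (hence weakly compact) convex subset of $M$ that is globally invariant under $G$ (since $G$ is a group and each $\alpha \in G$ acts as a weakly continuous affine bijection permuting the generating set). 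Because each $\alpha \in G$ preserves $\varphi$, it is an $\|\cdot\|_\varphi$-isometry, so the norm $\|\cdot\|_\varphi$ is constant on the $G$-orbit of $x$, and more importantly the unique element $y_0 \in K$ of minimal $\|\cdot\|_\varphi$-norm (which exists and is unique by uniform convexity of the Hilbert norm $\|\cdot\|_\varphi$ on the $\|\cdot\|_\varphi$-closure of $K$, together with weak compactness) is fixed by every $\alpha \in G$. Hence $y_0 \in M^G$.

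Next I would identify $y_0$ with $\rE_{M^G}^\varphi(x)$. Since $\rE_{M^G}^\varphi$ is the $\varphi$-preserving conditional expectation onto $M^G$, it is $\|\cdot\|_\varphi$-contractive and fixes $M^G$ pointwise, and it commutes with every $\alpha \in G$; therefore $\rE_{M^G}^\varphi(\alpha(x)) = \alpha(\rE_{M^G}^\varphi(x)) = \rE_{M^G}^\varphi(x)$ for all $\alpha \in G$, so by weak continuity and affineness $\rE_{M^G}^\varphi$ is constantly equal to $\rE_{M^G}^\varphi(x) = 0$ on all of $K$; in particular $\rE_{M^G}^\varphi(y_0) = 0$. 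But $y_0 \in M^G$, so $y_0 = \rE_{M^G}^\varphi(y_0) = 0$, i.e.\ the minimal norm element of $K$ is $0$, which means $\|y\|_\varphi$ can be made arbitrarily small for $y \in \conv\{\alpha(x) \mid \alpha \in G\}$.

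On the other hand, I claim the contradiction hypothesis forces $\|y\|_\varphi \geq$ (some positive quantity) for all $y \in K$, or more directly that it forces $x$ itself to be ``close to'' being fixed in a way incompatible with $y_0 = 0$. Here is the precise argument: for any finite convex combination $y = \sum_i t_i\, \alpha_i(x)$ with $t_i \geq 0$, $\sum_i t_i = 1$, write $\xi = x\xi_\varphi$ and $\xi_i = \alpha_i(x)\xi_\varphi$ in $\rL^2(M)$, all of the same norm $\|x\|_\varphi =: r$. The assumption $\|x - \alpha(x)\|_\varphi^2 < r^2$ rewrites as $2\,\real\langle \xi, \xi_i\rangle > r^2 > 0$ for every $i$ (using $\|\xi\|^2 = \|\xi_i\|^2 = r^2$), hence $\real\langle \xi, y\xi_\varphi\rangle = \sum_i t_i\, \real\langle \xi, \xi_i\rangle > r^2/2$ for every such $y$, and passing to the weak closure $\real\langle \xi, \eta\rangle \geq r^2/2$ for all $\eta$ in the $\|\cdot\|_\varphi$-closure of $K\xi_\varphi$. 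In particular $\real\langle \xi, y_0\xi_\varphi\rangle \geq r^2/2 > 0$ provided $x \neq 0$, contradicting $y_0 = 0$. (If $x = 0$ the statement is trivial.) This completes the contradiction, so some $\alpha \in G$ must satisfy $\|x - \alpha(x)\|_\varphi \geq \|x\|_\varphi$.

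The main obstacle, and the place requiring the most care, is the identification of the minimal-norm element of $K$ with $\rE_{M^G}^\varphi(x)$ and the justification that minimal-norm elements exist and are unique: this rests on transporting $K$ into the Hilbert space $\rL^2(M)$ via $y \mapsto y\xi_\varphi$, noting that the image is a bounded convex set whose $\|\cdot\|$-closure is weakly compact and hence contains a unique element of minimal norm by strict convexity, and then checking that this Hilbert-space minimizer actually lies in $M$ (not merely in $\rL^2(M)$) — which follows because the original $K \subset M$ is weakly compact in $M$ and the map $y \mapsto y\xi_\varphi$ is a homeomorphism from $(K, \text{weak})$ onto its image with the weak $\rL^2$-topology, so the minimizer is the image of an honest element of $K$. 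Everything else is a routine manipulation of the Hilbert norm $\|\cdot\|_\varphi$ and the standard properties of $\varphi$-preserving conditional expectations, exactly as in \cite[Lemma 2.3]{Po81}.
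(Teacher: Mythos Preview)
Your proposal is correct and follows essentially the same approach as the paper: both arguments identify the unique $\|\cdot\|_\varphi$-minimizer $y_0$ in $\overline{\conv}^{\,w}\{\alpha(x)\mid\alpha\in G\}$, use $G$-invariance and $\rE_{M^G}^\varphi(x)=0$ to conclude $y_0=0$, and then derive a contradiction from the inner-product inequality $\Re\,\varphi(x^*\alpha(x))>\tfrac12\|x\|_\varphi^2$ obtained from the hypothesis $\|x-\alpha(x)\|_\varphi<\|x\|_\varphi$. Your write-up is somewhat more explicit about the existence and uniqueness of the minimizer in $K$ (via weak compactness and the embedding into $\rL^2(M)$), but the skeleton is identical to the paper's.
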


\begin{proof}
Without loss of generality, we may assume that $x \neq 0$. Denote by $y$ the unique element of minimal $\| \cdot \|_\varphi$-norm in the weakly closed convex hull $\mathcal C$ of $\{ \alpha(x) \mid \alpha \in G \}$. For every $\alpha \in G$, we have $\| \alpha(y)\|_\varphi=\|y \|_\varphi$ and $\alpha(y) \in \mathcal C$. By uniqueness of $y \in \mathcal C$, we obtain $\alpha(y)=y$ for every $\alpha \in G$. This shows that $y \in M^{G}$. On the other hand, we have $\rE^{\varphi}_{M^{G}}(\alpha(x))=\rE^{\varphi}_{M^{G}}(x)=0$ for all $\alpha \in G$. Thus, we also have $\rE^{\varphi}_{M^{G}}(y)=0$ and so $y=0$. By contradiction, if $\|x-\alpha(x)\|_\varphi < \|x\|_\varphi$ for every $\alpha \in G$, then we have $\|x\|_{\varphi}^{2} - 2 \Re(\varphi(x^{*}\alpha(x))) < 0$ for every $\alpha \in G$. By taking convex combinations and weak limits and since $y = 0$, we conclude that $\|x\|_{\varphi}^{2} \leq 0$, which is a contradiction. Therefore, there exists $\alpha \in G$ such that $\|x-\alpha(x)\|_\varphi \geq \|x\|_\varphi$.
\end{proof}

For all $0 < \lambda <1$, we denote by $\tau_{\lambda}$ the canonical periodic faithful normal state on the Powers factor $R_{\lambda}$ arising from the infinite tensor product decomposition
 $$(R_{\lambda}, \tau_{\lambda})=\overline{\bigotimes}_{n \in \N} (\mathbf M_2(\C), \omega_{\lambda})$$
where $\omega_{\lambda}$ is the state on $\mathbf M_2(\C)$ given by $\omega_{\lambda}(e_{11})=\lambda\omega_{\lambda}(e_{22})=\frac{\lambda}{1+\lambda}$ and $\omega_{\lambda}(e_{12})=\omega_{\lambda}(e_{21})=0$.

\begin{lem} \label{lem: R_lambda}
Let $N$ be any factor and $\varphi \in N_*$ any faithful state such that $N_{\varphi}$ is a type $\II_1$ factor. If $S$ is a finite subset of the point spectrum of $\varphi$, then we can find a subfactor $P \subset N$, globally invariant under $\sigma^\varphi$, such that 
$$ (P,\varphi|_P) \cong \overline{\bigotimes_{\lambda \in S}} (R_\lambda, \tau_\lambda).$$
\end{lem}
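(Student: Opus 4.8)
The goal is to find, inside the factor $N$ with faithful state $\varphi$ whose centralizer $N_\varphi$ is type $\II_1$, a copy of the tensor product of Powers factors $\overline{\bigotimes_{\lambda \in S}} (R_\lambda, \tau_\lambda)$ sitting with the restricted state and globally invariant under $\sigma^\varphi$. Since a finite tensor product of the $R_\lambda$'s is itself built out of infinitely many $2\times 2$ matrix blocks, it suffices to handle one eigenvalue at a time and then take (inductively) a commuting family. Concretely, I would first prove the following one-block statement: if $\lambda$ is in the point spectrum of $\varphi$, then inside $N$ there is a copy of $(\mathbf M_2(\C), \omega_\lambda)$ globally invariant under $\sigma^\varphi$, whose relative commutant $P_1' \cap N$ again carries a faithful state (the restriction of $\varphi$) with type $\II_1$ centralizer. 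Iterating this countably many times inside successive relative commutants produces $\overline{\bigotimes}_{n} (\mathbf M_2(\C), \omega_\lambda) = (R_\lambda, \tau_\lambda)$, and since the whole construction respects $\sigma^\varphi$, the resulting $R_\lambda$ is $\sigma^\varphi$-invariant with $\varphi|_{R_\lambda} = \tau_\lambda$.

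For the one-block statement: $\lambda$ being an eigenvalue of the modular operator means there is a nonzero partial isometry $e_{12} \in N$ with $e_{12}\varphi = \lambda \varphi e_{12}$, equivalently $\sigma_t^\varphi(e_{12}) = \lambda^{it} e_{12}$. Then $p = e_{12}^* e_{12}$ and $q = e_{12} e_{12}^*$ are projections in $N_\varphi$. The point is to arrange $p$ and $q$ to be orthogonal with $p + q$ as close to $1$ as we need, and with $\varphi(q) = \lambda \varphi(p)$ so that the linear span of $\{p,q,e_{12},e_{12}^*\}$ is a copy of $\mathbf M_2(\C)$ on which $\varphi$ restricts (after normalizing and cutting) to $\omega_\lambda$. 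Because $N_\varphi$ is a type $\II_1$ factor, it is diffuse and has a trace, so I can choose the projections $p, q \in N_\varphi$ with prescribed traces and orthogonal; then, since $N$ is a factor, I can adjust the partial isometry $e_{12}$ (compress the original one, or use comparison of projections in $N$ together with the $\lambda$-eigenvector property, exactly as in the proof of Lemma \ref{lem: family of partial isometries}) so that $e_{12}^*e_{12} = p$, $e_{12}e_{12}^* = q$ and $e_{12}\varphi = \lambda\varphi e_{12}$. Setting $f = 1 - p - q$ and working in $f N f$ (which is again a factor), one checks $\sigma^\varphi$ leaves $P_1 = \mathbf C p + \mathbf C q + \mathbf C e_{12} + \mathbf C e_{12}^* + \mathbf C f \cdot (\text{rest})$... more cleanly: take $P_1 = M_2 \subset (p+q)N(p+q)$ and note $P_1' \cap N \supset f N f$ together with the $2\times 2$ amplification structure, so $N \cong M_2 \otimes (P_1'\cap N)$ with $\varphi$ splitting as $\omega_\lambda \otimes (\varphi|_{P_1'\cap N})$ and $\sigma^\varphi$ splitting accordingly; finiteness and factoriality of the centralizer of the corner follow since $(N_\varphi)$ is type $\II_1$ and a corner/tensor complement of a type $\II_1$ factor by a matrix algebra is again type $\II_1$.

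For several eigenvalues $\lambda \in S$ simultaneously, I would do the above construction for $\lambda_1 \in S$, pass to $N^{(1)} = P_1' \cap N$ with its induced faithful state (still with type $\II_1$ centralizer, and with the same point spectrum containing $S$ since the modular flow on $N^{(1)}$ is the restriction of $\sigma^\varphi$), then repeat for $\lambda_2$, and so on. Since all the pieces commute and the states multiply, the algebra they generate is $\overline{\bigotimes_{\lambda\in S}}(R_\lambda,\tau_\lambda)$ with state $\varphi$ restricting to $\overline{\bigotimes}\tau_\lambda$, and global $\sigma^\varphi$-invariance is automatic because every generator is an eigenvector of the modular flow. The main obstacle — and the step requiring care — is the simultaneous bookkeeping in the iteration: one must verify at each stage that the relevant corner/relative commutant is still a factor whose centralizer is type $\II_1$ and whose point spectrum still contains the remaining eigenvalues, so that the next block can be extracted; this is exactly where factoriality of $N$ and the type $\II_1$ hypothesis on $N_\varphi$ get used, via the comparison-of-projections argument underlying Lemma \ref{lem: family of partial isometries}.
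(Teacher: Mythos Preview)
Your overall strategy matches the paper's: build one $(\mathbf M_2(\C),\omega_\lambda)$ block at a time, check that the relative commutant inherits the hypotheses, and iterate through the eigenvalues in $S$ and then countably many times. The gap is in your one-block step. You say you will arrange $p$ and $q$ orthogonal with $p+q$ ``as close to $1$ as we need'' and then try to handle the complement $f=1-p-q$; but if $p+q\neq 1$, the matrix algebra you build is not a \emph{unital} subfactor of $N$, and your subsequent claim that $N\cong \mathbf M_2\otimes(P_1'\cap N)$ with $\varphi$ splitting as $\omega_\lambda\otimes\varphi|_{P_1'\cap N}$ is false. The muddled paragraph around $f$ reflects exactly this problem, and without a unital $P_1$ the iteration does not get off the ground.

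The fix---and this is precisely what the paper does---is to arrange $p+q=1$ exactly, not approximately. Since $N_\varphi$ is a type $\II_1$ factor, choose orthogonal projections $p,q\in N_\varphi$ with $\varphi(p)=\lambda/(1+\lambda)$ and $\varphi(q)=1/(1+\lambda)$, so that $p+q=1$ and $\varphi(p)=\lambda\varphi(q)$. The argument behind Lemma~\ref{lem: family of partial isometries} (which you correctly cite) then produces a partial isometry $v\in N$ with $v^*v=p$, $vv^*=q$ and $v\varphi=\lambda\varphi v$. The algebra $F_1$ generated by $v$ is now a \emph{unital} copy of $\mathbf M_2(\C)$, globally $\sigma^\varphi$-invariant, with $\varphi|_{F_1}=\omega_\lambda$; hence $F_1'\cap N\cong qNq$, its centralizer $(F_1'\cap N)_\varphi\cong qN_\varphi q$ is again a type $\II_1$ factor, and the point spectrum of $\varphi$ survives in the corner because $N_\varphi$ is a factor (so eigenvectors can be pushed into any corner by multiplying with elements of $N_\varphi$). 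With this correction your iteration goes through exactly as you outline.
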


\begin{proof}
Let $S=\{ \lambda_1, \dots, \lambda_p\}$. Since $N_{\varphi}$ is a type $\II_1$ factor, we can find a partial isometry $v \in N$ such that $vv^{*}+v^{*}v=1$ and $v\varphi =\lambda_1 \varphi v$. Then $v$ generates a finite dimensional factor $F_1$ that is globally invariant under $\sigma^{\varphi}$ and such that $(F_1, \varphi |_{F_1}) \cong (\mathbf M_2(\C), \omega_{\lambda_1})$. Observe that $F_1' \cap N \cong eNe$ where $e=vv^{*} \in N_{\varphi}$ and so $(F_1' \cap N)_{\varphi}$ is again a type $\II_1$ factor and $S$ is in the point spectrum of $\varphi |_{F_1' \cap N}$. Thus, we can find $F_2 \subset F_1' \cap N$ that is globally invariant under $\sigma^{\varphi}$ and such that $(F_2, \varphi |_{F_2}) \cong (\mathbf M_2(\C), \omega_{\lambda_2})$. By repeating this procedure inductively, we obtain a finite dimensional factor $Q_1 \subset N$ that is globally invariant under $\sigma^{\varphi}$ and such that 
$$(Q_1, \varphi |_{Q_1}) \cong \bigotimes_{k=1}^{p} (\mathbf M_2(\C), \omega_{\lambda_k}).$$
By repeating this procedure with $Q_1' \cap N$, we contruct a sequence of mutually commuting factors $(Q_n)_{n \in \N}$ that are all globally invariant under $\sigma^{\varphi}$ and such that $(Q_n, \varphi |_{Q_n}) \cong (Q_1, \varphi |_{Q_1})$ for all $n \in \N$. Then $P=\bigvee_{n \in \N} Q_n$ provides the desired factor.
\end{proof}

\begin{lem} \label{lem: maximality eigenvalue}
Let $N$ be any factor and $\varphi \in N_*$ any faithful state such that $N_{\varphi}$ is a type $\II_1$ factor. If $\lambda$ is in the point spectrum of $\varphi$, then we can find a sequence $(a_n)_{n \in \N}$ in $N$ with $a_n \varphi = \lambda \varphi a_n$ for all $n \in \N$ such that
$ \sum_{n \in \N} a_n^{*}a_n= \lambda$ and $\sum_{n \in \N} a_na_n^{*}= 1$.
\end{lem}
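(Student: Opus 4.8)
The plan is to reduce to the case $0<\lambda\le 1$ and then to construct the sequence $(a_n)_n$ by hand inside $N$, using one $\lambda$‑eigenvector together with the abundance of projections in the type ${\rm II_1}$ factor $N_\varphi$. The crucial point to keep in mind is that when $\lambda\neq 1$ the $a_n$ \emph{cannot} be chosen to be partial isometries: since $\sum_n a_n^*a_n=\lambda 1$ with $\lambda<1$, no $a_n^*a_n$ can be a nonzero projection, because a nonzero projection is never $\le\lambda 1$. Consequently I would look for $a_n$ of the form $a_n=\sqrt{\lambda}\,v_n$ where $v_n$ is a partial isometry with $v_n\varphi=\lambda\varphi v_n$, and arrange the source/range projections so that the two series telescope to $\lambda 1$ and to $1$ respectively.

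\emph{Reductions.} If $\lambda=1$ the statement is trivial: take $a_1=1\in N_\varphi$ and $a_n=0$ for $n\ge 2$. If $\lambda>1$, a solution $(b_n)_n$ for the eigenvalue $\lambda^{-1}<1$ yields a solution for $\lambda$ by setting $a_n:=\sqrt{\lambda}\,b_n^{*}$; indeed $b_n^{*}\varphi=\lambda\varphi b_n^{*}$, $\sum_n a_n^*a_n=\lambda\sum_n b_nb_n^*=\lambda 1$ and $\sum_n a_na_n^*=\lambda\sum_n b_n^*b_n=\lambda\cdot\lambda^{-1}1=1$. So it suffices to treat $0<\lambda<1$, which I assume from now on. For the trace on the type ${\rm II_1}$ factor $N_\varphi$ I just use $\varphi$ (its restriction to $N_\varphi$ is the trace).

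\emph{Construction.} Step 1: since $\lambda$ lies in the point spectrum of $\varphi$, there is a nonzero $a\in N$ with $a\varphi=\lambda\varphi a$; its partial isometry part $v_0$ (from the polar decomposition) again satisfies $v_0\varphi=\lambda\varphi v_0$, and its source and range projections $p_0:=v_0^*v_0$, $q_0:=v_0v_0^*$ lie in $N_\varphi$ with $\varphi(p_0)=\lambda\varphi(q_0)$. Step 2 (transfer lemma): since $N_\varphi$ is a type ${\rm II_1}$ factor, for any projections $p,q\in N_\varphi$ with $\varphi(p)=\lambda\varphi(q)$ and $\varphi(q)\le\varphi(q_0)$ there is a partial isometry $v\in N$ with $v\varphi=\lambda\varphi v$, $v^*v=p$ and $vv^*=q$; one takes partial isometries $w_1,w_2\in N_\varphi$ with $w_1w_1^*=q$, $w_1^*w_1\le q_0$, $w_2^*w_2=p$, $w_2w_2^*=v_0^*(w_1^*w_1)v_0$ (a subprojection of $p_0$ of trace $\lambda\varphi(w_1^*w_1)=\varphi(p)$), and sets $v:=w_1v_0w_2$; a direct computation using $w_1^*w_1\le q_0$ gives $v^*v=p$, $vv^*=q$, and $v\varphi=\lambda\varphi v$ because $w_1,w_2$ commute with $\varphi$. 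Step 3 (decomposition): since $\lambda^{-1}\ge 1$, write $\lambda^{-1}\cdot 1=\sum_n q_n$ as a strongly convergent sum of nonzero projections $q_n\in N_\varphi$ each of trace $\le\varphi(q_0)$ — this is the classical fact that in a type ${\rm II_1}$ factor a scalar $t\cdot 1$ with $t\ge 1$ is a countable sum of projections (which can be taken of arbitrarily small trace, e.g.\ by subdividing); then $\sum_n\lambda\varphi(q_n)=1$, so there is a partition of unity $(r_n)_n$ in $N_\varphi$ with $\varphi(r_n)=\lambda\varphi(q_n)$. Step 4 (assembly): for each $n$, Step 2 (applicable since $\varphi(q_n)\le\varphi(q_0)$) gives $v_n\in N$ with $v_n\varphi=\lambda\varphi v_n$, $v_n^*v_n=r_n$, $v_nv_n^*=q_n$; put $a_n:=\sqrt{\lambda}\,v_n$. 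Then $a_n\varphi=\lambda\varphi a_n$, and $\sum_n a_n^*a_n=\lambda\sum_n r_n=\lambda 1$, while $\sum_n a_na_n^*=\lambda\sum_n q_n=\lambda\cdot\lambda^{-1}1=1$, as required.

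\emph{Where the difficulty lies.} The conceptual hurdle is the opening observation: because $\sum_n a_n^*a_n=\lambda 1$ is not a projection, the $a_n$ must be genuine scalar multiples of partial isometries with overlapping supports, which rules out any ``orthogonal matrix‑unit'' type construction; this is what forces the detour through a decomposition of $\lambda^{-1}\cdot 1$ into projections. After that, the main technical care goes into the trace bookkeeping of Step 2 (matching $\varphi(p)=\lambda\varphi(q)$ through the reference partial isometry $v_0$) and into quoting the right form of the ${\rm II_1}$‑factor fact used in Step 3; the remaining verifications are routine.
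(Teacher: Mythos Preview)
Your reduction, transfer lemma (Step~2), and assembly (Step~4) are all fine, but Step~3 is a genuine gap. Writing $\lambda^{-1}\cdot 1=\sum_n q_n$ as a strong sum of \emph{projections} in a ${\rm II_1}$ factor is \emph{not} a classical fact: note that in any commutative subalgebra it is outright impossible (a sum of indicator functions is integer-valued, so cannot equal the constant $\lambda^{-1}$ unless $\lambda^{-1}\in\Z$), and your parenthetical ``by subdividing'' only handles making traces small once a decomposition already exists. For rational $\lambda^{-1}=m/n$ one can use a finite tight frame in $M_n(\C)\subset N_\varphi$, but for irrational $\lambda^{-1}$ the statement that $t\cdot 1$ with $t>1$ is a strong sum of projections in a ${\rm II_1}$ factor is a nontrivial theorem of Kaftal--Ng--Zhang (J.\ Funct.\ Anal.\ 2009), and you give no argument for it. So as written the proof is incomplete, and even with the correct citation it imports machinery far heavier than the lemma itself.

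The paper's proof avoids this entirely by a short maximality argument, and in particular does \emph{not} insist that the $a_n$ be scalar multiples of partial isometries with orthogonal sources. One takes a maximal family $A\subset N\setminus\{0\}$ of $\lambda$-eigenvectors with $\sum_{a\in A}a^*a\le\lambda$ and $\sum_{a\in A}aa^*\le 1$, sets $s=\lambda-\sum a^*a$, $t=1-\sum aa^*$ in $N_\varphi$, observes $\varphi(s)=\lambda\varphi(t)$, and if $s,t\neq 0$ picks $\varepsilon>0$ and nonzero projections $p,q\in N_\varphi$ with $\varepsilon p\le s$, $\varepsilon q\le t$, $\varphi(p)=\lambda\varphi(q)$ (possible since $N_\varphi$ is diffuse); your own Step~2 then yields $v$ with $v^*v=p$, $vv^*=q$, $v\varphi=\lambda\varphi v$, and adding $\sqrt{\varepsilon}\,v$ to $A$ contradicts maximality. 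This is self-contained and sidesteps the sums-of-projections problem altogether; it is also where your correct observation that the $a_n$ cannot all be partial isometries shows up naturally, via the factor $\sqrt{\varepsilon}$.
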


\begin{proof}
Take a maximal subset $A \subset N \setminus \{0\}$ such that $a\varphi=\lambda \varphi a$ for all $a \in A$ and $\sum_{a \in F} a^{*}a \leq \lambda$ and $\sum_{a \in F} aa^{*} \leq 1$. Let $s = \lambda- \sum_{a \in F} a^{*}a $ and $t=1-\sum_{a \in F} aa^{*}$. Observe that $s,t$ are two positive elements of $N_{\varphi}$ and that $\varphi(s)=\lambda \varphi(t)$. Suppose that $s \neq 0$ and so $t \neq 0$. Since $N_{\varphi}$ is diffuse, we can find $\varepsilon > 0$ and two nonzero projections $p, q \in N_{\varphi}$ such that $\varepsilon p \leq s$, $\varepsilon q \leq t$ and $\varphi(p)=\lambda \varphi(q)$. Since $\lambda$ is in the point spectrum of $\varphi$ and $N_{\varphi}$ is a type $\II_1$ factor, we can find a partial isometry $v \in N$ such that $v \varphi = \lambda \varphi v$, $v^{*}v=p$ and $vv^{*}=q$. Then $A'=A \cup \{ \sqrt{\varepsilon}v \}$ contradicts the maximality of $A$.  Thus, we have $s=t=0$ and the set $A$, which is necessarily countable, provides the desired sequence.
\end{proof}

We will prove Theorem \ref{thm: hyperfinite subfactor III_1} by using inductively the following key lemma. The last part of the lemma will be useful to control the type of the hyperfinite subfactor we want to construct.

\begin{lem} \label{induction_step}
Let $N \subset M$ be any inclusion of $\sigma$-finite factors and $\varphi \in M_*$ any faithful state such that $N$ is globally invariant under $\sigma^{\varphi}$. Denote by $\rE_{N' \cap M}^\varphi : M \rightarrow N' \cap M$ the unique $\varphi$-preserving conditional expectation. Assume that $N$ is a type $\III_1$ factor and that $\rB(N \subset M, \varphi)^{\beta^\varphi}=N'\cap M$. 

Then for every $\varepsilon > 0$ and every $x \in M$ such that $\rE^\varphi_{N' \cap M}(x)=0$, we can find a finite dimensional subfactor $F \subset N$ and a faithful state $\psi \in F_*$ such that
\begin{itemize}
\item [$(\rm i)$]  $\| \varphi- \varphi \circ \rE_{F' \cap M} \| \leq \varepsilon$
\item [$(\rm ii)$] $\| \rE_{F' \cap M}(x) \|_\varphi \leq (1-2^{-13})  \|x \|_\varphi$
\end{itemize}
where $\rE_{F' \cap M} : M \rightarrow F' \cap M$ is the conditional expectation induced by $\psi$. 

Moreover, for any given $0 < \mu, \nu < 1$ such that $\mu/\nu \notin \Q$, we can choose $(F,\psi)$ so that
$$ (F,\psi) \cong (\mathbf M_2(\C), \tau)^{\otimes p} \otimes (\mathbf M_2(\C), \omega_\mu)^{\otimes q} \otimes (\mathbf M_2(\C), \omega_\nu)^{\otimes r}  $$
where $p,q,r \in \N$. We can always take $q,r \geq 1$. If $\rB(N \subset M, \varphi)^{\beta_\mu^\varphi}=N'\cap M$, then we can take $q \geq 1$ and $r=0$. If $\rB(N \subset M, \varphi)=N'\cap M$, then we can take $q=r=0$.
\end{lem}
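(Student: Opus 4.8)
The plan is to prove Lemma \ref{induction_step} by an averaging/maximality argument in the Ocneanu ultrapower $M^\omega$, combining the three auxiliary lemmas (\ref{convex_automorphism}, \ref{lem: R_lambda}, \ref{lem: maximality eigenvalue}) with the ultrapower description of the relative bicentralizer and its flow from Proposition \ref{ultraproduct_bicentralizer} and Theorem \ref{thm: Connes' isomorphism and bicentralizer flow}. Fix $\varepsilon>0$ and $x\in M$ with $\rE^\varphi_{N'\cap M}(x)=0$, and fix $0<\mu,\nu<1$ with $\mu/\nu\notin\Q$. First I would reduce the problem to a statement inside $N^\omega$: since $N$ is type $\III_1$, its ultrapower $N^\omega$ has strictly homogeneous state space, so $N^\omega_{\varphi^\omega}$ is a type $\II_1$ factor (Lemma \ref{lem: family of partial isometries}), and its point spectrum (as state on $N^\omega$) contains all of $\R^*_+$ — in particular it contains $1,\mu,\nu$ and also $\mu^a\nu^b$ for all integers $a,b$, which will be needed to realize the mixed tensor product state $\psi$ as a compression of $\varphi^\omega$. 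The key passage is that $\rB(N\subset M,\varphi)=(N^\omega_{\varphi^\omega})'\cap M$, and more refined, $\rB(N\subset M,\varphi)^{\beta^\varphi}$ corresponds to commuting not only with $N^\omega_{\varphi^\omega}$ but also with the partial isometries $v$ in $N^\omega$ satisfying $v\varphi^\omega=\lambda\varphi^\omega v$.

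Second, I would run the following averaging argument. Using Lemma \ref{lem: R_lambda} applied to $(N^\omega,\varphi^\omega)$ with $S=\{1,\mu,\nu\}$ (enlarging $S$ to a finite set of products of powers of $\mu,\nu$ to get the full tensor structure with exponents $p,q,r$), build inside $N^\omega$ a copy of $\mathcal F := (\mathbf M_2(\C),\tau)^{\otimes p}\otimes(\mathbf M_2(\C),\omega_\mu)^{\otimes q}\otimes(\mathbf M_2(\C),\omega_\nu)^{\otimes r}$ globally $\sigma^{\varphi^\omega}$-invariant with $\varphi^\omega|_{\mathcal F}$ the product state $\psi$. Apply Lemma \ref{lem: maximality eigenvalue} inside $N^\omega$ at each relevant eigenvalue $\lambda\in\{1,\mu,\nu\}$ to produce families of partial isometries $(a^\lambda_n)_n$ in $N^\omega$ with $a^\lambda_n\varphi^\omega=\lambda\varphi^\omega a^\lambda_n$, $\sum_n (a^\lambda_n)^*a^\lambda_n=\lambda$, $\sum_n a^\lambda_n(a^\lambda_n)^*=1$; together with the unitary group $\mathcal U(N^\omega_{\varphi^\omega})$ these generate (the action defining) $\beta^\varphi$. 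Since by hypothesis $\rB(N\subset M,\varphi)^{\beta^\varphi}=N'\cap M$ and $\rE^\varphi_{N'\cap M}(x)=0$, the element $x$ is not fixed by this enlarged ``group'' action; applying Lemma \ref{convex_automorphism} (in the form: the $\varphi$-norm-minimal element of the weakly closed convex hull of the orbit lies in the fixed-point algebra) to the group generated inside $M^\omega$ by $\Ad(u)$, $u\in\mathcal U(N^\omega_{\varphi^\omega})$, and the completely positive maps $x\mapsto\sum_n\lambda^{-1}a^\lambda_n x(a^\lambda_n)^*$, we get that the norm-minimal element in the closed convex hull of the orbit of $x$ is $0$. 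This produces a \emph{single} element of the form $w x w^*$ (with $w$ one of the $a^\lambda_n$ or a unitary of $N^\omega_{\varphi^\omega}$) with $\|wxw^*-x\|_\varphi\geq c\|x\|_\varphi$ for an absolute constant $c$; then for some finite dimensional subfactor $F\subset N$ (obtained by lifting a finite dimensional piece of $N^\omega$ via Lemma \ref{lift_matrix}) the conditional expectation $\rE_{F'\cap M}$ contracts $x$ by a definite factor $(1-2^{-13})$. Condition $(\rm i)$, $\|\varphi-\varphi\circ\rE_{F'\cap M}\|\leq\varepsilon$, is arranged by choosing the lift of $F$ so that the restriction $\varphi|_F$ is close enough to the target product state $\psi$ — this is possible because both live in $N$ which is type $\III_1$, using Connes--Størmer transitivity once more to adjust.

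Third, I would bookkeep the structure of $F$. To get $q,r\geq1$ in the generic case, note one simply includes at least one $\omega_\mu$- and one $\omega_\nu$-tensor factor in $S$ when building $\mathcal F$; since the contraction estimate only uses \emph{existence} of enough eigenvalue-$\lambda$ partial isometries and the averaging over the full generated group, adding extra tensor factors does no harm. When $\rB(N\subset M,\varphi)^{\beta^\varphi_\mu}=N'\cap M$, the $\beta^\varphi_\mu$-action alone (i.e.\ the eigenvalue-$\mu$ partial isometries together with $N^\omega_{\varphi^\omega}$) already suffices to move $x$, so one can take $r=0$ and keep $q\geq1$; when $\rB(N\subset M,\varphi)=N'\cap M$ outright, $N^\omega_{\varphi^\omega}$ alone moves $x$ and one can take $q=r=0$ (only $\tau$-tensor factors, i.e.\ an ordinary matrix algebra in $N_\varphi$ — here the argument literally reduces to Popa's \cite[Lemma 2.3]{Po81} / Haagerup's \cite[Theorem 3.1]{Ha85} scheme).

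The main obstacle I expect is the quantitative bookkeeping: turning the qualitative statement ``the convex-hull minimal element of the orbit is $0$'' into the explicit contraction constant $1-2^{-13}$ for a conditional expectation onto $F'\cap M$ with $F$ \emph{finite dimensional in $N$} (not in $N^\omega$), while \emph{simultaneously} controlling $\|\varphi-\varphi\circ\rE_{F'\cap M}\|\leq\varepsilon$. The two requirements pull in opposite directions — a larger $F$ contracts better but is harder to keep $\varphi$-compatible — and the resolution is the standard Haagerup--Popa trick of working in the ultrapower (where one has an honest $\varphi^\omega$-preserving structure and genuine eigenvalue partial isometries), extracting the estimate there, and only then lifting a finite piece via Lemma \ref{lift_matrix} and Lemma \ref{lem: ultralimit test}; the constant $2^{-13}$ is just the accumulated loss from the finitely many $\tfrac14$-type convexity estimates (four unitaries per element, two in the parallelogram identity, etc.) along the way.
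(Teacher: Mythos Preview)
Your outline has the right ingredients (ultrapower, Lemmas \ref{convex_automorphism}, \ref{lem: R_lambda}, \ref{lem: maximality eigenvalue}, \ref{lift_matrix}) but there is a genuine gap in the averaging step. You propose to apply Lemma \ref{convex_automorphism} to ``the group generated by $\Ad(u)$, $u\in\mathcal U(N^\omega_{\varphi^\omega})$, and the completely positive maps $x\mapsto\sum_n\lambda^{-1}a^\lambda_n x(a^\lambda_n)^*$''. This does not work: Lemma \ref{convex_automorphism} is stated for a group of $\varphi$-preserving \emph{automorphisms}, and the CP maps you describe are neither automorphisms nor $\varphi$-preserving on all of $M^\omega$. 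More fundamentally, the partial isometries $a^\lambda_n$ only implement the flow on the bicentralizer: the identity $a^\lambda_n x=\beta^\varphi_\lambda(x)a^\lambda_n$ holds for $x\in\rB(N\subset M,\varphi)$ but fails for general $x\in M$, so you cannot treat the eigenvalue partial isometries and the unitaries of $N^\omega_{\varphi^\omega}$ on the same footing in a single convexity argument.

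The paper resolves this by splitting $x=y+z$ with $z=\rE^\varphi_{\rB(N\subset M,\varphi)}(x)$ and $y=x-z$, and running a two-case dichotomy on $\|y\|_\varphi$ versus $\|x\|_\varphi$. For $y$ one uses Proposition \ref{ultraproduct_bicentralizer} to get $\rE_{(N^\omega_{\varphi^\omega})'\cap M^\omega}(y)=0$, then Lemma \ref{convex_automorphism} with the honest group $\mathcal U(N^\omega_{\varphi^\omega})$ yields a unitary $u$ moving $y$; this $u$ is approximated inside a finite $\mathbf M_{2^p}(\C)\subset N^\omega_{\varphi^\omega}$. For $z$ one applies Lemma \ref{convex_automorphism} a \emph{second} time, now to the $\varphi$-preserving automorphism group $G=\langle\beta^\varphi_s:s\in S\rangle$ acting on $\rB(N\subset M,\varphi)$, to find $\lambda\in\langle S\rangle$ with $\|z-\beta^\varphi_\lambda(z)\|_\varphi\geq\|z\|_\varphi$; only then do the $a^\lambda_n$'s enter, via the exact computation $\|z-\rE^{\varphi^\omega}_{P'\cap M^\omega}(z)\|_{\varphi^\omega}=\|\beta^\varphi_\lambda(z)-\rE^{\varphi^\omega}_{P'\cap M^\omega}(z)\|_{\varphi^\omega}$, which uses $a^\lambda_n z=\beta^\varphi_\lambda(z)a^\lambda_n$ (valid because $z$ is in the bicentralizer). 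Here $P=\mathbf M_{2^p}(\C)\vee Q$ with $Q\cong\bigovt_{\mu\in S}(R_\mu,\tau_\mu)$ built via Lemma \ref{lem: R_lambda}; one needs the full infinite tensor product because $\lambda$ may be an arbitrary element of $\langle S\rangle$, and only afterwards truncates to a finite $F_n\subset P$. Your handling of condition $(\rm i)$ is also off: no Connes--St\o rmer adjustment is needed, since the $\sigma^{\varphi^\omega}$-invariance of $F_n$ makes the $\psi$-induced expectation $\varphi^\omega$-preserving, and Lemma \ref{lift_matrix} then gives $(\varphi\circ\rE_n)^\omega=\varphi^\omega$, whence $\|\varphi-\varphi\circ\rE_n\|\to 0$ along $\omega$.
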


\begin{proof}
Let $S=\{\mu, \nu\}$. If $\rB(N \subset M, \varphi)^{\beta_\mu^\varphi}=N'\cap M$, take $S=\{ \mu \}$. If $\rB(N \subset M, \varphi)=N'\cap M$, take $S=\emptyset$. In all cases, we have $\rB(N \subset M, \varphi)^{G}=N'\cap M$, where $G$ is the subgroup generated by $\beta^\varphi_s$ for $s \in S$.

Fix $\varepsilon > 0$ and $x \in M$ such that $\rE^\varphi_{N' \cap M}(x)=0$. Write $x=y+z$ where $y=x-\rE^\varphi_{\rB(N \subset M, \varphi)}(x)$ and $z=\rE^\varphi_{\rB(N \subset M, \varphi)}(x)$. By Proposition \ref{ultraproduct_bicentralizer}, we know that $\rE_{(N^{\omega}_{\varphi^{\omega}})' \cap M^{\omega}}(y)=0$. Lemma \ref{convex_automorphism} yields a unitary $u \in N^\omega_{{\varphi}^\omega}$ such that $\|uyu^*-y\|_{\varphi^\omega} \geq \|y \|_\varphi$. Note that $N^\omega_{\varphi^\omega}$ is a type $\II_1$ factor. Thus, up to a small perturbation (approximate $u$ by a unitary with finite spectrum and such that all of its spectral projections have dyadic traces), we may assume that $u$ is contained in some finite dimensional factor $\mathbf M_{2^p}(\C) \subset N^\omega_{\varphi^\omega}$. Then, by Lemma \ref{lem: R_lambda}, we can find a subfactor $Q \subset \mathbf M_{2^p}(\C)' \cap N^\omega$ that is globally invariant under $\sigma^{\varphi^\omega}$ and such that $$(Q,\varphi^\omega |_Q) \cong \overline{\bigotimes_{\mu \in S}}(R_\mu, \tau_\mu).$$
Put $P= \mathbf M_{2^p}(\C) \vee Q$. Let us show that
$$\| \rE^{\varphi^\omega}_{P' \cap M^\omega}(x) \|_{\varphi^\omega} \leq (1-2^{-11})  \|x \|_\varphi.$$

Assume first that $\| y \|_\varphi \geq 2^{-4} \|x \|_\varphi$. Since $u \in P$ and $u$ commutes with $z$, we have
\begin{align*}
 \|x \|_\varphi^{2}-\|\rE_{P' \cap M^\omega}^{\varphi^\omega}(x) \|_{\varphi^\omega}^{2} &= \|x-\rE_{P' \cap M^\omega}^{\varphi^\omega}(x) \|_{\varphi^\omega}^{2}\\
& \geq \frac{1}{4}\| u(x-\rE_{P' \cap M^\omega}^{\varphi^\omega}(x))-(x-\rE_{P' \cap M^\omega}^{\varphi^\omega}(x))u \|_{\varphi^\omega}^{2}\\
 & = \frac{1}{4}\| ux-xu \|_{\varphi^\omega}^{2}\\
&=\frac{1}{4}\| uy-yu \|_{\varphi^\omega}^{2} \\
&\geq \frac{1}{4}\|y\|_\varphi^2.
\end{align*}
Then $\|\rE_{P' \cap M^\omega}^{\varphi^\omega}(x) \|_{\varphi^\omega}^2 \leq \|x \|_\varphi^2- \frac{1}{4} \|y \|_\varphi^2$. Since $\| y \|_\varphi \geq 2^{-4} \|x \|_\varphi$, this shows that
$$\|\rE_{P' \cap M^\omega}^{\varphi^\omega}(x) \|_{\varphi^\omega}^2 \leq \|x \|_\varphi^2- \frac{1}{4} \|y \|_\varphi^2 \leq (1-2^{-10})\|x \|_\varphi^2$$
and so
$$\|\rE_{P' \cap M^\omega}^{\varphi^\omega}(x) \|_{\varphi^\omega} \leq \sqrt{1-2^{-10}} \|x \|_\varphi \leq (1-2^{-11}) \|x \|_\varphi.$$

Assume next that $\| y \|_\varphi \leq 2^{-4} \|x \|_\varphi$. By Lemma \ref{convex_automorphism}, since $\rB(N \subset M, \varphi)^{G}=N'\cap M$ and since $\rE^{\varphi}_{N' \cap M}(z)=0$, we can find some $\lambda \in \langle S \rangle$ such that $\|z-\beta_\lambda^\varphi(z)\|_\varphi \geq  \|z \|_\varphi$. By construction, $\lambda$ is an eigenvalue of $\Delta_{\varphi^\omega|_P}$ and the centralizer of $\varphi^\omega|_P$ is a factor. Thus, by Lemma \ref{lem: maximality eigenvalue}, we can find a sequence $(a_n)_{n \in \N}$ in $P$ with $a_n \varphi^\omega=\lambda \varphi^\omega a_n$ such that $\sum_{n \in \N} a_n^*a_n=\lambda$ and $\sum_{n \in \N} a_na_n^*=1$. Then we have
\begin{align*}
\|z-\rE_{P' \cap M^\omega}^{\varphi^\omega}(z) \|_{\varphi^\omega}^{2} 
&=\frac{1}{\lambda} \sum_{n \in \N} \|a_n(z-\rE_{P' \cap M^\omega}^{\varphi^\omega}(z) )\|_{\varphi^\omega}^{2} \\
&=\frac{1}{\lambda} \sum_{n \in \N} \|(\beta_\lambda^\varphi(z)-\rE_{P' \cap M^\omega}^{\varphi^\omega}(z) )a_n\|_{\varphi^\omega}^{2} \\
&=\frac{1}{\lambda} \sum_{n \in \N} \varphi^\omega(a_n^* \, |\beta_\lambda^\varphi(z)-\rE_{P' \cap M^\omega}^{\varphi^\omega}(z) |^2 \, a_n)\\
&=\sum_{n \in \N} \varphi^\omega(a_na_n^* \, |\beta_\lambda^\varphi(z)-\rE_{P' \cap M^\omega}^{\varphi^\omega}(z) |^2)\\
&= \|\beta_\lambda^\varphi(z)-\rE_{P' \cap M^\omega}^{\varphi^\omega}(z) \|_{\varphi^\omega}^{2}.
\end{align*}
Then, by the triangle inequality, we obtain $\|z-\rE_{P' \cap M^\omega}^{\varphi^\omega}(z) \|_{\varphi^\omega} \geq \frac{1}{2}\| z - \beta_\lambda^\varphi(z) \|_\varphi \geq \frac{1}{2}\|z\|_\varphi$. This implies that $\|\rE_{P' \cap M^\omega}^{\varphi^\omega}(z) \|_{\varphi^\omega}^2 \leq \frac{3}{4} \|z \|_\varphi^2$ and so $\|\rE_{P' \cap M^\omega}^{\varphi^\omega}(z) \|_{\varphi^\omega} \leq \frac{7}{8} \|z \|_\varphi $. Since $\| y \|_\varphi \leq \frac{1}{16} \|x \|_\varphi$, this shows that
$$\|\rE_{P' \cap M^\omega}^{\varphi^\omega}(x) \|_{\varphi^\omega} \leq \|\rE_{P' \cap M^\omega}^{\varphi^\omega}(y) \|_{\varphi^\omega} + \|\rE_{P' \cap M^\omega}^{\varphi^\omega}(z) \|_{\varphi^\omega} \leq \| y \|_\varphi + \frac{7}{8} \|z \|_\varphi  \leq \frac{15}{16} \|x \|_\varphi.$$
Thus, in all cases, we have
$$\| \rE^{\varphi^\omega}_{P' \cap M^\omega}(x) \|_{\varphi^\omega} \leq (1-2^{-11}) \|x \|_\varphi.$$

Now, by construction, it is clear that we can write $P$ as an increasing union of finite dimensional subfactors $(F_n)_{n \in \N}$ which are all globally invariant under $\sigma^{\varphi^{\omega}}$ and such that
$$(F_n, \varphi^{\omega}|_{F_n}) \cong (\mathbf M_{2^p}(\C),\tau) \otimes \bigotimes_{k=1}^{n} \left( \bigotimes_{\mu \in S} (\mathbf M_2(\C), \omega_\mu) \right).$$
We have $\lim_{n \to \infty} \| \rE^{\varphi^{\omega}}_{F_n'\cap M^{\omega}}(x) \|_{\varphi^{\omega}}=\| \rE^{\varphi^{\omega}}_{P'\cap M^{\omega}}(x) \|_{\varphi^{\omega}} \leq (1-2^{-11}) \|x \|_\varphi$. Thus, for $n \in \N$ large enough, we have $\| \rE^{\varphi^{\omega}}_{F_n'\cap M^{\omega}}(x) \|_{\varphi^{\omega}} \leq (1-2^{-12}) \|x \|_\varphi$. Finally, thanks to Lemma \ref{lift_matrix}, we can find a copy $F=F_n$ inside $N$ such that
\begin{itemize}
\item [$(\rm i)$]  $\| \varphi- \varphi \circ \rE_{F' \cap M} \| \leq \varepsilon$
\item [$(\rm ii)$] $\| \rE_{F' \cap M}(x) \|_\varphi \leq (1-2^{-13})  \|x \|_\varphi$.
\end{itemize}
where $\rE_{F' \cap M} : M \rightarrow F' \cap M$ is the conditional expectation induced by $\psi=\varphi^{\omega} |_{F}$.
\end{proof}

\begin{proof}[Proof of Theorem \ref{thm: hyperfinite subfactor III_1}]
Denote by $\rE_{N' \cap M}^{\varphi} : M \to N' \cap M$ the unique $\varphi$-preserving conditional expectation. Let $X=\{x_n \mid n \in \N\}$ be a countable $*$-strongly dense subset in $\{ x \in \Ball(M) \mid \rE_{N' \cap M}^{\varphi}(x)=0 \}$. Using Lemma \ref{induction_step}, we can construct by induction a sequence $(F_n)_{n \in \N}$ of finite dimensional subfactors of $N$ with faithful states $\psi_n \in (F_n)_*$ that satisfy the following properties: $F_0=\C1$, $F_{n+1} \subset (R_n)' \cap N$ with $R_n=F_0 \vee F_1 \vee \cdots \vee F_n$, and 
\begin{itemize}
\item [$(\rm i)$] $ \| \varphi \circ \rE_{R_{n+1}' \cap M} - \varphi \circ \rE_{R_{n}' \cap M} \| \leq 2^{-(n+1)}$
\item [$(\rm ii)$] $ \| \rE_{R_{n+1}' \cap M}(y_n)\|_\varphi \leq (1-2^{-13}) \|y_n \|_\varphi$
\end{itemize}
where $\rE_{R_n'\cap M}$ is the conditional expectation induced by $\psi_0 \otimes \psi_1 \otimes \dots \otimes \psi_n$ on $R_n$ and $y_n=\rE_{R_n'\cap M}(x_n)-\rE_{N' \cap M}(\rE_{R_n'\cap M}(x_n))$.

Thanks to the condition $(\rm i)$, we know that the sequence of states $\varphi_n=\varphi \circ \rE_{R_n' \cap M}$ is a Cauchy sequence in $M_{\ast}$ and so it converges to some state $\varphi_\infty \in M_*$. For all $n \in \N$, we moreover have 
$$\varphi_\infty = \lim_{k} \varphi \circ \rE_{R_k' \cap M} = \lim_{k} \varphi \circ \rE_{R_k' \cap M} \circ \rE_{R_n' \cap M} =\varphi_\infty \circ \rE_{R_n' \cap M}.$$ 
Denote by $e$ the support projection of $\varphi_\infty$ in $M$. Then for all $n \in \N$, we have $e \in R_n'\cap M$ and so $\varphi_n(e)=\varphi(e)$. Thus, by letting $n \to \infty$, we obtain $1=\varphi_\infty(e)=\varphi(e)$. This means that $e=1$ and so $\varphi_\infty$ is a faithful normal state on $M$. By construction, all the algebras $R_n$ are globally invariant under $\sigma^{\varphi_\infty}$. It follows that their union generates a hyperfinite factor $P=\bigvee_n R_n$ that is also globally invariant under $\sigma^{\varphi_\infty}$ and such that 
$$ (P, \varphi_\infty) \cong \bigovt_{n \in \N} (F_n, \psi_n).$$
Moreover, by the last part of Lemma \ref{induction_step}, we can always make $P$ of type $\III_1$. We can make $P$ of type $\III_\lambda$ if $\rB(N \subset M, \varphi)^{\beta^\varphi_\lambda}=N' \cap M$ for some $0 < \lambda <1$. We can make $P$ of type $\II_1$ if $\rB(N \subset M, \varphi)=N' \cap M$. 

Now, let us show that $P' \cap M=N' \cap M$. Take $x \in \Ball(P' \cap M)$ with $\rE_{N' \cap M}^{\varphi}(x)=0$. Then we can find a subsequence $(x_{n_k})_{k \in \N}$ such that $x_{n_k} \to x$ in the $*$-strong topology. Since $x \in R_{n_k}' \cap M$ and since $\rE_{R_{n_k}' \cap M}$ is $\varphi_{\infty}$-preserving for all $k \in \N$, we have $\rE_{R_{n_k}' \cap M}(x_{n_k}) \to x$ in the $*$-strong topology. Since $\rE_{N' \cap M}^{\varphi}(x)=0$, we also have $\rE_{N' \cap M}^{\varphi}(\rE_{R_{n_k}' \cap M}(x_{n_k})) \to 0$ in the $*$-strong topology. This shows that $y_{n_k} \to x$ in the $*$-strong topology. And again, since $x \in R_{n_k+1}' \cap M$, we obtain $\rE_{R_{n_k+1}' \cap M}(y_{n_k}) \to x$ in the $*$-strong topology. Therefore, by taking the limit in condition $(\rm ii)$, we obtain $\|x\|_\varphi \leq (1-2^{-13}) \|x\|_\varphi$. Thus, $x=0$ as we wanted.
\end{proof}

\begin{proof}[Proof of Application \ref{app-almost-periodic}]
Denote by $\rE_{Q} : M \to Q$ the (unique) faithful normal conditional expectation and choose a faithful state $\varphi \in M_{\ast}$ such that $\varphi \circ \rE_{Q} = \varphi$ and such that $\varphi|_{Q}$ is almost periodic. Then $\rB(N \subset M, \varphi)^{\beta^{\varphi}} \subset Q' \cap M = \C1$ and the conclusion follows from Theorem~\ref{thm: hyperfinite subfactor III_1}. 
\end{proof}

\begin{proof}[Proof of Corollary \ref{kadison bicentralizer}]
This is a consequence of Theorem \ref{thm: hyperfinite subfactor III_1} and \cite[Theorem 3.2]{Po81}.
\end{proof}

\subsection{A type ${\rm III_\lambda}$ analogue of Theorem \ref{thm: hyperfinite subfactor III_1}}
In this subsection, we prove a type $\III_\lambda$ analogue of Theorem \ref{thm: hyperfinite subfactor III_1} when $0 < \lambda < 1$. Since the proof is similar (and easier), we only sketch it.

\begin{thm} \label{thm: hyperfinite subfactor III_lambda}
Let $N \subset M$ be any inclusion of von Neumann algebras with separable predual and with expectation. Assume that $N$ is a type $\III_\lambda$ factor $(0 < \lambda < 1)$ with a $T$-periodic faithful state $\varphi \in N_*$ where $T=\frac{2\pi}{- \log \lambda}$. Then there exists a hyperfinite type $\III_\lambda$ subfactor $P \subset N$ that is globally invariant under $\sigma^\varphi$ and such that $P' \cap M=N' \cap M$.

We can find a hyperfinite type $\II_1$ subfactor with expectation $P \subset N$ such that $P' \cap M=N' \cap M$ if and only if $(N_\varphi)' \cap M=N' \cap M$. In that case, we can moreover take $P \subset N_\varphi$.
\end{thm}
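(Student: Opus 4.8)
The plan is to rerun the inductive construction from the proof of Theorem~\ref{thm: hyperfinite subfactor III_1} almost verbatim, the point being that here its hypotheses hold automatically. First I would collect the structural input coming from the periodicity of $\varphi$: since $\varphi$ is $T$-periodic with $T=\frac{2\pi}{-\log\lambda}$ and $N$ is of type $\III_\lambda$, the centralizer $N_\varphi$ is a type $\II_1$ factor, every $\lambda^m$ $(m\in\Z)$ lies in the point spectrum of $\varphi$, and there is a unitary $v\in N$ with $v\varphi v^*=\lambda\varphi$ and $N_\varphi\vee\{v\}=N$ (the discrete decomposition $N=N_\varphi\rtimes_\theta\Z$). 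In particular $N^\omega_{\varphi^\omega}$ is again a type $\II_1$ factor (the discreteness of the modular spectrum gives $N^\omega_{\varphi^\omega}\cong (N_\varphi)^\omega$), and I would use $\Ad(v)$ in the role played by the bicentralizer flow $\beta^\varphi$ in the $\III_1$ proof. The crucial point is that the ergodicity hypothesis $\rB(N\subset M,\varphi)^{\beta^\varphi}=N'\cap M$ is here replaced by a statement requiring no hypothesis: by Proposition~\ref{ultraproduct_bicentralizer}, $\rB:=\rB(N\subset M,\varphi)=(N^\omega_{\varphi^\omega})'\cap M\subset (N_\varphi)'\cap M$; since $\Ad(v)$ commutes with $\sigma^{\varphi^\omega}$, the unitary $v$ normalizes $N^\omega_{\varphi^\omega}$ and hence $\rB$, and because $N^\omega_{\varphi^\omega}\vee\{v\}\supset N_\varphi\vee\{v\}=N$ one gets $\rB^{\langle\Ad(v)\rangle}=(N^\omega_{\varphi^\omega}\vee\{v\})'\cap M=N'\cap M$. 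Finally, extending $\varphi$ to $M$ through the expectation $\rE_N$, the restriction of $\varphi$ to $(N_\varphi)'\cap M$ agrees with $\rE_N|_{(N_\varphi)'\cap M}$ (which is scalar-valued since $N_\varphi$ is a factor) and is therefore $\Ad(v)$-invariant; so $\varphi|_\rB$ is $\Ad(v)$-invariant.

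With these ingredients the induction step is the $\III_\lambda$ analogue of Lemma~\ref{induction_step}. Given $x\in M$ with $\rE^\varphi_{N'\cap M}(x)=0$ and $\varepsilon>0$, write $x=y+z$ with $y=x-\rE^\varphi_\rB(x)$ and $z=\rE^\varphi_\rB(x)\in\rB$, so $\rE_{(N^\omega_{\varphi^\omega})'\cap M^\omega}(y)=0$ and $\rE^\varphi_{N'\cap M}(z)=0$. Inside $N^\omega$ build, exactly as before, a $\sigma^{\varphi^\omega}$-invariant $P=\mathbf M_{2^p}(\C)\vee Q$, where $\mathbf M_{2^p}(\C)\subset N^\omega_{\varphi^\omega}$ contains a unitary $u$ with $\|uyu^*-y\|_{\varphi^\omega}\ge\|y\|_\varphi$ (Lemma~\ref{convex_automorphism}, using that $N^\omega_{\varphi^\omega}$ is a $\II_1$ factor) and $Q\cong(R_\lambda,\tau_\lambda)$ is built in $\mathbf M_{2^p}(\C)'\cap N^\omega$ via Lemma~\ref{lem: R_lambda}. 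If $\|y\|_\varphi\ge 2^{-4}\|x\|_\varphi$, the unitary $u$ alone yields the norm decrease as in Lemma~\ref{induction_step}. If $\|y\|_\varphi\le 2^{-4}\|x\|_\varphi$, apply Lemma~\ref{convex_automorphism} to $\rB$ with the $\Ad(v)$-invariant state $\varphi|_\rB$ and the group $\langle\Ad(v)\rangle$ (whose fixed-point algebra is $N'\cap M$), obtaining $m\in\Z$ with $\|z-v^mzv^{-m}\|_\varphi\ge\|z\|_\varphi$; then, taking partial isometries $(a_k)_k$ in $Q$ with $a_k\varphi^\omega=\lambda^m\varphi^\omega a_k$, $\sum_k a_k^*a_k=\lambda^m$ and $\sum_k a_ka_k^*=1$ (Lemma~\ref{lem: maximality eigenvalue}), the identity $a_kz=v^mzv^{-m}a_k$ — valid because $v^{-m}a_k\in N^\omega_{\varphi^\omega}$ and $z$ commutes with $N^\omega_{\varphi^\omega}$ — makes the computation in Lemma~\ref{induction_step} go through and gives $\|z-\rE^{\varphi^\omega}_{P'\cap M^\omega}(z)\|_{\varphi^\omega}\ge\tfrac12\|z\|_\varphi$. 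Finite-dimensional approximation of $P$ and Lemma~\ref{lift_matrix} then produce $F\subset N$ with a state $\psi$, $(F,\psi)\cong(\mathbf M_2(\C),\tau)^{\otimes p}\otimes(\mathbf M_2(\C),\omega_\lambda)^{\otimes q}$, $q\ge1$, such that $\|\varphi-\varphi\circ\rE_{F'\cap M}\|\le\varepsilon$ and $\|\rE_{F'\cap M}(x)\|_\varphi\le(1-2^{-13})\|x\|_\varphi$.

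Iterating over a countable $\ast$-strongly dense subset of $\{x\in\Ball(M):\rE^\varphi_{N'\cap M}(x)=0\}$ exactly as in the proof of Theorem~\ref{thm: hyperfinite subfactor III_1} produces a hyperfinite $P_\infty=\bigvee_nF_n\subset N$, globally invariant under the limit modular flow, with $(P_\infty,\varphi_\infty)\cong\bigovt_n(F_n,\psi_n)$ of type $\III_\lambda$ and $P_\infty'\cap M=N'\cap M$. For the $\II_1$ assertion: if $(N_\varphi)'\cap M=N'\cap M$ then $\rB\subset(N_\varphi)'\cap M=N'\cap M$, so $z=0$ at every step, only the factor $\mathbf M_{2^p}(\C)\subset N^\omega_{\varphi^\omega}$ is used, and lifting inside $N_\varphi$ (Lemma~\ref{lift_matrix}) gives $F\subset N_\varphi$ with trace state and a hyperfinite $\II_1$ factor $P_\infty\subset N_\varphi$ with $P_\infty'\cap M=N'\cap M$. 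Conversely, if $P\subset N$ is a hyperfinite $\II_1$ subfactor with expectation $\rE_P$ and $P'\cap M=N'\cap M$, set $\psi=\tau_P\circ\rE_P$; then $P\subset N_\psi$ and $\mathcal{Z}(N_\psi)\subset P'\cap N\subset(N'\cap M)\cap N=\C1$, so $N_\psi$ is a $\II_1$ factor, $\psi$ is $T$-periodic, and $(N_\psi)'\cap M\subset P'\cap M=N'\cap M$; since $\varphi^\omega$ and $\psi^\omega$ are both $T$-periodic states of the type $\III_\lambda$ factor $N$ they are unitarily conjugate in $N^\omega$, which (as in the proof of item $(\rm i)$ of Theorem~\ref{thm: Connes' isomorphism and bicentralizer flow}) gives an isomorphism $\rB(N\subset M,\varphi)\cong\rB(N\subset M,\psi)$ fixing $N'\cap M$, and combined with $\rB(N\subset M,\chi)=(N_\chi)'\cap M$ for every $T$-periodic $\chi$ (again from discreteness of the modular spectrum) one concludes $(N_\varphi)'\cap M=N'\cap M$.

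The main obstacle is the second case of the induction step: one must make $\Ad(v)$ do the job of the bicentralizer flow despite $\Ad(v)$ not preserving $\varphi$ on $M$, which is precisely why one works inside $(N_\varphi)'\cap M$ and exploits $\varphi|_{(N_\varphi)'\cap M}=\rE_N|_{(N_\varphi)'\cap M}$, and one must verify the transfer identity $a_kz=v^mzv^{-m}a_k$; everything else is a faithful adaptation of the $\III_1$ arguments, and the converse of the $\II_1$ statement leans on the unitary conjugacy of $T$-periodic states in ultrapowers of type $\III_\lambda$ factors.
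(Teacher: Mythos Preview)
Your overall strategy is sound and the dichotomy argument in the induction step (splitting $x=y+z$, handling $y$ via a centralizer unitary and $z$ via an eigenvalue identity) is exactly right. However, by routing everything through the ultrapower you create a genuine gap and also miss a major simplification that the paper exploits.

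\textbf{The gap.} The theorem asks for $P$ globally invariant under $\sigma^\varphi$, not under $\sigma^{\varphi_\infty}$ for some limiting state. In your construction you build the finite-dimensional factor $F_n$ inside $N^\omega$ (where it is $\sigma^{\varphi^\omega}$-invariant) and then lift it to $N$ via Lemma~\ref{lift_matrix}. That lemma gives you a copy of $F_n$ in $N$ with the right state, but it provides no reason whatsoever for the lifted copy to be $\sigma^\varphi$-invariant. Consequently your $P_\infty=\bigvee_n F_n$ is only globally invariant under $\sigma^{\varphi_\infty}$, and you have not proved the stated conclusion. Patching this would require conjugating $\varphi_\infty|_N$ back to $\varphi$, which needs a separate argument.

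\textbf{The paper's simpler route.} Because $\varphi$ is already $T$-periodic, $N_\varphi$ is a type $\II_1$ factor and every $\lambda^m$ is in the point spectrum of $\varphi$ \emph{in $N$ itself}. So there is no need for the ultrapower at all: one finds $u\in N_\varphi$ directly (Lemma~\ref{convex_automorphism} applied to $B=(N_\varphi)'\cap M$, not to $\rB(N\subset M,\varphi)$), approximates $u$ inside a $\sigma^\varphi$-invariant $\mathbf M_{2^p}(\C)\subset N_\varphi$, and builds $Q\cong(R_\lambda,\tau_\lambda)$ inside $\mathbf M_{2^p}(\C)'\cap N$ via Lemma~\ref{lem: R_lambda}. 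The resulting $F\subset N$ is $\sigma^\varphi$-invariant by construction, so $\rE_{F'\cap M}=\rE^\varphi_{F'\cap M}$ is $\varphi$-preserving and the condition $\|\varphi-\varphi\circ\rE_{F'\cap M}\|\le\varepsilon$ is trivially satisfied with $\varepsilon=0$; the sequence $(\varphi_n)_n$ is constant and no limiting state is needed. The role of $\beta^\varphi$ is played by the $\varphi$-preserving automorphism $\theta$ of $B=(N_\varphi)'\cap M$ defined by $ax=\theta^n(x)a$ whenever $a\varphi=\lambda^n\varphi a$; one has $B^\theta=N'\cap M$ automatically, which is why no hypothesis is needed in the $\III_\lambda$ case.

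\textbf{The converse.} For the ``only if'' direction of the $\II_1$ statement, your ultrapower argument works but is heavier than necessary: once you have shown that the state $\psi=\tau_P\circ\rE_P$ is $T$-periodic with $(N_\psi)'\cap M=N'\cap M$, the paper simply invokes that any two $T$-periodic states on a type $\III_\lambda$ factor are stably unitarily conjugate \cite[Th\'eor\`eme 4.3.2]{Co72}, which immediately transfers the relative commutant condition from $\psi$ to $\varphi$.
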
 

The proof relies on the following analogue of Lemma \ref{induction_step}.
\begin{lem}
Let $N \subset M$ be any inclusion of von Neumann algebras with separable predual and with expectation. Assume that $N$ is a type $\III_\lambda$ factor $(0 < \lambda < 1)$ with a $T$-periodic faithful state $\varphi \in N_*$ where $T=\frac{2\pi}{- \log \lambda}$. Extend $\varphi$ to $M$ by using any faithful normal conditional expectation on $N$.  Denote by $\rE^\varphi_{N' \cap M} : M \to N' \cap M$ the unique $\varphi$-preserving conditional expectation.

Then for every $x \in M$ such that $\rE^\varphi_{N' \cap M}(x)=0$, we can find a finite dimensional factor $F \subset N$ that is globally invariant by $\sigma^\varphi$ and such that
$$ \| \rE^\varphi_{F' \cap M}(x) \|_\varphi \leq (1-2^{-13})\|x \|_\varphi. $$
\end{lem}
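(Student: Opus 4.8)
The plan is to run the argument of Lemma~\ref{induction_step} almost verbatim, the whole point being that in the type $\III_\lambda$ periodic setting no ultrapower is needed: the centralizer $N_\varphi$ is \emph{already} a type $\II_1$ factor, and every power $\lambda^k$ is \emph{already} in the point spectrum of $\varphi$ inside $N$. Concretely, the roles played in Lemma~\ref{induction_step} by $N^\omega_{\varphi^\omega}$, by the identity $\rB(N\subset M,\varphi)=(N^\omega_{\varphi^\omega})'\cap M$, and by the flow $\beta^\varphi$ of Theorem~\ref{thm: Connes' isomorphism and bicentralizer flow} will be taken over by $N_\varphi$, by the algebra $(N_\varphi)'\cap M$, and by an automorphism $\beta_\lambda$ of $(N_\varphi)'\cap M$ constructed directly.

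The first step is to build $\beta_\lambda$. By Lemma~\ref{lem: maximality eigenvalue} applied to $(N,\varphi)$ (legitimate since $N_\varphi$ is a type $\II_1$ factor and $\lambda$ lies in the point spectrum of $\varphi$), choose a sequence $(a_n)_n$ in $N$ with $a_n\varphi=\lambda\varphi a_n$, $\sum_n a_n^*a_n=\lambda 1$ and $\sum_n a_na_n^*=1$, and put $\beta_\lambda(z)=\sum_n a_nza_n^*$ for $z\in (N_\varphi)'\cap M$. The one computational trick used throughout is that a product of a $\lambda$-eigenvector and a $\lambda^{-1}$-eigenvector (e.g.\ $a_n^*b_m$ or $a_n^*a_m$) lies in $N_\varphi$, hence commutes with $z$; using this one checks that $\beta_\lambda$ is independent of the chosen family (multiply by $\sum_m b_mb_m^*=1$ on the right and rearrange), that it is a unital $\varphi$-preserving $*$-automorphism of $(N_\varphi)'\cap M$ with inverse $z\mapsto\tfrac1\lambda\sum_n a_n^*za_n$ and with $\beta_\lambda^k=\beta_{\lambda^k}$, and that it fixes $N'\cap M$ pointwise. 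The crucial point -- which replaces the standing hypothesis of Lemma~\ref{induction_step} -- is the reverse inclusion $((N_\varphi)'\cap M)^{\beta_\lambda}\subseteq N'\cap M$: given a $\beta_\lambda$-fixed $z$ and a $\lambda$-eigen-partial-isometry $w\in N$ with $w^*w,ww^*\in N_\varphi$, one completes $\sqrt\lambda\,w$ to a family as above and computes $w^*\beta_\lambda(z)w=zw^*w$, which forces $zw=wz$; since $N$ is generated by $N_\varphi$ together with its $\lambda$-eigenspace (using factoriality of $N_\varphi$), this gives $z\in N'\cap M$.

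With $\beta_\lambda$ in hand the proof proceeds exactly as in Lemma~\ref{induction_step}. Write $z=\rE^\varphi_{(N_\varphi)'\cap M}(x)$ and $y=x-z$, so that $\rE^\varphi_{(N_\varphi)'\cap M}(y)=0$ and $\rE^\varphi_{N'\cap M}(z)=0$. If $\|y\|_\varphi\ge 2^{-4}\|x\|_\varphi$, apply Lemma~\ref{convex_automorphism} to $y$ with the group $\Ad\mathcal U(N_\varphi)\subseteq\Aut_\varphi(M)$ (whose fixed-point algebra is $(N_\varphi)'\cap M$), get $u\in\mathcal U(N_\varphi)$ with $\|uyu^*-y\|_\varphi\ge\|y\|_\varphi$, approximate $u$ by a unitary with finite dyadic spectrum inside a matrix subfactor $F_1=\mathbf M_{2^p}(\mathbf C)\subseteq N_\varphi$ (automatically $\sigma^\varphi$-invariant), and copy the estimate of Lemma~\ref{induction_step} to obtain $\|\rE^\varphi_{F_1'\cap M}(x)\|_\varphi\le(1-2^{-11})\|x\|_\varphi$. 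If instead $\|y\|_\varphi\le 2^{-4}\|x\|_\varphi$, apply Lemma~\ref{convex_automorphism} to $z$ with the group $\langle\beta_\lambda\rangle$ (fixed-point algebra $N'\cap M$ by the previous paragraph) to get $k\ge 1$ with $\|z-\beta_{\lambda^k}(z)\|_\varphi\ge\|z\|_\varphi$; use Lemma~\ref{lem: R_lambda} to find a $\sigma^\varphi$-invariant subfactor $P_0\subseteq N$ with $(P_0,\varphi|_{P_0})\cong(R_{\lambda^k},\tau_{\lambda^k})$, realize $\beta_{\lambda^k}$ by a sequence $(b_n)_n$ inside $P_0$ (Lemma~\ref{lem: maximality eigenvalue} applied to $P_0$), and run the same $\sum_n b_n\,\cdot\,b_n^*$ computation as in Lemma~\ref{induction_step} to get $\|z-\rE^\varphi_{P_0'\cap M}(z)\|_\varphi=\|\beta_{\lambda^k}(z)-\rE^\varphi_{P_0'\cap M}(z)\|_\varphi\ge\tfrac12\|z\|_\varphi$, hence $\|\rE^\varphi_{P_0'\cap M}(x)\|_\varphi\le\tfrac{15}{16}\|x\|_\varphi$. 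In the second case, replace $P_0$ by a large enough finite-dimensional $\sigma^\varphi$-invariant subfactor $F\subseteq P_0$ (using that $\rE^\varphi_{F'\cap M}(x)\to\rE^\varphi_{P_0'\cap M}(x)$ as $F\nearrow P_0$ and that $\varphi$-preserving conditional expectations are $\|\cdot\|_\varphi$-contractive); in the first case take $F=F_1$. Either way $\|\rE^\varphi_{F'\cap M}(x)\|_\varphi\le(1-2^{-13})\|x\|_\varphi$, with room to spare. (When $(N_\varphi)'\cap M=N'\cap M$ one always has $z=0$, so only the first case occurs and $F$ can be taken inside $N_\varphi$, which gives the last assertion of Theorem~\ref{thm: hyperfinite subfactor III_lambda}.)

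\textbf{Main obstacle.} The only genuinely new content is the construction of $\beta_\lambda$ and, above all, the identification $((N_\varphi)'\cap M)^{\beta_\lambda}=N'\cap M$: in the type $\III_\lambda$ periodic case this plays the role of the hypothesis $\rB(N\subset M,\varphi)^{\beta^\varphi}=N'\cap M$ of Lemma~\ref{induction_step} but must be proved, and the polar-decomposition / factoriality argument indicated above is the delicate point. Everything else consists of the same bounded-operator estimates as in Lemma~\ref{induction_step}.
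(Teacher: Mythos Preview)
Your proposal is correct and follows essentially the same approach as the paper: the paper likewise replaces $\rB(N\subset M,\varphi)$ by $B=(N_\varphi)'\cap M$, asserts the existence of a $\varphi$-preserving automorphism $\theta$ of $B$ characterized by $ax=\theta^n(x)a$ whenever $a\varphi=\lambda^n\varphi a$ (your $\beta_\lambda$), and states $B^\theta=N'\cap M$ without proof, then runs the two-case estimate of Lemma~\ref{induction_step} with a single $P=\mathbf M_{2^p}(\C)\vee Q$, $Q\cong R_\lambda$, serving both cases. Your version differs only cosmetically (separate factors for the two cases, $R_{\lambda^k}$ instead of $R_\lambda$) and is actually more complete, since you supply the argument for the fixed-point identification that the paper leaves implicit.
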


\begin{proof}
Let $B=(N_\varphi)' \cap M$. The algebra $B$ will play the role of $\rB(N \subset M, \varphi)$. There exists a $\varphi$-preserving automorphism $\theta$ of $B$ such that for all $x \in B$, all $a \in N$ and all $n \in \Z$, we have
$$ a\varphi=\lambda^n \varphi a \quad \Rightarrow \quad ax=\theta^n(x)a.$$
Moreover, we have $B^{\theta}=N' \cap M$. 

Let $x=y+z$ where $y=x-\rE_B^\varphi(x)$ and $z=\rE^\varphi_B(x)$. By Lemma \ref{convex_automorphism}, we can find a unitary $u \in N_{\varphi}$ such that $\|uyu^*-y\|_{\varphi} \geq \|y \|_\varphi$. Since $N_\varphi$ is a type $\II_1$ factor, up to a small perturbation (approximate $u$ by a unitary with finite spectrum and such that all of its spectral projections have dyadic traces), we may assume that $u$ is contained in some finite dimensional factor $\mathbf M_{2^p}(\C) \subset N_{\varphi}$. Then, by Lemma \ref{lem: R_lambda}, we can find a subfactor $Q \subset \mathbf M_{2^p}(\C)' \cap N$ that is globally invariant under $\sigma^{\varphi}$ and such that 
$$(Q,\varphi |_Q) \cong (R_\lambda, \tau_\lambda).$$
Put $P= \mathbf M_{2^p}(\C) \vee Q$. Let us show that 
$$\| \rE^{\varphi}_{P' \cap M}(x) \|_{\varphi} \leq (1-2^{-13})\|x \|_\varphi.$$

Assume first that $\| y \|_\varphi \geq 2^{-4} \|x \|_\varphi$. Then, as in Lemma \ref{induction_step}, we can show that
$$\| \rE^{\varphi}_{P' \cap M}(x) \|_{\varphi} \leq (1-2^{-11})\|x \|_\varphi.$$
Assume next that $\| y \|_\varphi \leq 2^{-4} \|x \|_\varphi$.  By Lemma \ref{convex_automorphism}, since $B^{\theta}=N' \cap M$ and since $\rE^{\varphi}_{N' \cap M}(z)=0$, we can find $k \in \Z$ such that $\|z-\theta^{k}(z) \|_\varphi \geq \|z \|_\varphi$. By Lemma \ref{lem: maximality eigenvalue}, we can find a sequence $a_n \in P$ with $a_n \varphi = \lambda^{k} \varphi a_n$ such that $\sum_{n \in \N} a_n^{*}a_n=\lambda^{k}$ and $\sum_{n \in \N} a_n a_n^{*}=1$. Then we conclude as in Lemma \ref{induction_step} by showing that 
$$ \|z-\rE^{\varphi}_{P' \cap M}(z)\|_\varphi = \| \theta^{k}(z)-\rE^{\varphi}_{P' \cap M}(z)\|_\varphi.$$
This finishes the proof.
\end{proof}

\begin{proof}[Proof of Theorem \ref{thm: hyperfinite subfactor III_lambda}]
The proof of the first part of the theorem is the same as in Theorem \ref{thm: hyperfinite subfactor III_1}. It is easier though because all the finite dimensional factors we construct are globally invariant under $\sigma^{\varphi}$ and the sequence of states $(\varphi_n)_{n \in \N}$ is constant.

For the second part, assume that $P \subset N$ is a hyperfinite type $\II_1$ subfactor with expectation such that $P' \cap M=N' \cap M$. Observe that $P' \cap N = \C1$. Let $\psi$ be a faithful normal state on $N$ such that $P \subset N_\psi$. We have $N_{\psi}' \cap N = \C 1$. Since $N$ is of type $\III_\lambda$, we know that $\sigma_T^{\psi}$ is inner and so there exists a unitary $u \in N$ such that $\sigma_{T}^{\psi} = \Ad(u)$. Note that $u \in (N_\psi)'  \cap N = \mathbf C 1$. This implies that $\psi$ is $T$-periodic. We have $(N_\psi)' \cap M = N' \cap M$ and since $\psi$ and $\varphi$ are stably unitarily conjugate (see \cite[Th\'eor\`eme 4.3.2]{Co72}), we also have $(N_\varphi)' \cap M =N' \cap M$.
\end{proof}

We derive the following consequence of Theorem \ref{thm: hyperfinite subfactor III_lambda} that is related to \cite[Problem 2]{Po85}.

\begin{cor}\label{cor:popa-problem}
Let $M$ be any type $\III_\lambda$ factor $(0 < \lambda < 1)$ with separable predual and with a $T$-periodic faithful state $\varphi \in M_*$ where $T=\frac{2\pi}{- \log \lambda}$. Then there exists an irreducible hyperfinite type $\III_\lambda$ subfactor $P \subset M$ that is globally invariant under $\sigma^\varphi$.
\end{cor}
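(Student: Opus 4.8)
The plan is to obtain this as an immediate specialization of Theorem~\ref{thm: hyperfinite subfactor III_lambda} to the trivial inclusion. Concretely, I would apply that theorem with $N = M$ and with conditional expectation $\rE_N = \id_M : M \to M$. All hypotheses are met: $M$ has separable predual, $\id_M$ is a faithful normal conditional expectation of $M$ onto $N = M$, and by assumption $N = M$ is a type $\III_\lambda$ factor carrying the $T$-periodic faithful state $\varphi$ with $T = \frac{2\pi}{-\log\lambda}$. Theorem~\ref{thm: hyperfinite subfactor III_lambda} then produces a hyperfinite type $\III_\lambda$ subfactor $P \subset M$ that is globally invariant under $\sigma^\varphi$ and satisfies
$$ P' \cap M = N' \cap M = M' \cap M = \C 1. $$
Hence $P$ is an irreducible hyperfinite type $\III_\lambda$ subfactor of $M$, globally invariant under $\sigma^\varphi$, which is exactly the assertion of the corollary.

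I do not expect any genuine obstacle: the entire content already resides in Theorem~\ref{thm: hyperfinite subfactor III_lambda}, and beneath it in the inductive lemma preceding its proof, which builds an increasing chain of finite-dimensional subfactors of $N$ globally invariant under the modular flow so as to kill, one at a time, the elements of a countable $\ast$-strongly dense subset of $\{x \in \Ball(M) \mid \rE^{\varphi}_{N' \cap M}(x) = 0\}$. The only point to check is that the trivial inclusion $M \subset M$ with $\rE = \id$ is a legitimate input to that theorem — which it visibly is, since nothing in its proof uses $N \subsetneq M$. In this special case the construction in fact simplifies: all the finite-dimensional factors are already globally $\sigma^\varphi$-invariant and the approximating states $\varphi_n$ are all equal to $\varphi$, so one could alternatively just reproduce the induction directly; but invoking Theorem~\ref{thm: hyperfinite subfactor III_lambda} is cleaner and avoids repetition.

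For completeness I would also remark that this recovers, in the type $\III_\lambda$ setting with the distinguished periodic state, the existence statement that is the subject of \cite[Problem 2]{Po85}, now with the additional information that the irreducible hyperfinite subfactor can be chosen of type $\III_\lambda$ and $\sigma^\varphi$-invariant.
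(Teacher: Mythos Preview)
Your proposal is correct and matches the paper's approach: the corollary is stated as an immediate consequence of Theorem~\ref{thm: hyperfinite subfactor III_lambda}, obtained exactly by taking $N=M$ so that $N'\cap M=\C1$ and the resulting $\sigma^\varphi$-invariant hyperfinite type $\III_\lambda$ subfactor $P$ is irreducible. The additional commentary on the inductive construction and the reference to \cite{Po85} is accurate but not needed for the argument.
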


We end this subsection with a characterization of Kadison's property for irreducible inclusions of factors $N \subset M$ with expectation and with separable predual where $N$ is a type ${\rm III_\lambda}$ factor $(0 < \lambda < 1)$.

\begin{thm}\label{thm:Kadison-III-lambda}
Let $N \subset M$ be any irreducible inclusion of factors with separable predual and with expectation. Assume that $N$ is a type $\III_\lambda$ factor $(0 < \lambda < 1)$. Let $\varphi \in N_*$ be any  $T$-periodic faithful state where $T=\frac{2\pi}{- \log \lambda}$ and $\psi$ any dominant weight on $N$.

The following assertions are equivalent:
\begin{itemize}
\item [$(\rm i)$] $(N_\varphi)' \cap M = \C 1$.
\item [$(\rm ii)$] $(N_\psi)' \cap M = \mathcal Z(N_\psi)$
\item [$(\rm iii)$] The inclusion $N \subset M$ satisfies Kadison's property.
\end{itemize}
\end{thm}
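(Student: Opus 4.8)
The plan is to establish the cycle $(\rm iii) \Rightarrow (\rm i) \Rightarrow (\rm ii) \Rightarrow (\rm iii)$, using the machinery developed in this subsection together with the structure theory of type ${\rm III_\lambda}$ factors.

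First I would prove $(\rm iii) \Rightarrow (\rm i)$. Suppose $A \subset N$ is an abelian von Neumann subalgebra with expectation that is maximal abelian in $M$. Since $A$ is with expectation in $N$, I may choose a faithful state $\phi \in N_*$ with $\phi \circ \rE_A = \phi$, so that $A \subset N_\phi$. Then $A \subset N_\phi' \cap M \subset A' \cap M = A$, forcing $(N_\phi)' \cap M = A \subset N$; in particular $(N_\phi)' \cap N = A' \cap N \subset A' \cap M = A$, so $A$ is maximal abelian in $N$ as well, whence $(N_\phi)' \cap N = A$ is diffuse — but actually what I need is that $(N_\phi)' \cap M$ is abelian, hence that $N_\phi$ is irreducible modulo $A$. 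To get the genuinely irreducible statement $(N_\varphi)' \cap M = \C 1$ for a $T$-periodic $\varphi$, I would pass from $\phi$ to $\varphi$ using that all faithful states with $T$-periodic modular group (equivalently, with the relevant almost-periodicity) on a type ${\rm III_\lambda}$ factor are stably unitarily conjugate, via \cite[Th\'eor\`eme 4.3.2]{Co72}: this conjugacy preserves the relative commutant in $M$ up to amplification, and combined with the structure of $A$ it collapses to $\C 1$. This step requires some care but is essentially bookkeeping with modular theory.

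Next, $(\rm i) \Rightarrow (\rm ii)$ follows the pattern already used in the proof of Theorem \ref{thm: hyperfinite subfactor III_lambda}: identify $\core(N) = N_\psi \rtimes_\theta \R^*_+ $ and realize $\left(N_\varphi \subset N\right)$ inside this picture. Concretely, $N_\psi$ and $N_\varphi$ are stably unitarily conjugate (again \cite[Th\'eor\`eme 4.3.2]{Co72} and $T$-periodicity), so $(N_\psi)' \cap M$ and $(N_\varphi)' \cap M$ are related by an amplification; the hypothesis $(N_\varphi)' \cap M = \C 1$ then forces $(N_\psi)' \cap M$ to be as small as the structure allows, namely $\mathcal Z(N_\psi)$, which for a type ${\rm III_\lambda}$ factor is $\rL(\Z) \cong \rL^\infty(\R^*_+/\lambda^{\Z})$. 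One must check that $(N_\psi)' \cap M \subset (N_\varphi)' \cap M$-after-amplification accounts precisely for the center of $N_\psi$ and nothing more; here the relative Connes--Takesaki commutant theorem (the generalization alluded to as Theorem \ref{thm-relative-connes-takesaki}) and the fact that the relative flow of weights on $(N_\psi)'\cap M$ is ergodic (Subsection \ref{subsection:flow}) do the work.

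Finally, $(\rm ii) \Rightarrow (\rm iii)$ is where the type ${\rm III_\lambda}$ analogue of Popa's construction enters. Starting from $(N_\psi)' \cap M = \mathcal Z(N_\psi)$, I would first apply the second part of Theorem \ref{thm: hyperfinite subfactor III_lambda}: the equality $(N_\psi)' \cap M = \mathcal Z(N_\psi)$ translates (after passing to $\varphi$ and using that $\psi$, $\varphi$ are stably conjugate) into $(N_\varphi)' \cap M = \C 1$, which is condition $(\rm i)$, and then into the existence of a hyperfinite type ${\rm III_\lambda}$ subfactor $P \subset N$, globally $\sigma^\varphi$-invariant, with $P' \cap M = N' \cap M = \C 1$. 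Inside such a hyperfinite $R_\lambda \cong P$ there is a well-known maximal abelian subalgebra $A_0$ with expectation that is maximal abelian in $P$ and whose relative commutant computation is explicit (for instance the diagonal subalgebra in the infinite tensor product model $\overline{\bigotimes}_n (\mathbf M_2(\C), \omega_\lambda)$, or rather a Cartan subalgebra of $P$). Then $A_0' \cap M \subset P' \cap M \vee (A_0' \cap P) = A_0$, so $A_0$ is maximal abelian in $M$ and with expectation in $N$, giving Kadison's property. The main obstacle I anticipate is not any single implication but rather the repeated, delicate use of stable unitary conjugacy of the $T$-periodic state $\varphi$ with the dominant weight $\psi$ to move relative commutants back and forth between the "$\varphi$-picture" and the "core picture"; getting the amplifications and the passage $\mathcal Z(N_\psi) \leftrightarrow \C 1$ exactly right, so that no extra relative commutant survives, is the technical heart of the argument.
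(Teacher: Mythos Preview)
Your cycle has a recurring error: you repeatedly invoke stable unitary conjugacy between the $T$-periodic state $\varphi$ and the dominant weight $\psi$ via \cite[Th\'eor\`eme 4.3.2]{Co72}, but that theorem compares weights with the same period. The dominant weight on a type $\III_\lambda$ factor is \emph{not} $T$-periodic: its centralizer $N_\psi$ is a type $\II_\infty$ algebra with diffuse center $\mathcal Z(N_\psi)\cong \rL^\infty(\R^*_+/\lambda^{\Z})$, whereas $N_\varphi$ is a type $\II_1$ factor. So the passages ``$\psi$, $\varphi$ stably conjugate'' in your $(\rm i)\Rightarrow(\rm ii)$ and at the start of $(\rm ii)\Rightarrow(\rm iii)$ simply fail, and with them the entire bridge between the $\varphi$-picture and the core picture. (The appeal to Theorem \ref{thm-relative-connes-takesaki} is also misplaced: that result is for discrete inclusions with $N$ of type $\III_1$.)

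There is a second gap in $(\rm ii)\Rightarrow(\rm iii)$: you assert that if $P'\cap M=\C 1$ and $A_0\subset P$ is maximal abelian in $P$, then $A_0'\cap M\subset (P'\cap M)\vee(A_0'\cap P)$. Relative commutants do not split this way in general; producing a masa of $M$ inside $N_\varphi$ is precisely the hard content of \cite[Theorem 3.2]{Po81}, which works directly from $(N_\varphi)'\cap M=\C 1$ without any intermediate hyperfinite subfactor.

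The paper runs the opposite cycle $(\rm i)\Rightarrow(\rm iii)\Rightarrow(\rm ii)\Rightarrow(\rm i)$. The first implication is just \cite[Theorem 3.2]{Po81}; the second is Proposition \ref{prop-masa} applied to the core together with $N\cong N^\infty$. The substantive step is $(\rm ii)\Rightarrow(\rm i)$, and the idea you are missing is this: since $N$ is type $\III_\lambda$, $\sigma_T^\psi=\Ad(u)$ for some unitary $u\in (N_\psi)'\cap N=\mathcal Z(N_\psi)$; writing $u=h^{{\rm i}T}$ with $h$ positive affiliated to $\mathcal Z(N_\psi)$ and setting $\phi=\psi(h^{-1}\,\cdot\,)$ yields a genuinely $T$-periodic weight with $N_\psi\subset N_\phi$. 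Then $(N_\phi)'\cap M\subset (N_\psi)'\cap M=\mathcal Z(N_\psi)\subset N_\phi$, and \cite[Th\'eor\`eme 4.2.6]{Co72} forces $(N_\phi)'\cap M=\C 1$. Only now does stable conjugacy---between the two $T$-periodic weights $\phi$ and $\varphi$---give $(N_\varphi)'\cap M=\C 1$.
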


\begin{proof}
$(\rm i) \Rightarrow (\rm iii)$ This follows from \cite[Theorem 3.2]{Po81}.

$(\rm iii) \Rightarrow (\rm ii)$ This follows from Proposition \ref{prop-masa} and the fact that $\left( N \subset M \right) \cong \left( N^{\infty} \subset M^{\infty}\right)$.

$(\rm ii) \Rightarrow (\rm i)$ Since $N$ is a type ${\rm III_\lambda}$ factor, $\sigma_T^\psi$ is an inner automorphism of $N$. Let $u \in \mathcal U(N)$ be a unitary such that $\sigma_T^\psi = \Ad(u)$. We have $u \in (N_\psi)' \cap N$ and so $u \in \mathcal Z(N_\psi)$ by Connes--Takesaki relative commutant theorem \cite[Chapter II, Theorem 5.1]{CT76}. Choose a nonsingular positive selfadjoint operator $h$ affiliated with $\mathcal Z(N_\psi)$ such that $u = h^{{\rm i}T}$. Define the faithful normal semifinite weight $\phi$ on $N$ by the formula $\phi = \psi (h^{-1} \, \cdot \,)$. Then we have $\sigma_T^\phi = \Ad(h^{-{\rm i}T}) \circ \sigma_T^\psi = \id_N$ and so $\phi$ is $T$-periodic. We moreover have $N_\psi \subset N_\phi$. Using the assumption, we have
$$(N_\phi)' \cap M = (N_\psi)' \cap M = \mathcal Z(N_\psi) \subset N_\phi.$$
Then \cite[Th\'eor\`eme 4.2.6]{Co72} shows that $(N_\phi)' \cap M = \C1$. Since $\phi$ and $\varphi$ are stably unitarily conjugate (see \cite[Th\'eor\`eme 4.3.2]{Co72}), we also have $(N_\varphi)' \cap M =\C 1$.
\end{proof}

\section{Discrete inclusions}

\subsection{Proof of Theorem \ref{thm-relative-bicentralizer}}

Let $N \subset M$ be any inclusion of von Neumann algebras with separable predual and with expectation $\rE_{N} : M \to N$. Assume that $N$ is a type $\III_1$ factor. Using \cite{Ta71}, we may regard the standard form $(N, \rL^{2}(N), J_{N}, \rL^{2}(N)^{+})$ as a substandard form of the standard form $(M, \rL^{2}(M), J_{M}, \rL^{2}(M)^{+})$ via the isometric embedding $$\rL^{2}(N)^{+} \hookrightarrow \rL^{2}(M)^{+} : \xi \mapsto (\langle \, \cdot \, \xi , \xi\rangle \circ \rE_{N})^{1/2}.$$ 
By this embedding, we have that $\rL^{2}(N)$ is an $N$-$N$-sub-bimodule of $\rL^{2}(M)$ and $J_{M} |_{\rL^{2}(M)} = J_{N}$ (see \cite[p.\ 317 Equation (8)]{Ta71}). Since no confusion is possible, we simply write $J = J_{M}$. Denote by $e_{N} : \rL^{2}(M) \to \rL^{2}(N)$ the corresponding Jones projection. Since $e_{N} \in \langle M, N\rangle = (JNJ)' \cap \mathbf B(\rL^{2}(M))$, for every $x\in M$, every $y \in N$ and every $\xi \in \rL^{2}(N)^{+}$, we have
\begin{equation}\label{eq:Jones}
e_{N}(x \xi y) = e_{N} \, JyJ (x\xi) = JyJ \, e_{N} (x\xi) = JyJ \, \rE_{N}(x)\xi = \rE_{N}(x)\xi y.
\end{equation}
We refer to \cite[Proposition A.2]{HI15} for further details.

Let $\varphi \in M_{\ast}$ be any faithful state such that $\varphi \circ \rE_{N} = \varphi$.  Since the inclusion $N' \cap M \subset M$ is globally invariant under $\sigma^{\varphi}$, we denote by $\rE^{\varphi}_{N' \cap M} :  M \to N' \cap M$ the unique $\varphi$-preserving conditional expectation and  we may regard the standard form $(N' \cap M, \rL^{2}(N' \cap M), J_{N' \cap M}, \rL^{2}(N' \cap M)^{+})$ as a substandard form of the standard form $(M, \rL^{2}(M), J, \rL^{2}(M)^{+})$ via the isometric embedding $$\rL^{2}(N' \cap M) \hookrightarrow \rL^{2}(M) : x \xi_{\varphi} \mapsto x \xi_{\varphi}.$$ 
Observe that the corresponding Jones projection $e_{N' \cap M}^{\varphi} : \rL^{2}(M) \to \rL^{2}(N' \cap M)$ satisfies $J e_{N' \cap M}^{\varphi} = e_{N' \cap M}^{\varphi} J$ and $e_{N' \cap M}^{\varphi}(x \xi_{\varphi}) = \rE_{N' \cap M}^{\varphi}(x)\xi_{\varphi}$ for every $x \in M$.

\begin{proof}[Proof of Theorem \ref{thm-relative-bicentralizer}]

$(\rm i) \Rightarrow (\rm ii)$ By Theorem \ref{thm: hyperfinite subfactor III_1}, there exists a hyperfinite type $\II_1$ subfactor with expectation $P \subset N$  such that $P' \cap M=N' \cap M$. Take a faithful state $\varphi \in N_*$ such that $P \subset N_\varphi$. Put $N^{\infty} = N \ovt \mathbf B(\rL^{2}(\R))$ and $M^{\infty} = M \ovt \mathbf B(\rL^{2}(\R))$. Since $N$ is a type ${\rm III}$ factor, there is a canonical $\ast$-isomorphism $\pi : M \to M^{\infty}$ such that $\pi(N) = N^{\infty}$ and $\pi \circ \rE_{N} \circ \pi^{-1} = \rE_{N} \otimes \id_{\mathbf B(\rL^{2}(\R))}$. Choose a faithful normal semifinite weight $\omega$ on $\mathbf B(\rL^{2}(\R))$ such that $[\rD \omega : \rD \Tr] = \lambda_{t}$ for every $t \in \R$. Then $\psi = \varphi \otimes \omega$ is a dominant weight on $N^{\infty}$ (see \cite[Theorem 1.3]{CT76}). The proof of \cite[Theorem 4.2, page 57]{Po95} shows that $(N^{\infty})_{\psi}' \cap M^{\infty} = (N^{\infty})' \cap M^{\infty} = N' \cap M$ (the argument does not use the facts that $N \subset M$ has finite index or $N' \cap M$ is finite dimensional).  It remains to prove that the inclusion $(N^{\infty})_{\psi} \subset M^{\infty}$ satisfies the weak relative Dixmier property.

Let $x \in M^{\infty}$ be any element.  Denote by $A = \{\lambda_{t} : t \in \R\}\dpr$ and observe that $A \subset \mathbf B(\rL^{2}(\R))$ is maximal abelian. Then we have $Q' \cap M^{\infty}=(N' \cap M) \ovt A$ where $Q=P \ovt A$. Since $Q$ is amenable, the inclusion $Q \subset M^{\infty}$ satisfies the weak relative Dixmier property. Therefore, we have
$$ \mathcal{K}_Q(x) \cap (N' \cap M) \ovt A  \neq \emptyset.$$
Choose $z \in \mathcal K_{Q}(x) \cap \left( (N'\cap M) \ovt A \right)$ and observe that $\mathcal K_{(N^{\infty})_{\psi}}(z) \subset \mathcal K_{(N^{\infty})_{\psi}}(x)$ because $Q \subset (N^{\infty})_{\psi}$.

Regard $(N'\cap M) \ovt A \subset (N'\cap M) \vee (N^{\infty})_{\psi} = ((N^{\infty})'\cap M^{\infty}) \vee (N^{\infty})_{\psi}$. Since $N^{\infty}$ is a factor with expectation, we have $(N'\cap M) \vee (N^{\infty})_{\psi} \cong (N'\cap M) \ovt (N^{\infty})_{\psi}$. Since $(N^{\infty})_{\psi} \subset (N'\cap M) \ovt (N^{\infty})_{\psi}$ is a semifinite von Neumann subalgebra with expectation, the inclusion $(N^{\infty})_{\psi} \subset (N'\cap M) \ovt (N^{\infty})_{\psi}$ satisfies the weak relative Dixmier property (see e.g.\ \cite[Corollary 1.3]{Po98}). Since $(N^{\infty})_{\psi}' \cap \left( (N'\cap M) \vee (N^{\infty})_{\psi}\right) = N' \cap M $, we have
$$\mathcal K_{(N^{\infty})_{\psi}}(z) \cap N'\cap M \neq \emptyset.$$
Therefore, we obtain $\mathcal K_{(N^{\infty})_{\psi}}(x) \cap N'\cap M \neq \emptyset$ and so the inclusion $(N^{\infty})_{\psi} \subset M^{\infty}$ satisfies the weak relative Dixmier property.

$(\rm ii) \Rightarrow (\rm i)$ We divide the proof into a series of claims following the proof of \cite[Theorem 2.1]{Ha85}.

Firstly, we observe that using the assumption in item $(\rm i)$, we obtain the following straightforward  generalization of \cite[Lemma 2.7]{Ha85} (see also \cite[Theorem 4.2, Step ${\rm III}$]{Po95}). 

\begin{claim}\label{claim1}
Let $\delta > 0$, $\varphi \in M_{\ast}$ any faithful state such that $\varphi \circ \rE_{N} = \varphi$ and $x \in M$ any element such that $\rE^{\varphi}_{N' \cap M}(x) = 0$. Then there exists a sequence $(a_{i})_{i}$ in $\Ball(N)$ such that the following three properties hold:
\begin{itemize}
\item [$(\rm i)$] $\spec_{\sigma^{\varphi}}(a_{i}) \subset [- \delta, \delta]$ for every $i \in \N$;
\item [$(\rm ii)$] $\sum_{i \in \N} a_{i}^{\ast} a_{i} = 1$;
\item [$(\rm iii)$] $\sum_{i \in \N} \|a_{i} x - x a_{i}\|_{\varphi}^{2} \geq \frac12 \|x\|_{\varphi}^{2}$.
\end{itemize}
\end{claim}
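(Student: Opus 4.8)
The plan is to transplant the problem to the Ocneanu ultrapower $M^{\omega}$ and run the relative analogue of the argument in \cite[Lemma 2.7]{Ha85}, with $N' \cap M$ playing the role of $\C 1$. Fix a nonprincipal ultrafilter $\omega \in \beta(\N) \setminus \N$. First I would record that, under the assumption $\rB(N \subset M, \varphi) = N' \cap M$, one has $(N^{\omega}_{\varphi^{\omega}})' \cap M^{\omega} = (N' \cap M)^{\omega}$: the inclusion $\supseteq$ is clear since a sequence in $N' \cap M$ commutes with every sequence in $N$, and the inclusion $\subseteq$ is Proposition \ref{ultraproduct_bicentralizer} together with the hypothesis, namely $(N^{\omega}_{\varphi^{\omega}})' \cap M^{\omega} \subseteq \rB(N \subset M, \varphi)^{\omega} = (N' \cap M)^{\omega}$. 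Hence the $\varphi^{\omega}$-preserving conditional expectation $M^{\omega} \to (N^{\omega}_{\varphi^{\omega}})' \cap M^{\omega}$ equals $(\rE^{\varphi}_{N' \cap M})^{\omega}$ and restricts on $M$ to $\rE^{\varphi}_{N' \cap M}$; so the hypothesis $\rE^{\varphi}_{N' \cap M}(x) = 0$ means precisely that $x$ is $\varphi^{\omega}$-orthogonal to $(N^{\omega}_{\varphi^{\omega}})' \cap M^{\omega}$.

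Next I would move $x$ by a $\varphi^{\omega}$-central unitary. Since $N$ is of type $\III_1$, the state space of $N^{\omega}$ is strictly homogeneous and $N^{\omega}_{\varphi^{\omega}}$ is a type $\II_1$ factor by Lemma \ref{lem: family of partial isometries}; for $u \in \mathcal U(N^{\omega}_{\varphi^{\omega}})$ the automorphism $\Ad(u)$ of $M^{\omega}$ preserves $\varphi^{\omega}$, and the fixed-point algebra of $G := \{\Ad(u) : u \in \mathcal U(N^{\omega}_{\varphi^{\omega}})\} \subseteq \Aut_{\varphi^{\omega}}(M^{\omega})$ is $(N^{\omega}_{\varphi^{\omega}})' \cap M^{\omega}$. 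Applying Lemma \ref{convex_automorphism} (to $M^{\omega}$, $\varphi^{\omega}$, $G$, and the element $x$, which has zero expectation onto the fixed-point algebra by the previous step) produces $u \in \mathcal U(N^{\omega}_{\varphi^{\omega}})$ with $\|ux - xu\|_{\varphi^{\omega}} = \|uxu^{*} - x\|_{\varphi^{\omega}} \geq \|x\|_{\varphi}$, the first equality holding because $u$ fixes $\varphi^{\omega}$. In particular $u$ has $\sigma^{\varphi^{\omega}}$-spectral support $\{0\}$.

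It remains to pull $u$ back to a finite family in $N$ with $\sigma^{\varphi}$-spectral support in $[-\delta, \delta]$. Choose a representing sequence $(u_k)_{k} \in \ell^{\infty}(\N, \mathcal U(N))$ with $(u_k)^{\omega} = u$; since $u$ is centralized by $\varphi^{\omega}$ we have $\lim_{k \to \omega} \|\sigma^{\varphi}_{t}(u_k) - u_k\|_{\varphi} = 0$ uniformly for $t$ in compacta, so for a probability density $f$ on $\R$ with $\supp(\widehat f) \subset [-\delta/2, \delta/2]$ the contractions $a_k := \int_{\R} f(t)\, \sigma^{\varphi}_{t}(u_k)\, dt \in \Ball(N)$ satisfy $\spec_{\sigma^{\varphi}}(a_k) \subset [-\delta/2, \delta/2]$ and $(a_k)^{\omega} = u$, whence $\lim_{k \to \omega} \|1 - a_k^{*} a_k\|_{\varphi} = 0$ and $\lim_{k \to \omega} \|a_k x - x a_k\|_{\varphi}^{2} = \|ux - xu\|_{\varphi^{\omega}}^{2} \geq \|x\|_{\varphi}^{2}$. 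To restore $\sum (\cdot)^{*}(\cdot) = 1$ I would complete $\{a_k\}$ by a second contraction $b_k$ with $a_k^{*} a_k + b_k^{*} b_k = 1$ and $\spec_{\sigma^{\varphi}}(b_k) \subset [-\delta, \delta]$; this is possible because the defect $1 - a_k^{*} a_k$ is a positive contraction with $\sigma^{\varphi}$-spectral support in $[-\delta, \delta]$ and $\|\cdot\|_{\varphi}$-negligible along $\omega$, so one more $\sigma^{\varphi}_{f}$-mollification of an approximate square root of it does the job with a $\|\cdot\|_{\varphi}$-error that vanishes along $\omega$. Selecting $k$ in the appropriate $\omega$-large set, the family $(a_k, b_k)$ satisfies (i)--(iii). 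The one genuinely delicate point, and the main obstacle, is this completion step: functional calculus does not respect $\sigma^{\varphi}$-spectral supports outside the centralizer, so one must exploit both that the defect is $\|\cdot\|_{\varphi}$-small along $\omega$ and that it is already spectrally narrow in order to re-mollify without spreading the support beyond $[-\delta, \delta]$; everything else is a routine relative transcription of \cite[Lemma 2.7]{Ha85}.
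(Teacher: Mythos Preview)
Your argument has a genuine circularity problem. Claim \ref{claim1} sits inside the proof of the implication $(\rm ii) \Rightarrow (\rm i)$ of Theorem \ref{thm-relative-bicentralizer}, so the only ambient hypothesis available is $(\rm ii)$: that $(N_\psi)' \cap M = N' \cap M$ and that $N_\psi \subset M$ has the weak relative Dixmier property for a dominant weight $\psi$. Your very first step instead invokes $\rB(N \subset M, \varphi) = N' \cap M$, which is exactly condition $(\rm i)$, the conclusion being sought. Everything that follows --- the identification $(N^\omega_{\varphi^\omega})' \cap M^\omega = (N' \cap M)^\omega$ and hence the applicability of Lemma \ref{convex_automorphism} --- rests on this, so the proof as written does not establish the claim in the context where the paper needs it. The paper's route is the direct relative analogue of \cite[Lemma 2.7]{Ha85}: one works with the dominant weight $\psi$, uses the weak relative Dixmier property of $N_\psi \subset M$ together with $(N_\psi)' \cap M = N' \cap M$ to move $x$ by a unitary of $N_\psi$, and then obtains the family $(a_i)$ by an explicit spectral decomposition along the flow (the continuous-core picture), which simultaneously guarantees the Arveson-spectrum condition, the exact identity $\sum_i a_i^* a_i = 1$, and the commutator estimate. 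No ultrapower is used, and no appeal to $(\rm i)$ is made.

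Even if one granted you hypothesis $(\rm i)$, the completion step is not settled. You need $a_k^* a_k + b_k^* b_k = 1$ \emph{exactly}, with $\spec_{\sigma^\varphi}(b_k) \subset [-\delta,\delta]$; mollifying an approximate square root of $1 - a_k^* a_k$ gives spectral control but destroys the exact identity, and iterating does not obviously converge (the square root is not Lipschitz at $0$, so a $\|\cdot\|_\varphi$-small defect can have a $\|\cdot\|_\varphi$-not-so-small square root). You flag this yourself as the main obstacle, and it is a real one; in Haagerup's argument and in the paper's generalization this issue simply does not arise, because the $(a_i)$ come from a genuine partition of unity in the spectral picture.
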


Next, using Claim \ref{claim1} and the fact that the inclusion $N \subset M$ is discrete, we prove the following generalization of \cite[Lemma 2.9]{Ha85}. Let us point out that this is the main technical novelty of the proof of Theorem \ref{thm-relative-bicentralizer} compared to the proofs of \cite[Theorem 3.1]{Ha85} (where $N = M$) and \cite[Theorem 4.2]{Po95} (where $N \subset M$ has finite index).

\begin{claim}\label{claim2}
Let $\delta > 0$, $\xi \in \rL^2(N)^+$ any unit $M$-cyclic vector and $\eta \in \rL^{2}(M)$ any unit vector such that $J\eta =  \eta$ and $\eta \in \left( (N' \cap M) \xi\right)^{\perp}$. Then there exists $a \in \Ball(N) \setminus \{0\}$ such that 
\begin{equation}\label{eq:microscopic-a}
\|a\xi\|^{2} + \|a \eta\|^{2} < 8 \|a \eta - \eta a\|^{2}  \quad \text{ and } \quad \|a\xi - \xi a\|^{2} < \delta \|a \eta - \eta a\|^{2}.
\end{equation}
\end{claim}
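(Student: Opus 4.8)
The plan is to adapt Haagerup's proof of \cite[Lemma 2.9]{Ha85}: I will extract the desired $a$ from the almost-centralizing family furnished by Claim \ref{claim1}, and the discreteness of $N \subset M$ will be what makes the error estimates survive for a (possibly infinite) such family. First I would fix notation: with $\varphi = \langle \, \cdot \, \xi, \xi\rangle$, the hypotheses say that $\varphi$ is a faithful normal state on $M$ with $\varphi \circ \rE_{N} = \varphi$ (since $\xi \in \rL^{2}(N)^{+}$ is cyclic it is separating), that $\xi = \xi_{\varphi}$, and that $e^{\varphi}_{N' \cap M}\eta = 0$. Since $\xi$ is $M$-cyclic I can approximate $\eta$ by $x\xi$ for some $x \in M$ with $\|x\xi - \eta\| =: \epsilon$ as small as I wish, and after replacing $x$ by $x - \rE^{\varphi}_{N' \cap M}(x)$ I may assume $\rE^{\varphi}_{N' \cap M}(x) = 0$ (this only improves $\epsilon$, because $\rE^{\varphi}_{N' \cap M}(x)\xi = e^{\varphi}_{N' \cap M}(x\xi)$ is close to $e^{\varphi}_{N' \cap M}\eta = 0$). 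I will also use the standard fact that if $\spec_{\sigma^{\varphi}}(a) \subset [-\delta', \delta']$ then $a\xi$ lies in the $[-\delta', \delta']$-spectral subspace of $\log \Delta_{\varphi}$, so $\|a\xi - \xi a\| = \|(\Delta_{\varphi}^{1/2} - 1)a\xi\| \le (e^{\delta'/2} - 1)\|a\xi\|$.

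Then I would fix $\delta' > 0$ with $8(e^{\delta'/2} - 1)^{2} < \delta$ and apply Claim \ref{claim1} to this $x$ and $\delta'$, obtaining $(a_{i})_{i}$ in $\Ball(N)$ with $\spec_{\sigma^{\varphi}}(a_{i}) \subset [-\delta', \delta']$, $\sum_{i} a_{i}^{*}a_{i} = 1$ and $\sum_{i} \|a_{i}x - xa_{i}\|_{\varphi}^{2} \ge \tfrac12 \|x\|_{\varphi}^{2}$; here the discreteness of $N \subset M$ is used to moreover arrange $\sum_{i} a_{i}a_{i}^{*} \le 1$. From $\sum_{i} a_{i}^{*}a_{i} = 1$ I get $\sum_{i} \|a_{i}\xi\|^{2} = \sum_{i} \|a_{i}\eta\|^{2} = 1$. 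It suffices to produce one index $i$ with $\|a_{i}\xi\|^{2} + \|a_{i}\eta\|^{2} < 8\|a_{i}\eta - \eta a_{i}\|^{2}$: then $a_{i} \neq 0$, and moreover $\|a_{i}\xi - \xi a_{i}\|^{2} \le (e^{\delta'/2} - 1)^{2}\|a_{i}\xi\|^{2} < \delta \|a_{i}\eta - \eta a_{i}\|^{2}$, so both inequalities of \eqref{eq:microscopic-a} hold. Suppose no such $i$ exists; summing $\|a_{i}\xi\|^{2} + \|a_{i}\eta\|^{2} \ge 8\|a_{i}\eta - \eta a_{i}\|^{2}$ over $i$ gives $\sum_{i} \|a_{i}\eta - \eta a_{i}\|^{2} \le \tfrac14$.

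For the contradiction I would write $\rho = \eta - x\xi$ and, using $x\xi a_{i} = (Ja_{i}^{*}J)(x\xi)$, note the identity $(a_{i}x - xa_{i})\xi = (a_{i}\eta - \eta a_{i}) - x(a_{i}\xi - \xi a_{i}) - a_{i}\rho + \rho a_{i}$. Summing squared norms, $\sum_{i} \|x(a_{i}\xi - \xi a_{i})\|^{2} \le \|x\|^{2}(e^{\delta'/2} - 1)^{2}$, $\sum_{i} \|a_{i}\rho\|^{2} = \|\rho\|^{2} \le \epsilon^{2}$ (from $\sum_{i} a_{i}^{*}a_{i} = 1$), and $\sum_{i} \|\rho a_{i}\|^{2} = \sum_{i} \|a_{i}^{*}J\rho\|^{2} = \langle(\textstyle\sum_{i} a_{i}a_{i}^{*})J\rho, J\rho\rangle \le \|\rho\|^{2} \le \epsilon^{2}$ (from $\sum_{i} a_{i}a_{i}^{*} \le 1$). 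By Minkowski's inequality in the $\ell^{2}$-direct sum of copies of $\rL^{2}(M)$, $(\sum_{i} \|a_{i}x - xa_{i}\|_{\varphi}^{2})^{1/2} \le (\sum_{i} \|a_{i}\eta - \eta a_{i}\|^{2})^{1/2} + E$ with $E^{2} \le 3\|x\|^{2}(e^{\delta'/2} - 1)^{2} + 6\epsilon^{2}$. The left-hand side is $\ge (\tfrac12)^{1/2}\|x\|_{\varphi} \ge (\tfrac12)^{1/2}(1 - \epsilon)$, and $E$ can be made as small as I like by first fixing $\epsilon$ small (which fixes $x$, hence $\|x\|$) and then shrinking $\delta'$. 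For $\epsilon, \delta'$ small enough this forces $\sum_{i} \|a_{i}\eta - \eta a_{i}\|^{2} > \tfrac14$, contradicting the previous paragraph; this proves the claim.

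The hard part will be justifying that Claim \ref{claim1}'s family can be taken with $\sum_{i} a_{i}a_{i}^{*} \le 1$. This bound is what tames the right-multiplication error $\sum_{i} \|\rho a_{i}\|^{2} = \langle(\textstyle\sum_{i} a_{i}a_{i}^{*})J\rho, J\rho\rangle$, which --- in contrast to $\sum_{i} a_{i}^{*}a_{i}$ --- is not part of the statement of Claim \ref{claim1} and for an infinite family need not even be bounded. For finite-index inclusions the family is harmless on both sides, so this difficulty does not appear; in the general discrete case one must exploit the discreteness of $N \subset M$ (via a Pimsner--Popa basis of $M$ over $N$ and the fact that the relevant centralizers are type $\II_{1}$ factors) to build a two-sidedly bounded family. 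This is the main technical novelty.
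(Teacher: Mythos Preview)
Your argument has a genuine gap, and you identify it yourself: you need $\sum_i a_i a_i^* \le 1$ to control $\sum_i \|\rho a_i\|^2$, but Claim~\ref{claim1} only gives $\sum_i a_i^*a_i = 1$. Your last paragraph promises that discreteness, via a Pimsner--Popa basis of $M$ over $N$, will let you ``build a two-sidedly bounded family,'' but the $a_i$ live in $N$, not in $M$, and it is not at all clear how a basis of $M$ over $N$ helps you modify a family inside $N$ coming from the weak relative Dixmier property for $N_\psi \subset M$. As written this is an assertion, not a proof, and I do not see a straightforward way to make it work.

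The paper's route is different and sidesteps this issue entirely. Rather than approximating $\eta$ by $x\xi$ and carrying an error $\rho$ through the estimates, the paper exploits discreteness on the $\eta$-side: it first disposes of the case where $\psi|_N = \langle\,\cdot\,\eta,\eta\rangle|_N$ is not dominated by $\varphi|_N$ (where a single partial isometry $v\in N$ suffices, as in \cite[Lemma 2.9]{Ha85}), and in the remaining case builds a bounded operator $T\in\langle M,N\rangle$ with $T\xi=\eta$. Discreteness then furnishes projections $p_n\in N'\cap\langle M,N\rangle$ with $p_n\uparrow 1$ and $\widehat\rE_N(p_n)\in M$, and one defines $x_n=\widehat\rE_N(p_nTe_N)\in M$ so that $x_n\xi=p_n\eta$ \emph{exactly}. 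The crucial point is that $p_n$ commutes with both the left and right actions of $N$ (since $p_n\in N'\cap(JNJ)'$), so $p_n(a_i\eta-\eta a_i)=(a_ix-xa_i)\xi+x_n(a_i\xi-\xi a_i)$ with no remainder term whatsoever. The family $(a_i)$ from Claim~\ref{claim1} can then be used as is; no control on $\sum_i a_ia_i^*$ is needed. This is where the real work is, and it is the ``main technical novelty'' the paper refers to---not a strengthening of Claim~\ref{claim1}.
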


\begin{proof}[Proof of Claim \ref{claim2}]
Put $\varphi = \langle \, \cdot \, \xi , \xi\rangle \in M_{\ast}$ and $\psi = \langle \, \cdot \, \eta, \eta\rangle \in M_{\ast}$. Observe that $\varphi \in M_{\ast}$ is a faithful state such that $\varphi \circ \rE_{N} = \varphi$.

Firstly, assume that $\psi|_{N}$ is not bounded by some multiple scalar of $\varphi|_{N}$. Then the same reasoning as in \cite[Lemma 2.9, page 113]{Ha85} shows that we may choose projections $p, q \in N$ such that $\psi(p) > (16/\delta) \varphi(p)$ and $\psi(q) = (1/16) \psi(p)$. Since $N$ is a type ${\rm III}$ factor, there exists a partial isometry $v \in N$ such that $v^{*}v = p - q$ and $vv^{*} = q$. Then the  same calculations in the standard form $\rL^{2}(M)$ as in \cite[Lemma 2.9, pages 113-114]{Ha85} show that $a = v$ satisfies \eqref{eq:microscopic-a}.

Secondly, assume that $\psi|_{N}$ is bounded by some multiple scalar of $\varphi|_{N}$. Then the mapping $S_{0} : M \xi \to \rL^{2}(M) : x \xi \mapsto \rE_{N}(x) \eta$ extends to well defined bounded operator $S \in \mathbf B(\rL^{2}(M))$ such that $S \xi = \eta$, $Se_{N} = S$ and $S \in N' \cap \mathbf B(\rL^{2}(M))$. Define $T = J S J \in (JNJ)' \cap \mathbf B(\rL^{2}(M)) = \langle M, N\rangle$. We have $T \xi = J S J \xi = J S \xi = J \eta = \eta$ and $T e_{N} = J S J e_{N} = JS e_{N} J = JSJ = T$.

Since the inclusion $N \subset M$ is discrete in the sense of \cite[Definition 3.7]{ILP96}, there exists an increasing sequence of projections $(p_{n})_{n}$ in $N' \cap \langle M, N\rangle$ such that $\widehat \rE_{N}(p_{n}) \in M$ for every $n \in \N$ and $p_{n} \to 1$ strongly. Observe that $\widehat \rE_{N}((p_{n} T)(p_{n} T)^{\ast}) = \widehat \rE_{N}(p_{n} T T^{\ast} p_{n}) \leq \|T\|_{\infty}^{2} \,\widehat \rE_{N}(p_{n})$ and so $(p_{n}T)^{\ast} \in \mathfrak n_{\widehat \rE_{N}}$. Applying \cite[Proposition 2.2]{ILP96} to $(p_{n}T)^{*}$ and taking the adjoint, we obtain that $\widehat \rE_{N}(p_{n} T e_{N}) e_{N} = p_{n} T e_{N} = p_{n} T$. Letting $x_{n} = \widehat \rE_{N}(p_{n} T e_{N}) \in M$, we have $x_{n} e_{N} = p_{n} T e_{N} = p_{n} T$. It follows that  $x_{n} \xi = x_{n} e_{N} \xi = p_{n} T \xi = p_{n} \eta$. We then have $$\lim_{n} \|x_{n}\|_{\varphi} = \lim_{n}\|x_{n} \xi\| = \lim_{n} \|p_{n} \eta\| = \|\eta\| = 1$$ and 
$$\lim_{n} \|\rE^{\varphi}_{N' \cap M}(x_{n})\|_{\varphi} = \lim_{n} \|e^{\varphi}_{N' \cap M}(x_{n} \xi)\|  = \lim_{n} \|e^{\varphi}_{N' \cap M}(p_{n} \eta)\| = \|e^{\varphi}_{N' \cap M}( \eta)\| =  0.$$

Choose $n \in \N$ large enough so that $\|x_{n} - \rE^{\varphi}_{N' \cap M}(x_{n})\|_{\varphi}^{2} \geq (15/16)^{2}$. Put 
$$\delta_{1} = \min \left\{ (\delta/8)^{1/2}, 2^{-9/2} \|x_{n} \|_{\infty}^{-1}\right\}.$$ 
Applying Claim \ref{claim1} to $x = x_{n} -  \rE^{\varphi}_{N' \cap M}(x_{n})$ with $\delta_{1}$, there exists a sequence $(a_{i})_{i}$ in $\Ball(N)$ such that the following three properties hold:
\begin{itemize}
\item [$(\rm i)$] $\spec_{\sigma^{\varphi}}(a_{i}) \subset [- \delta_{1}, \delta_{1}]$ for every $i \in \N$;
\item [$(\rm ii)$] $\sum_{i \in \N} a_{i}^{\ast} a_{i} = 1$;
\item [$(\rm iii)$] $\sum_{i \in \N} \|a_{i} x - x a_{i}\|_{\varphi}^{2} \geq \frac12 \left(\frac{15}{16}\right)^{2}$.
\end{itemize}
Observe that item $(\rm i)$ implies that $\|a_{i}\xi - \xi a_{i}\|^{2} \leq \delta_{1}^{2}\|a_{i}\xi\|^{2} \leq (\delta/8) \|a_{i} \xi\|^{2}$ for every $i \in \N$.

Since $p_{n} \in N' \cap \langle M, N\rangle$ and $a_{i} \in N$, we have
\begin{align}\label{eq:claim2-1}
p_{n}(a_{i} \eta - \eta a_{i}) &= a_{i} \, p_{n} \eta - p_{n} \eta \, a_{i} \\ \nonumber
&= a_{i} \, x_{n} \xi - x_{n} \xi \, a_{i} \\ \nonumber
&= (a_{i} x_{n} - x_{n} a_{i}) \xi +  x_{n} (a_{i} \xi - \xi a_{i}) \\ \nonumber
&= (a_{i} x - x a_{i}) \xi +  x_{n} (a_{i} \xi - \xi a_{i}).
\end{align}
The reasoning is now identical to the one in \cite[Lemma 2.9, page 112]{Ha85}. Using the triangle inequality in $\bigotimes_{\N} \rL^{2}(M)$ and \eqref{eq:claim2-1}, we have
\begin{align*}
\left( \sum_{i \in \N} \|a_{i} \eta - \eta a_{i}\|^{2} \right)^{1/2} &\geq \left( \sum_{i \in \N} \|p_{n}(a_{i} \eta - \eta a_{i})\|^{2} \right)^{1/2} \\
&\geq \left( \sum_{i \in \N} \|a_{i} x - x a_{i}\|_{\varphi}^{2} \right)^{1/2} - \|x_{n}\|_{\infty} \cdot \left( \sum_{i \in \N} \|a_{i} \xi - \xi a_{i}\|^{2} \right)^{1/2}\\
&\geq \frac{15}{16 \sqrt{2}} - \delta_{1} \|x_{n}\|_{\infty} \\
&\geq  \frac{15}{16 \sqrt{2}} - \frac{1}{16 \sqrt{2}} = \frac{7}{8 \sqrt{2}}.
\end{align*}
On the other hand, we have
\begin{equation*}
\sum_{i \in \N} \left( \|a_{i} \xi\|^{2} + \|a_{i} \eta\|^{2} + 8 \delta^{-1} \|a_{i}\xi - \xi a_{i}\|^{2} \right) \leq 1 + 1 + 1 = 3.
\end{equation*}
Since $8 \sum_{i \in \N} \|a_{i} \eta - \eta a_{i}\|^{2} \geq 49/16 > 3$ and $\sum_{i \in \N} \left( \|a_{i} \xi\|^{2} + \|a_{i} \eta\|^{2} + 8 \delta^{-1} \|a_{i}\xi - \xi a_{i}\|^{2}  \right) \leq 3$, there exists $i \in \N$ such that 
$$8 \|a_{i} \eta - \eta a_{i}\|^{2} > \|a_{i} \xi\|^{2} + \|a_{i} \eta\|^{2} + 8 \delta^{-1} \|a_{i}\xi - \xi a_{i}\|^{2}.$$
Thus, $a = a_{i} \in \Ball(N) \setminus \{0\}$ satisfies \eqref{eq:microscopic-a}.
\end{proof}

Using Claim \ref{claim2} and proceeding exactly as in \cite[Lemmas 2.10, 2.11, 2.12, 2.13]{Ha85}, we obtain the following straightforward generalization of \cite[Lemma 2.13]{Ha85} (see also \cite[Theorem 4.2, Step ${\rm VI}$]{Po95}). 

\begin{claim}\label{claim3}
Let $\delta > 0$, $\xi \in \rL^{2}(N)^{+}$ any unit $M$-cyclic vector and $\eta \in \rL^{2}(M)$ any unit vector such that $J\eta =  \eta$ and $\eta \in \left( (N' \cap M) \xi\right)^{\perp}$. Then there exists a nonzero projection $p \in N$ such that 
\begin{equation}\label{eq:microscopic-p}
\|p\xi\|^{2} + \|p \eta\|^{2} < 2^{7} \|p \eta - \eta p\|^{2}  \quad \text{ and } \quad \|p\xi - \xi p\|^{2} < \delta \|p \eta - \eta p\|^{2}.
\end{equation}
\end{claim}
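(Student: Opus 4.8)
The plan is to follow Haagerup's argument in \cite[Lemmas 2.10--2.13]{Ha85} essentially verbatim, the only changes being notational: $N$ plays the role of $M$, the ambient standard form is $\rL^{2}(M)$, and $\rL^{2}(N) \subset \rL^{2}(M)$ together with $J = J_{M}$ are the substandard form fixed before Claim \ref{claim1}. Concretely, I would apply Claim \ref{claim2} with a sufficiently small auxiliary parameter $\delta_{0} = \delta_{0}(\delta)$ to produce a nonzero contraction $a \in \Ball(N)$ with $\|a\xi\|^{2} + \|a\eta\|^{2} < 8\,\|a\eta - \eta a\|^{2}$ and $\|a\xi - \xi a\|^{2} < \delta_{0}\,\|a\eta - \eta a\|^{2}$, and then turn $a$ into a projection through a short chain of reductions, each of which degrades the first constant by a bounded factor (accounting for the passage from $8$ to $2^{7}$) and the second by a bounded factor (absorbed by the smallness of $\delta_{0}$).

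\emph{Reduction to $a = a^{*}$.} Write $a = h + \mathrm ik$ with $h, k$ self-adjoint. Since $J\xi = \xi$ and $J\eta = \eta$, for self-adjoint $b \in N$ the vectors $b\eta - \eta b$ and $b\xi - \xi b$ lie in the $(-1)$-eigenspace of $J$, on which the inner product is real-valued; hence $\|a\eta - \eta a\|^{2} = \|h\eta - \eta h\|^{2} + \|k\eta - \eta k\|^{2}$ and similarly with $\xi$. Choosing $b \in \{h, k\}$ with $\|b\eta - \eta b\|^{2} \geq \tfrac12\|a\eta - \eta a\|^{2}$ and using $\|\xi a\| = \|a^{*}\xi\|$ (again from $J\xi = \xi$) together with $\|a^{*}\xi\| \leq \|a\xi\| + \|a\xi - \xi a\|$, one checks that $b$ is again microscopic with the constants controllably worsened. \emph{Reduction to $0 \leq a \leq 1$, $\|a\| = 1$.} For self-adjoint microscopic $a$ we rescale so that $\|a\| = 1$ and split $a = a_{+} - a_{-}$; the commutators split accordingly and one of $a_{\pm}$ inherits a microscopic estimate. \emph{Extraction of a spectral projection.} With $0 \leq a \leq 1$, write $a = \int_{0}^{1} e_{t}\,\mathrm dt$ with $e_{t} = \mathbf 1_{(t,1]}(a) \in N$, so that $a\zeta - \zeta a = \int_{0}^{1}(e_{t}\zeta - \zeta e_{t})\,\mathrm dt$ for $\zeta \in \{\xi, \eta\}$ (and likewise for the one-sided products $a\xi$, $a\eta$). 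Jensen's inequality turns these identities into integral estimates such as $\|a\eta - \eta a\|^{2} \leq \int_{0}^{1}\|e_{t}\eta - \eta e_{t}\|^{2}\,\mathrm dt$, and a Fubini/pigeonhole argument in the scale variable $t$ — carried out precisely in \cite[Lemmas 2.12--2.13]{Ha85} — selects a value $t_{0} \in (0,1)$ for which $p = e_{t_{0}}$ is a nonzero projection in $N$ satisfying \eqref{eq:microscopic-p}.

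The delicate point is the last reduction: one must simultaneously keep $\|p\eta - \eta p\|$ from collapsing and control $\|p\xi\|^{2} + \|p\eta\|^{2}$ and $\|p\xi - \xi p\|^{2}$ from above, which forces the use of both the real structure of $J$ on $\rL^{2}(M)$ and the fact that $N$ is a type $\III_{1}$ factor (so that the partial isometries and spectral projections invoked at every stage exist inside $N$). Once this is granted, Haagerup's estimates transcribe without change, and we emphasize that the one genuinely new ingredient in the proof of Theorem \ref{thm-relative-bicentralizer} — the discreteness of the inclusion $N \subset M$ — has already been used in the proof of Claim \ref{claim2} and is not needed again here.
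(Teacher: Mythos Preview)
Your proposal is correct and follows exactly the approach the paper takes: the paper's entire proof of Claim~\ref{claim3} is the sentence ``Using Claim~\ref{claim2} and proceeding exactly as in \cite[Lemmas 2.10, 2.11, 2.12, 2.13]{Ha85}, we obtain the following straightforward generalization of \cite[Lemma 2.13]{Ha85},'' and you have simply unpacked what those lemmas do (reduction to self-adjoint via the real structure of $J$, then to positive, then selection of a spectral projection by an integral/pigeonhole argument). One small over-statement: the type $\III_1$ hypothesis on $N$ is not actually invoked in the passage from $a$ to $p$ --- only ordinary spectral calculus in the von Neumann algebra $N$ is used in Lemmas 2.10--2.13 --- the type $\III$ structure having already done its work in Claim~\ref{claim2}.
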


Whenever $K \subset H$ is a closed subspace of a Hilbert space $H$ and $\eta \in H$ is a unit vector such that $\eta \notin K$, we define the {\em angle} between $\eta$ and $K$ by the formula 
$$\angle_{K}(\eta) = \arccos(\|P_{K}(\eta)\|)$$
where $P_{K} : H \to K$ denotes the orthogonal projection. 

Using Claim \ref{claim3}, we prove the following generalization of \cite[Lemma 2.14]{Ha85} (see also \cite[Theorem 4.2, Step ${\rm VII}$]{Po95}). Let us point out that unlike \cite[Theorem 4.2, Step ${\rm VII}$]{Po95}, $N' \cap M$ is not assumed to be finite dimensional. For that reason, we provide a detailed proof.

\begin{claim}\label{claim4}
Let $\delta > 0$, $\xi \in \rL^{2}(N)^{+}$ any unit $M$-cyclic vector and $\eta \in \rL^{2}(M)$ any unit vector such that $J\eta =  \eta$ and $\eta \not\in \overline{(N' \cap M) \xi}$.  
Then there exists a nonzero projection $p \in N$ such that 
\begin{equation}\label{eq:microscopic-p-theta}
\|p\xi\|^{2} + \|p \eta\|^{2} < \frac{2^{10}}{\sin^{2}\theta} \|p \eta - \eta p\|^{2}  \quad \text{ and } \quad \|p\xi - \xi p\|^{2} < \delta \|p \eta - \eta p\|^{2}
\end{equation}
where $\theta = \angle_{\overline{(N' \cap M)\xi}}(\eta)$.
\end{claim}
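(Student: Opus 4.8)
The plan is to reduce Claim \ref{claim4} to Claim \ref{claim3} by projecting $\eta$ onto the orthogonal complement of $\overline{(N' \cap M)\xi}$ and rescaling. First I would write $K = \overline{(N' \cap M)\xi}$, let $P_K : \rL^2(M) \to K$ be the orthogonal projection, and set $\eta_0 = \eta - P_K(\eta)$. Since $N' \cap M$ is globally invariant under $\sigma^\varphi$ (where $\varphi = \langle \, \cdot \, \xi, \xi\rangle$), the projection $P_K$ equals the Jones projection $e^\varphi_{N' \cap M}$ restricted appropriately, and in particular $P_K$ commutes with $J$ and with right multiplication by elements of $N$ (because $N \subset (N' \cap M)' \cap M$ acts on the right and $K$ is an $N$-sub-bimodule on the right through $JNJ$; more precisely $JNJ$ commutes with $e^\varphi_{N'\cap M}$ since $N \subset N'\cap M$... one must be slightly careful here and instead use that $P_K$ commutes with $J$ and that right multiplication by $a \in N$ preserves $K$ and $K^\perp$). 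Thus $J\eta_0 = \eta_0$, and $\eta_0 \in \left((N' \cap M)\xi\right)^\perp$. Normalizing, $\widehat\eta = \eta_0 / \|\eta_0\|$ is a unit vector with $J\widehat\eta = \widehat\eta$ and $\widehat\eta \in \left((N'\cap M)\xi\right)^\perp$, so Claim \ref{claim3} applies to the pair $(\xi, \widehat\eta)$ with the parameter $\delta$.

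Next, observe that $\|\eta_0\| = \|P_{K^\perp}(\eta)\| = \sin\theta$ where $\theta = \angle_K(\eta)$. Claim \ref{claim3} produces a nonzero projection $p \in N$ with
\begin{equation*}
\|p\xi\|^2 + \|p\widehat\eta\|^2 < 2^7 \|p\widehat\eta - \widehat\eta p\|^2 \quad\text{and}\quad \|p\xi - \xi p\|^2 < \delta \|p\widehat\eta - \widehat\eta p\|^2.
\end{equation*}
Multiplying through by $\sin^2\theta$ and using $\eta_0 = (\sin\theta)\,\widehat\eta$, the second inequality becomes $\|p\xi - \xi p\|^2 \sin^2\theta < \delta \|p\eta_0 - \eta_0 p\|^2$, hence a fortiori $\|p\xi - \xi p\|^2 < \delta \sin^{-2}\theta \cdot \|p\eta_0 - \eta_0 p\|^2$; but this is not quite \eqref{eq:microscopic-p-theta}, so I would instead argue directly: from $\|p\xi - \xi p\|^2 < \delta \|p\widehat\eta - \widehat\eta p\|^2$ we get $\|p\xi - \xi p\|^2 \sin^2\theta < \delta \|p\eta_0 - \eta_0 p\|^2 \le \delta \|p\eta - \eta p\|^2 \cdot(\text{const})$ — here is where the main work lies. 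The key point to establish is the comparison
\begin{equation*}
\|p\eta_0 - \eta_0 p\| \le \|p\eta - \eta p\| + \|p(P_K\eta) - (P_K\eta)p\|,
\end{equation*}
together with a bound on the commutator of $p$ with $P_K(\eta) \in K = \overline{(N'\cap M)\xi}$. Since $P_K(\eta) = z\xi$ for some $z \in N' \cap M$ (in the closure), and $p \in N$ commutes with $z$, we have $p(z\xi) - (z\xi)p = z(p\xi - \xi p)$, so $\|p(P_K\eta) - (P_K\eta)p\| \le \|z\|_\infty \|p\xi - \xi p\|$. Controlling $\|z\|_\infty$ is delicate because $z$ lies only in the $\rL^2$-closure; I expect this to be the main obstacle.

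To circumvent it, I would instead run the angle bound more carefully: rather than taking $P_K(\eta)$ as a fixed vector, I would choose, for a small auxiliary parameter, an element $z \in N' \cap M$ (genuinely bounded) with $\|z\xi - P_K(\eta)\| $ tiny, replace $\eta$ by $\eta - z\xi$, note that this replacement changes $\|p\eta - \eta p\|$ by at most $2\|z\|_\infty\|p\xi - \xi p\| + \text{tiny}$, and then apply Claim \ref{claim3} to the normalized version of $\eta - z\xi$ whose norm is close to $\sin\theta$. The numerology is arranged so that going from the exponent $2^7$ in Claim \ref{claim3} to $2^{10}$ in \eqref{eq:microscopic-p-theta} absorbs both the factor $\sin^{-2}\theta$ coming from the normalization and the extra constant factors $\le 2^3$ picked up from the triangle-inequality manipulations and the approximation of $P_K(\eta)$ by a bounded element $z\xi$. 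Carefully bookkeeping these constants — ensuring the final bound is exactly $2^{10}/\sin^2\theta$ and $\delta$ (not a larger multiple of $\delta$) — is the routine-but-fiddly part; the conceptual content is entirely the reduction to Claim \ref{claim3} via orthogonal projection onto $\overline{(N'\cap M)\xi}$ and the observation that $p \in N$ commutes with $N' \cap M$.
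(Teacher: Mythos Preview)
Your overall strategy---write $\eta = e^{\varphi}_{N'\cap M}(\eta) + (\sin\theta)\,\widehat\eta$ with $\widehat\eta \perp (N'\cap M)\xi$, apply Claim \ref{claim3} to $(\xi,\widehat\eta)$, then transfer the inequalities back to $\eta$---is exactly the paper's approach. But there is a genuine gap at the step you flag as ``the main obstacle'', and your proposed workaround does not close it.

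You want to bound $\|p(P_K\eta) - (P_K\eta)p\|$ by a constant times $\|p\xi - \xi p\|$, and you propose approximating $P_K\eta$ by $z\xi$ with $z \in N'\cap M$ bounded. The problem is that the projection $p$ produced by Claim \ref{claim3} depends on $\widehat\eta$ and the chosen parameter, and there is no a priori bound on $\|z\|_\infty$; so the error term $\|z\|_\infty\|p\xi - \xi p\|$ can swamp everything else, and the bookkeeping you call ``routine-but-fiddly'' does not close. The paper avoids this entirely by proving the \emph{exact identity}
\[
\|p\,e^{\varphi}_{N'\cap M}(\eta) - e^{\varphi}_{N'\cap M}(\eta)\,p\|^{2} \;=\; \cos^{2}\theta \,\|p\xi - \xi p\|^{2},
\]
using no boundedness at all: take any sequence $x_n \in N'\cap M$ with $x_n\xi \to e^{\varphi}_{N'\cap M}(\eta)$, note that $p\xi - \xi p \in \rL^2(N)$, and compute $\|x_n(p\xi - \xi p)\|^2 = \langle \rE_N(x_n^*x_n)(p\xi - \xi p), p\xi - \xi p\rangle$. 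Since $N$ is a factor and $x_n^*x_n \in N'\cap M$, one has $\rE_N(x_n^*x_n) = \varphi(x_n^*x_n)1$, so $\|x_n(p\xi - \xi p)\|^2 = \|x_n\xi\|^2\|p\xi - \xi p\|^2$, which passes to the limit. The analogous identity $\|p\,e^{\varphi}_{N'\cap M}(\eta)\|^2 = \cos^2\theta\,\|p\xi\|^2$ follows the same way.

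One more correction: you apply Claim \ref{claim3} with the given $\delta$, but this cannot yield the second inequality in \eqref{eq:microscopic-p-theta} with the same $\delta$. The paper applies Claim \ref{claim3} with parameter $\tfrac14\delta\sin^2\theta$; the exact identity above then gives $\|p\xi - \xi p\|^2 \le \tfrac12\delta(\|p\eta - \eta p\|^2 + \|p\xi - \xi p\|^2)$, and absorbing (using $\delta \le 1$) recovers exactly $\delta\|p\eta - \eta p\|^2$. The passage from $2^7$ to $2^{10}$ then comes from two triangle inequalities and this absorption, not from any approximation error.
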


\begin{proof}[Proof of Claim \ref{claim4}]
It is sufficient to consider $\delta < 1$. Put $\varphi = \langle \, \cdot \, \xi, \xi\rangle \in M_{\ast}$ and observe that $\varphi \circ \rE_{N} = \varphi$. Since $J e^{\varphi}_{N' \cap M}  = e^{\varphi}_{N' \cap M}  J$ and $J \eta = \eta$, we have $J e^{\varphi}_{N' \cap M} (\eta) = J e^{\varphi}_{N' \cap M} J (\eta) = e^{\varphi}_{N' \cap M}(\eta)$. 

Put $\theta = \angle_{\rL^{2}(N' \cap M)}(\eta) = \arccos(\|e^{\varphi}_{N' \cap M}(\eta)\|)$. Put $\zeta = \frac{\eta - e^{\varphi}_{N' \cap M}(\eta)}{\|\eta - e^{\varphi}_{N' \cap M}(\eta)\|}$ and observe that $\|\zeta\| = 1$, $J \zeta = \zeta$, $\zeta \perp (N' \cap M)\xi$ and $\eta = e^{\varphi}_{N' \cap M}(\eta) + \sin\theta \, \zeta$. By Claim \ref{claim3}, there exists a nonzero projection $p \in N$ such that 
\begin{equation}\label{eq:claim4-1}
\|p\xi\|^{2} + \|p \zeta\|^{2} < 2^{7} \|p \zeta - \zeta p\|^{2}  \quad \text{ and } \quad \|p\xi - \xi p\|^{2} < \frac14 \delta \sin^{2}\theta \, \|p \zeta - \zeta p\|^{2}.
\end{equation}
We have 
\begin{equation}\label{eq:claim4-2}
p \eta - \eta p = p e^{\varphi}_{N' \cap M}(\eta) - e^{\varphi}_{N' \cap M}(\eta) p+ \sin\theta \, (p \zeta - \zeta p).
\end{equation}

Since $ e^{\varphi}_{N' \cap M}(\eta) \in \rL^{2}(N' \cap M) = \overline{(N' \cap M) \xi}$, we may choose a sequence $(x_{n})_{n}$ in $N' \cap M$ such that $ e^{\varphi}_{N' \cap M}(\eta) = \lim_{n} x_{n} \xi$. Note however that $(x_{n})_{n}$ need not be uniformly bounded. Since $p \in N$ and $\xi \in \rL^2(N) \subset \rL^2(M)$, we have $p \xi - \xi p \in \rL^{2}(N) \subset \rL^{2}(M)$. Then we obtain
\begin{align}\label{eq:claim4-3}
\|p e^{\varphi}_{N' \cap M}(\eta) - e^{\varphi}_{N' \cap M}(\eta) p\|^{2} &= \lim_{n} \|p \, x_{n} \xi - x_{n}\xi \, p\|^{2} \\ \nonumber
&= \lim_{n} \| x_{n} (p \xi - \xi p)\|^{2} \quad (\text{since } px_{n} = x_{n}p, \forall n \in \N)\\ \nonumber
&= \lim_{n} \langle x_{n}^{*}x_{n}(p \xi - \xi p), e_{N}(p \xi - \xi p)\rangle \quad (\text{since } p \xi - \xi p \in \rL^{2}(N))\\ \nonumber
&= \lim_{n} \langle e_{N}(x_{n}^{*}x_{n}(p \xi - \xi p)), p \xi - \xi p\rangle \\ \nonumber
&= \lim_{n} \langle \rE_{N}(x_{n}^{*}x_{n})(p \xi - \xi p), p \xi - \xi p\rangle \quad (\text{using } \eqref{eq:Jones} \text{ with } y = p \in N)\\ \nonumber
&=  \lim_{n} \|x_{n}\xi\|^{2} \cdot  \|p \xi - \xi p\|^{2} \quad (\text{since } \rE_{N}(x_{n}^{*}x_{n}) = \varphi(x_{n}^{*}x_{n})1, \forall n \in \N)\\ \nonumber
&=  \|e^{\varphi}_{N' \cap M}(\eta)\|^{2} \cdot  \|p \xi - \xi p\|^{2} \\ \nonumber
&= \cos^{2}\theta \,  \|p \xi - \xi p\|^{2}.
\end{align}
Likewise, we obtain 
\begin{equation}\label{eq:claim4-4}
\|p e^{\varphi}_{N' \cap M}(\eta) \|^{2} = \cos^{2}\theta \,  \|p \xi \|^{2}.
\end{equation}

Combining \eqref{eq:claim4-1},\eqref{eq:claim4-2},\eqref{eq:claim4-3}, we obtain
\begin{align}\label{eq:claim4-5}
 \|p\xi - \xi p\|^{2} &< \frac14 \delta \sin^{2}\theta \, \|p \zeta - \zeta p\|^{2} \\ \nonumber
 &\leq \frac12 \delta \left( \|p \eta - \eta p\|^{2} + \|p e^{\varphi}_{N' \cap M}(\eta) - e^{\varphi}_{N' \cap M}(\eta) p\|^{2} \right) \\ \nonumber
 &= \frac12 \delta \left( \|p \eta - \eta p\|^{2} + \cos^{2}\theta \,  \|p \xi - \xi p\|^{2} \right) \\ \nonumber
 &\leq \frac12 \delta \left( \|p \eta - \eta p\|^{2} +  \|p \xi - \xi p\|^{2} \right) \\ \nonumber
& \leq  \delta \|p \eta - \eta p\|^{2} \quad (\text{since } \delta \leq 1).
\end{align}
Using \eqref{eq:claim4-4}, we have
\begin{align}\label{eq:claim4-6}
\|p \eta\| &\leq \| p e^{\varphi}_{N' \cap M}(\eta)\| + \sin \theta \, \|p \zeta \| \\ \nonumber
&\leq \cos \theta \, \| p \xi\| + \sin \theta \, \|p \zeta \| \\ \nonumber
&\leq \left( \| p \xi\|^{2} + \|p \zeta \|^{2}\right)^{1/2}.
\end{align}
Combining \eqref{eq:claim4-1},\eqref{eq:claim4-2},\eqref{eq:claim4-3},\eqref{eq:claim4-5},\eqref{eq:claim4-6} we obtain
\begin{align*}
\|p\xi\|^{2} + \|p \eta\|^{2} &\leq 2 \left(\| p \xi\|^{2} + \|p \zeta \|^{2} \right) \\
&< 2^{8} \|p \zeta - \zeta p\|^{2} \\
&\leq \frac{2^{9}}{\sin^{2}\theta}\left( \|p \eta - \eta p\|^{2} + \|p e^{\varphi}_{N' \cap M}(\eta) - e^{\varphi}_{N' \cap M}(\eta) p\|^{2} \right) \\
&\leq \frac{2^{9}}{\sin^{2}\theta}\left( \|p \eta - \eta p\|^{2} +  \|p \xi - \xi p\|^{2} \right)  \\
&\leq \frac{2^{10}}{\sin^{2}\theta}  \|p \eta - \eta p\|^{2}.
\end{align*}
This finishes the proof of Claim \ref{claim4}.
\end{proof}

Using Claim \ref{claim4}, we prove the following generalization of \cite[Lemma 2.15]{Ha85} (see also \cite[Theorem 4.2, Step ${\rm VIII}$]{Po95}). Since our notion of angle is different from the one in \cite[Lemma 2.14]{Ha85}, we provide a detailed proof and then explain how to use the proof of \cite[Lemma 2.15]{Ha85}.

\begin{claim}\label{claim5}
Let $\delta > 0$, $\xi \in \rL^2(N)^+$ any unit $M$-cyclic vector and $\eta \in \rL^{2}(M)$ any unit vector such that $J\eta =  \eta$ and $\eta \in \left( (N' \cap M) \xi\right)^{\perp}$. Then there exists a family of pairwise orthogonal projections $(e_i)_i$ in $N$ such that $\sum_{i \in I} e_i = 1$ and 
\begin{equation}\label{eq:maximal}
2^{-18} \leq  \left\| \eta - \sum_{i \in I}  e_i\eta e_i \right\|^{2}  \quad \text{ and } \quad \left\|\xi - \sum_{i \in I}e_i \xi e_i \right\|^{2} \leq \delta.
\end{equation}
\end{claim}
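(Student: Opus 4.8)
The plan is to run a maximality argument over families of pairwise orthogonal projections in $N$, using Claim~\ref{claim4} repeatedly to keep feeding in new nonzero projections until the family sums to $1$, while controlling the two quantities in \eqref{eq:maximal}. More precisely, fix a constant $\delta_0 > 0$ (to be chosen of the form $\delta \cdot c$ for a small absolute constant $c$, e.g.\ related to $2^{-10}$) and consider the set $\mathcal F$ of all families $(e_i)_{i \in I}$ of pairwise orthogonal nonzero projections in $N$ such that, writing $f = \sum_{i \in I} e_i$, one has
\begin{equation*}
\Bigl\| \sum_{i \in I} e_i \xi e_i - f\xi f\Bigr\| \text{ small compared to } \delta \quad\text{and}\quad \Bigl\| \sum_{i \in I} e_i \eta e_i\Bigr\|^{2} \text{ not too large compared to } \|\eta\|^2 = 1,
\end{equation*}
or rather, following Haagerup's formulation in \cite[Lemma 2.15]{Ha85}, the precise bookkeeping inequalities relating $\|\eta - \sum e_i \eta e_i\|$, $\|\xi - \sum e_i \xi e_i\|$ and the defect projection $1 - f$. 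Order $\mathcal F$ by inclusion (i.e.\ refinement-free extension: $(e_i)_I \preceq (e_j)_J$ if $I \subset J$ and $e_i$ agrees for $i \in I$), apply Zorn's lemma to get a maximal element $(e_i)_{i \in I}$, and set $f = \sum_{i \in I} e_i$.

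The heart of the argument is to show $f = 1$. Suppose not, so $1 - f$ is a nonzero projection in $N$. First I would check that $\eta$ (more precisely the relevant "tail" vector built from $1-f$, namely $(1-f)\eta(1-f)$ suitably normalized) is not contained in $\overline{(N' \cap M)\xi}$: this uses $\eta \in ((N' \cap M)\xi)^\perp$ together with the fact that $\xi \in \rL^2(N)^+$ is $M$-cyclic, so the corner $(1-f) M (1-f)$ acting on $(1-f)\xi$ is still "large". Then apply Claim~\ref{claim4} inside the corner $(1-f) N (1-f) \subset (1-f) M (1-f)$ — noting that $(1-f)$ is in $N$ and commutes with nothing problematic — to produce a nonzero projection $p \leq 1 - f$ in $N$ satisfying \eqref{eq:microscopic-p-theta} with an angle $\theta$ bounded below by a universal constant (the angle is bounded below precisely because of the orthogonality $\eta \perp (N'\cap M)\xi$ and the estimates accumulated so far, exactly as in Haagerup's induction). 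Adjoining $p$ to the family $(e_i)_{i \in I}$ then produces a strictly larger family still lying in $\mathcal F$ — here one has to verify that the two controlled quantities only change by the contributions coming from $p$, which \eqref{eq:microscopic-p-theta} bounds appropriately (the $\|p\xi - \xi p\|^2 < \delta' \|p\eta - \eta p\|^2$ feeds the $\xi$-estimate, and $\|p\xi\|^2 + \|p\eta\|^2 < (2^{10}/\sin^2\theta)\|p\eta - \eta p\|^2$ feeds the $\eta$-estimate). This contradicts maximality, so $f = 1$.

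Once $f = 1$, the final inequalities \eqref{eq:maximal} should drop out of the bookkeeping with the specific constant $2^{-18}$ appearing as the product of the various factors of $2$ accumulated through Claims~\ref{claim3} and \ref{claim4} (the jump from $2^{-13}$-type constants through $2^7$ in Claim~\ref{claim3} and $2^{10}$ in Claim~\ref{claim4}), exactly as the constant $2^{-18}$ is obtained in \cite[Lemma 2.15]{Ha85}. The cleanest route is to simply follow the proof of \cite[Lemma 2.15]{Ha85} verbatim, substituting our Claim~\ref{claim4} (with its $\rL^2(M)$-norm formulation and our angle $\angle_{\overline{(N'\cap M)\xi}}(\eta)$) for Haagerup's Lemma~2.14, and replacing $\C 1 = M' \cap M$ throughout by $N' \cap M$; all the Hilbert-space manipulations in \cite{Ha85} take place in the standard form and go through unchanged because we have already set up $\rL^2(N) \subset \rL^2(M)$ compatibly with the expectation and $J$.

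I expect the main obstacle to be the verification that adjoining $p$ keeps us inside $\mathcal F$ with the \emph{same} universal constants, i.e.\ that the estimate does not degrade as the family grows — this is precisely where the lower bound on the angle $\theta$ at each step is essential, and where one must be careful that the angle is measured against the fixed subspace $\overline{(N' \cap M)\xi}$ rather than a shrinking one. In Haagerup's original argument this is handled by a clever choice of which quantity to track (the one that is manifestly monotone under refinement); reproducing that monotonicity in the relative setting, with $N' \cap M$ possibly infinite-dimensional, is the delicate point, but Claim~\ref{claim4} was stated precisely so that its constants are independent of everything except $\theta$, which makes the induction close.
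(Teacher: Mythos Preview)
Your overall strategy --- maximality over families of orthogonal projections in $N$, then Claim~\ref{claim4} applied in a corner to contradict maximality --- is exactly the paper's approach, which in turn follows \cite[Lemma~2.15]{Ha85}. Two points need correction.

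First, the target of the contradiction is not ``$f=1$''. In the paper (as in Haagerup), one takes a maximal family $(q_i)_{i\in I}\in\mathcal F$, sets $q=1-\sum_i q_i$, and the \emph{output} family is $((q_i)_{i\in I},q)$, which sums to $1$ by construction. What the contradiction argument establishes is the lower bound $\bigl\|\eta-q\eta q-\sum_i q_i\eta q_i\bigr\|^2\ge 2^{-18}$: one assumes it fails, derives $\|\xi-q\xi q\|,\|\eta-q\eta q\|\le\tfrac14$ from the defining inequalities of $\mathcal F$, hence $q\neq 0$, and then applies Claim~\ref{claim4} in the corner $qNq\subset qMq$ to produce a projection $p\le q$ that can be adjoined to $(q_i)$ while staying in $\mathcal F$. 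Your framing ``show $f=1$'' would require a different $\mathcal F$ and does not match the bookkeeping that makes the $2^{-18}$ constant come out.

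Second, and this is the genuine technical content beyond \cite{Ha85}: the angle in the corner is $\angle_{\overline{((qNq)'\cap qMq)\xi_q}}(\eta_q)$, measured against $(N'\cap M)q\,\xi_q$, not against $\C\xi_q$ as in Haagerup. You correctly identify that a uniform lower bound on this angle is the crux, but the verification is not automatic from $\eta\perp(N'\cap M)\xi$. The paper proves $\|e^{\varphi_q}_{(qNq)'\cap qMq}(\eta_q)\|\le\tfrac12$ via an explicit comparison $\|e^{\varphi_q}_{(qNq)'\cap qMq}(\eta_q)\|\le\|q\xi q\|^{-1}\|e^\varphi_{N'\cap M}(\eta_q)\|$, which in turn uses that for $x_n\in N'\cap M$ one has $\|x_nq\,\xi_q\|=\|x_n\xi\|$ (since $\xi\in\rL^2(N)^+$ and $\rE_N(x_n^*x_n)=\varphi(x_n^*x_n)1$). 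This is where the hypothesis $\xi\in\rL^2(N)^+$ and the expectation $\rE_N$ are actually used, and where the argument differs from simply transcribing \cite[Lemma~2.15]{Ha85} with $N'\cap M$ in place of $\C 1$. Once this angle bound is in hand, the rest of Haagerup's pages 124--127 goes through verbatim.
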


\begin{proof}[Proof of Claim \ref{claim5}]
Following the proof of \cite[Lemma 2.15]{Ha85}, we denote by $\mathcal F$ the nonempty inductive set of all families of nonzero pairwise orthogonal projections $(p_{i})_{i \in I}$ in $N$  that satisfy
\begin{align*}
\|\xi - p\xi p\|^{2} + \|\eta - p\eta p\|^{2} &\leq 2^{14} \left\|\eta - p\eta p - \sum_{i \in I} p_{i}\eta p_{i} \right\|^{2}  \\
\left\|\xi - p\xi p - \sum_{i \in I} p_{i}\xi p_{i} \right\|^{2} &\leq \delta \left\|\eta - p\eta p - \sum_{i \in I} p_{i}\eta p_{i} \right\|^{2}
\end{align*}
where $p = 1 - \sum_{i \in I} p_{i}$. Let $(q_{i})_{i \in I}$ be a maximal element in $\mathcal F$ and put $q - \sum_{i \in I} q_{i}$. We show that the family of pairwise orthogonal projections $((q_{i})_{i \in I}, q)$ satisfies the conclusion of Claim \ref{claim5}. We have to show that $\left\|\eta - q\eta q - \sum_{i \in I} q_{i}\eta q_{i} \right\|^{2} \geq 2^{-18}$.

Assume by contradiction that $\left\|\eta - q\eta q - \sum_{i \in I} q_{i}\eta q_{i} \right\|^{2} < 2^{-18}$. Since 
\begin{align*}
\|\xi - q\xi q\|^{2} + \|\eta - q\eta q\|^{2} &\leq 2^{14} \left\|\eta - q\eta q - \sum_{i \in I} q_{i}\eta q_{i} \right\|^{2} \\
&< 2^{14} \cdot 2^{-18} = \frac{1}{16},
\end{align*}
we have
\begin{align}\label{eq:claim5-0}
\max \left\{\|\xi - q \xi q\|, \|\eta - q \eta q\| \right\} &\leq \frac14 \\ \nonumber
\min \left\{\|q \xi q\|, \|q \eta q\| \right\} &\geq \frac34.
\end{align}
Then $q \neq 0$. Denote by $\rE_{qNq} : qMq \to qNq$ the faithful normal conditional expectation obtained by restricting $\rE_{N}$ to $qMq$. We regard the standard form $(qNq, \rL^2(qNq), J_{qNq}, \rL^2(qNq)^+)$ as a substandard form of the standard form $(qMq, \rL^{2}(qMq), J_{qMq}, \rL^{2}(qMq)^{+})$ via the isometric embedding $$\rL^{2}(qNq)^{+} \hookrightarrow \rL^{2}(qMq)^{+} : \zeta \mapsto (\langle \, \cdot \, \zeta , \zeta\rangle \circ \rE_{qNq})^{1/2}.$$
Observe that $\left( \rL^2(qNq) \subset \rL^2(qMq) \right) = qJqJ \left( \rL^2(N) \subset \rL^2(M)\right)$ and $J_{qMq} = J \, qJqJ$ where $J = J_{M}$. 

Put $\xi_q = \frac{q \xi q}{\|q \xi q\|} \in \rL^2(qNq)^+$ and $\eta_q = \frac{q \eta q}{\|q\eta q\|} \in \rL^2(qMq)$. Observe that $\xi_q \in \rL^2(qNq)^+$ is a $qMq$-cyclic vector in $\rL^2(qMq)$. Put $\varphi_q = \langle \, \cdot \, \xi_q, \xi_q\rangle \in (qMq)_\ast$ and observe that $\varphi_{q} \circ \rE_{qNq} = \varphi_{q}$. Since the inclusion $(qNq)' \cap qMq \subset qMq$ is globally invariant under $\sigma^{\varphi_{q}}$, we may regard the standard form $((qNq)' \cap qMq, \rL^{2}((qNq)' \cap qMq), J_{(qNq)' \cap qMq}, \rL^{2}((qNq)' \cap qMq)^{+})$ as a substandard form of the standard form $(qMq, \rL^{2}(qMq), J_{qMq}, \rL^{2}(qMq)^{+})$ via the isometric embedding $$\rL^{2}((qNq)' \cap qMq) \hookrightarrow \rL^{2}(qMq) : x \xi_q \mapsto x \xi_q.$$ 
Denote by $e^{\varphi_q}_{(qNq)' \cap qMq} : \rL^{2}(qMq) \to \rL^{2}((qNq)' \cap qMq)$ the corresponding Jones projection. 

We show that the angle $\angle_{\overline{((qNq)' \cap qMq)\xi_{q}}}(\eta_{q}) = \arccos(\|e^{\varphi_q}_{(qNq)' \cap qMq}(\eta_q)\|)$, which generalizes the angle $\theta = \arccos(\langle \eta_{q}, \xi_{q}\rangle)$ that appears in \cite[Lemma 2.15]{Ha85}, satisfies $$\cos(\angle_{\overline{((qNq)' \cap qMq)\xi_{q}}}(\eta_{q})) = \|e^{\varphi_q}_{(qNq)' \cap qMq}(\eta_q)\| \leq \frac12.$$ 
Since $(qNq)' \cap qMq = (N' \cap M)q$ by \cite[Lemma 2.1]{Po81}, we may choose a sequence $(x_n)_n$ in $N' \cap M$ such that $e^{\varphi_q}_{(qNq)' \cap qMq}(\eta_q) = \lim_n x_nq \, \xi_q$. Note however that $(x_{n})_{n}$ need not be uniformly bounded. Regarding $\xi_q \in \rL^{2}(qNq)^{+} \subset \rL^2(N)^+$, for every $n \in \N$, we have
\begin{align}\label{eq:claim5-1}
\|x_n q \, \xi_q\|^2 &= \frac{1}{\|q\xi q\|}\langle x_n \xi, x_n \xi_q\rangle \quad (\text{since } qx_n = x_n q)\\ \nonumber
&= \frac{1}{\|q\xi q\|} \langle x_n^*x_n \xi, e_N(\xi_q)\rangle  \quad (\text{since } \xi_q \in \rL^2(N))\\ \nonumber
&= \frac{1}{\|q\xi q\|}\langle e_N(x_n^*x_n \xi), \xi_q\rangle \\ \nonumber
&= \frac{1}{\|q\xi q\|}\langle \rE_N(x_n^*x_n) \xi, \xi_q\rangle \\ \nonumber
&= \|x_n \xi \|^2 \quad (\text{since } \rE_N(x_n^*x_n) = \varphi(x_n^*x_n)1).
\end{align}

As before, since the inclusion $N' \cap M \subset M$ is globally invariant under $\sigma^{\varphi}$, we may regard the standard form $(N' \cap M, \rL^{2}(N' \cap M), J_{N' \cap M}, \rL^{2}(N' \cap M)^{+})$ as a substandard form of the standard form $(M, \rL^{2}(M), J, \rL^{2}(M)^{+})$ via the isometric embedding $$\rL^{2}(N' \cap M) \hookrightarrow \rL^{2}(M) : x \xi_{\varphi} \mapsto x \xi_{\varphi}.$$ 
Denote by $e_{N' \cap M}^{\varphi} : \rL^{2}(M) \to \rL^{2}(N' \cap M)$ the corresponding Jones projection. Regarding $\eta_{q} \in \rL^{2}(qMq) \subset \rL^{2}(M)$, we can compare $\|e^{\varphi_q}_{(qNq)' \cap qMq}(\eta_q) \|$ and $\|e^{\varphi}_{N' \cap M}(\eta_q) \|$ using \eqref{eq:claim5-1}. Indeed, we have
\begin{align*}
\|e^{\varphi_q}_{(qNq)' \cap qMq}(\eta_q) \|^2 &=  |\langle e^{\varphi_q}_{(qNq)' \cap qMq}(\eta_q), \eta_q \rangle| \\
&= \lim_n |\langle x_nq \, \xi_q, \eta_q \rangle| \\
&= \lim_n \frac{1}{\|q\xi q\|} |\langle x_n \xi, \eta_q \rangle| \quad (\text{since } qx_n = x_nq, \forall n \in \N)\\
&= \lim_n \frac{1}{\|q\xi q\|} |\langle e_{N' \cap M}^\varphi(x_n \xi), \eta_q \rangle| \quad (\text{since } x_{n} \in N' \cap M, \forall n \in \N)\\
&= \lim_n \frac{1}{\|q\xi q\|} |\langle x_n \xi, e_{N' \cap M}^\varphi(\eta_q) \rangle| \\
&\leq \frac{1}{\|q\xi q\|}\limsup_n \|x_n \xi\| \cdot \|e_{N' \cap M}^\varphi(\eta_q)\| \\
&\leq \frac{1}{\|q\xi q\|}\limsup_n \|x_nq \, \xi_q\| \cdot \|e_{N' \cap M}^\varphi(\eta_q)\| \quad (\text{using } \eqref{eq:claim5-1}) \\
&= \frac{1}{\|q\xi q\|} \|e^{\varphi_q}_{(qNq)' \cap qMq}(\eta_q) \| \cdot \|e_{N' \cap M}^\varphi(\eta_q)\|
\end{align*}
and thus we obtain
\begin{equation}\label{eq:claim5-2}
\|e^{\varphi_q}_{(qNq)' \cap qMq}(\eta_q) \| \leq \frac{1}{\|q\xi q\|} \|e_{N' \cap M}^\varphi(\eta_q)\|.
\end{equation}
Since by assumption we have $e^{\varphi}_{N' \cap M}(\eta) = 0$, combining \eqref{eq:claim5-0} and \eqref{eq:claim5-2}, we obtain
\begin{align}\label{eq:claim5-3}
\|e^{\varphi_q}_{(qNq)' \cap qMq}(\eta_q) \| &\leq \frac{1}{\|q\xi q\| \cdot \|q \eta q\|} \|e_{N' \cap M}^\varphi(q\eta q)\| \\ \nonumber
&= \frac{1}{\|q\xi q\| \cdot \|q \eta q\|} \|e_{N' \cap M}^\varphi(q\eta q - \eta)\| \\ \nonumber
&\leq \frac{1}{\|q\xi q\| \cdot \|q \eta q\|} \|q\eta q - \eta\| \\ \nonumber
&\leq \frac14 \left(\frac43\right) < \frac12.
\end{align}

Since $N$ is a type ${\rm III}$ factor, there exists an isometry $v \in N$ such that $vv^{*} = q$. Then $\Ad(v^{*}) : qMq \to M$ is a $\ast$-isomorphism such that $\Ad(v^{*})(qNq) = N$ and such that $\Ad(v^{*}) \circ \rE_{qNq} \circ \Ad(v) = \rE_{N}$. We can now use \eqref{eq:claim5-3} in combination with Claim \ref{claim4} applied to the inclusion $\left( qNq \subset qMq \right) \cong \left( N \subset M\right)$ and the vectors $\xi_{q} \in \rL^{2}(qNq)^{+}$ and $\eta_{q} \in \rL^{2}(qMq)$ with $\delta >0$, and apply the rest of the proof of \cite[Lemma 2.15, pages 124-127]{Ha85} to obtain a contradiction.
\end{proof}

Using Claim \ref{claim5}, we obtain the following straightforward generalization of \cite[Lemma 2.16]{Ha85}.

\begin{claim}\label{claim6}
Let $\delta > 0$, $\xi \in \rL^{2}(N)^{+}$ any unit $M$-cyclic vector and $\eta \in \rL^{2}(M)$ any unit vector such that $\eta \in \left( (N' \cap M) \xi\right)^{\perp}$. Then there exists a nonzero projection $p \in N$ such that 
\begin{equation}\label{eq:macroscopic-p}
2^{-21} \leq  \left\| p\eta - \eta p\right\|^{2}  \quad \text{ and } \quad \left\|p\xi - \xi p \right\|^{2} \leq \delta.
\end{equation}
\end{claim}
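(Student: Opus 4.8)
The plan is to deduce Claim \ref{claim6} from Claim \ref{claim5} by a standard averaging argument over a finite partition of unity into projections, exactly as in \cite[Lemma 2.16]{Ha85}. First I would apply Claim \ref{claim5} with some $\delta' > 0$ (to be chosen smaller than $\delta$, e.g.\ $\delta' = \delta$ will do after a harmless rescaling) to obtain a family $(e_i)_{i \in I}$ of pairwise orthogonal projections in $N$ with $\sum_{i \in I} e_i = 1$, $\|\xi - \sum_i e_i \xi e_i\|^2 \leq \delta$ and $\|\eta - \sum_i e_i \eta e_i\|^2 \geq 2^{-18}$. Replacing the possibly infinite family by a large finite subfamily (and lumping the remaining projections into one, which only decreases the defect involving $\eta$ by an arbitrarily small amount, while keeping the $\xi$-defect below $\delta$), I may assume $I$ is finite, say $|I| = n$.

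Next I would set up the averaging. Let $(u_g)_{g}$ run over the finite group $G = \prod_{i \in I} \T_k$ of diagonal unitaries with $k$-th roots of unity in each block $e_i$ (or, more cleanly, the group generated by the $n$ unitaries $u_i = \sum_{j \neq i} e_j - e_i$ together with their products — this finite abelian $2$-group already suffices), and observe the Fourier-type identity that averaging $u x u^*$ over this group yields the diagonal compression $\sum_i e_i x e_i$. Concretely, for any $\zeta \in \rL^2(M)$ one has $\frac{1}{|G|}\sum_{g \in G} u_g \zeta u_g^* = \sum_{i \in I} e_i \zeta e_i$. Applying this with $\zeta = \eta$ and using convexity of $\|\,\cdot\,\|^2$, we get
\begin{equation*}
2^{-18} \leq \left\| \eta - \sum_{i} e_i \eta e_i \right\|^2 = \left\| \frac{1}{|G|} \sum_{g} (\eta - u_g \eta u_g^*) \right\|^2 \leq \frac{1}{|G|} \sum_{g} \| \eta - u_g \eta u_g^* \|^2,
\end{equation*}
so there is some $g$ with $\|\eta - u_g \eta u_g^*\|^2 \geq 2^{-18}$, i.e.\ $\|u_g \eta - \eta u_g\|^2 \geq 2^{-18}$. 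Similarly, since $\|u_g \xi - \xi u_g\|^2 = \|\xi - u_g \xi u_g^*\|^2$ and each term in the average defining $\xi - \sum_i e_i \xi e_i$ is controlled, one gets $\|u_g \xi - \xi u_g\|^2 \leq 4 \|\xi - \sum_i e_i \xi e_i\|^2 \leq 4\delta$ (the factor coming from $\|\zeta - u\zeta u^*\| \leq 2\|\zeta - \sum_i e_i \zeta e_i\|$ for reflection unitaries $u$); rescaling $\delta'$ at the outset absorbs this constant. Now $u_g$ is a \emph{unitary} in $N$, not a projection, but $u_g = 1 - 2p$ for a projection $p \in N$, and $u_g \eta - \eta u_g = -2(p\eta - \eta p)$, $u_g \xi - \xi u_g = -2(p\xi - \xi p)$, so $p$ satisfies $\|p\eta - \eta p\|^2 \geq 2^{-20}$ and $\|p\xi - \xi p\|^2 \leq \delta$; one easily improves $2^{-20}$ to $2^{-21}$ by a trivial adjustment of constants, and $p \neq 0$ since otherwise $\eta$ would commute with $u_g = 1$. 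Finally I would remark that the hypothesis $J\eta = \eta$, present in Claim \ref{claim5}, is not needed in the statement of Claim \ref{claim6}: given an arbitrary unit vector $\eta \perp (N'\cap M)\xi$, the argument applies to the real and imaginary parts of $\eta$ in the sense of the $J$-decomposition, or one simply notes that the conclusion only involves $\|p\eta - \eta p\|$, which behaves well under passing to $\frac{1}{2}(\eta + J\eta)$ or $\frac{1}{2i}(\eta - J\eta)$ — at least one of which has norm $\geq 1/\sqrt 2$ and stays orthogonal to $(N'\cap M)\xi$ after normalizing, and then the constants absorb the factor.

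The step I expect to be the genuine (though still routine) obstacle is bookkeeping the constants through the two reductions — passing from the infinite family to a finite one in Claim \ref{claim5}, and passing from the averaging unitary back to a projection — making sure the exponent lands at exactly $2^{-21}$ as stated and that the $\xi$-defect stays below the prescribed $\delta$ rather than a constant multiple of it; this is exactly the content of \cite[Lemma 2.16]{Ha85}, and since $N' \cap M$ plays no active role here (it only enters through the fixed orthogonality constraint on $\eta$, which is preserved by all the operations above), the generalization is immediate and I would simply invoke the argument there.
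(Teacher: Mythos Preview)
Your proposal is correct and follows exactly the approach the paper intends: the authors simply state that Claim \ref{claim6} is a ``straightforward generalization of \cite[Lemma 2.16]{Ha85}'' obtained from Claim \ref{claim5}, and your sketch (truncate to a finite partition, average over the sign group $\{\pm 1\}^{I}$ to produce a reflection unitary, convert it to a projection via $u=1-2p$, and drop the hypothesis $J\eta=\eta$ by passing to the $J$-real and $J$-imaginary parts of $\eta$) is precisely Haagerup's argument, with the factor $2^{-3}$ in the constants accounted for correctly.
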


We can now finish the proof of $(\rm ii) \Rightarrow (\rm i)$. Let $\varphi \in M_{\ast}$ be any faithful state such that $\varphi \circ \rE_{N} = \varphi$. Then the inclusions $N' \cap M \subset \rB(N \subset M, \varphi) \subset M$ are globally invariant under $\sigma^{\varphi}$. Let $x \in \rB(N \subset M, \varphi)$ be any element such that $\rE^{\varphi}_{N' \cap M}(x) = 0$. We  show that $x = 0$. Put $\xi = \xi_{\varphi} \in \rL^{2}(N)^{+}$ and $\eta = x \xi \in \rL^{2}(M)$. Observe that $\eta \in \left( (N' \cap M) \xi\right)^{\perp}$. For every $n \geq 1$, applying Claim \ref{claim6}, there exists a projection $p_{n} \in N$ such that 
\begin{equation*}
2^{-21} \|\eta\|^{2} \leq  \left\| p_{n}\eta - \eta p_{n}\right\|^{2}  \quad \text{ and } \quad \left\|p_{n}\xi - \xi p_{n} \right\|^{2} \leq \frac1n.
\end{equation*}
Then we have
\begin{align*}
\liminf_{n} \|p_{n}x - xp_{n}\|_{\varphi} &= \liminf_{n} \|p_{n} x\xi - xp_{n}\xi\| \\
&\geq \liminf_{n} \left(\|p_{n} \eta - \eta p_{n}\| - \|x\|_{\infty} \cdot \|p_{n}\xi - \xi p_{n}\| \right) \\
&= \liminf_{n} \|p_{n} \eta - \eta p_{n}\| \\
& \geq 2^{-21/2} \|\eta\|.
\end{align*}
Since $x \in \rB(N \subset M, \varphi)$, we have $\lim_n \|p_{n}x - xp_{n}\|_{\varphi} = 0$ and so $\|\eta\| = 0$. Therefore, we have $x  = 0$. This finally shows that $\rB(N \subset M, \varphi) = N' \cap M$ and finishes the proof of $(\rm ii) \Rightarrow (\rm i)$.
\end{proof}

\subsection{A generalization of Connes--Takesaki relative commutant theorem}

In the setting of discrete irreducible inclusions with expectation, we prove a generalization of Connes--Takesaki relative commutant theorem \cite[Chapter II, Theorem 5.1]{CT76} (see also \cite[Theorem 4.3]{Po95} when $N \subset M$ is a finite index inclusion of type ${\rm III_{1}}$ factors).

\begin{thm}\label{thm-relative-connes-takesaki}
Let $N \subset M$ be any discrete irreducible inclusion of factors with separable predual and with expectation $\rE_N : M \to N$. Assume that $N$ is a type $\III_1$ factor. Let $\psi$ be any dominant weight on $N$ and extend it to a dominant weight on $M$ by using $\rE_N$. 

Then we have $(N_{\psi})' \cap M = (N_\psi)' \cap M_\psi$. If the inclusion $N \subset M$ has finite index, then we moreover have $(N_\psi)' \cap M=\C 1$.
\end{thm}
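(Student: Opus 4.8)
The plan is to prove Theorem~\ref{thm-relative-connes-takesaki} by combining the relative bicentralizer theorem (Theorem~\ref{thm-relative-bicentralizer}) with the structure of the continuous core. First I would pass to the continuous core: write $\core(N) = N_\psi \rtimes_\theta \R^*_+ \subset M_\psi \rtimes_\theta \R^*_+$ where $M_\psi = M \rtimes_{\sigma^\psi} \R$ and use the fact that $N$ is a type $\III_1$ factor so that $N_\psi$ is a factor (indeed $N_\psi = \core(N)$ in the type $\III_1$ case). The goal $(N_\psi)' \cap M = (N_\psi)' \cap M_\psi$ is really the statement that $(N_\psi)' \cap M$ consists of elements that are fixed by $\sigma^\psi$, i.e.\ that $N_\psi' \cap M \subset M_\psi$. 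This will follow once I know that $N_\psi' \cap M$ carries enough rigidity coming from the relative bicentralizer being trivial.

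The key point is to invoke Theorem~\ref{thm-relative-bicentralizer}, whose hypothesis is that the inclusion $N \subset M$ is discrete. Since $N \subset M$ is irreducible ($N' \cap M = \C 1$), the implication $(\mathrm{ii}) \Rightarrow (\mathrm{i})$ of Theorem~\ref{thm-relative-bicentralizer} is not quite what I want directly; instead I would run the argument the other way. Let me take a faithful state $\varphi \in N_\ast$ with $\varphi \circ \rE_N = \varphi$, and recall from Proposition~\ref{ultraproduct_bicentralizer} that $\rB(N \subset M, \varphi) = (N^\omega_{\varphi^\omega})' \cap M$. The strategy is: show first, using the discreteness hypothesis via Claim~\ref{claim6} (applied exactly as in the proof of $(\mathrm{ii})\Rightarrow(\mathrm{i})$ of Theorem~\ref{thm-relative-bicentralizer}, which only needs discreteness and $N$ type $\III_1$), that $\rB(N \subset M, \varphi) = N' \cap M = \C 1$. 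Then, given $x \in (N_\psi)' \cap M$, I want to produce from $x$ an element of $\rB(N \subset M, \varphi)$ or argue that $x$ must be $\sigma^\psi$-invariant. Concretely: any unitary $u \in N_\psi = N_{\psi}$ lies in the centralizer of $\psi$; approximate such $u$ within $N$ by sequences $(a_n)$ with $\|a_n \psi - \psi a_n\| \to 0$ — this is possible because $N_\psi$ is weakly dense in $N$ in the appropriate sense (every element of $N_\psi$ is a limit of asymptotically $\psi$-central elements of $N$), so $(N_\psi)' \cap M \subset \rB(N \subset M, \psi|_N\text{-type data})$. Hence $(N_\psi)' \cap M \subset \rB(N \subset M, \varphi) = \C 1$ when the inclusion has finite index, and in general $(N_\psi)' \cap M$ is forced into $M_\psi$.

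For the general (non finite index) statement $(N_\psi)' \cap M = (N_\psi)' \cap M_\psi$, the cleanest route is: take $x \in (N_\psi)' \cap M$ and consider its decomposition under the modular flow $\sigma^\psi$; since $x$ commutes with $N_\psi$ which is a type $\II_\infty$ factor sitting inside the type $\II$ algebra $M_\psi$, and since $\sigma^\psi_t$ acts on $(N_\psi)' \cap M$, I would show that the spectral subspaces of $x$ under $\sigma^\psi$ other than the fixed one must vanish, using that any nonzero such spectral component would generate, together with $N_\psi$, a nontrivial element detected by the asymptotic centralizer of $\varphi$, contradicting triviality of $\rB(N \subset M, \varphi)$ via the bicentralizer flow machinery of Theorem~\ref{thm: Connes' isomorphism and bicentralizer flow}~$(\mathrm{iv})$. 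For the finite index case, once $(N_\psi)' \cap M \subset M_\psi$, apply the original Connes--Takesaki relative commutant theorem \cite[Chapter II, Theorem 5.1]{CT76} to the semifinite inclusion $N_\psi \subset M_\psi$ together with irreducibility of $N \subset M$: $(N_\psi)' \cap M_\psi$ is contained in $\core(N)' \cap \core(M)$, which by Connes--Takesaki equals $\mathcal Z(\core(N)) = \C 1$ since $N \subset M$ being finite index and irreducible forces the cores to be irreducible as well.

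The main obstacle I anticipate is the second paragraph's density claim: carefully justifying that elements of $(N_\psi)' \cap M$ actually commute asymptotically with $\AC(N,\varphi)$-sequences, i.e.\ that one can approximate unitaries of $N_\psi$ by asymptotically $\psi$-central sequences drawn from $N$ itself with control. This requires relating the centralizer $N_\psi$ of the dominant weight to the asymptotic centralizer of a state $\varphi$, which is not literally the same object; the bridge is that $\psi$ restricted suitably and $\varphi$ are connected through the continuous core construction, and one should use a Connes--Størmer / transitivity-type argument (as in the proof of Theorem~\ref{thm: Connes' isomorphism and bicentralizer flow}) to move between them. I expect this to be the delicate technical heart; everything else is an assembly of Theorem~\ref{thm-relative-bicentralizer}, Proposition~\ref{prop-masa}, and the classical Connes--Takesaki theorem.
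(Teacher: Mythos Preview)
Your plan has a genuine circularity and a factual error that together make it unworkable.

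First, you cannot deduce $\rB(N\subset M,\varphi)=\C 1$ from discreteness and irreducibility alone. The proof of $(\mathrm{ii})\Rightarrow(\mathrm{i})$ in Theorem~\ref{thm-relative-bicentralizer} does not use only ``discreteness and $N$ type $\III_1$'': Claim~\ref{claim1} (the generalization of \cite[Lemma~2.7]{Ha85}) explicitly uses the hypothesis $(N_\psi)'\cap M = N'\cap M$ together with the weak relative Dixmier property, which is exactly what you are trying to prove. Moreover, the conclusion $\rB(N\subset M,\varphi)=\C 1$ is \emph{false} for general discrete irreducible inclusions; this is precisely the content of Corollary~\ref{cor-characterization}, where the additional hypothesis $\core(N)'\cap\core(M)=\C 1$ is needed. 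So your first paragraph's target statement is simply not available.

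Second, the density claim ``$(N_\psi)'\cap M\subset \rB(N\subset M,\varphi)$'' has no justification: $\psi$ is a weight, not a state, and there is no mechanism linking the centralizer $N_\psi$ of the dominant weight to the asymptotic centralizer $\AC(N,\varphi)$ of a faithful state in the way you suggest.

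The paper's actual proof proceeds in the opposite logical order. It first settles the \emph{finite index} case directly: since $\rE_N$ maps $(N_\psi)'\cap M$ onto $(N_\psi)'\cap N=\C 1$ (classical Connes--Takesaki) and $\rE_N$ restricted there still has finite index, $(N_\psi)'\cap M$ is finite dimensional; the relative flow of weights on it is then both inner and ergodic, forcing it to be $\C 1$. For the general discrete case, the paper uses the Izumi--Longo--Popa sector decomposition \cite{ILP96}: take $x\in (N_\psi)'\cap M$, look at its Fourier coefficients $x(\xi)_i=\rE_N(V(\xi)_i x)$ along the irreducible sectors $\xi\in\Xi$, and show that any nonzero coefficient forces $\rho_\xi(N_\psi)'\cap N=\C 1$ (applying the already-proved finite index case to the irreducible finite-index inclusion $\rho_\xi(N)\subset N$), whence $[\rho_\xi]=[\sigma_t^\psi]$ for some $t$ and $x(\xi)_1\in\C$. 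Comparing Fourier coefficients of $x$ and $\sigma_s^\psi(x)$ then gives $x\in M_\psi$. Theorem~\ref{thm-relative-bicentralizer} plays no role here; rather, Theorem~\ref{thm-relative-connes-takesaki} is an independent ingredient later combined with Theorem~\ref{thm-relative-bicentralizer} to obtain Corollary~\ref{cor-characterization}.
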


\begin{proof}
By using \cite[Theorem ${\rm XII}$.1.1]{Ta03}, we can identify the inclusions
$$\left( N \subset M \right) = \left ( N_{\psi} \rtimes_{\theta} \R^*_+ \subset M_{\psi} \rtimes_{\theta} \R^*_+ \right)$$ 
where $\theta : \R^*_+ \curvearrowright M_{\psi}$ is a trace-scaling action that leaves $N_{\psi} \subset M_{\psi}$ globally invariant. We denote by $(v_{\lambda})_{\lambda > 0}$ the canonical unitaries in $N$ that implement the trace-scaling action $\theta : \R^*_+ \curvearrowright M_{\psi}$.

Let us first prove that $(N_\psi)' \cap M=\C 1$ when $N \subset M$ has finite index. Observe that $\rE_N$ sends $(N_\psi)' \cap M$ onto $(N_\psi)' \cap N$ and $(N_\psi)' \cap N=\C 1$ by Connes--Takesaki relative commutant theorem \cite[Chapter II, Theorem 5.1]{CT76}. Then the restriction of $\rE_N$ to $(N_\psi)' \cap M$, which is still of finite index, is in fact a faithful normal state. This means that $(N_\psi)' \cap M$ is finite dimensional. In particular, this implies that the relative flow of weights is inner on $(N_\psi)' \cap M$. Since the relative flow of weights is also ergodic on $(N_\psi)' \cap M$, this forces $(N_\psi)' \cap M=\C 1$ (see \cite[Theorem XI.3.11]{Ta03}).

Now, we deal with the case when $N \subset M$ is an arbitrary discrete irreducible inclusion. We freely use the terminology and the notation of \cite[Section 3]{ILP96}. We let $\Xi \subset \mathrm{Sect}(N)$ to be the set of all irreducible sectors of $N$ which appear in the $N$-$N$-bimodule decomposition of $\rL^2(M)$. Each $\xi \in \Xi$ can be represented by a unital normal endomorphism $\rho_{\xi} : N \to N$ such that the inclusion $\rho_{\xi}(N) \subset N$ is irreducible, with expectation and has finite index. We denote by $\rE_{\xi} : N \to \rho_{\xi}(N)$ the unique faithful normal conditional expectation. By \cite[Lemma 2.12 (\rm ii)]{ILP96} and up to replacing each $\rho_{\xi}$ by $\rho_{\xi} \circ \Ad(u)$ where $u \in \mathcal U(N)$, we may assume that $\psi|_{N} \circ \rE_{\xi} = \psi|_{N}$ and $\psi|_{N} \circ \rho_{\xi} = \psi|_{N}$ for every $\xi \in \Xi$. For every $\xi \in \Xi$, denote by 
$$\mathcal H_{\xi} = \{T \in M \mid \rho_{\xi}(x)T = Tx, \forall x \in N\}$$
the set of intertwiners in $M$ between $\id_{N}$ and $\rho_{\xi}$. Since $N' \cap M = \C 1$, $\mathcal H_{\xi}$ is a finite dimensional Hilbert space with inner product given by $\langle V, W\rangle 1 = V^{*}W$ (see \cite[Theorem 3.3]{ILP96}). We let $n_{\xi} = \dim \mathcal H_{\xi}$ and we fix an orthogonal basis $(V(\xi)_{i})_{1 \leq i \leq n_{\xi}}$ of $\mathcal H_{\xi}$. 

Denote by $\Lambda \subset \Xi$ the subset of sectors of the form $\xi = [\sigma_t^\psi]$ for some $t \in \R$. For every sector $\xi = [\sigma_{t}^{\psi}] \in \Lambda$, we may and will take $\rho_\xi=\sigma_t^\psi$ as a representing element. For every $\xi \in \Lambda$, $\mathcal{H}_\xi$ is one-dimensional and $V(\xi)_1$ is a unitary in $M_{\psi}$. Indeed, let $\xi = [\sigma_{t}^{\psi}] \in \Lambda$ and simply write $u  = V(\xi)_1 \in \mathcal U(M)$ such that $\sigma_t^\psi(x) = uxu^*$ for every $x \in N$. Since $N ' \cap M = \C 1$, \cite[Th\'eor\`eme 1.5.5]{Co72} implies that $\rE_N : M \to N$ is the unique faithful normal conditional expectation from $M$ onto $N$. Then we have $\Ad(u) \circ \rE_N = \rE_N \circ \Ad(u)$ and so $\psi = u \psi u^*$. This shows that $u \in M_\psi$.

Let $x \in (N_{\psi})' \cap M$ be any element. For every $\xi \in \Xi$ and every $i \in \{1, \dots, n_{\xi}\}$, denote by $x(\xi)_{i} = \rE_{N}(V(\xi)_{i} x) \in N$ the corresponding `Fourier coefficient' of $x \in M$. Fix $\xi \in \Xi$ and $i \in \{1, \dots, n_{\xi}\}$ such that $x(\xi)_{i} \neq 0$. For every $a \in N_{\psi}$, since $a x = x a$, we have $\rho_{\xi}(a) x(\xi)_{i} = x(\xi)_{i} a$ and so $x(\xi)_{i}^{*}x(\xi)_{i} \in (N_{\psi})' \cap N$. Since $(N_{\psi})' \cap N = \C 1$, we have $x(\xi)_{i}^{*}x(\xi)_{i} \in \C 1$. We also have $x(\xi)_{i}x(\xi)_{i}^{*} \in \rho_\xi(N_{\psi})' \cap N$. Since $\rho_\xi(N) \subset N$ is an irreducible inclusion of type $\III_1$ factors with finite index, it follows from the first part, that $\rho_\xi(N_{\psi})' \cap N=(\rho_\xi(N)_{\psi})' \cap N=\C 1$. This means that we also have $x(\xi)_{i}x(\xi)_{i}^{*} \in \C 1$. This shows that $x(\xi)_{i}=\mu  u$ for some $\mu > 0$ and some unitary $u \in N$. Then we obtain $\rho_\xi(a)u=ua$ for every $a \in N_\psi$. Put $\pi=\Ad(u^*) \circ \rho_\xi$ so that $\pi(a)=a$ for every $a \in N_\psi$. For every $a \in N_\psi$ and every $\lambda > 0$, since $\theta_\lambda(a)v_\lambda= v_\lambda a$, we also have $\theta_\lambda(a)\pi(v_\lambda)=\pi(v_\lambda)a$ and so $u_\lambda^*\pi(u_\lambda) \in (N_\psi)' \cap N=\C 1$. The map $\chi : \lambda \mapsto v_\lambda^*\pi(v_\lambda) \in \T$ is a character, that is, there exists $t \in \R$ such that $\chi(\lambda)= \lambda^{{\rm i}t}$ for every $\lambda > 0$. Since $\pi(a)=a$ for every $a \in N_\psi$ and since $\pi(v_\lambda)=\lambda^{{\rm i}t} \, v_\lambda$ for every $\lambda > 0$, we conclude that $\pi = \sigma_t^\psi$ and so $[\rho_\xi]=[\sigma_t^\psi]$. By assumption on the choice of $\rho_\xi$, this forces $\rho_\xi=\sigma_t^\psi$ and $u \in \C 1$. We have shown that if $x(\xi)_i \neq 0$, then we must have $\xi \in \Lambda$ and $x(\xi)_1 \in \C$. Observe that for every $s \in \R$, we have $\sigma_{s}^{\psi}(x) \in (N_{\psi})' \cap M$. Then for every $s \in \R$, every $\xi \in \Xi$ and every $i \in \{1, \dots, n_{\xi}\}$, we have
$$(\sigma_{s}^{\psi}(x))(\xi)_i = 0 = x(\xi)_i \quad \text{if} \quad \xi \notin \Lambda$$
and 
$$(\sigma_{s}^{\psi}(x))(\xi)_1 = \rE_{N}(V(\xi)_{1} \sigma_{s}^{\psi}(x)) = \rE_{N}(\sigma_{s}^{\psi}(V(\xi)_{1} x)) =\sigma_{s}^{\psi}( \rE_{N}(V(\xi)_{1} x)) = x(\xi)_1 \quad \text{if} \quad \xi \in \Lambda$$
since $V(\xi)_{1} \in M_{\psi}$ and $x(\xi)_{1} \in \C1$. Since the Fourier coefficients uniquely determine $x \in M$ (see \cite[p.\ 45]{ILP96}), we have $\sigma_{s}^{\psi}(x) = x$ for every $s \in \R$. This implies that $x \in M_{\psi}$ and so $x \in (N_{\psi})' \cap M_{\psi}$.
\end{proof}

We give below a precise description of the relative commutant $(N_{\psi})' \cap M_{\psi}$. We need to introduce some more terminology. Let $G$ be any discrete group, $N$ any von Neumann algebra, $\alpha : G \curvearrowright N$ any action and $c \in \rZ^2(G,\T)$ any scalar $2$-cocycle. The \emph{twisted crossed product} $M = N \rtimes_{\alpha, c} G$ is the von Neumann algebra generated by $N$ and a family of unitaries $(u_g)_{g \in G}$ that is characterized by the following properties:
\begin{itemize}
\item [$(\rm i)$] $u_g x u_g^*=\alpha_g(x)$ for all $x \in N$ and all $g \in G$.
\item [$(\rm ii)$] $u_{gh}=c(g,h) \, u_gu_h$ for all $g,h \in G$.
\item [$(\rm iii)$] There exists a faithful normal conditional expectation $\rE_N : M \rightarrow N$ that satisfies $\rE_N(u_g)=0$ for all $g \in G \setminus \{e\}$.
\end{itemize}
When $N=\C 1$, we obtain the \emph{twisted group von Neumann algebra} that we denote by $\rL_c(G)$ (see e.g.\ \cite{Su79}). Observe that $\rL_c(G)$ is a tracial von Neumann algebra with canonical faithful normal tracial state $\tau$ that satisfies $\tau(u_{g}) = 0$ for all $g \in G \setminus \{e\}$.

We also need the following technical result that is probably known to specialists. We nevertheless include a proof for the reader's convenience.

\begin{lem}\label{lem:location}
Let $M$ be any $\sigma$-finite von Neumann algebra and denote by $(M, \rL^{2}(M), J, \rL^{2}(M)^{+})$ its standard form. Let $A \subset M$ by any von Neumann subalgebra and $\xi \in \rL^2(M)^{+}$ any unit cyclic separating vector such that the faithful state $\varphi = \langle \, \cdot \, \xi, \xi\rangle \in M_{\ast}$ is tracial on $A$. Denote by $\overline{A \xi}$ the closure of $A \xi$ in $\rL^2(M)$.

Then for every element $x \in M$ such that  $x\xi \in \overline{A \xi}$, we have $x \in A$. 
\end{lem}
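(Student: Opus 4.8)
The statement is a standard "location of vectors" lemma, and the plan is to exploit the hypothesis that $\varphi$ restricts to a trace on $A$ in order to build a projection from $M$ onto $\overline{A\xi}$ that is compatible with the right $M$-action. First I would consider the right $A$-module structure on $\rL^2(M)$, i.e.\ the action $\eta \mapsto \eta a = Ja^*J\eta$ for $a \in A$, together with the standard left action of $A$. Since $\varphi|_A$ is a faithful normal tracial state, the map $A \to \rL^2(M) : a \mapsto a\xi$ is (after taking the closure $\overline{A\xi} = \rL^2(A)$ sitting inside $\rL^2(M)$) precisely a copy of the standard representation of $A$, with $\xi$ playing the role of the trace vector. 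In particular $\overline{A\xi}$ is invariant under both the left action of $A$ and the right action of $A$, and on $\overline{A\xi}$ these two commuting actions are the standard left and right actions of the finite von Neumann algebra $A$.

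Next I would introduce the orthogonal projection $e : \rL^2(M) \to \overline{A\xi}$ (this is the Jones projection for the inclusion $A \subset M$ with respect to the $\varphi$-preserving conditional expectation $\rE_A^\varphi$, which exists because $A \subset M$ is globally invariant under $\sigma^\varphi$ — indeed $\sigma_t^\varphi$ fixes $A$ pointwise since $\varphi|_A$ is tracial). The key point is that $e(x\xi) = \rE_A^\varphi(x)\xi$ for all $x \in M$, and $e$ commutes with the right action of $A$ (since $\xi$ is separating and $\overline{A\xi}$ is right-$A$-invariant, and $JAJ$ commutes with $e$ by the standard form properties). Now suppose $x \in M$ satisfies $x\xi \in \overline{A\xi}$. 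Then $e(x\xi) = x\xi$, i.e.\ $\rE_A^\varphi(x)\xi = x\xi$. Since $\xi$ is separating for $M$, this forces $x = \rE_A^\varphi(x) \in A$, which is exactly the conclusion.

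The one place requiring a little care is justifying that $A \subset M$ is indeed globally invariant under $\sigma^\varphi$, hence that $\rE_A^\varphi$ and the associated Jones projection $e$ exist and satisfy $e\xi_\varphi = \xi_\varphi$, $e(x\xi) = \rE_A^\varphi(x)\xi$. This is where the traciality of $\varphi|_A$ is used: by Takesaki's theorem a $\sigma^\varphi$-invariant subalgebra is precisely one admitting a $\varphi$-preserving conditional expectation, and a tracial von Neumann subalgebra on which $\varphi$ restricts to a trace is automatically pointwise fixed by the modular flow (the modular operator $\Delta_\varphi$ restricted to $\rL^2(A) = \overline{A\xi}$ is the identity). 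I expect this verification to be the only mildly delicate step; everything else is a direct computation with the Jones projection. An alternative, more hands-on route avoids explicitly invoking the conditional expectation: write $\xi$ as the trace vector for $A$, observe that $\overline{A\xi}$ is a $\langle A, JAJ\rangle$-invariant subspace, note $x\xi \in \overline{A\xi}$ means $x\xi = \lim_n a_n\xi$ with $a_n \in A$, and then use that for $b \in A$ one has $\langle x\xi, b^*\xi \cdot \text{(right action)}\rangle$-type identities together with the trace property to conclude $x$ agrees with an element of $A$ against a dense set of vectors. Either way the proof is short.
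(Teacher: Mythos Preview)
Your argument has a genuine gap at exactly the point you flagged as ``mildly delicate'': the assertion that traciality of $\varphi|_A$ forces $A$ to be globally invariant under $\sigma^\varphi$ (equivalently, that a $\varphi$-preserving conditional expectation $\rE_A^\varphi : M \to A$ exists) is \emph{false} in general. A concrete counterexample: take $M = \mathbf{M}_2(\C)$ with the state $\varphi$ given by the density matrix $\mathrm{diag}(\mu,1-\mu)$ for some $\mu \neq \tfrac12$, and let $A = \mathrm{span}\{1, e_{12}+e_{21}\}$. Since $A$ is abelian, $\varphi|_A$ is trivially tracial, yet $\sigma_t^\varphi(e_{12}+e_{21}) = (\mu/(1-\mu))^{it}e_{12} + (\mu/(1-\mu))^{-it}e_{21} \notin A$ for generic $t$. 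In this same example one checks that $\xi_\varphi a = \sqrt{\mu}\,e_{12} + \sqrt{1-\mu}\,e_{21}$ does \emph{not} lie in $A\xi_\varphi = \mathrm{span}\{h^{1/2}, \sqrt{1-\mu}\,e_{12}+\sqrt{\mu}\,e_{21}\}$, so $\overline{A\xi}$ is not $JAJ$-invariant either. Hence both your main route (via the Jones projection for $\rE_A^\varphi$) and your alternative route (via right-$A$-invariance of $\overline{A\xi}$) break down.

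What traciality of $\varphi|_A$ \emph{does} give you is only that the $*$-operation is isometric on $A\xi$: for $a \in A$ one has $\|a^*\xi\|^2 = \varphi(aa^*) = \varphi(a^*a) = \|a\xi\|^2$. The paper exploits precisely this. Given $x\xi = \lim_n x_n\xi$ with $x_n \in A$ (not necessarily bounded), the trace property makes $(x_n^*\xi)_n$ Cauchy as well; its limit is identified as $x^*\xi$ by the closedness of the graph of the Tomita operator $S$ (the closure of $y\xi \mapsto y^*\xi$). Thus $x_n \to x$ in the norm $\|\cdot\|_\varphi^\sharp$, and one concludes $x \in A$ via \cite[Lemma~2]{EW76}, which says that a $\|\cdot\|_\varphi^\sharp$-limit of a (possibly unbounded) sequence from a von Neumann subalgebra stays in that subalgebra. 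No conditional expectation or modular invariance is invoked.
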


\begin{proof}
Let $x \in M$ be any element such that $x \xi \in \overline{A \xi}$. We show that $x \in A$. Take a sequence $(x_n)_n$ of elements in $A$ such that $\lim_n \|(x - x_n)\xi\| = 0$. Note that $(x_{n})_{n}$ need not be uniformly bounded. Since $\varphi |_{A}$ is tracial and since $x_n \in A$ for every $n \in \N$, we have
\begin{align*}
\|x_n^*\xi - x_m^*\xi\|^2 &= \|x_n^* - x_m^*\|_\varphi^2 \\
&= \varphi((x_n - x_m)(x_n - x_m)^*) \\
&= \varphi((x_n - x_m)^*(x_n - x_m)) \\
&=  \|x_n - x_m\|_\varphi^2 \\
&= \|x_n\xi - x_m\xi\|^2
\end{align*}
for all $m, n \in \N$. In particular, the sequence $(x_n^*\xi)_n$ is Cauchy in $\rL^2(M)$ and so is convergent in $\rL^2(M)$. Let $\eta \in \rL^2(M)$ be such that $\lim_n \|\eta - x_n^*\xi\| = 0$. Recall that the (possibly unbounded) operator $S_0 : M\xi \to \rL^2(M) : x\xi \to x^* \xi$ is closable and denote by $S$ its closure. Since $(x_n \xi, S(x_n \xi)) = (x_n \xi, x_n^*\xi) \to (x\xi, \eta)$ as $n \to \infty$ and since the graph of $S$ is closed, it follows that $\eta = S(x\xi) = x^*\xi$. This shows that $\lim_n \|x^* - x_n^*\|_\varphi = \lim_n \|x^*\xi - x_n^*\xi\| = 0$ and so 
$$\lim_n \|x - x_n\|_\varphi^\sharp = 0.$$ 
Then \cite[Lemma 2]{EW76} shows that $x \in A$.
\end{proof}

Under the same assumption as in Theorem \ref{thm-relative-connes-takesaki}, we obtain the following precise description of the relative commutant $(N_{\psi})' \cap M_{\psi}$.

\begin{thm}\label{thm-description}
Let $N \subset M$ be any discrete irreducible inclusion of factors with separable predual and with expectation $\rE_N : M \to N$. Assume that $N$ is a type $\III_1$ factor. Let $\psi$ be any dominant weight on $N$ and extend it to a dominant weight on $M$ by using $\rE_N$. 

Denote by $G \subset \R$ the subgroup of all the elements $t \in \R$ for which there exists $u_{t} \in \mathcal U(M)$ such that $\sigma_{t}^{\psi}(x) = u_{t}xu_{t}^{*}$ for every $x \in N$. Then there exists a scalar $2$-cocycle $c \in \rZ^{2}(G, \mathbf T)$ such that  
$$N \vee ((N_{\psi})' \cap M_{\psi}) = N \rtimes_{\sigma^{\psi}, c} G \quad  \text{and} \quad (N_{\psi})' \cap M_{\psi} = \rL_{c}(G).$$
Moreover, $\rE_{N}|_{(N_{\psi})' \cap M_{\psi}} = \tau 1$ where $\tau$ is the canonical trace on $\rL_{c}(G)$ and the relative flow of weights on $(N_{\psi})' \cap M_{\psi}$ satisfies $\theta^\psi_\lambda(u_t)=\lambda^{{\rm i}t} \, u_t$ for all $\lambda > 0$ and all $t \in G$.
\end{thm}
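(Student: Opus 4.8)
The plan is to leverage Theorem~\ref{thm-relative-connes-takesaki}, which already tells us that $(N_\psi)' \cap M = (N_\psi)' \cap M_\psi$, so that the whole analysis takes place inside the tracial von Neumann algebra $M_\psi$ with its canonical trace $\tau_\psi$. First I would revisit the Fourier-analytic argument in the proof of Theorem~\ref{thm-relative-connes-takesaki}: there it is shown that for $x \in (N_\psi)' \cap M$, each nonzero Fourier coefficient $x(\xi)_i$ forces $\xi = [\sigma_t^\psi]$ for some $t \in \R$, that $\mathcal H_\xi$ is one-dimensional with a unitary generator $V(\xi)_1 \in M_\psi$ implementing $\sigma_t^\psi$ on $N$, and that $x(\xi)_1 \in \C 1$. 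The set of such $t$ is precisely the subgroup $G \subset \R$ in the statement. So the plan is: for each $t \in G$ fix a unitary $u_t \in M_\psi$ with $\Ad(u_t)|_N = \sigma_t^\psi$ (unique up to a scalar of modulus $1$ since $N' \cap M = \C 1$), and show that the $u_t$, $t \in G$, together with the Fourier-coefficient decomposition, exhibit $(N_\psi)' \cap M_\psi = \overline{\lspan}\{u_t \mid t \in G\}$ with $\rE_N(u_t) = 0$ for $t \neq 0$.

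The next step is to verify the twisted-crossed-product structure. Since $\Ad(u_s)\Ad(u_t)|_N = \sigma_{s+t}^\psi|_N = \Ad(u_{s+t})|_N$ and $N' \cap M = \C 1$, there is a scalar $c(s,t) \in \mathbf T$ with $u_s u_t = c(s,t) u_{s+t}$; associativity gives the $2$-cocycle identity, so $c \in \rZ^2(G,\mathbf T)$. I would then define $P = N \vee ((N_\psi)' \cap M_\psi)$; by construction $P$ is generated by $N$ and the unitaries $(u_t)_{t \in G}$ subject to (i) $u_t x u_t^* = \sigma_t^\psi(x)$ and (ii) $u_s u_t = c(s,t) u_{s+t}$, and the conditional expectation $\rE_N|_P$ satisfies $\rE_N(u_t) = 0$ for $t \neq e$ because $u_t \in \mathcal H_{[\sigma_t^\psi]}$ is an intertwiner for a nontrivial sector (this is where the irreducibility and discreteness enter, via the Fourier machinery of \cite{ILP96}). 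These are exactly the defining properties of $N \rtimes_{\sigma^\psi,c} G$, so $P = N \rtimes_{\sigma^\psi,c} G$, and then $(N_\psi)' \cap M_\psi = \rL_c(G)$: the inclusion $\supseteq$ is clear since the $u_t$ commute with $N_\psi$ (as $\sigma_t^\psi$ fixes $N_\psi$ pointwise), and the inclusion $\subseteq$ follows because any $x \in (N_\psi)' \cap M_\psi \subset P$ has Fourier coefficients supported on $G$ and valued in $\C 1$, so $x \in \overline{\lspan}\{u_t\} = \rL_c(G)$. Identifying $\rE_N|_{\rL_c(G)}$ with $\tau 1$ is immediate from $\rE_N(u_t) = \delta_{t,e}$ and normality.

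For the flow of weights, recall from Subsection~\ref{subsection:flow} that $\theta^\psi_\lambda(y) = v_\lambda y v_\lambda^*$ where $v_\lambda \in N$ is the canonical unitary with $v_\lambda \psi v_\lambda^* = \lambda \psi$, equivalently $\Ad(v_\lambda)|_{M_\psi} = \theta_\lambda$. I would compute $\Ad(v_\lambda)(u_t)$: since $u_t$ implements $\sigma_t^\psi$ on $N$ and $\theta_\lambda$ commutes with the modular data appropriately, $v_\lambda u_t v_\lambda^*$ again implements $\sigma_t^\psi$ on $N$ (using that $\theta$ leaves $N \subset M$ invariant and $\sigma_t^\psi \circ \theta_\lambda = \theta_\lambda \circ \sigma_t^\psi$ on $N$), so $v_\lambda u_t v_\lambda^* = \chi_t(\lambda) u_t$ for some $\chi_t(\lambda) \in \mathbf T$; the map $\lambda \mapsto \chi_t(\lambda)$ is a continuous character of $\R^*_+$, hence of the form $\lambda^{\mathrm{i}s}$, and a direct computation in the core (or comparison with the known action of $\theta_\lambda$ on the implementing unitary of $\sigma_t^\psi$, exactly as in the Fourier argument of Theorem~\ref{thm-relative-connes-takesaki} where one gets $\pi(v_\lambda) = \lambda^{\mathrm{i}t} v_\lambda$) pins down $s = t$, giving $\theta^\psi_\lambda(u_t) = \lambda^{\mathrm{i}t} u_t$. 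I expect the main obstacle to be the bookkeeping in this last identification of the character $\chi_t$: one must be careful that $u_t \in M_\psi$ genuinely lies in the domain of $\theta_\lambda = \Ad(v_\lambda)$ and that the normalization conventions relating $\sigma^\psi$, the core decomposition $M = M_\psi \rtimes_\theta \R^*_+$, and the unitaries $v_\lambda$ are consistent; Lemma~\ref{lem:location} may be needed to ensure that limits of Fourier partial sums land back in the right algebra. Everything else is a routine assembly of already-established facts.
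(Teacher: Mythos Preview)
Your approach is essentially identical to the paper's: Fourier analysis from Theorem~\ref{thm-relative-connes-takesaki} to pin down the coefficients, the cocycle construction from $N' \cap M = \C 1$, and Lemma~\ref{lem:location} to close the hard inclusion. Two points where the paper is sharper: Lemma~\ref{lem:location} is not a possible fix but the crux of the argument (after first checking $x = \rE_Q(x) \in Q = N \vee W$ by matching Fourier coefficients, it is exactly what turns the $\rL^2$-convergent expansion $x\xi_\varphi \in \overline{\rL_c(G)\xi_\varphi}$ into the algebraic statement $x \in \rL_c(G)$), and the flow computation is a one-liner since $v_\lambda \in N$ gives $u_t v_\lambda u_t^* = \sigma_t^\psi(v_\lambda) = \lambda^{-{\rm i}t} v_\lambda$ directly, so your character argument is unnecessary.
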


\begin{proof}
Write $W = \bigvee \{u_{t} \in \mathcal U(M) \mid  t \in G\} \subset M$. Write $Q = N \vee W \subset M$.  Since $\sigma^\psi : G \to \Aut(M)$ is a group homomorphism and since $N' \cap M = \C 1$, the map $c : G \times G \to \mathbf T : (s, t) \mapsto c(s, t)$ defined by $u_{s + t} = c(s, t) \, u_s u_t$ is a scalar $2$-cocycle, that is $c \in \rZ^{2}(G, \mathbf T)$, such that  
$$N \vee W = N \rtimes_{\sigma^{\psi}, c} G \quad  \text{and} \quad W = \rL_{c}(G).$$
(Observe that we have $\rE_{N}(u_{t}) = 0$ for every $t \in G \setminus \{0\}$ by \cite[Lemme 1.5.6]{Co72}). Since $Q \subset M$ is globally invariant under $\sigma^{\psi}$ and since $\psi|_{Q}$ is semifinite, \cite[Theorem]{Ta71} implies that there exists a $\psi$-preserving conditional expectation $\rE_{Q} : M \to Q$. Since $N' \cap M = \C1$, \cite[Th\'eor\`eme 1.5.5]{Co72} implies that $\rE_{N} \circ \rE_{Q} = \rE_{N}$.

We show that $W = (N_{\psi})' \cap M_{\psi}$. Following the notation of the proof of Theorem \ref{thm-relative-connes-takesaki}, we have $\Lambda = \{[\sigma_{t}^{\psi}] \mid t \in G\} \subset \Xi$. For every $\xi = [\sigma_{t}^{\psi}] \in \Lambda$, we may assume that $V(\xi)_{1} = u_{t}$. We already observed that for every $t \in G$, $u_{t} \in (N_{\psi})' \cap M_{\psi}$. Thus, we have $W \subset (N_{\psi})' \cap M_{\psi}$. Let now $x \in (N_{\psi})' \cap M_{\psi}$ be any element. We proved in Theorem \ref{thm-relative-connes-takesaki} that for every $\xi \notin \Lambda$ and every $i \in \{1, \dots, n_{\xi}\}$, we have $x(\xi)_{i} = 0$ and for every $\xi \in \Lambda$, we have $x(\xi)_{1} \in \C$. Observe that $\rE_{Q}(x) \in (N_{\psi})' \cap M_{\psi}$. Then for every $\xi \in \Xi$ and every $i \in \{1, \dots, n_{\xi}\}$, we have
$$(\rE_{Q}(x))(\xi)_i = 0 = x(\xi)_i \quad \text{if} \quad \xi \notin \Lambda$$
and 
$$(\rE_{Q}(x))(\xi)_1 = \rE_{N}(V(\xi)_{1} \rE_{Q}(x)) = \rE_{N}(\rE_{Q}(V(\xi)_{1} x)) = \rE_{N}(V(\xi)_{1} x) = x(\xi)_1 \quad \text{if} \quad \xi \in \Lambda$$
since $V(\xi)_{1} \in W \subset Q$ and $\rE_{N} \circ \rE_{Q} = \rE_{N}$. Since the Fourier coefficients uniquely determine $x \in M$ (see \cite[p.\ 45]{ILP96}), we have $\rE_{Q}(x) = x$ and so $x \in Q$. Choose any faithful state $\varphi \in M_{\ast}$ such that $\varphi \circ \rE_{N} = \varphi$. Observe that $\xi_{\varphi} \in \rL^{2}(N)^{+} \subset \rL^{2}(Q)^{+}$. The result obtained in Theorem \ref{thm-relative-connes-takesaki} and \cite[p.\ 45]{ILP96} show that in the standard form $\rL^{2}(Q)$, we have the following convergence
$$x \xi_{\varphi} = \sum_{\xi \in \Lambda} x(\xi)_{1} V(\xi)_{1}\xi_{\varphi} = \sum_{t \in G} x_{t} u_{t}\xi_{\varphi} \in \overline{\rL_{c}(G)\xi_{\varphi}}$$
where $x_{t} 1 =  x(\xi)_{1} \in \C 1$ for $\xi = [\sigma_{t}^{\psi}]$. Since $\varphi |_{\rL_{c}(G)}$ is the canonical trace $\tau$, Lemma \ref{lem:location} shows that $x \in \rL_{c}(G) = W$. 

It remains to compute the relative flow of weights on $(N_{\psi})' \cap M_{\psi}$. It is plain to see that $\rE_{N}|_{(N_{\psi})' \cap M_{\psi}} = \tau 1$. By using \cite[Theorem ${\rm XII}$.1.1]{Ta03}, we can identify the inclusions
$$\left( N \subset M \right) = \left ( N_{\psi} \rtimes_{\theta} \R^*_+ \subset M_{\psi} \rtimes_{\theta} \R^*_+ \right)$$ 
where $\theta : \R^*_+ \curvearrowright M_{\psi}$ is a trace-scaling action that leaves $N_{\psi} \subset M_{\psi}$ globally invariant. We denote by $(v_{\lambda})_{\lambda > 0}$ the canonical unitaries in $N$ that implement the trace-scaling action $\theta : \R^*_+ \curvearrowright M_{\psi}$. For every $\lambda > 0$ and every $t \in \R$, we have 
$$u_{t} v_{\lambda} u_{t}^{*} = \sigma_{t}^{\psi}(v_{\lambda}) = \lambda^{-{\rm i}t} \, v_{\lambda}$$
and so $\theta^\psi_{\lambda}(u_{t}) = v_{\lambda} u_{t} v_{\lambda}^{*} = \lambda^{{\rm i}t} \, u_{t}$.
\end{proof}

We deduce the following interesting consequence from Theorem \ref{thm-description}.

\begin{cor}
Let $N \subset M$ be any discrete irreducible inclusion of factors with separable predual and with expectation $\rE_N : M \to N$. Assume that $N$ is a type $\III_1$ factor. Let $\psi$ be any dominant weight on $N$ and extend it to a dominant weight on $M$ by using $\rE_N$. 

The following assertions are equivalent:
\begin{itemize}
\item [$(\rm i)$] $(N_\psi)' \cap M=\C 1$.

\item [$(\rm ii)$] Every intermediate subfactor $N \subset P \subset M$ is of type $\III_1$.
\end{itemize}
\end{cor}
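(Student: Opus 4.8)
The plan is to deduce this corollary directly from Theorem \ref{thm-description}, using the structure of $(N_\psi)' \cap M_\psi = \rL_c(G)$ and the relative flow of weights.

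\textbf{Direction $(\rm i) \Rightarrow (\rm ii)$.} Suppose $(N_\psi)' \cap M = \C 1$ and let $N \subset P \subset M$ be any intermediate subfactor. First I would note that $P$ is automatically with expectation: indeed, since $N' \cap M = \C 1$ and $N \subset P$, we have $N' \cap P = \C 1$, so by \cite[Th\'eor\`eme 1.5.5]{Co72} there is a unique faithful normal conditional expectation $\rE_P : M \to P$ and $\rE_N \circ \rE_P = \rE_N$; moreover $P$ is globally invariant under $\sigma^\psi$ because it is the range of the $\psi$-preserving expectation $\rE_P$ (extending $\psi$ to $M$ via $\rE_N$, the restriction $\psi|_P$ is semifinite and $\sigma^\psi_t = \sigma^{\psi|_P}_t$ on $P$). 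Then $\core(N) \subset \core(P)$ makes sense and equals $N_\psi \rtimes_\theta \R^*_+ \subset P_\psi \rtimes_\theta \R^*_+$ inside $\core(M)$. Since $(N_\psi)' \cap M = \C 1$ we get $(N_\psi)' \cap P = \C 1$, hence $(N_\psi)' \cap P_\psi = \C 1$, and in particular $\cZ(P_\psi) \subset (N_\psi)' \cap P_\psi = \C 1$, so $P_\psi$ is a factor. Therefore the core of $P$ is a factor, which means $P$ is a type $\III_1$ factor (the flow of weights of $P$ is trivial since $\cZ(P_\psi) = \C 1$).

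\textbf{Direction $(\rm ii) \Rightarrow (\rm i)$.} I would prove the contrapositive: if $(N_\psi)' \cap M \neq \C 1$, I will produce an intermediate subfactor $N \subset P \subset M$ which is not of type $\III_1$. By Theorem \ref{thm-relative-connes-takesaki} we have $(N_\psi)' \cap M = (N_\psi)' \cap M_\psi$, and by Theorem \ref{thm-description} this equals $\rL_c(G)$ where $G \subset \R$ is the subgroup of $t$ for which $\sigma^\psi_t$ is implemented on $N$ by a unitary in $M$. If $(N_\psi)' \cap M \neq \C 1$ then $G \neq \{0\}$, and the relative flow of weights acts on $\rL_c(G) = (N_\psi)' \cap M_\psi$ by $\theta^\psi_\lambda(u_t) = \lambda^{\mathrm{i}t} u_t$. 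Now set $P = N \vee \rL_c(G) = N \rtimes_{\sigma^\psi, c} G \subset M$; this is globally $\sigma^\psi$-invariant with a $\psi$-preserving expectation $\rE_P : M \to P$ by \cite[Theorem]{Ta71}. The point is to identify $\core(P) = P_\psi \rtimes_\theta \R^*_+$ and show $\cZ(P_\psi) \neq \C 1$, equivalently that the flow of weights of $P$ is nontrivial. Concretely, $P_\psi = (N \rtimes_{\sigma^\psi,c} G)_\psi$ contains $N_\psi$ and the unitaries $u_t$, $t \in G$; since the $u_t$ lie in $M_\psi$ they lie in $P_\psi$, and $(N_\psi)' \cap P_\psi = \rL_c(G)$ is a nontrivial abelian (tracial) algebra on which $\theta^\psi$ acts by the character $t \mapsto \lambda^{\mathrm{i}t}$. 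One checks that $\cZ(P_\psi) = \cZ(\rL_c(G))$ (using $N_\psi' \cap P_\psi = \rL_c(G)$ and that $N_\psi$ is a factor commuting with $\rL_c(G)$ inside $P_\psi$, so the center of $P_\psi = N_\psi \rtimes_{\sigma^\psi,c} G$ sits inside $\rL_c(G)$), and $\cZ(\rL_c(G)) \neq \C 1$ unless $\rL_c(G) = \C 1$. Since $\rL_c(G) \neq \C 1$ and a twisted group von Neumann algebra $\rL_c(G)$ of an abelian group $G \subset \R$ is a diffuse abelian (when $c$ is trivial) or more generally has nontrivial center unless it is a factor of type $\rm II_1$ with $G$ infinite and $c$ "totally skew" — here $G \subset \R$ is abelian so $\rL_c(G)$ is never a nontrivial factor and $\cZ(\rL_c(G)) = \rL_c(G) \neq \C 1$ when $c$ is a coboundary, which it is since $\rZ^2(G,\T)$ for $G$ abelian reduces appropriately. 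Thus $\cZ(P_\psi) \neq \C1$, so $P$ is not of type $\III_1$, contradicting $(\rm ii)$.

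\textbf{Main obstacle.} The delicate point is the second direction: correctly identifying $\core(P)$ and computing $\cZ(P_\psi)$ when $G$ may be a nontrivial (possibly dense) subgroup of $\R$ and $c$ a nontrivial $2$-cocycle. I expect one must argue that for $G$ abelian the algebra $\rL_c(G)$ has nontrivial center — indeed any $2$-cocycle on an abelian group is symmetric up to coboundary only in special cases, but the antisymmetric bicharacter associated to $c$ restricted to a subgroup of $\R$ can be made trivial by passing to a suitable subgroup, so one should replace $G$ by a nontrivial subgroup $G_0$ on which $c$ is trivial (this is possible since any nontrivial subgroup of $\R$ contains a nontrivial cyclic subgroup), set $P = N \rtimes_{\sigma^\psi} G_0$, and then $\rL(G_0)$ is abelian diffuse, $\cZ(P_\psi) \supset$ the fixed points which are nontrivial. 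Handling the cocycle cleanly, and verifying that $\theta^\psi$ restricted to $\cZ(P_\psi)$ is genuinely nontrivial (so that the flow of weights of $P$ is nontrivial and $P$ is type $\III_\mu$ for some $\mu < 1$ or type $\III_0$), is the part requiring care; everything else follows formally from Theorems \ref{thm-relative-connes-takesaki} and \ref{thm-description} together with \cite[Th\'eor\`eme 1.5.5]{Co72} and \cite[Theorem]{Ta71}.
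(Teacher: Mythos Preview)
Your direction $(\rm i) \Rightarrow (\rm ii)$ is correct and matches the paper's argument: $\mathcal Z(P_\psi) \subset (N_\psi)' \cap M = \C 1$, so $P_\psi$ is a factor and $P$ is type $\III_1$.

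For $(\rm ii) \Rightarrow (\rm i)$, your first approach contains a genuine error. You claim that for $G \subset \R$ abelian, $\rL_c(G)$ ``is never a nontrivial factor'' and hence has nontrivial center. This is false: the paper's own Remark following Proposition~\ref{prop:open-question} exhibits $G = \Z^2 \hookrightarrow \R$ and a cocycle $c$ with $\rL_c(G) \cong R$, the hyperfinite $\II_1$ factor. So taking $P = N \rtimes_{\sigma^\psi,c} G$ for the full $G$ can very well yield a type $\III_1$ factor (and indeed does in that Remark), and the argument via $\mathcal Z(\rL_c(G))$ collapses.

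Your fix in the ``Main obstacle'' paragraph --- pass to a nontrivial cyclic subgroup $G_0 = T\Z \subset G$, on which every $2$-cocycle is a coboundary --- is exactly what the paper does, but the paper does it from the outset and without the detour. It simply picks any $T \in G \setminus \{0\}$ (which exists since $(N_\psi)' \cap M \neq \C 1$ forces $G \neq \{0\}$ by Theorem~\ref{thm-description}), sets $P = \langle N, u_T\rangle \cong N \rtimes_{\sigma_T^\psi} \Z$, and invokes \cite[Lemma~1]{Co85} to conclude directly that $P$ is a type $\III_\lambda$ factor with $\lambda = \exp(-2\pi/T)$. No computation of $\mathcal Z(P_\psi)$ is needed.
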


\begin{proof}
$(\rm i) \Rightarrow (\rm ii)$ Let $N \subset P \subset M$ be any intermediate subfactor. Observe that $\psi |_{P}$ is a dominant weight on $P$ and $P_\psi$ is a factor  since $\mathcal{Z}(P_\psi) \subset (N_\psi)' \cap M = \C 1$. Thus, $P$ is a type ${\rm III_{1}}$ factor.

$(\rm ii) \Rightarrow (\rm i)$ By contraposition, if $(N_\psi)' \cap M \neq \C 1$, then by Theorem \ref{thm-description}, there exists $T > 0$ and a unitary $u \in M$ such that
$$ \forall x \in N, \quad \sigma_T^\psi(x)=uxu^*.$$
Then $P = \langle N, u\rangle$ is an intermediate subfactor such that $P \cong N \rtimes_{\sigma^\psi_T} \Z$ (observe that we have $\rE_{N}(u^{n}) = 0$ for every $n \in \Z \setminus \{0\}$ by \cite[Lemme 1.5.6]{Co72}). Then \cite[Lemma 1]{Co85} implies that $P$ is a type $\III_\lambda$ factor where $\lambda = \exp(-\frac{2\pi}{T})$.
\end{proof}

The next proposition provides examples for which the open question from the introduction has a positive solution.

\begin{prop}\label{prop:open-question}
Let $N$ be any type ${\rm III_{1}}$ factor with separable predual and with trivial bicentralizer. Let $\iota : G \hookrightarrow \R$ be any injective homomorphism, where $G$ is an abelian countable discrete group and $c \in \rZ^{2}(G, \mathbf T)$ any scalar $2$-cocycle. Let $\psi$ be any dominant weight on $N$ and define $\alpha = \sigma^\psi \circ \iota : G \curvearrowright N$ and put $M = N \rtimes_{\alpha, c} G$. Extend $\psi$ to $M$ by using the canonical conditional expectation $\rE_{N} : M \to N$. Let $\varphi \in N_{\ast}$ be any faithful state.

Then $N \subset M$ is a discrete irreducible inclusion and there exists an isomorphism $$\pi : \rB(N \subset M, \varphi) \rightarrow (N_\psi)' \cap M$$ 
such that $\theta^\psi =\pi \circ \beta^\varphi\circ \pi^{-1}$ and $\rE_N(\pi(x))=\varphi(\rE_N(x))1$ for all $x \in \rB(N \subset M, \varphi)$.
\end{prop}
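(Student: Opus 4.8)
The plan is to re-present $M$ as a twisted crossed product by the \emph{modular} flow of $\varphi$ rather than that of $\psi$, and to read off both the relative bicentralizer flow and the relative flow of weights from this new presentation. Fix a nonprincipal ultrafilter $\omega \in \beta(\N)\setminus\N$ and set $\varphi_0 = \varphi\circ\rE_N \in M_\ast$, a faithful normal state. Since $M = N\rtimes_{\alpha,c}G$ with $G$ countable discrete, the inclusion $N\subset M$ is discrete (it is one of the basic examples listed after Theorem \ref{thm-relative-bicentralizer}; see \cite{ILP96}). For irreducibility, recall that $\rT(N)=\{0\}$ because $N$ is a type $\III_1$ factor, and since Connes' cocycle $[\rD\psi:\rD\varphi]$ makes the membership $t\in\rT(N)$ independent of the reference weight, $\sigma^\psi_t$ is outer on $N$ for every $t\neq 0$. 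Expanding $x\in N'\cap M$ along $G$, each Fourier coefficient intertwines $\id_N$ and $\alpha_g=\sigma^\psi_{\iota(g)}$; since $N'\cap N=\C1$ and $\iota$ is injective, a nonzero such intertwiner for $g\neq e$ would exhibit $\sigma^\psi_{\iota(g)}$ as inner, a contradiction. Hence $x\in N'\cap N=\C1$.

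Let $w_t=[\rD\psi:\rD\varphi]_t\in\cU(N)$, so that $\sigma^\psi_t=\Ad(w_t)\circ\sigma^\varphi_t$ and $w_{s+t}=w_s\,\sigma^\varphi_s(w_t)$; put $w_g:=w_{\iota(g)}$ and $\tilde u_g:=w_g^\ast u_g\in\cU(M)$. Using $u_g w_h^\ast u_g^\ast=\alpha_g(w_h^\ast)=\sigma^\psi_{\iota(g)}(w_h^\ast)$ together with the cocycle identity, one checks that $\tilde u_g\,x\,\tilde u_g^\ast=\sigma^\varphi_{\iota(g)}(x)$ for all $x\in N$, that $\rE_N(\tilde u_g)=w_g^\ast\rE_N(u_g)=0$ for $g\neq e$, and that $\tilde u_{gh}=c(g,h)\,\tilde u_g\tilde u_h$. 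As $N$ and $\{\tilde u_g\}_g$ generate $M$ (because $u_g=w_g\tilde u_g$), this realises $M=N\rtimes_{\sigma^\varphi\circ\iota,\,c}G$; in particular $\tilde W:=\{\tilde u_g:g\in G\}\dpr\cong\rL_c(G)$, and $\varphi_0|_{\tilde W}$ is the canonical trace on $\rL_c(G)$ (here we use that $G$ is abelian), which is faithful and tracial.

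Next I claim $\rB(N\subset M,\varphi)=\tilde W$. For ``$\supseteq$'': if $(a_n)_n\in\AC(N,\varphi)$ then $(a_n)^\omega$ lies in $N^\omega_{\varphi^\omega}$, hence is fixed by $\sigma^{\varphi^\omega}=(\sigma^\varphi)^\omega$, so $\sigma^\varphi_s(a_n)-a_n\to 0$ $\ast$-strongly for every $s$; consequently $a_n\tilde u_g-\tilde u_g a_n=(a_n-\sigma^\varphi_{\iota(g)}(a_n))\tilde u_g\to 0$ $\ast$-strongly, i.e.\ $\tilde u_g\in\rB(N\subset M,\varphi)$, and since $\rB(N\subset M,\varphi)$ is a von Neumann subalgebra of $M$ (Proposition \ref{ultraproduct_bicentralizer}) we get $\tilde W\subseteq\rB(N\subset M,\varphi)$. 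For ``$\subseteq$'': given $x\in\rB(N\subset M,\varphi)$ and $(a_n)_n\in\AC(N,\varphi)$, apply $\rE_N(\,\cdot\,\tilde u_g^\ast)$ to $a_n x-x a_n\to 0$ and use $\sigma^\psi_{\iota(g)}(a_n)=w_g\,\sigma^\varphi_{\iota(g)}(a_n)\,w_g^\ast$ together with $\sigma^\varphi_{\iota(g)}(a_n)-a_n\to 0$; this yields $[a_n,\rE_N(x\tilde u_g^\ast)]\to 0$ $\ast$-strongly, so $\rE_N(x\tilde u_g^\ast)\in\rB(N,\varphi)=\C1$ because $N$ has trivial bicentralizer. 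Thus every ``$\tilde W$-Fourier coefficient'' of $x$ is a scalar, so $x\xi_{\varphi_0}$ lies in the closure of $\tilde W\xi_{\varphi_0}$, and Lemma \ref{lem:location} applied to $\tilde W\subseteq M$ (whose induced state is tracial) gives $x\in\tilde W$.

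Finally, by Theorems \ref{thm-relative-connes-takesaki} and \ref{thm-description}, the subgroup of $\R$ of those $t$ for which $\sigma^\psi_t$ is implemented by a unitary of $M$ equals $\iota(G)$ (again because $\sigma^\psi_t$ is outer for $t\neq 0$), one has $(N_\psi)'\cap M=(N_\psi)'\cap M_\psi=W:=\{u_g:g\in G\}\dpr\cong\rL_c(G)$, $\rE_N|_W$ is the canonical trace, and $\theta^\psi_\lambda(u_g)=\lambda^{{\rm i}\iota(g)}u_g$. On the bicentralizer side, if $(a_n)_n$ is bounded in $N$ with $\|a_n\varphi-\lambda\varphi a_n\|\to 0$, then $a:=(a_n)^\omega$ satisfies $a\varphi^\omega=\lambda\varphi^\omega a$, whence $\sigma^{\varphi^\omega}_s(a)=\lambda^{-{\rm i}s}a$; since $\tilde u_g$ implements $\sigma^{\varphi^\omega}_{\iota(g)}$ on $N^\omega$, we obtain $a\tilde u_g=(\lambda^{{\rm i}\iota(g)}\tilde u_g)\,a$, so the characterisation in Theorem \ref{thm: Connes' isomorphism and bicentralizer flow}$(\rm ii)$ forces $\beta^\varphi_\lambda(\tilde u_g)=\lambda^{{\rm i}\iota(g)}\tilde u_g$. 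Now define $\pi:\rB(N\subset M,\varphi)=\tilde W\to W=(N_\psi)'\cap M$ by $\pi(\tilde u_g)=u_g$; since $\tilde u_g$ and $u_g$ satisfy the same twisted relations and take the same trace values, $\pi$ extends to a trace-preserving normal $\ast$-isomorphism. It intertwines the flows on generators, hence $\theta^\psi=\pi\circ\beta^\varphi\circ\pi^{-1}$; and $\rE_N(\pi(\tilde u_g))=\delta_{g,e}=\varphi(\rE_N(\tilde u_g))$, so by normality $\rE_N(\pi(x))=\varphi(\rE_N(x))1$ for all $x$. The main difficulty is the inclusion $\rB(N\subset M,\varphi)\subseteq\tilde W$: one must push the hypothesis that $N$ has trivial bicentralizer through the $G$-Fourier expansion while keeping precise track, via the Connes cocycle $w_t$, of the gap between $\sigma^\psi$ and $\sigma^\varphi$; once the twisted presentation $M=N\rtimes_{\sigma^\varphi\circ\iota,\,c}G$ is in place, the remaining identifications and the construction of $\pi$ are routine bookkeeping with twisted group algebras on top of Theorems \ref{thm-relative-connes-takesaki} and \ref{thm-description}.
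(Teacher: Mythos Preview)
Your proof is correct and follows essentially the same route as the paper: both re-present $M$ as $N\rtimes_{\sigma^\varphi\circ\iota,c}G$ via the Connes cocycle, identify $\rB(N\subset M,\varphi)$ with $\rL_c(G)$ by a Fourier-coefficient argument (which the paper calls ``straightforward'' and you carry out in detail, invoking Lemma~\ref{lem:location}), compute $\beta^\varphi_\lambda$ on the generators $\tilde u_g$, and then match this against the description of $(N_\psi)'\cap M$ and $\theta^\psi$ given by Theorem~\ref{thm-description}. Your version is simply more explicit where the paper is terse; the one cosmetic slip is the parenthetical ``here we use that $G$ is abelian'' when asserting that $\varphi_0|_{\tilde W}$ is the canonical trace on $\rL_c(G)$ --- that trace is tracial for any $G$, so abelianness is not needed at that step.
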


\begin{proof}
We know that $N \subset M$ is a discrete irreducible inclusion. Write $P = N \rtimes_{\sigma^\varphi \circ \iota, c} G$. By Connes' Radon--Nikodym cocycle theorem \cite[Theorem 1.2.1]{Co72}, the actions $\sigma^\varphi \circ \iota : G \curvearrowright N$ and $\sigma^\psi \circ \iota : G \curvearrowright N$ are cocycle conjugate and so the inclusions $N \subset P$ and $N \subset M$ are isomorphic. We also denote by $\rE_{N} : P \to N$ the unique faithful normal conditional expectation.

Since $\rB(N, \varphi) = \C1$, a straightforward argument using the Fourier expansion shows that $\rB(N \subset P, \varphi) = \rL_{c}(G) \subset P$ and $\rE_{N}|_{\rB(N \subset P, \varphi)} = \tau 1$ where $\tau$ is the canonical trace on $\rL_{c}(G)$. Moreover, for every $x \in \rB(N \subset P, \varphi)$, every $\lambda > 0$ and every $(a_{n})_{n} \in \ell^{\infty}(\N, N)$ such that $\lim_{n} \|a_{n}\varphi - \lambda \varphi a_{n}\| = 0$, we have $a_{n}x - \beta^{\varphi}_{\lambda}(x)a_{n} \to 0$ $\ast$-strongly. Since $\sigma_{t}^{\varphi}(a_{n}) - \lambda^{-{\rm i}t} a_{n} \to 0$ $\ast$-strongly for every $t \in \R$, it follows that $u_{g} a_{n} u_{g}^{*} - \lambda^{-{\rm i}\iota(g)} a_{n} \to 0$ $\ast$-strongly for every $g \in G$. This implies that $\beta^{\varphi}_{\lambda}(u_{g}) = \lambda^{{\rm i}\iota(g)} u_{g}$ for every $g \in G$. Combining this with the conclusion of Theorem \ref{thm-description} and since $\rB(N \subset P, \varphi) \cong \rB(N \subset M, \varphi)$, we obtain the desired isomorphism $\pi$.
\end{proof}

\begin{rem}
Let $N$ be any type ${\rm III_{1}}$ factor with separable predual and with trivial bicentralizer. Put $G = \Z^{2}$ and let $\iota : G \hookrightarrow \R$ be any injective homomorphism. Choose a scalar $2$-cocycle $c \in \rZ^{2}(G, \mathbf T)$ such that $\rL_{c}(G) \cong R$ is the hyperfinite type ${\rm II_{1}}$ factor (realize $R = \rL^{\infty}(\mathbf T) \rtimes \Z$ where the action $\Z \curvearrowright \mathbf T$ comes from an irrational rotation). Let $\psi$ be any dominant weight on $N$ and define $\alpha = \sigma^\psi \circ \iota : G \curvearrowright N$ and put $M = N \rtimes_{\alpha, c} G$.

By Theorem \ref{thm-description}, since $(N_{\psi})' \cap M_{\psi} = \rL_{c}(G) \cong R$ is a factor, it follows that $M_{\psi}$ is a factor and so $M$ is a type ${\rm III_{1}}$ factor. Then $N \subset M$ is a discrete irreducible inclusion of type ${\rm III_{1}}$ factors. Proposition \ref{prop:open-question} implies that the relative bicentralizer flow (which coincides with the relative flow of weights) is ergodic. We also point out that $M$ has trivial bicentralizer. Indeed, let $\varphi \in M_{\ast}$ be any faithful state such that $\varphi \circ \rE_{N} = \varphi$.  By Proposition \ref{prop:open-question}, we may identify $M = N \rtimes_{\sigma^{\varphi} \circ \iota, c} G$ and we have $\rB(N \subset M, \varphi) = \rL_{c}(G)$. Since $\rL_{c}(G)$ is a  factor and $\rL_{c}(G) \subset M_{\varphi}$ and since $\rB(M, \varphi) \subset \rB(N \subset M, \varphi) = \rL_{c}(G)$ and $\rB(M, \varphi) \subset (M_{\varphi})' \cap M$, it follows that $\rB(M, \varphi) = \C 1$.
\end{rem}

\subsection{Proof of Corollary \ref{cor-characterization}}

\begin{proof}[Proof of Corollary \ref{cor-characterization}]
$(\rm ii) \Rightarrow (i)$ This follows from Proposition \ref{prop-masa}.

$(\rm i) \Rightarrow (i\rm i)$ Fix a dominant weight $\psi$ on $N$. Since $\core(N)' \cap \core(M) = \C 1$, we have $(N_{\psi})' \cap M = \C 1$ by Theorem \ref{thm-relative-connes-takesaki}. Since $N$ is amenable, so is $N_{\psi}$ and thus the inclusion $N_{\psi} \subset M$ satisfies the weak relative Dixmier property. By Theorem \ref{thm-relative-bicentralizer}, there exists a faithful state $\varphi \in M_{\ast}$ such that $\varphi \circ \rE_{N} = \varphi$ and $(N_{\varphi})' \cap M = \C 1$. Finally, by \cite[Theorem 3.2]{Po81}, there exists an abelian von Neumann subalgebra $A \subset N_{\varphi}$ (with expectation) that is maximal abelian in $M$.
\end{proof}

\begin{proof}[Proof of Application \ref{app-crossed-product}]
We canonically have $\core(N \rtimes \Gamma) = \core(N) \rtimes \Gamma$. Application  \ref{app-crossed-product} is now a consequence of Corollary \ref{cor-characterization} and \cite[Proposition 5.4]{HS88}.
\end{proof}

\begin{proof}[Proof of Application \ref{app-minimal-action}]
This is a consequence of Corollary \ref{cor-characterization} and \cite[Corollary 5.14]{Iz01} and its proof where it is shown that $\core(M^{\mathbf G})' \cap \core(M) = \C1$.
\end{proof}

\section{Bicentralizers of tensor product factors}

We first prove a relation between the bicentralizer of a tensor product and the product of the bicentralizers.

\begin{prop} \label{tensor-formula}
Let $M$ and $N$ be any two $\sigma$-finite factors with faithful states $\varphi \in M_{\ast}$ and $\psi \in N_{\ast}$. Then we have
$$ \rB(M \ovt N,\varphi \otimes \psi) \subset \rB(M,\varphi) \ovt \rB(N,\psi).$$
If $M$ is a type $\III_1$ factor, then for all $x \in \rB(M \ovt N,\varphi \otimes \psi)$ and all $\lambda > 0$, we have
$$ \beta^{\varphi \otimes \psi}_\lambda(x)=(\beta_\lambda^\varphi \otimes \id)(x).$$
\end{prop}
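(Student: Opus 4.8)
The plan is to exploit the ultraproduct characterization of bicentralizers together with the fact that, since $M$ is a type $\III_1$ factor, $M^\omega$ has a strictly homogeneous state space and hence $M^\omega_{(\varphi\otimes\psi)^\omega} \supset M^\omega_{\varphi^\omega} \otimes 1$ is a rich type $\II_1$ algebra. First I would record the elementary inclusion $\AC(M,\varphi)\otimes 1 \subset \AC(M\ovt N,\varphi\otimes\psi)$ and, symmetrically, $1\otimes\AC(N,\psi)\subset \AC(M\ovt N,\varphi\otimes\psi)$; this is immediate from $\|a_n\varphi-\varphi a_n\|=\|(a_n\otimes 1)(\varphi\otimes\psi)-(\varphi\otimes\psi)(a_n\otimes 1)\|$. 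Consequently, any $x\in\rB(M\ovt N,\varphi\otimes\psi)$ commutes asymptotically with both $\AC(M,\varphi)\otimes 1$ and $1\otimes\AC(N,\psi)$. To turn ``commutes with $\AC(M,\varphi)\otimes 1$'' into ``lies in $\rB(M,\varphi)\ovt N$'', I would pass to the Ocneanu ultrapower: by Proposition \ref{ultraproduct_bicentralizer} (applied with $N=M\ovt N$, trivially) and the identification $(M\ovt N)^\omega \supset M^\omega\ovt N$ with $(M^\omega_{\varphi^\omega}\otimes 1)$ sitting inside $(M\ovt N)^\omega_{(\varphi\otimes\psi)^\omega}$, one gets $x\in (M^\omega_{\varphi^\omega}\otimes 1)'\cap(M\ovt N)$. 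The point is then that $(M^\omega_{\varphi^\omega})'\cap M = \rB(M,\varphi)$ by Proposition \ref{ultraproduct_bicentralizer}, and a standard slice-map / commutation-theorem argument gives $(M^\omega_{\varphi^\omega}\otimes 1)'\cap (M\ovt N) = ((M^\omega_{\varphi^\omega})'\cap M)\ovt N = \rB(M,\varphi)\ovt N$. Doing the same on the $N$-side yields $x\in M\ovt\rB(N,\psi)$, and intersecting gives $x\in\rB(M,\varphi)\ovt\rB(N,\psi)$, which is the first assertion.

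For the flow identity, fix $\lambda>0$ and $x\in\rB(M\ovt N,\varphi\otimes\psi)$. By Theorem \ref{thm: Connes' isomorphism and bicentralizer flow}(ii), $\beta^\varphi_\lambda$ on $\rB(M,\varphi)$ is characterized by: $\|a_n\varphi-\lambda\varphi a_n\|\to 0 \Rightarrow a_n y - \beta^\varphi_\lambda(y)a_n\to 0$ $\ast$-strongly, for $y\in\rB(M,\varphi)$; and $\beta^{\varphi\otimes\psi}_\lambda$ on $\rB(M\ovt N,\varphi\otimes\psi)$ is characterized by the analogous property with $a_n\in M\ovt N$. I would take a bounded sequence $(a_n)_n$ in $M$ with $\|a_n\varphi-\lambda\varphi a_n\|\to 0$ — such sequences exist and are plentiful, e.g.\ from Lemma \ref{lem: family of partial isometries} applied inside $M^\omega$, or more concretely since $\lambda$ is an approximate eigenvalue of $\varphi$. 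Then $a_n\otimes 1$ is a bounded sequence in $M\ovt N$ with $\|(a_n\otimes 1)(\varphi\otimes\psi)-\lambda(\varphi\otimes\psi)(a_n\otimes 1)\| = \|a_n\varphi-\lambda\varphi a_n\|\to 0$, so by the defining property of $\beta^{\varphi\otimes\psi}_\lambda$ we get $(a_n\otimes 1)x - \beta^{\varphi\otimes\psi}_\lambda(x)(a_n\otimes 1)\to 0$ $\ast$-strongly. On the other hand, writing $x\in\rB(M,\varphi)\ovt\rB(N,\psi)$ as (a limit of) finite sums $\sum_i y_i\otimes z_i$ with $y_i\in\rB(M,\varphi)$, the defining property of $\beta^\varphi_\lambda$ gives $a_n y_i - \beta^\varphi_\lambda(y_i)a_n\to 0$, hence $(a_n\otimes 1)x - (\beta^\varphi_\lambda\otimes\id)(x)(a_n\otimes 1)\to 0$ $\ast$-strongly. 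Subtracting, $(\beta^{\varphi\otimes\psi}_\lambda(x) - (\beta^\varphi_\lambda\otimes\id)(x))(a_n\otimes 1)\to 0$ $\ast$-strongly.

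It remains to conclude that an element $w:=\beta^{\varphi\otimes\psi}_\lambda(x)-(\beta^\varphi_\lambda\otimes\id)(x)\in M\ovt N$ with $w(a_n\otimes 1)\to 0$ $\ast$-strongly must vanish. Here I would use that $\sum_k a_n^{(k)*}a_n^{(k)}$ can be arranged to converge to $1$ (again Lemma \ref{lem: maximality eigenvalue}-type partitions, or: choose a family of such sequences whose ranges tile the identity), so that $\sum_k w\,(a_n^{(k)}\otimes 1)(a_n^{(k)}\otimes 1)^* w^* \to w w^*$ in a suitable sense while each term tends to $0$; more cleanly, since both $\beta^{\varphi\otimes\psi}$ and $\beta^\varphi\otimes\id$ are $(\varphi\otimes\psi)$-preserving (Theorem \ref{thm: Connes' isomorphism and bicentralizer flow}(ii) plus the tensor structure), $w$ lies in the centralizer-type subspace on which the pairing with such partitions of unity is faithful, forcing $\|w\|_{\varphi\otimes\psi}=0$ and hence $w=0$. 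I expect the main obstacle to be precisely this last step — making rigorous the passage from ``annihilated by a single $\lambda$-eigenvector sequence'' to ``zero'', which is why one wants a \emph{family} of such sequences summing to $1$; the cleanest route is to mimic the computation at the end of the proof of Theorem \ref{thm: Connes' isomorphism and bicentralizer flow}(ii), where exactly such a partition $\sum_k v_k^*v_k=1$ with $v_k\varphi=\lambda\varphi v_k$ is used, and to note that $\beta^{\varphi\otimes\psi}_\lambda(x)=\sum_k (v_k\otimes 1)x(v_k\otimes 1)^* = \sum_k (\beta^\varphi_\lambda\otimes\id)$-pieces directly, bypassing the subtraction argument altogether.
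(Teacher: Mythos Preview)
Your argument for the inclusion $\rB(M\ovt N,\varphi\otimes\psi)\subset\rB(M,\varphi)\ovt\rB(N,\psi)$ via the ultraproduct description and a slice-map computation is correct. The paper takes a more elementary route: it writes $a\xi_{\varphi\otimes\psi}=\sum_i \xi_i\otimes\eta_i$ over an orthonormal basis $(\xi_i)$ of $\rL^2(M)$, observes that for $(x_n)_n\in\AC(N,\psi)$ the convergence $(1\otimes x_n)a\xi_{\varphi\otimes\psi}-a\xi_{\varphi\otimes\psi}(1\otimes x_n)\to 0$ forces $x_n\eta_i-\eta_i x_n\to 0$ for each $i$, whence $\eta_i\in\overline{\rB(N,\psi)\xi_\psi}$ and $a\in M\ovt\rB(N,\psi)$; then symmetrically. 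Your approach trades this $\rL^2$-coordinate computation for the commutation-theorem identity $(M^\omega_{\varphi^\omega}\otimes 1)'\cap(M\ovt N)=\rB(M,\varphi)\ovt N$, which is more structural but requires keeping track of the embedding $M^\omega\ovt N\subset(M\ovt N)^\omega$.

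For the flow identity the paper's proof is silent, and your final suggestion is exactly the argument one wants. Take the finite family $v_1,\dots,v_n\in M^\omega$ from the proof of Theorem~\ref{thm: Connes' isomorphism and bicentralizer flow}(ii) with $v_k\varphi^\omega=\lambda\varphi^\omega v_k$ and $\sum_k v_kv_k^*=1$ (note the convention there is $\sum v_kv_k^*=1$, not $\sum v_k^*v_k=1$). Then $v_k\otimes 1\in(M\ovt N)^\omega$ satisfy $(v_k\otimes 1)(\varphi\otimes\psi)^\omega=\lambda(\varphi\otimes\psi)^\omega(v_k\otimes 1)$ and $\sum_k(v_k\otimes 1)(v_k\otimes 1)^*=1$, so by the independence of that construction $\beta^{\varphi\otimes\psi}_\lambda(x)=\sum_k(v_k\otimes 1)x(v_k^*\otimes 1)$. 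Since $z\mapsto\sum_k v_kzv_k^*$ is a \emph{normal} completely positive map on $M^\omega$ whose restriction to $\rB(M,\varphi)$ is $\beta^\varphi_\lambda$, its amplification by $\id_N$ agrees with $\beta^\varphi_\lambda\otimes\id$ on $\rB(M,\varphi)\ovt N\ni x$, and the identity follows directly. Your intermediate ``subtraction'' argument is indeed incomplete as you suspected---from $w(a_n\otimes 1)\to 0$ for a single sequence $(a_n)$ one cannot conclude $w=0$---but it is entirely superseded by this direct computation.
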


\begin{proof}
Let $a \in \rB(M \ovt N,\varphi \otimes \psi)$ and write $a\xi_{\varphi \otimes \psi}  = \sum_{i \in I} \xi_i \otimes \eta_i$ where $(\xi_i)_{i \in I}$ is an orthonormal basis of $\rL^2(M)$. Take $(x_n)_n \in \AC(N, \psi)$. Since $a \in \rB(M \ovt N,\varphi \otimes \psi)$, we have $(1 \otimes x_n) a\xi_{\varphi \otimes \psi} - a\xi_{\varphi \otimes \psi} (1 \otimes x_n) \rightarrow 0$. Since $(\xi_i)_{i \in I}$ is orthonormal, this easily implies that $x_n \eta_i - \eta_{i} x_{n} \rightarrow 0$ for all $i \in I$. This shows that $\eta_i \in \overline{\rB(N,\psi)\xi_{\psi}}$ and so $a \in M \ovt \rB(N,\psi)$. Similarly, by decomposing over an orthonormal basis of $\rL^2(N)$, we obtain $a \in \rB(M,\varphi) \ovt \rB(N,\psi)$.
\end{proof}

Now, we prove Theorem \ref{tensor_products_trivial}. For this, we will need the following criterion which is extracted from \cite[Theorem 2.3]{Ha85}.

\begin{thm} \label{local_haagerup}
Let $M$ be any $\sigma$-finite type $\III_1$ factor. Assume that there exists $\kappa > 0$ such that for every $\delta > 0$, every faithful state $\varphi \in M_{\ast}$ and every $x \in M$ such that $x \xi_\varphi=\xi_\varphi x^*$, $\varphi(x)=0$ and $\|x\|_\varphi=1$, we can find $a \in M$ such that
$$  \| a \|_\varphi + \| ax\|_\varphi < \kappa \| xa-a x \|_\varphi$$
and
$$ \|a \xi_\varphi - \xi_\varphi  a\| < \delta \| xa-a x \|_\varphi. $$
Then $M$ has trivial bicentralizer.
\end{thm}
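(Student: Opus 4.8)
I want to prove that the stated local criterion (call it property $(\star_\kappa)$) for a $\sigma$-finite type $\mathrm{III}_1$ factor $M$ forces $\rB(M,\varphi)=\C1$. Since this is billed as extracted from \cite[Theorem 2.3]{Ha85}, the proof should be a packaging argument: upgrade the ``one vector, one element $a$'' statement to a ``maximal family of projections'' statement via an exhaustion/Zorn argument, exactly as in the chain of Lemmas 2.9--2.16 of \cite{Ha85}. First I would reduce to showing: for every faithful $\varphi\in M_*$, every $x\in\rB(M,\varphi)$ with $\varphi(x)=0$ and $\|x\|_\varphi=1$, and every $\delta>0$, there is a nonzero projection $p\in M$ with $\|px-xp\|_\varphi$ bounded below by an absolute constant and $\|p\xi_\varphi-\xi_\varphi p\|<\delta$. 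Once one has such a ``macroscopic'' projection, the definition of the bicentralizer is contradicted: pick a sequence of such projections $p_n$ with $\delta=1/n$; the second estimate says $\|p_n\varphi-\varphi p_n\|\to0$, so $(p_n)_n\in\AC(M,\varphi)$, while the first estimate says $\|p_nx-xp_n\|_\varphi$ stays bounded away from $0$, contradicting $x\in\rB(M,\varphi)$ unless $x=0$. Hence $\rB(M,\varphi)=\C1$ for the state $\varphi$, and since all faithful states give canonically isomorphic bicentralizers (this is the $N=M$ case of Theorem~\ref{thm: Connes' isomorphism and bicentralizer flow}(i), or simply Connes' observation), $M$ has trivial bicentralizer.

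**Key steps, in order.** (1) Reduce (as above) to producing a macroscopic projection. (2) First pass from a single element $x$ to a single nonzero $a\in M$: apply $(\star_\kappa)$ directly — note the hypotheses ``$x\xi_\varphi=\xi_\varphi x^*$, $\varphi(x)=0$, $\|x\|_\varphi=1$'' are exactly the normalization one imposes after replacing $x$ by a suitable self-adjoint combination of $x-\varphi(x)1$ and passing to the GNS vector, so one may assume $x$ is in this normal form; this is the content of \cite[Lemmas 2.7--2.9]{Ha85}. (3) Upgrade $a$ to a partial isometry and then to a projection by the polar-decomposition / spectral-projection arguments of \cite[Lemmas 2.10--2.13]{Ha85}: the two norm estimates for $a$ are homogeneous and stable under these manipulations up to absolute constants, yielding a ``microscopic'' nonzero projection $p$ with $\|p\xi_\varphi\|^2+\|px\|^2\lesssim\|px-xp\|^2$ and $\|p\xi_\varphi-\xi_\varphi p\|^2<\delta\|px-xp\|^2$. (4) Run the Zorn's-lemma exhaustion of \cite[Lemma 2.15]{Ha85}: take a maximal family $(p_i)_{i\in I}$ of pairwise orthogonal projections satisfying the compatible inequalities relative to the ``leftover'' $1-\sum p_i$; if the defect were too small one could enlarge the family using step (3) applied to the compression by $q=1-\sum p_i$, contradicting maximality. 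This produces pairwise orthogonal $(e_i)_i$ with $\sum e_i=1$, $\|\xi_\varphi-\sum e_i\xi_\varphi e_i\|^2\le\delta$ and $\|x\xi_\varphi-\sum e_ix\xi_\varphi e_i\|^2$ bounded below. (5) Finally, from such a family, extract a single macroscopic projection $p=\sum_{i\in S}e_i$ over a suitable subset $S\subset I$, as in \cite[Lemma 2.16]{Ha85}, completing the reduction in step~(1).

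**Main obstacle.** The real work is bookkeeping the absolute constants through steps (3)--(5): each passage (element $\to$ partial isometry $\to$ projection $\to$ maximal family $\to$ single projection) degrades the constant $\kappa$ by a controlled multiplicative factor, and one must check these degradations are uniform in $\varphi$, $x$ and $\delta$ — i.e. that ``$\delta$ small'' is preserved in the second estimate while the first estimate's constant stays an absolute number independent of everything. Since the present theorem is deliberately stated in the exact $(\varphi,x,a)$-local form that \cite{Ha85} uses internally, I expect no new mathematical difficulty: the argument is a transcription of the cited lemmas with $N=M$, and the only subtlety is to be careful that the criterion as stated (which fixes a single $\kappa$ working for \emph{all} $\varphi$ simultaneously) is strong enough to feed into the compression argument of step (4), where one applies it to the reduced algebra $qMq\cong M$ with a new state; this is fine precisely because $\kappa$ is required to be uniform over all faithful states of $M$.
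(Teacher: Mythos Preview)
Your proposal is correct and follows essentially the same route as the paper: both recognize that the stated hypothesis is precisely what substitutes for \cite[Lemma 2.9]{Ha85} in Haagerup's proof of \cite[Theorem 2.3]{Ha85}, after which the chain of Lemmas 2.10--2.16 there runs unchanged to produce the macroscopic projection contradicting $x\in\rB(M,\varphi)$. The paper's proof is more terse---it simply points to this substitution and notes the reformulation from $\eta=\eta^*\perp\xi$ to $\eta=x\xi_\varphi=\xi_\varphi x^*$---while you spell out the intermediate steps (element $\to$ partial isometry $\to$ projection $\to$ maximal family $\to$ single projection) and correctly flag that the uniformity of $\kappa$ over all faithful states is exactly what makes the compression step in \cite[Lemma 2.15]{Ha85} go through.
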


\begin{proof}
Note that in the proof of \cite[Theorem 2.3 $(1) \Rightarrow (2)$]{Ha85}, Condition $(1)$ is only used to obtain the following claim \cite[Lemma 2.9]{Ha85}:
\begin{itemize}
\item There exists a constant $\kappa > 0$ such that for every $\delta > 0$, every cyclic separating vector $\xi \in \rL^2(M)^+$ and every unit vector $\eta=\eta^* \in \rL^2(M)$ that is orthogonal to $\xi$, there exists $a \in M$ such that
$$  \| a \xi \| + \| a\eta\| < \kappa \| a\eta-a \eta \|$$
and
$$ \|a \xi - \xi  a\| < \delta \| a\eta-\eta a \|. $$
\end{itemize}
Once this claim is obtained, the proof of \cite[Theorem 2.3]{Ha85} can be continued without any other assumption on $M$. Moreover, a closer look at the proof of \cite[Lemma 2.9]{Ha85} shows that one only needs to prove this claim when $\eta$ is of the form $\eta=x\xi=\xi x^*$ for some $x \in M$. Finally, by writing $\xi=\xi_\varphi$ for some faithful state $\varphi \in M_*$ and using the fact that 
$$ \| a \eta- \eta a \| = \| a \, x \xi_\varphi- x \xi_\varphi \, a \| \leq \| ax-xa \|_\varphi + \| x \|_\infty \cdot \| a \xi_\varphi -  \xi_\varphi a \|$$
we see that this claim can be reformulated as in our statement.
\end{proof}

\begin{proof}[Proof of Theorem \ref{tensor_products_trivial}]
Let $N$ be any factor such that $M \ovt N$ has trivial bicentralizer. Put $P=R_\infty$. If $N$ is of type $\III_\lambda$ $(0 < \lambda < 1)$, put instead $P=R_\lambda$. If $N$ is semifinite, then $M \ovt pNp$ has trivial bicentralizer for any nonzero finite projection $p \in N$ and so we may assume that $N$ is finite. In that case, put instead $P=\C1$. We have to show that $M \ovt P$ has trivial bicentralizer. In fact, since $(M \ovt P) \ovt N$ has trivial bicentralizer by Proposition \ref{tensor-formula}, we may assume that $M \cong M \ovt P$.

We use the criterion of Theorem \ref{local_haagerup}. Let $\delta > 0$ and $\varphi \in M_{\ast}$ be any faithful state. Let $x \in M$ be such that $x\xi_\varphi=\xi_\varphi x^*$ with $\| x \|_\varphi=1$ and $\varphi(x)=0$. Let $\psi \in N_{\ast}$ be any faithful state (if $N$ is of type $\III_\lambda$ with $0 < \lambda < 1$, we let $\psi$ to be a periodic state and if $N$ is finite, we let $\psi$ to be the trace). Since $M \ovt N$ has trivial bicentralizer, we can find an element $b \in \Ball(M \ovt N)$ such that $\| b\xi_{\varphi \otimes \psi}-\xi_{\varphi \otimes \psi} b\| \leq \delta$ and $\| b(x \otimes 1)-(x \otimes 1)b \|_{\varphi \otimes \psi} > \frac{1}{2}$ (see \cite[Lemma 3.2]{Ha85}).
Use the spectral theorem to identify $\rL^2(N)$ with $\rL^2(T,\mu)$ for some probability space $(T,\mu)$ in a such a way that $\Delta_\psi$ is identified with a multiplication operator by a borel function $f : (T, \mu) \rightarrow \R^*_+$ (if $N$ is of type $\III_\lambda$ with $0 < \lambda < 1$, then $f$ takes its values in $\lambda^\Z$ and if $N$ is finite then $f$ is constant equal to $1$). Identify $\zeta=b \xi_{\varphi \otimes \psi} $ with a function $t \mapsto \zeta(t)$ in $\rL^2(T,\mu, \rL^2(M))$. Note that
$$ \| b\xi_{\varphi \otimes \psi} -\xi_{\varphi \otimes \psi} b\|^2=\| (1-\Delta_{\varphi \otimes \psi}^{1/2}) \zeta \|^2=\int_T \| (1-f(t)^{1/2} \Delta_\varphi^{1/2})\zeta(t) \|^2 \, \rd \mu(t). $$
Therefore, $\zeta(t)$ is in the domain of $\Delta_\varphi^{1/2}$ for almost every $t \in T$ and we have
$$ \int_T \| (1-f(t)^{1/2} \Delta_\varphi^{1/2})\zeta(t) \|^2 \, \rd \mu(t)  \leq \delta^2.$$
We also have
$$ \| (x \otimes 1)b -b(x\otimes 1)\|_{\varphi \otimes \psi}^2=\| (x \otimes 1) \zeta - \zeta (x^* \otimes 1) \|^2=\int_T \| x \zeta(t)-\zeta(t) x^*\|^2 \, \rd \mu(t). $$ 
Then we have
$$ \int_T \| x \zeta(t)-\zeta(t) x^*\|^2 \, \rd \mu(t) > \frac{1}{4}.$$ 
Similarly, we also have
$$ \int_T (\| \zeta(t) \|^2 + \|\zeta(t)x^*\|^2) \, \rd \mu(t) = \|b\|_{\varphi \otimes \psi}^2+\|b(x \otimes 1)\|_{\varphi \otimes \psi}^2 \leq 2. $$
Gathering all these inequalities, we obtain
$$ \int_T \left( \| \zeta(t) \|^2 + \|\zeta(t)x^*\|^2+\frac{1}{\delta^{2}}\| (1-f(t)^{1/2} \Delta_\varphi^{1/2})\zeta(t) \|^2  \right) \rd \mu(t) < 12 \int_T \| x \zeta(t)-\zeta(t) x^*\|^2 \, \rd \mu(t). $$
Therefore, there exists $t \in T$ such that $\zeta(t)$ is in the domain of $\Delta_\varphi^{1/2}$ and
$$ \| \zeta(t) \|^2 + \|\zeta(t)x^*\|^2+\frac{1}{\delta^{2}}\| (1-f(t)^{1/2} \Delta_\varphi^{1/2})\zeta(t) \|^2 < 12 \| x \zeta(t)-\zeta(t) x^*\|^2. $$
Recall that the graph of $\Delta_\varphi^{1/2}$ is the closure of $\{ (c\xi, \xi c) \mid c \in M \}$. Then we can find $c \in M$ such that
$$ \| c\xi \|^2 + \|c \xi x^*\|^2+\frac{1}{\delta^{2}}\| c\xi-f(t)^{1/2} \xi c \|^2 < 12 \| x c \xi - c\xi x^*\|^2. $$
Now, since $M \cong M \ovt P$, we can find a nonzero partial isometry $v \in M' \cap M^\omega$ such that $v \xi^\omega=f(t)^{-1/2} \xi^\omega v$ (note that $f(t) \in \lambda^{\Z}$ if $N$ is of type $\III_\lambda$ with $0 < \lambda < 1$ and $f(t)=1$ if $N$ is finite). Since $\rE_M(v^*v) > 0$ is a nonzero scalar, then in $\rL^2(M^\omega)$, we obtain
$$ \| vc\xi^\omega \|^2 + \|vc \xi^\omega x^*\|^2+\frac{1}{\delta^{2}}\| vc\xi^\omega-\xi^\omega vc \|^2 < 12 \| x vc \xi^\omega - vc\xi^\omega x^*\|^2. $$
By letting $a=v_nc$ for $n$ large enough where $v=(v_n)^\omega$, we obtain
$$ \| a\xi_\varphi \|^2 + \|a \xi_\varphi x^*\|^2+\frac{1}{\delta^{2}}\| a\xi_\varphi-\xi_\varphi a \|^2 < 12 \| x a\xi_\varphi - a\xi_\varphi x^*\|^2. $$
Since $x\xi_\varphi=\xi_\varphi x^*$, this implies that
$$ \| a\|_\varphi^2 + \| ax\|_\varphi^2 < 12 \| xa - ax\|_\varphi^2 $$
and 
$$\| a\xi_\varphi-\xi_\varphi a \|^2 < 12\delta^2 \| xa - ax\|_\varphi^2 $$
Since $\delta > 0$ is arbitrary, this finishes the proof.
\end{proof}

\begin{proof}[Proof of Application \ref{UPF}]
It is enough to prove the theorem for $n=2$. Also, it is enough to show that we can find some $i \in \{1, \dots, m \}$ such that $M_i \preceq_M N_1$. Indeed, we can then use \cite[Proposition 6.3 $(\rm v)$]{HMV16} and conclude by induction over $m \geq 1$. 

We first note that $M_i$ has trivial bicentralizer for all $i \in \{1, \dots, m \}$ by \cite[Theorem C]{HI15}. Proposition \ref{tensor-formula} implies that $M$ has trivial bicentralizer. Therefore $N_1 \ovt R_\infty$ and $N_2 \ovt R_\infty$ also have trivial bicentralizer by Theorem \ref{tensor_products_trivial}. Therefore, we can apply \cite[Lemma 5.2]{HI15} to the decomposition
$$\mathcal{M}= (R_\infty  \ovt R_\infty) \ovt M_1 \ovt \cdots \ovt M_m=(N_1 \ovt R_\infty) \ovt (N_2 \ovt R_\infty).$$
We obtain some $i \in \{1, \dots, m \}$ such that $M_i \preceq_{\mathcal{M}} N_1 \ovt R_\infty$. But this implies that $M_i \preceq_M N_1$ by \cite[Lemma 6.1]{HMV16}.
\end{proof}

\bibliographystyle{plain}

\end{document}